\numberwithin{equation}{section}
\theoremstyle{plain}
\newtheorem{lem}[equation]{Lemma}
\newtheorem{prop}[equation]{Proposition}
\newtheorem{thm}[equation]{Theorem}
\newtheorem{theorem}[equation]{Theorem}
\newtheorem{cor}[equation]{Corollary}
\newtheorem{conj}[equation]{Conjecture}
\theoremstyle{definition}
\newtheorem{definition}[equation]{Definition}
\newtheorem{remark}[equation]{Remark}
\newtheorem*{que*}{Question}
\newtheorem*{remark*}{Remark}
\newtheorem*{definition*}{Definition}
\newcommand{\cq}{\mathcal{Q}}
\newcommand{\od}{\widehat{\Sigma}}
\newcommand{\Si}{\Sigma}
\newcommand{\bC}{\mathfrak C}
\newcommand{\ca}{\mathcal {A}} 
\newcommand{\cb}{\mathcal {B}}
\newcommand{\fan}{\operatorname{Fan}}
\newcommand{\lk}{\operatorname{lk}} 
\newcommand{\ch}{\mathcal {H}}
\newcommand{\ck}{\mathcal {K}}
\newcommand{\act}{\curvearrowright}
\newcommand{\bb}[2][]{\ensuremath{\mathbb{#2}^{#1}}}
\newcommand{\PA}{P\!A}
\newcommand{\bSD}{\Delta} 
\newcommand{\sd}{\bSD}
\newcommand{\falk}{\mathbb F}
\newcommand{\pos}{\mathrm{pos}}
\renewcommand{\neg}{\mathrm{neg}}
\let\ced\c
\renewcommand{\c}{\mathcal}
\renewcommand{\epsilon}{\varepsilon}
\renewcommand{\subset}{\subseteq}
\title{A new class of affine $K(\pi,1)$  arrangements}
\author[K.~Goldman]{Katherine Goldman$^{\dag}$}
    \address[K.~Goldman]{
    Department of Mathematics and Statistics,
    McGill University,
    Burnside Hall,
    805 Sherbrooke Street West,
    Montreal, QC,
    H3A 0B9, Canada}
    \email{kat.goldman@mcgill.ca}
    \thanks{$\dag$ Partially supported by NSF grant DMS-2402105}
\author[J.~Huang]{Jingyin Huang$^{\ast}$}
    \address[J.~Huang]{
        Department of Mathematics,
        The Ohio State University, 
        231 W. 18th Ave,
        Columbus, OH 43210, U.S.              
    }
    \email{huang.929@osu.edu}
    \thanks{$\ast$ Partially supported by a Sloan fellowship and NSF grant DMS-2305411}
\begin{document}

\begin{abstract}
    We show that a certain class of affine hyperplane arrangements are $K(\pi,1)$ by endowing their Falk complexes with an injective metric. This gives new examples of infinite $K(\pi,1)$ arrangements in dimension $n>2$.
\end{abstract}

\maketitle

\section{Introduction}
Let $\ca$ be an \emph{affine hyperplane arrangement} in $\mathbb R^n$, i.e., a locally finite collection of affine hyperplanes in $\mathbb R^n$. We consider the complex manifold which is the complement of the following collection of hyperplanes in $\mathbb C^n$:
$$M(\ca)=\mathbb C^n-\bigcup_{H\in \ca}(H\otimes \mathbb C).$$
It is an important question to understand the topology of $M(\ca)$, see e.g.~\cite{falk1986homotopy,falk1998homotopy}. We will be specifically interested in the asphericity of $M(\ca)$. 
If the manifold $M(\ca)$ is aspherical, we call $\ca$ a \emph{$K(\pi,1)$ arrangement}. 

Unlike the situation of knot complements in $\mathbb S^3$, asphericity of $M(\ca)$ is a relatively rare phenomenon. However, there are some specific classes of $\ca$ where asphericity is known, for example:
\begin{enumerate}
    \item $\ca$ is central and simplicial by Deligne \cite{deligne};
    \item $\ca$ is supersolvable by Terao \cite{terao1986modular};
    \item $\ca$ is certain type of line arrangement in $\mathbb R^2$ by Falk \cite{falk1995k};
    \item $\ca$ is the collection of reflection hyperplanes associated with an affine Coxeter group by Paolini and Salvetti \cite{paolini2021proof}.
\end{enumerate}
These results are obtained through different means: (1) and (4) rely heavily on Garside theory; (2) is obtained through a fibration argument; (3) uses a form of conformal non-positive curvature for 2-dimensional complexes, allowing one to compute the second homotopy group directly. 
Given that there are relatively few methods and examples of aspherical arrangements when $n\ge 3$, it is desirable to extend Falk's method over dimension 2, which is the goal of this article. 
In higher dimensions, we must use a different notion of non-positive curvature in place of the conformal non-positive curvature in \cite{falk1995k} which can only be used in dimension 2.

Given an affine arrangement $\ca$, an \emph{$\ca$-vertex} is a point in $\mathbb R^n$ which can be realized as intersection of elements of $\ca$. The \emph{local arrangement} at an $\ca$-vertex $x$ is the collection of all hyperplanes in $\ca$ that contain $x$. An interesting feature of Falk's result, is the local-to-global phenonmenon that for certain classes of arrangements $\ca$, one can detect the asphericity of $M(\ca)$ by looking at the combinatorial features of its local arrangements. Motivated by this, we consider the following class of arrangements characterized by their local arrangements.
\begin{definition}
 \label{def:admissible1}
We say an affine hyperplane arrangement $\ca$ in $\mathbb R^n$ is \emph{admissible}, if at each $\mathcal A$-vertex $x$, the local arrangement at $x$ is a translate of the following four types:
\begin{enumerate}
    \item (type $B_n$) $x_i\pm x_j=0$ for $1\le i\neq j\le n$ and $x_i=0$ for $1\le i\le n$;
    \item (type $D_n$) $x_i\pm x_j=0$ for $1\le i\neq j\le n$;
    \item (skewed type $A_n$) $x_i=0$ for $1\le i\le n$ and $x_i=x_j$ for $1\le i\neq j\le n$, or any image of this this arrangement under the $(\bb{Z}/2\bb{Z})^n$ action on $\bb[n]{R}$ by reflections about the coordinate hyperplanes;
    \item or a product of the previous types.
\end{enumerate}
\end{definition}

Note that any affine Coxeter arrangement associated with a non-exceptional affine Coxeter group (i.e., types $\widetilde A_n,\widetilde B_n,\widetilde C_n,\widetilde D_n$) is an admissible arrangement\footnote{We use a different description of the $\widetilde A_n$ arrangement, where the hyperplanes are $x_i\in \mathbb Z$ for $1\le i\le n$ and $x_i-x_j\in \mathbb Z$ for $1\le i\neq j\le n$. This does not affect the topology of $M(\ca)$.}. Although most of the arrangements in Definition~\ref{def:admissible1} are not Coxeter arrangements.

\begin{theorem}
\label{thm:main1}
Let $\ca$ be an admissible affine arrangement in $\mathbb R^n$ which is invariant under the action of a discrete translation subgroup $\mathbb Z^n$ of $\mathbb R^n$ (this does not have to be the usual embedding of $\mathbb Z^n$). Suppose $n\le 4$. Then $\ca$ is a $K(\pi,1)$ arrangement. More generally, modulo a group theoretical conjecture on the spherical Artin group of type $D_n$ (Conjecture~\ref{conj:dn}), $\ca$ is a $K(\pi,1)$ arrangement for any $n$.
\end{theorem}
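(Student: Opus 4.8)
\subsection*{Idea of the proof}

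The plan is to realize $M(\ca)$, up to homotopy, by a polyhedral complex carrying a metric that is non-positively curved in a weak but still sufficient sense. Concretely, we will work with the Falk complex $\falk(\ca)$: a locally finite polyhedral complex, homotopy equivalent to $M(\ca)$, on which the periodicity lattice $\mathbb Z^n$ acts cocompactly, and whose cells are metrized as convex $\ell^\infty$-polytopes (cells of product local type being metrized as $\ell^\infty$-products of the factors). Since an injective (equivalently, hyperconvex) metric space is an absolute $1$-Lipschitz retract of any metric space containing it, hence in particular contractible, it will be enough to prove that the universal cover $\widetilde{\falk(\ca)}$, with the induced length metric, is injective; this forces $\widetilde{\falk(\ca)}$ to be contractible, and hence $M(\ca)\simeq\falk(\ca)$ to be aspherical. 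We stress that a $\mathrm{CAT}(0)$ metric will not suffice: for the local models appearing below, which come from spherical reflection arrangements, it is a well-known open problem whether the relevant Artin groups are $\mathrm{CAT}(0)$ (this is open already for braid groups of large rank), so we must use the weaker condition of injectivity, which nonetheless still forces contractibility.

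To prove that $\widetilde{\falk(\ca)}$ is injective we will use a local-to-global, Cartan--Hadamard-type principle for injective metric spaces: a complete, geodesic, simply connected metric space that is locally injective is injective. Since $\widetilde{\falk(\ca)}$ is simply connected, and is complete and geodesic because $\falk(\ca)$, being cocompact, has only finitely many isometry types of cells, the problem reduces to a combinatorial condition on the link of each vertex of $\widetilde{\falk(\ca)}$, that is, to local injectivity at each $\ca$-vertex. By construction the local structure of $\falk(\ca)$ near a lift of an $\ca$-vertex $x$ depends only on the local arrangement $\ca_x$ at $x$, so by admissibility it suffices to treat the four types of Definition~\ref{def:admissible1}: for $\ca_x$ a central arrangement of type $B_n$, of type $D_n$, of skewed type $A_n$, or a product of such, we must show that the corresponding local Falk complex is locally injective, i.e.\ that its universal cover carries an injective metric. (Up to the $(\mathbb Z/2\mathbb Z)^n$-twist in the skewed case, these are reflection arrangements, hence $K(\pi,1)$ by Deligne with fundamental group the appropriate spherical Artin group; the point here is a metric refinement of that fact.)

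Since an $\ell^\infty$-product of injective spaces is injective, we may reduce to the irreducible types. For the (skewed) type $A_n$ case the required local injectivity can be established directly and is closely related to the fact that braid groups are Helly; the twist by $(\mathbb Z/2\mathbb Z)^n$ acts by cellular isometries and causes no difficulty. For type $B_n$ ($=C_n$) the injectivity we need can likewise be verified, using the Garside (and hence Helly) structure of that type. The remaining, and decisive, case is type $D_n$: here the injectivity we require (of a specific complex built from the Salvetti complex of type $D_n$) is what we cannot establish in general, and it reduces to a combinatorial property of the spherical Artin group of type $D_n$ that we formulate as Conjecture~\ref{conj:dn}. For $n\le 4$ this property needs no conjecture: $D_2\cong A_1\times A_1$ and $D_3\cong A_3$ are already covered by the type $A$ analysis, while $D_4$ can be checked directly; this yields the unconditional part of Theorem~\ref{thm:main1}, and granting Conjecture~\ref{conj:dn} for all $n$ yields the general statement.

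The main obstacles are two. The first is geometric bookkeeping: one must describe the cell structure of $\falk(\ca)$ intrinsically (not merely up to homotopy), choose the $\ell^\infty$-metrization so that the various local models glue into an honest geodesic metric on $\widetilde{\falk(\ca)}$, and pin down the precise link condition under which the local-to-global criterion applies; this is where the combinatorics of the admissible local arrangements enters in an essential way. The second, and genuinely hard, point is the type $D_n$ local model: in contrast to types $A$ and $B$, we do not know how to endow the relevant local Falk complex of type $D_n$ with an injective metric, and this comes down to a combinatorial property of the spherical Artin group of type $D_n$ that we can verify only in low rank. That is the content of Conjecture~\ref{conj:dn}, and for $n\ge 5$ the argument necessarily remains conditional on it.
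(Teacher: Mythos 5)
Your high-level strategy matches the paper's: metrize a Falk-type complex by $\ell^\infty$ orthoschemes, apply a Cartan--Hadamard local-to-global principle for injective metric spaces, reduce the local condition to the admissible types of Definition~\ref{def:admissible1}, and treat type $D_n$ conditionally on Conjecture~\ref{conj:dn} (known for $n\le 4$). There is, however, a structural misreading and a genuine gap. Structurally: the Falk complex is not homotopy equivalent to $M(\ca)$ and is neither locally finite nor cocompact; it is already the ``universal-cover-level'' object, built from elevations of standard subcomplexes in the universal cover of the Salvetti complex. It is shown to be simply connected, and only under the additional hypothesis that all local arrangements are $K(\pi,1)$ does its contractibility imply asphericity of $M(\ca)$ (Theorem~\ref{thm:falk}). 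So one does not pass to a universal cover of the Falk complex, and completeness and geodesicity of the metric come from the finite-shape hypothesis, not from cocompactness.

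The decisive gap is your treatment of the skewed type $A_n$ local model, which you dismiss as ``established directly'' via the Helly property of braid groups. The Helly/lattice property gives bowtie-freeness of the \emph{unsubdivided} $A_n$ Deligne-complex poset (Theorem~\ref{thm:bowtie free An}), but that poset is \emph{not} upward flag (Remark~\ref{rmk:contrast}), so it cannot be fed into the injectivity criterion of Theorem~\ref{thm:injective criterion}. The ambient $\ell^\infty$ metric on $D_\ca$ forces the link of a skewed-$A_n$ vertex to carry the finer, $B_n$-compatible orthoscheme subdivision, and one must prove that this \emph{subdivided} poset is both bowtie free and upward flag. That is Proposition~\ref{prop:key}, it occupies all of Section~\ref{sec:check links}, and it does not follow from known Helly-type results for braid groups; it is the main new technical contribution that your proposal leaves unaddressed.
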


In the situation of the above theorem, we have a free action of $\mathbb Z^n$ on $M(\ca)$. Then the fundamental group of $M(\ca)/\mathbb Z^n$ can be viewed as a generalization of the affine Artin groups (when $\ca$ is an affine Coxeter arrangement, this gives a finite index subgroup of the corresponding affine Artin group). 

\begin{cor}
Under the assumption of Theorem~\ref{thm:main1}, the manifold $M(\ca)/\mathbb Z^n$ is homotopy equivalent to a finite aspherical cell complex, which is the quotient of the Salvetti complex of $\ca$ (\cite{s87}) by a free action of $\mathbb Z^n$. In particular, the fundamental group of $M(\ca)/\mathbb Z^n$ is of type $F$.
\end{cor}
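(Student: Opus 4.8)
The statement is a formal packaging of Theorem~\ref{thm:main1}, so the plan is essentially covering-space bookkeeping together with the functoriality of the Salvetti complex; the real content is already in Theorem~\ref{thm:main1}.

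\emph{Step 1 (the covering $M(\ca)\to M(\ca)/\mathbb Z^n$).} Identify $\mathbb C^n=\mathbb R^n\otimes \mathbb C$ and let the translation subgroup $\mathbb Z^n\le \mathbb R^n$ act on $\mathbb C^n$ by translating the real part. Since $\ca$ is $\mathbb Z^n$-invariant, this action permutes the complexified hyperplanes $H\otimes \mathbb C$ and hence preserves $M(\ca)$. Translations act freely on $\mathbb C^n$, and $\mathbb Z^n$ is discrete in $\mathbb R^n$, so the action on $M(\ca)$ is free and properly discontinuous; therefore $M(\ca)\to M(\ca)/\mathbb Z^n$ is a regular covering with deck group $\mathbb Z^n$.

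\emph{Step 2 (asphericity of the quotient).} By Theorem~\ref{thm:main1}, $M(\ca)$ is aspherical, so its universal cover is contractible. This universal cover is also the universal cover of $M(\ca)/\mathbb Z^n$, whence $M(\ca)/\mathbb Z^n$ is a $K(\pi,1)$; its fundamental group fits into a short exact sequence $1\to \pi_1(M(\ca))\to \pi_1(M(\ca)/\mathbb Z^n)\to \mathbb Z^n\to 1$.

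\emph{Step 3 (finiteness via the Salvetti complex).} The Salvetti complex $\s(\ca)$ of \cite{s87} is a locally finite CW complex built functorially from the stratification of $\mathbb R^n$ by $\ca$ (its cells are indexed by incident pairs of faces), and $M(\ca)$ deformation retracts onto it; a symmetry of $\ca$ therefore induces a cellular automorphism of $\s(\ca)$, and the deformation retraction may be chosen $\mathbb Z^n$-equivariantly. Since $\mathbb Z^n$ acts cocompactly on $\mathbb R^n$ and $\ca$ is locally finite, the stratification has finitely many $\mathbb Z^n$-orbits of faces (it descends to a locally finite, hence finite, stratification of the compact torus $\mathbb R^n/\mathbb Z^n$), so $\s(\ca)$ has finitely many $\mathbb Z^n$-orbits of cells; and since a nontrivial translation fixes no face of $\ca$, it fixes no cell of $\s(\ca)$, so the $\mathbb Z^n$-action on $\s(\ca)$ is free. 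Hence $\s(\ca)/\mathbb Z^n$ is a finite CW complex, the descended equivariant retraction gives $M(\ca)/\mathbb Z^n\simeq \s(\ca)/\mathbb Z^n$, and combining with Step~2 this exhibits $M(\ca)/\mathbb Z^n$ as homotopy equivalent to a finite aspherical complex; in particular $\pi_1(M(\ca)/\mathbb Z^n)$ admits a finite $K(\pi,1)$ and so is of type $F$.

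\emph{Main obstacle.} All the genuine difficulty lies in Theorem~\ref{thm:main1}; the only points needing care here are (a) that Salvetti's deformation retraction, usually stated for the complement in $\mathbb C^n$, can be taken $\mathbb Z^n$-equivariantly and thus descends to the quotient (this follows from the combinatorial, $\ca$-natural nature of the construction, or by an averaging argument), and (b) upgrading ``$\mathbb Z^n$ acts freely on $M(\ca)$'' to ``$\mathbb Z^n$ acts freely on the cell complex $\s(\ca)$'', which holds because the indexing data of $\s(\ca)$ carries no translation-fixed points. Neither is serious.
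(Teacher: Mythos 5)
Your argument is correct and is exactly the routine covering-space/equivariance argument that the paper leaves implicit (the corollary is stated without proof). The only point worth making explicit is that freeness of the $\mathbb Z^n$-action on the cells of the Salvetti complex uses that every fan of $\ca$ is bounded, which holds here because $\mathbb Z^n$-invariance of an admissible arrangement forces completeness ($D_\ca=\mathbb R^n$) as noted in Section~\ref{subsec:admissible}; with that observation, nothing is missing.
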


\Cref{thm:main1} is a special case of a more general statement which does not require $\ca$ to be $\mathbb Z^n$-invariant, see \Cref{thm:main}.

Now we give more concrete examples of arrangements where Theorem~\ref{thm:main1} (or more specifically, \Cref{thm:main}) applies. For each dimension $n$, we will construct an infinite family of finite affine arrangements $\ch_{k,n}$, and an infinite family of $\mathbb Z^n$-invariant affine arrangements $\ck_{k,n}$. These arrangements are not Coxeter arrangements, and their complexified complement do not admit iterated fibration structure in an obvious way. The finite arrangements are not simplicial. So the $K(\pi,1)$ results for these arrangements are new. 

\begin{definition}
For $k\ge 1$, let $\ch_{k,n}$ be the affine hyperplane arrangement in $\mathbb R^n$ given by $x_i \in \{-2k-1, -2k+1, \dots,-3, -1, 1, 3, \dots, 2k - 1, 2k + 1\}$ for $1 \le i \le n$ and $x_i \pm x_j = 0$ for $1\le i\neq j\le n$. This family $\ch_{k,n}$ generalizes Falk's \cite[Example 3.13]{falk1995k} which is neither supersolvable or simpicial.

For $k\ge 1$, let $\ck_{k,n}$ be the affine hyperplane arrangement in $\mathbb R^n$ given by $x_i\in \mathbb Z$ for $1\le i\le n$, and $x_i+x_j \in 2k\mathbb Z+1$, $x_i-x_j \in 2k\mathbb Z$ for $1\le i\neq j\le n$.
\end{definition}

\begin{thm}(=Theorem~\ref{thm:examples})
For $n\le 4$ and any $k\ge 1$, the arrangements $\ch_{k,n}$ and $\ck_{k,n}$ are $K(\pi,1)$ arrangements. More generally, modulo a group theoretical conjecture on the spherical Artin group of type $D_n$ (Conjecture~\ref{conj:dn}), $\ch_{k,n}$ and $\ck_{k,n}$ are $K(\pi,1)$ arrangements for any $n,k$.
\end{thm}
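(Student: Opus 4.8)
The plan is to derive this from \Cref{thm:main} --- and, for the $\mathbb Z^n$-invariant family $\ck_{k,n}$, from its specialization \Cref{thm:main1} --- so that essentially all the work is to verify their hypotheses: that both families are admissible in the sense of \Cref{def:admissible1}, and that $\ck_{k,n}$ is invariant under a rank-$n$ discrete group of translations. Local finiteness is immediate, since $\ch_{k,n}$ is finite and $\ck_{k,n}$ is periodic. For the translation invariance I would exhibit the lattice $L = \{v \in \mathbb Z^n : v_i \in k\mathbb Z \text{ for all } i,\ v_i \equiv v_j \pmod{2k}\text{ for all } i,j\}$, check that translating by any $v \in L$ sends each of the three defining families $x_i \in \mathbb Z$, $x_i + x_j \in 2k\mathbb Z + 1$, $x_i - x_j \in 2k\mathbb Z$ to itself, and observe that $2k\mathbb Z^n \subseteq L$, so $L$ has rank $n$; this realizes $\mathbb Z^n$ as a (possibly non-standard) discrete translation subgroup preserving $\ck_{k,n}$. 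The remaining task is then the local analysis of the two arrangements.

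For $\ch_{k,n}$ I would fix an $\ca$-vertex $p$, put $Z = \{i : p_i = 0\}$, and partition the remaining coordinates into blocks according to the common value of $|p_i|$. The key feature to exploit is that the coordinate hyperplanes of $\ch_{k,n}$ sit only at the odd values $\pm 1, \pm 3, \dots, \pm(2k+1)$, so no hyperplane $x_i = 0$ belongs to $\ch_{k,n}$; hence on $Z$ the only hyperplanes through $p$ are the $x_i \pm x_j = 0$, producing exactly a type $D_{|Z|}$ factor. On a block with common value $v$, for each pair of coordinates exactly one of $x_i = x_j$, $x_i = -x_j$ passes through $p$ (the one matching the signs of the $p_i$), while all of the coordinate hyperplanes $x_i = p_i$ on that block pass through $p$ precisely when $v \in \{1, 3, \dots, 2k+1\}$; in that case the block contributes, after the coordinate reflection recording $\mathrm{sgn}(p_i)$, a skewed type $A$ factor, and otherwise that block of coordinates is not cut down to a point. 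A rank count then forces every block to be of the first kind and $|Z| \ne 1$, so the local arrangement at $p$ is a product of type $D$ and skewed type $A$ factors --- hence admissible.

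For $\ck_{k,n}$ I would again fix an $\ca$-vertex $p$ and first argue, by examining fractional parts (and residues mod $2k$ for the half-integers), that every coordinate of $p$ must be an integer or lie in $k\mathbb Z + \tfrac12$: any other value leaves the corresponding block of coordinates with no coordinate hyperplane through $p$, and supported only by a sub-arrangement of a braid arrangement, which cannot pin those coordinates to a point. Every integer coordinate carries a coordinate hyperplane through $p$; grouping the integer coordinates by residue mod $2k$, the surviving pairings --- $x_i - x_j = 0$ when $p_i \equiv p_j \pmod{2k}$ and $x_i + x_j = 0$ when $p_i + p_j \equiv 1 \pmod{2k}$ --- assemble, after the evident coordinate reflection, into skewed type $A$ factors. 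The coordinates in $k\mathbb Z + \tfrac12$ split into those $\equiv \tfrac12$ and those $\equiv k + \tfrac12 \pmod{2k}$, and within each class both $x_i = x_j$ and $x_i = -x_j$ pass through $p$, giving a type $D$ factor (a singleton class would leave a coordinate free, so does not occur at a vertex). Thus the local arrangement at $p$ is again a product of skewed type $A$ and type $D$ factors, and $\ck_{k,n}$ is admissible.

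I do not expect any genuine obstacle here beyond \Cref{thm:main} itself, which already does the hard work. The part needing the most care is the $\ck_{k,n}$ bookkeeping --- sorting coordinates into integral and half-integral residue classes mod $2k$ and tracking which of $x_i \pm x_j$ survives --- together with, in both families, confirming that the rank-deficient configurations discarded above really are not $\ca$-vertices, so that the classification of local arrangements is complete. Granting this, \Cref{thm:main1} applies to $\ck_{k,n}$ and \Cref{thm:main} to $\ch_{k,n}$, yielding the $K(\pi,1)$ property unconditionally for $n \le 4$ and modulo \Cref{conj:dn} for all $n$.
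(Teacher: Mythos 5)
Your reduction works for $\ck_{k,n}$: it is complete and of finite shape (being invariant under a rank-$n$ translation lattice, e.g.\ $2k\mathbb Z^n$ sits inside the lattice you describe), and your residue-class analysis of its local arrangements matches the paper's Proposition~\ref{prop:admissible} in substance. The gap is in the $\ch_{k,n}$ case: Theorem~\ref{thm:main} hypothesizes that $\ca$ is \emph{complete}, i.e.\ $D_\ca=\mathbb R^n$, and $\ch_{k,n}$ is a finite arrangement whose bounded fans only fill the cube $[-(2k+1),2k+1]^n$. So you cannot ``apply \Cref{thm:main} to $\ch_{k,n}$'' as stated; admissibility and finite shape are not enough. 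The failure is not cosmetic: the local injectivity argument of Lemma~\ref{lem:link to injective} uses that the link of a vertex $x$ of $\falk_\ca$ is the \emph{entire} spherical Deligne complex $\sd_{\ca_{\bar x}}$, whereas at a vertex $\bar x$ on the boundary of $D_{\ch_{k,n}}$ Lemma~\ref{lem:link deligne} only gives the preimage of $\lk(\bar x, D)$, a proper subcomplex of $\sd_{\ca_{\bar x}}$. One must then know that the relevant sub-poset is still bowtie free and upward flag.

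This is exactly what the paper's proof of Theorem~\ref{thm:examples} spends its effort on: it identifies the local arrangement at a boundary vertex as a product of skewed type $A$ factors, decomposes $\lk(\bar x,D)$ as a join in which the skewed $A_m$ factor contributes only its first-octant part, and invokes Proposition~\ref{prop:positive part} (the restriction of the $s$-order to $(\Delta'_{\ca(3)})^+$ is bowtie free and upward flag) to rerun the local-to-global argument. Your proposal contains no substitute for this step, so the $\ch_{k,n}$ half of the theorem is not established by your argument. To repair it you would either need to prove the analogue of Proposition~\ref{prop:positive part} for the truncated links, or embed $\ch_{k,n}$ into a complete admissible arrangement in a way that is compatible with the Falk complex --- neither of which is automatic.
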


For $n=2$, Falk \cite{falk1995k} constructed a (locally infinite) 2-dimensional complex $\falk_\ca$ that is homotopy equivalent to the universal cover of $M(\ca)$. This construction of Falk can be generalized to higher dimensions without too much difficulty \cite{huang2024cycles}. Later, we show $\falk_\ca$ is homotopy equivalent to the universal cover of $M(\ca)$ whenever all the local arrangements are $K(\pi,1)$. Thus all the above results rely on showing the complex $\falk_\ca$ is contractible. 

Recall that a geodesic metric space $X$ is \emph{injective} if any pairwise intersecting closed metric balls in $X$ have non-empty common intersection. For example, $\mathbb R^n$ equipped with the $\ell^\infty$ metric is injective. Injective metric spaces are contractible, and they are connected to the above theorems in the following way.

\begin{thm}
\textup{(= \Cref{thm:main} and \Cref{thm:examples})}
Under the assumptions of any of the previous theorems, the Falk complex $\falk_\ca$ admits a metric which makes it an injective metric space.
\end{thm}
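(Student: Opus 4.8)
The plan is to construct the metric on $\falk_\ca$ one ``local block'' at a time, arranging that the pieces agree on overlaps, and then to promote local injectivity to global injectivity by a Cartan--Hadamard-type argument, using that $\falk_\ca$ is simply connected.

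First I would recall the structure of $\falk_\ca$: its construction (Falk's in dimension $2$, and its higher-dimensional generalization) exhibits it as a union of finite-dimensional subcomplexes $\falk_{\ca_x}$, one for each stratum $x$ of $\ca$ (in particular each $\ca$-vertex), where $\ca_x$ is the local arrangement at $x$; each $\falk_{\ca_x}$ is a contractible model for the universal cover of $M(\ca_x)$, and the intersection of two blocks is again a block of this form, for a local arrangement of smaller rank lying ``between'' them. The metric on $\falk_\ca$ will be assembled so as to restrict to a fixed injective metric on each $\falk_{\ca_x}$.

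Second, the local input. Because $\ca$ is admissible, every $\ca_x$ is a product of reflection arrangements of type $A$, $B$, or $D$ — note that the ``skewed type $A_n$'' arrangements, and their images under the coordinate reflections, become the braid arrangement after a linear change of coordinates, hence have homeomorphic complements. For a reflection arrangement of type $A$ or $B$, the universal cover of its Salvetti complex carries an injective metric: this follows from the Helly property of spherical Artin groups of type $A$ and $B$ (Huang--Osajda) together with Haettel's constructions of injective metrics on the associated (oriented Deligne / Salvetti) complexes; these metrics are built from the combinatorics in a way that restricts correctly to sub-arrangements, which is what will make the blocks compatible along their common sub-blocks (formally one argues by induction on rank). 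For type $D_n$ the existence of such a metric is exactly \Cref{conj:dn}, known for $n\le 4$ — the source of the dichotomy in the statement. Finally, $\ell^\infty$-products of injective metric spaces are injective, so a block of product type is handled once its factors are.

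Third, local-to-global. With compatible metrics in place, $\falk_\ca$ becomes a complete geodesic space carrying a consistent local convex geodesic bicombing, and every point has a neighborhood that is either an open subset of a single block or a union of finitely many blocks glued along a common convex sub-block; a gluing lemma for injective metric spaces then shows such neighborhoods are injective, so $\falk_\ca$ is locally injective. Since $\falk_\ca$ is homotopy equivalent to the universal cover of $M(\ca)$ (established earlier under the hypothesis that all local arrangements are $K(\pi,1)$ — which holds here, since finite reflection arrangements are simplicial, by Deligne's theorem), it is simply connected; a Cartan--Hadamard theorem for metric spaces with local geodesic bicombings in the spirit of Miesch, together with the local injectivity, then upgrades this to global injectivity of $\falk_\ca$. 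As injective metric spaces are contractible, this simultaneously yields the asphericity assertions of the preceding theorems. The crux, and the step I expect to be hardest, is the gluing: one must verify that the union of blocks along lower-rank blocks is locally injective, which requires tight control of the geometry of each block transverse to its boundary faces — the chosen metric must be sufficiently ``$\ell^\infty$-like'' across the walls for the binary ball-intersection property to survive the identifications — and it is precisely here that the explicit list of admissible local types, and the combinatorics of how they fit together, are used in an essential way. Two further points need care: $\falk_\ca$ is locally infinite, so completeness and consistency of the local bicombing must be checked directly rather than quoted from the cocompact setting; and the type-$D_n$ case for large $n$ is genuinely open, which is why the unconditional conclusion is limited to $n\le 4$.
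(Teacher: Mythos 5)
There is a genuine gap, and it sits exactly where the paper's main new work lies. Your treatment of the type-$A$ local blocks is to change coordinates to the braid arrangement and quote the known Helly/injectivity results for spherical type-$A$ Artin groups. This cannot work as stated, for two reasons. First, the metric that $\falk_\ca$ must carry near a skewed-$A_n$ vertex is forced by compatibility with the neighboring $B_n$- and $D_n$-blocks: the paper takes the $\ell^\infty$ metric pulled back from $D_\ca\subset\mathbb R^n$, which on the link of such a vertex is the orthoscheme metric of the \emph{subdivided} Deligne complex $\Delta'_{\ca(3)}$ (cut along the coordinate hyperplanes into the $B_n$ cell structure, with the $s$-order), not the standard orthoscheme metric of the type-$A_n$ Artin complex. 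Second, the injectivity criterion (\Cref{thm:injective criterion}) requires the poset to be both bowtie free \emph{and} upward flag, and the unsubdivided type-$A_n$ poset of \Cref{thm:bowtie free An} is \emph{not} upward flag (\Cref{rmk:contrast}); so the known type-$A$ results genuinely do not suffice, and one must prove that the subdivided poset $((\Delta'_{\ca(3)})^0,<)$ is bowtie free and upward flag. That is \Cref{prop:key}, proved in Section~\ref{sec:check links}, and it is the heart of the argument; your sentence that the chosen metrics ``restrict correctly to sub-arrangements'' asserts precisely the compatibility that this proposition establishes, rather than deriving it.

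Relatedly, your local-to-global step leans on a ``gluing lemma for injective metric spaces'' along common sub-blocks, which you correctly identify as the crux but do not supply; no such gluing lemma is available in the needed generality. The paper avoids gluing altogether: it defines one global length metric $d_\infty$ by pulling back $\ell^\infty$ from $D_\ca$ (\Cref{def:falk metric}), verifies completeness and the geodesic property (\Cref{lem:geodesic metric}), shows each sufficiently small ball is isometric to a ball in $|\mathcal P|_\infty$ for the relevant subdivided link poset and is therefore injective by \Cref{thm:injective criterion}, establishes \emph{uniform} local injectivity via the finite-shape hypothesis (\Cref{lem:link to injective}), and then applies \Cref{thm:local-to-global} together with simple connectivity of $\falk_\ca$ (\Cref{thm:falk}). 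Your overall architecture (local injectivity plus a Cartan--Hadamard upgrade) matches the paper's, but the two essential ingredients --- the subdivision/flagness analysis for skewed $A_n$ and the uniform local verification replacing the gluing --- are missing.
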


The reason for us to consider skewed type $A_n$ arrangements as local arrangements, rather than the standard $A_n$-arrangements, is for the compatibility of arranging injective metrics on $\falk_\ca$. A substantial part of the article (Section~\ref{sec:check links}) is devoted to checking such compatibility (Proposition~\ref{prop:key} and Remark~\ref{rmk:contrast}). %

It is a topic of independent interest to produce natural examples of injective metric spaces arising from group theory, and the above theorem gives many such examples. As the above theorem becomes conditional for $n\ge 5$, we also have the following variation that is unconditional for all dimensions, where we allow the local arrangements to be a mixture of $A_n$-type and $B_n$-type.
The following corollary is more interesting in terms of providing new examples of injective metric spaces, rather than $K(\pi,1)$ results, as one can prove the arrangements in the following corollary are $K(\pi,1)$ via an iterated fibration argument.

\begin{cor} \textup{(= \Cref{cor:AB})}
\label{cor:AB intro}
Suppose $\ca$ is a complete, finite shape, affine arrangement in $\mathbb R^n$ such that for each $\mathcal A$-vertex $x$, the local arrangement at $x$ is a translate of one of the following three types:
\begin{enumerate}
    \item (type $B_n$) $x_i\pm x_j=0$ for $1\le i\neq j\le n$ and $x_i=0$ for $1\le i\le n$;
    \item (skewed type $A_n$) $x_i=0$ for $1\le i\le n$ and $x_i=x_j$ for $1\le i\neq j\le n$, or any image of this this arrangement under the $(\bb{Z}/2\bb{Z})^n$ action on $\bb[n]{R}$ by reflections about the coordinate hyperplanes;
    \item or a product of the previous types.
\end{enumerate}
Then $(\falk_\ca,d_\infty)$ is an injective metric space and $\ca$ is a $K(\pi,1)$ arrangement.
\end{cor}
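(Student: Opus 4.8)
The plan is to prove that $(\falk_\ca,d_\infty)$ is injective and then read off the $K(\pi,1)$ statement for free: injective metric spaces are contractible, and once we know every local arrangement is $K(\pi,1)$ the complex $\falk_\ca$ is homotopy equivalent to the universal cover of $M(\ca)$ (by the general fact recalled in the introduction), so contractibility of $\falk_\ca$ forces $M(\ca)$ to be aspherical. The first step is thus to check that the permitted local arrangements are indeed $K(\pi,1)$. The type $B_n$ arrangement is the reflection arrangement of the Coxeter group of type $B_n$, hence central and simplicial and so $K(\pi,1)$ by Deligne~\cite{deligne}. The skewed type $A_n$ arrangement $\{x_i=0\}\cup\{x_i=x_j\}$ is linearly isomorphic to the essential reflection arrangement of type $A_n$ — it is its image under the change of coordinates $y_i=x_i-x_0$ on $\mathbb R^{n+1}/(\text{diagonal})$ — hence again central and simplicial, and the same holds for its images under the coordinate reflections; so these are $K(\pi,1)$ as well. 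Finally a product of $K(\pi,1)$ arrangements is $K(\pi,1)$, since $M$ of a product of arrangements is the product of their complements. Consequently all local arrangements of $\ca$ are $K(\pi,1)$, $\falk_\ca$ is homotopy equivalent to the universal cover of $M(\ca)$, and in particular $\falk_\ca$ is simply connected.

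Next I would equip $\falk_\ca$ with the piecewise-$\ell^\infty$ metric $d_\infty$, metrizing each maximal cell by an appropriate $\ell^\infty$-type metric and gluing them; the hypotheses that $\ca$ be \emph{complete} and of \emph{finite shape} are exactly what guarantee that $d_\infty$ is a well-defined complete geodesic metric built from only finitely many isometry types of cells (so that, in particular, the space is proper). The crucial local input is that a neighborhood of each vertex $v$ of $\falk_\ca$ is isometric (after rescaling factors) to a neighborhood of the cone point in the Falk complex of the local arrangement at the corresponding $\ca$-vertex — a type $B_n$, skewed type $A_n$, or product arrangement — and that each such local model, with its $d_\infty$ metric, is injective, the metrics on adjacent cells agreeing on overlaps. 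This is precisely the compatibility established in Section~\ref{sec:check links}, and in particular in Proposition~\ref{prop:key}. I would emphasize that for the local types arising here — $B_n$, skewed $A_n$, and products of these — this verification is completely unconditional; Conjecture~\ref{conj:dn} enters the argument of Theorem~\ref{thm:main} only through the type $D_n$ local arrangements, which by hypothesis do not occur. Hence $(\falk_\ca,d_\infty)$ is \emph{locally} injective.

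To pass from local to global I would invoke the Cartan–Hadamard-type principle for injective metric spaces: a complete, simply connected, locally injective geodesic space — with the convex, consistent geodesic bicombing supplied by the local $\ell^\infty$ structures — is injective. Applying this to $\falk_\ca$, which is complete by the finite-shape and completeness hypotheses and simply connected by the first paragraph, yields that $(\falk_\ca,d_\infty)$ is injective, hence contractible; therefore $M(\ca)$ is aspherical and $\ca$ is a $K(\pi,1)$ arrangement. (As noted in the introduction, the $K(\pi,1)$ conclusion alone also follows from an iterated fibration argument exploiting the supersolvability of the $A_n$- and $B_n$-type pieces; the new content here is the injective metric.)

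The main obstacle is the local step. One must check that the piecewise-$\ell^\infty$ metric is locally injective at \emph{every} vertex, including vertices whose local arrangement is a product mixing $A$- and $B$-type factors, and that the various choices of $\ell^\infty$ metrics on the maximal cells are mutually consistent where the cells meet. This is exactly the reason the definition uses skewed type $A_n$ arrangements rather than the standard $A_n$ arrangements, and it is the technical heart of Section~\ref{sec:check links}; by contrast, once that input is available, the global Cartan–Hadamard step and the deduction of the $K(\pi,1)$ property are essentially formal. A secondary, routine point is to confirm that the hypotheses ``complete'' and ``finite shape'' do supply the completeness and properness needed for the local-to-global theorem to apply.
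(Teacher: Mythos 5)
Your proposal is correct and follows essentially the same route as the paper: the unconditional poset conditions for skewed $A_n$ (Proposition~\ref{prop:key}) and for $B_n$ (Theorem~\ref{thm:Bnflag}) are combined with \Cref{lem:decomposing 4}, \Cref{lem:link to injective}, and \Cref{thm:falk} exactly as in the proof of Theorem~\ref{thm:main}, with Conjecture~\ref{conj:dn} never invoked since no $D_n$ local arrangement occurs. One small caution: $\falk_\ca$ is locally infinite and hence not proper, but this is harmless because \Cref{thm:local-to-global} requires only completeness, simple connectivity, and \emph{uniform} local injectivity --- the uniformity (not properness) being what the finite-shape hypothesis actually supplies in the argument of \Cref{lem:link to injective}.
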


\subsection*{Structure of the article} In Section~\ref{sec:prelim} we collect some background material. In Section~\ref{subsec:deligne complex} we define Falk complexes of affine hyperplane arrangements and prove some of them admit injective metrics, modulo a key proposition (Proposition~\ref{prop:key}) about skewed $A_n$ arrangements. Section~\ref{sec:check links} is devoted to this key proposition. Section~\ref{sec:example} contains some new, concrete examples of affine arrangements where our results apply.

\section{Preliminaries}
\label{sec:prelim}
\subsection{Hyperplane arrangements and their dual polyhedra}\label{subsec:zonotope}
An \emph{affine hyperplane arrangement} in the vector space $\mathbb R^n$ is a locally finite family $\mathcal A$ of affine hyperplanes.
 Let $\cq(\ca)$ be the set of nonempty affine subspaces that are intersections of subfamilies of $\ca$ (here $\mathbb R^n\in\cq(\ca)$ as the intersection of an empty family). Each point $x\in\mathbb R^n$ belongs to a unique element of $\cq(\ca)$ that is minimal with respect to inclusion, called the \emph{support} of~$x$. A \emph{fan} of $\ca$ is a maximal connected subset of $\mathbb R^n$ consisting of points with the same support. 
Denote the collection of all fans of $\ca$ by $\fan(\ca)$.
Note that $\mathbb R^n$ is the (disjoint) union of $\fan(\ca)$. 
We define a partial order on $\fan(\ca)$ so that $U_1<U_2$ if $U_1$ is contained in the closure of $U_2$.
Let $b\Si_{\ca}$ be the simplicial complex that is the geometric realisation of this poset.
For each $U\in \fan(\ca)$, we choose a point $x_U\in U$. 
This gives a piecewise linear embedding $b\Si_{\ca}\subset\mathbb R^n$ sending the vertex of $b\Si_\ca$ corresponding to $U$ to $x_U$.

By %
\cite[pp.~606-607]{s87}, the simplicial complex $b\Si_{\ca}$ is the barycentric subdivision of a combinatorial complex $\Si_{\ca}$ whose vertices correspond to the top-dimensional fans.
Namely, for each vertex of $b\Si_{\ca}$ corresponding to $U\in \fan(\ca)$, the union of all the simplices of $b\Si_{\ca}$ corresponding to chains with smallest element $U$ is homeomorphic to a closed disc \cite[Lem 6]{s87}, which becomes the face of $\Si_{\ca}$ corresponding to~$U$. We will  sometimes view $b\Si_\ca$ and $\Si_\ca$ as subspaces of $\mathbb R^n$.
For $B\in \cq(\ca)$, a face $F$ of $\Si_{\ca}$ is \emph{dual} to $B$,  if %
$B$ contains the fan $U$ corresponding to $F$ and $\dim(B)=\dim(U)$.
We equip the 1-skeleton of $\Si_{\ca}$ with the path metric $d$ such that each edge has length $1$. Given vertices $x,y\in \Si^0_{\ca}$, it turns out that $d(x,y)$  is the number of hyperplanes separating $x$ and~$y$ \cite[Lem~1.3]{deligne}.  

\begin{lem}[{\cite[Lem 3]{s87}}]
	\label{lem:gate}
	Let $x\in \Si^0_{\ca}$ and let $F$ be a face of $\Si_{\ca}$. Then there exists unique $\Pi_F(x)\in F^0$ such that $d(x,\Pi_F(x))\le d(x,y)$ for any $y\in F^0$. \end{lem}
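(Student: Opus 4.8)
The statement is \cite[Lem 3]{s87}; I outline the argument one would give. The plan is to translate everything into sign vectors of chambers, identify the vertex set $F^0$ with the chambers of the ``local'' subarrangement of hyperplanes through the support of the fan dual to $F$, and then read off $\Pi_F(x)$ as the obvious nearest chamber.

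First I would fix notation: regard each $x\in\Si^0_\ca$ as an open chamber of $\ca$, recorded by its sign vector $(\epsilon_H(x))_{H\in\ca}$ with $\epsilon_H(x)\in\{+,-\}$ telling on which side of $H$ the chamber lies. A chamber is determined by this vector, and by the separation formula cited above from \cite[Lem 1.3]{deligne} one has $d(x,y)=\#\{H\in\ca:\epsilon_H(x)\ne\epsilon_H(y)\}$. Let $U\in\fan(\ca)$ be the fan whose dual face is $F$, let $B=\supp(U)\in\cq(\ca)$ (so $\dim B=\dim U$), and put $\ca_B=\{H\in\ca:B\subseteq H\}$, a central subarrangement. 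The first substantive step is to identify $F^0$: by the construction of $\Si_\ca$ recalled above, $F^0$ is the set of chambers $V$ with $U\subseteq\overline V$. Choosing $q\in U$ (so that exactly the hyperplanes of $\ca_B$ pass through $q$, since $\supp(q)=B$), a small ball at $q$ meets only hyperplanes of $\ca_B$, and I would check that intersecting with this ball yields a bijection $F^0\cong\operatorname{Ch}(\ca_B)$ with the chambers of the central arrangement $\ca_B$.

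The second step is a dichotomy on hyperplanes. For $H\notin\ca_B$ the fan $U$ is disjoint from $H$, hence lies in a single open half-space, and I would argue that then every $V\in F^0$ lies in that same half-space, so $\epsilon_H$ is constant on $F^0$. For $H\in\ca_B$, under the identification above $\epsilon_H$ on $F^0$ is just the sign taken in $\ca_B$, so both signs occur and are unconstrained by the others. Now I would define $\Pi_F(x)$ to be the chamber $c_x\in F^0\cong\operatorname{Ch}(\ca_B)$ containing $x$ (this makes sense as $\ca_B\subseteq\ca$), equivalently the vertex of $F$ whose sign vector agrees with that of $x$ on every $H\in\ca_B$. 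For any $y\in F^0$ the separation formula splits as $d(x,y)=c_0+\#\{H\in\ca_B:\epsilon_H(x)\ne\epsilon_H(y)\}$, where $c_0$ counts the $H\notin\ca_B$ on which $\epsilon_H(x)$ differs from the common value of $\epsilon_H$ on $F^0$ and is therefore independent of $y$ by the second step. The remaining term is $\ge 0$ and vanishes exactly when $y$ and $c_x$ have the same $\ca_B$-signs, i.e.\ when $y=c_x$. Hence $d(x,y)\ge d(x,c_x)$ with equality iff $y=c_x$, which gives both existence and uniqueness.

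The only genuinely nontrivial point is the identification $F^0\cong\operatorname{Ch}(\ca_B)$ — that is, reading off the local combinatorics of the dual cell $F$ of $\Si_\ca$; once the cell structure of $\Si_\ca$ is in hand (as in \cite{s87}), the rest is the sign-vector bookkeeping above. Everything else — the constancy of $\epsilon_H$ on $F^0$ for $H\notin\ca_B$, and the fact that $c_x$ is a vertex of $F$ — follows immediately from that identification.
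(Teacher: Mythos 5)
Your argument is correct and is essentially the standard proof of the gate property from \cite{s87} (which the paper simply cites without reproducing): identify $F^0$ with the chambers of the central subarrangement $\ca_B$ through the support of the dual fan, observe that the sign $\epsilon_H$ is constant on $F^0$ for $H\notin\ca_B$, and conclude via the separating-hyperplane distance formula that the unique minimizer is the chamber of $F^0$ agreeing with $x$ on all of $\ca_B$. No gaps; the one step you flag as nontrivial ($F^0\cong\operatorname{Ch}(\ca_B)$) is exactly the content of Salvetti's construction of the dual cell structure.
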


The vertex $\Pi_F(x)$ is called the \emph{projection} of $x$ to $F$.  A hyperplane $H\in \ca$ \emph{crosses} a face $F$ of $\Si_{\ca}$ if $H$ is dual to an edge of $F$. For an edge $xy$ of $\Si_\ca$, if the hyperplane dual to $xy$ crosses $F$, then $\Pi_F(x)\Pi_F(y)$ is an edge dual to the same hyperplane, otherwise we have $\Pi_F(x)=\Pi_F(y)$. Thus $\Pi_F$ extends naturally to a map $\Si^1_\ca\to F^1$.

\begin{lem}(\cite[Lem 3.2]{huang2025353})
\label{lem:unlabelled}
	Let $E$ and $F$ be faces of $\Si_\ca$. Then $\Pi_F(E^0)=F'^0$ for some face $F'\subset F$. 
\end{lem}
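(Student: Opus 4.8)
The plan is to describe the projection $\Pi_F$ completely explicitly in sign-vector language, and then deduce the statement by a short computation with the local arrangements of $\ca$. Identify $\Si^0_\ca$ with the set $\mathrm{Ch}(\ca)$ of chambers (top-dimensional fans) of $\ca$, each chamber with its sign vector in $\{+,-\}^\ca$; likewise write $\mathrm{Ch}(\mathcal B)$ for the chambers of any arrangement $\mathcal B$. For a face $G$ of $\Si_\ca$ write $U_G$ for the corresponding fan, $B_G\in\cq(\ca)$ for its support, and $\ca_G:=\{H\in\ca:B_G\subseteq H\}$ for the finite central arrangement of hyperplanes crossing $G$. From the construction of $\Si_\ca$ recalled above, the vertices of $G$ are exactly the chambers of $\ca$ whose closure contains $U_G$, and in sign-vector terms
\[
G^0=\{\tau\in\mathrm{Ch}(\ca):\ \tau|_{\ca\setminus\ca_G}=\epsilon_G\},
\]
where $\epsilon_G\in\{+,-\}^{\ca\setminus\ca_G}$ is the sign vector of $U_G$ away from $\ca_G$ (well defined since each $H\in\ca\setminus\ca_G$ misses the connected set $U_G$), and $\tau\mapsto\tau|_{\ca_G}$ is a bijection $G^0\to\mathrm{Ch}(\ca_G)$.

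The first step is to show that $\Pi_F$ is ``localization at $B_F$'': for $x\in\Si^0_\ca$, the vertex $\Pi_F(x)$ is the unique vertex of $F$ whose sign vector agrees with that of $x$ on $\ca_F$. This is immediate from \Cref{lem:gate} together with the fact that $d(x,y)$ counts the hyperplanes separating $x$ and $y$: for $y\in F^0$ we split
\[
d(x,y)=\bigl|\{H\in\ca\setminus\ca_F:\ H\text{ separates }x,y\}\bigr|+\bigl|\{H\in\ca_F:\ H\text{ separates }x,y\}\bigr|,
\]
where the first summand is independent of $y\in F^0$, since no hyperplane outside $\ca_F$ separates two vertices of $F$ (such a hyperplane misses $U_F$, hence lies to one side of every chamber adjacent to $U_F$), while the second summand is the $\ca_F$-distance between $y|_{\ca_F}$ and $x|_{\ca_F}$, uniquely minimized at $y|_{\ca_F}=x|_{\ca_F}$.

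Now put $\ca_C:=\ca_E\cap\ca_F$; one checks $\ca_C=\{H\in\ca:B_C\subseteq H\}$, where $B_C\in\cq(\ca)$ is the smallest flat containing $B_E$ and $B_F$, so $B_C=\bigcap\ca_C$ and $B_E\subseteq B_C$. Set $s:=\epsilon_E|_{\ca_F\setminus\ca_E}$. Combining the description of $E^0$ with that of $\Pi_F$,
\[
\Pi_F(E^0)=\{\tau|_{\ca_F}:\ \tau\in\mathrm{Ch}(\ca),\ \tau|_{\ca\setminus\ca_E}=\epsilon_E\}\subseteq\mathrm{Ch}(\ca_F).
\]
On $\ca_F\setminus\ca_E\subseteq\ca\setminus\ca_E$ the restriction is forced to equal $s$, and on $\ca_C=\ca_F\cap\ca_E$ it ranges over all of $\mathrm{Ch}(\ca_C)$, independently of the forced part: given $\rho\in\mathrm{Ch}(\ca_F)$ with $\rho|_{\ca_F\setminus\ca_E}=s$, surjectivity of $\mathrm{Ch}(\ca_E)\to\mathrm{Ch}(\ca_C)$ (as $\ca_C\subseteq\ca_E$) gives $D\in\mathrm{Ch}(\ca_E)$ with $D|_{\ca_C}=\rho|_{\ca_C}$, and then $\tau:=(\epsilon_E\text{ on }\ca\setminus\ca_E,\ D\text{ on }\ca_E)$ lies in $E^0$ and satisfies $\tau|_{\ca_F}=\rho$. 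Hence $\Pi_F(E^0)=\{\rho\in\mathrm{Ch}(\ca_F):\rho|_{\ca_F\setminus\ca_E}=s\}$. Finally, the partial sign vector on $\ca_F$ equal to $s$ on $\ca_F\setminus\ca_E$ and to $0$ on $\ca_C$ is the covector of an actual face of the central arrangement $\ca_F$ — realized by any point of $U_E$, which lies in $\bigcap\ca_C$ and has sign $s_H$ on each $H\in\ca_F\setminus\ca_E$ (such $H$ being disjoint from $U_E$) — and under the duality between faces of $\ca_F$ and faces of the cell $F$ used in \cite{s87} to build $\Si_\ca$, the chambers of $\ca_F$ extending this covector are exactly the vertices of the face $F'\subset F$ dual to it. Therefore $\Pi_F(E^0)=F'^0$.

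I expect the only substantive step to be the explicit description of $\Pi_F$ in the second paragraph; the remainder is bookkeeping with the three sub-arrangements $\ca_E$, $\ca_F$, $\ca_C=\ca_E\cap\ca_F$. The point that needs care is the ``decoupling'' in the third paragraph — that after fixing the sign pattern on $\ca_F\setminus\ca_E$ the pattern on $\ca_C$ is still arbitrary — which is precisely what the explicit lift into $E^0$ verifies, using surjectivity of chamber-restriction for $\ca_C\subseteq\ca_E$ together with the local model $E^0\cong\mathrm{Ch}(\ca_E)$ of the first paragraph.
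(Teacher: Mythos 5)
Your proof is correct. The paper itself gives no argument for this lemma (it is quoted from the reference cited in its statement), so there is nothing internal to compare against; your sign-vector argument is the standard one and is complete. The three key facts you use --- that $G^0$ is exactly the set of chambers agreeing with $\epsilon_G$ off $\ca_G$ and that restriction to $\ca_G$ is a bijection onto $\mathrm{Ch}(\ca_G)$; that the gate $\Pi_F$ is localization at $\ca_F$ (via the separating-hyperplane formula for $d$ and the fact that no $H\notin\ca_F$ separates two vertices of $F$); and the decoupling of the forced signs on $\ca_F\setminus\ca_E$ from the free signs on $\ca_C$ --- are all justified. The only step stated a bit tersely is the last one: a point of $U_E$ shows the partial sign vector is a genuine covector of the central arrangement $\ca_F$, but to exhibit the face $F'\subset F$ one should either invoke the bijection between faces of $\ca_F$ and fans $U'$ with $U_F\le U'$ (which holds by local finiteness, since $\ca$ looks like $\ca_F$ near a point of $U_F$), or realize the covector directly by a point on the segment from $U_F$ to $U_E$ sufficiently close to $U_F$; either patch is routine.
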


In the situation of Lemma~\ref{lem:unlabelled}, we write $F'=\Pi_F(E)$.

    The assignment $E\to \Pi_F(E)$ gives rise to a piecewise linear map $\Pi_F\colon\Si_\ca \cong b\Si_\ca\to bF\cong F$.

\subsection{The Salvetti complex}
\label{sec:Salvetti}
Let $V=\Si_\ca^0$. 
Consider the set of pairs $(F,v)$, where $F$ is a face of $\Si_\ca$ and $v\in V$. We define an equivalence relation $\sim$ on this set by $(F,v)\sim (F',v')$ whenever  $F=F'$ and $\Pi_F(v') = \Pi_F(v).$
Note that each equivalence class $[F,v']$ contains a unique representative of form $(F,v)$ with $v\in F^0$.  The \emph{Salvetti complex} $\widehat\Si_\ca$ is obtained from  $\Si_\ca\times V$ (a disjoint union of copies of $\Si_\ca$) by identifying faces $F\times v$ and $F\times v'$ whenever $[F,v]=[F,v']$ \cite[p.~608]{s87}.
For example, for each edge $F=v_0v_1$ of $\Si_\ca$, we obtain two edges $F\times v_0$ and $F\times v_1$ of $\widehat\Si_\ca$, glued along their endpoints $v_0\times v_0$ and $v_1\times v_1$. We orient the edge $F\times v_0$ from $v_0\times v_0$ to $v_1\times v_0=v_1\times v_1$. Then $\widehat\Si_\ca^0=V$, while $\widehat\Si_\ca^1$ is obtained from $\Si_\ca^1$ by doubling each edge.  
Thus each edge of the form $F\times v$ is oriented so that its endpoint is farther from $v$ in~$F^1$ than its starting point.

There is a natural map $p\colon\widehat\Si_\ca\to\Si_\ca$ forgetting the second coordinate. 
For each subcomplex $Y$ of $\Si_\ca$, we write $\widehat Y=p^{-1}(Y)$. 
If $F$ is a face of $\Si_\ca$, then $\widehat F$ is a \emph{standard subcomplex} of $\widehat\Si_\ca$. %

\begin{lem}%
	\label{lem:compactible}
	Let $E$ and $F$ be faces of $\Si_\ca$.
	If $[E,v_1]=[E,v_2]$, then $[\Pi_F(E),v_1]=[\Pi_F(E),v_2]$.
\end{lem}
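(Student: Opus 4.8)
The plan is to pass to the language of separating hyperplanes. Write $F'=\Pi_F(E)$ and $w=\Pi_E(v_1)=\Pi_E(v_2)\in E^0$, the last equality being exactly what $[E,v_1]=[E,v_2]$ means; the conclusion $[\Pi_F(E),v_1]=[\Pi_F(E),v_2]$ unwinds to $\Pi_{F'}(v_1)=\Pi_{F'}(v_2)$. So it suffices to prove $\Pi_{F'}(v_i)=\Pi_{F'}(w)$ for $i=1,2$, since then $\Pi_{F'}(v_1)=\Pi_{F'}(w)=\Pi_{F'}(v_2)$. I would first record the standard description of face projections: for a vertex $v$ and a face $G$ of $\Si_\ca$, the projection $\Pi_G(v)$ is the unique vertex of $G^0$ lying on the same side as $v$ of every hyperplane of $\ca$ that crosses $G$. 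This is immediate from \Cref{lem:gate} together with the fact that $d$ counts separating hyperplanes: such a vertex is separated from $v$ only by hyperplanes disjoint from all of $G^0$, whereas any other vertex of $G^0$ is separated from $v$ by at least one additional hyperplane crossing $G$, hence lies farther from $v$. In particular, applying this to $G=E$, no hyperplane crossing $E$ separates $v_i$ from $w=\Pi_E(v_i)$.

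The heart of the argument is the inclusion: \emph{every hyperplane of $\ca$ crossing $F'=\Pi_F(E)$ also crosses $E$} (the reverse inclusion is false in general, so the direction matters). To prove it, let $H$ be dual to an edge $e'$ of $F'$; by \Cref{lem:unlabelled} its two endpoints are $\Pi_F(a)$ and $\Pi_F(b)$ for some $a,b\in E^0$. Pick any edge path from $a$ to $b$ in the connected $1$-skeleton $E^1$ and push it forward by $\Pi_F$ edge by edge, using the description of $\Pi_F$ on edges recalled just before \Cref{lem:unlabelled}: each edge either collapses to a point or maps to an edge dual to the same hyperplane, so after discarding the collapsed edges we obtain a genuine edge path in $\Si^1_\ca$ from $\Pi_F(a)$ to $\Pi_F(b)$. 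Since $H$ separates these two endpoints, some edge of this path is dual to $H$; that edge is $\Pi_F(e)$ for an edge $e$ of the original path in $E^1$, and as $\Pi_F(e)$ is non-degenerate and dual to $H$, the edge $e$ itself is dual to $H$. Since $e\subset E$, the hyperplane $H$ crosses $E$.

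Combining the two observations finishes it: no hyperplane crossing $F'$ separates $v_i$ from $w$, because such a hyperplane would have to cross $E$, yet no hyperplane crossing $E$ separates $v_i$ from $w$. By the description of face projections applied to $G=F'$, this forces $\Pi_{F'}(v_i)=\Pi_{F'}(w)$ for $i=1,2$, hence $\Pi_{F'}(v_1)=\Pi_{F'}(v_2)$ and $[\Pi_F(E),v_1]=[\Pi_F(E),v_2]$. The main obstacle is isolating and proving the inclusion of the second paragraph with the correct direction; once that is established, the rest is bookkeeping with \Cref{lem:gate}. A minor point requiring care is that pushing a path forward by $\Pi_F$ may collapse edges, so the argument should be phrased in terms of the surviving edges, which still form a path with the prescribed endpoints.
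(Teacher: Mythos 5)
Your argument is correct. The paper actually states \Cref{lem:compactible} without proof (it is treated as standard, essentially folklore from Salvetti's and Godelle--Paris's work), so there is no in-paper argument to compare against; but what you give is the natural proof. The two ingredients are exactly right: (i) the gate characterization of $\Pi_G(v)$ as the unique vertex of $G^0$ not separated from $v$ by any hyperplane crossing $G$, and (ii) the inclusion that every hyperplane crossing $F'=\Pi_F(E)$ also crosses $E$, which you prove correctly by pushing an edge path of $E^1$ forward through $\Pi_F$ and noting that a surviving edge dual to $H$ must come from an edge of $E$ dual to $H$. Combining these with $w=\Pi_E(v_1)=\Pi_E(v_2)$ gives $\Pi_{F'}(v_i)=\Pi_{F'}(w)$ as you say. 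The only point you gloss over is the existence half of the gate characterization (that some vertex of $G^0$ lies on the same side as $v$ of every hyperplane crossing $G$): \Cref{lem:gate} gives you a unique minimizer, but you still need to know the minimizer has no crossing hyperplane separating it from $v$, which requires exhibiting such a vertex (e.g.\ via the chamber of the local arrangement at the support of the fan dual to $G$ containing $v$) and then comparing distances as you do. This is standard and minor; alternatively, for the final step one can avoid existence altogether by observing that $d(v_i,y)-d(w,y)$ is constant over $y\in F'^0$ when no hyperplane crossing $F'$ separates $v_i$ from $w$, so the two minimizers coincide.
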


\begin{definition}%
	\label{def:retraction}
	Let $F$ be a face of $\Si_\ca$. Consider the disjoint union of $V$ copies of the map $\Pi_F$, where $\Pi_F\times v\colon\Si_\ca\times v\to  F\times v$. It follows from \Cref{lem:compactible} that this map factors to a map $\Pi_{\widehat F}\colon \widehat\Si_\ca\to \widehat F$, which is a retraction (see \cite[Thm 2.2]{godelle2012k}).
\end{definition}

The following key property of $\Pi_{\widehat F}$ follows directly from Definition \ref{def:retraction}.

\begin{lem}	\label{lem:retraction property}
	Let $E$ and $F$ be faces of $\Si_\ca$. Then $\Pi_{\widehat F}(\widehat E)=\widehat{\Pi_F(E)}$.
\end{lem}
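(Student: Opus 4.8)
The identity is essentially a matter of unwinding Definition~\ref{def:retraction}, so the plan is to reduce everything to the single input that $\Pi_F$, regarded as the piecewise-linear map $\Si_\ca\cong b\Si_\ca\to bF\cong F$ induced by $E\mapsto\Pi_F(E)$, carries the subcomplex $E$ \emph{onto} the face $\Pi_F(E)$, cell by cell. First I would record two facts that are immediate from the construction. (i) Since $\Pi_{\widehat F}$ is obtained by factoring $\bigsqcup_{v\in V}(\Pi_F\times v)$ through the quotient map $\Si_\ca\times V\to\widehat\Si_\ca$ — the factoring being legitimate precisely by Lemma~\ref{lem:compactible} — we have $p\circ\Pi_{\widehat F}=\Pi_F\circ p$, and on a cell $[G,v]$ of $\widehat\Si_\ca$ the map $\Pi_{\widehat F}$ is given by $[G,v]\mapsto[\Pi_F(G),v]$. (ii) By definition $\widehat E=p^{-1}(E)$ and $\widehat{\Pi_F(E)}=p^{-1}(\Pi_F(E))$.

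For the inclusion $\Pi_{\widehat F}(\widehat E)\subseteq\widehat{\Pi_F(E)}$ I would avoid cell-by-cell reasoning altogether: by (i), $p\bigl(\Pi_{\widehat F}(\widehat E)\bigr)=\Pi_F\bigl(p(\widehat E)\bigr)=\Pi_F(E)$, where the last term is the image of the subspace $E$ under $\Pi_F$, which is the face $\Pi_F(E)$ by the discussion following Lemma~\ref{lem:unlabelled}. Hence $\Pi_{\widehat F}(\widehat E)\subseteq p^{-1}(\Pi_F(E))=\widehat{\Pi_F(E)}$.

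For the reverse inclusion I would lift cells: given a cell $[H,v]$ of $\widehat{\Pi_F(E)}$, so that $H$ is a face of $\Pi_F(E)$, I want a face $G$ of $E$ with $\Pi_F(G)=H$; then $[G,v]$ is a cell of $\widehat E$ and, by the cell formula in (i), $\Pi_{\widehat F}([G,v])=[\Pi_F(G),v]=[H,v]$. This is the one genuine point in the argument, and I expect it to be the main obstacle: one needs that $\Pi_F$ restricted to the face $E$ is a \emph{cellular surjection} onto $\Pi_F(E)$, i.e.\ every face of $\Pi_F(E)$ is the $\Pi_F$-image of a face of $E$ (the same fact is what makes the cell formula $[G,v]\mapsto[\Pi_F(G),v]$ legitimate, since it says $\Pi_F$ maps the cell $G$ onto the cell $\Pi_F(G)$). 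This should follow directly from the way $\Pi_F$ is built — it is the piecewise-linear map induced by the assignment $E\mapsto\Pi_F(E)$, and a piecewise-linear surjection between polytopal complexes that carries cells onto cells realises each cell of its image as the image of a cell of its source (one can see this after passing to barycentric subdivisions, where the map becomes a simplicial surjection affine on each simplex) — but extracting this cleanly, or, if one prefers, reproving it in parallel with Lemma~\ref{lem:unlabelled}, is the step that deserves a careful sentence; the rest is pure bookkeeping with the relation $\sim$ and Lemma~\ref{lem:compactible}.
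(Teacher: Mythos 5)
The paper offers no argument beyond asserting that the lemma ``follows directly from Definition~\ref{def:retraction}'', and your write-up is a faithful unwinding of exactly that definition, so the approach is the same; the commutation $p\circ\Pi_{\widehat F}=\Pi_F\circ p$, the forward inclusion, and the cell bookkeeping are all fine. The one caveat is the step you yourself flag: your parenthetical justification that $\Pi_F|_E$ carries $E$ onto the face $\Pi_F(E)$ cell-by-cell is circular as written (you invoke ``a piecewise-linear surjection \dots that carries cells onto cells'', which is precisely what is to be established); the fact is true and standard for these gate maps, and note that for the reverse inclusion you only need it for the single top face rather than for every face $H$ of $\Pi_F(E)$, since $\widehat{\Pi_F(E)}=p^{-1}(\Pi_F(E))$ is already the union over $v\in V$ of the closed cells $[\Pi_F(E),v]=\Pi_{\widehat F}([E,v])$.
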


Let $\mathcal A\otimes \mathbb C$ be the complexification of $\mathcal A$, which is a collection of affine complex hyperplanes in $\mathbb C^n$. Define
$$M(\mathcal A\otimes \mathbb C)=\mathbb C^n - \bigcup_{H\in\mathcal A} (H\otimes \mathbb C).$$
It follows from \cite[Thm 1]{s87} that $\widehat\Si_\ca$ is homotopy equivalent to $M(\mathcal A\otimes \mathbb C)$, and so they have isomorphic fundamental groups. We say $\mathcal A$ is a \emph{$K(\pi,1)$ arrangement} if $M(\ca\otimes \mathbb C)$ is aspherical, or equivalently, the Salvetti complex $\od_\ca$ is aspherical.

\subsection{Injective metric spaces}
\label{subsec:injective}
A \emph{geodesic (segment)} in a metric space $X$ is the image of an isometric embedding from a closed interval in $\mathbb R$ (equipped with the usual metric on $\mathbb R$) to $X$. A metric space is \emph{geodesic}, if every pair of points are joined by a geodesic in the space. A \emph{ball} $B(x,r)$ in a metric space is the collection of points of distance $\le r$ from a point $x$, which is the center of the ball.
A geodesic metric space is \emph{injective} if every collection of balls in the metric space which have non-empty pairwise intersection also have non-empty common intersection. As an example, the $\ell^\infty$-norm on $\mathbb R^n$ induces an injective metric on $\mathbb R^n$. It is known that any injective metric space is contractible, as it is possible to select a geodesic joining each pair of points such that this geodesic varies continuously depending on the endpoints (see e.g. \cite[Prop 3.8]{lang2013injective}). Note the each ball in an injective metric space $X$, endowed with the induced metric from $X$, is itself an injective metric space (the ball is geodesic by \cite[Prop 3.8 (1)]{lang2013injective}).

We recall the following local-to-global criterion for injective metric space.

\begin{theorem}\cite[Thm 1.14]{haettel2021lattices}
    \label{thm:local-to-global}
Let $X$ be a geodesic metric space that is complete,
simply connected and uniformly locally injective. Then $X$ is injective.
\end{theorem}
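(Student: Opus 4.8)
The plan is to run the metric analogue of the Cartan--Hadamard theorem, with ``uniform local injectivity'' playing the role of ``local nonpositive curvature''. By definition we must show that every family of closed balls $\{B(x_i,r_i)\}_{i\in I}$ in $X$ with pairwise nonempty intersections has a common point. I would phrase this via the $1$-Lipschitz \emph{excess function} $h(y):=\sup_{i\in I}\bigl(d(y,x_i)-r_i\bigr)$, whose sublevel set $\{h\le 0\}$ is precisely the intersection $\bigcap_i B(x_i,r_i)$; thus it suffices to exhibit a point where $h$ is nonpositive, and, since one can pass to a minimizing sequence, it suffices to show that $h$ attains its infimum and that $\inf_X h\le 0$. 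Working with $h$ this way handles arbitrary, even infinite, index sets $I$ uniformly; the pairwise intersection hypothesis enters only through the elementary fact that in a geodesic space $B(x_i,r_i)\cap B(x_j,r_j)\neq\varnothing$ is equivalent to $d(x_i,x_j)\le r_i+r_j$.

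The heart of the matter is to convert uniform local injectivity into a \emph{global} convex geodesic bicombing on $X$. Fix $r_0>0$ so that every closed ball of radius $\le r_0$ in $X$ is injective. By the theory of convex bicombings (Descombes--Lang) each such ball carries a convex, consistent, reversible geodesic bicombing, and on overlaps these local bicombings agree on sufficiently short subsegments, since a geodesic between two nearby points lies in a slightly enlarged injective ball and a convex consistent bicombing restricts correctly to geodesically convex subsets. One then propagates a germ of bicombed geodesic along a path in $X$: convexity forces the propagation and makes the propagated geodesic vary $1$-Lipschitzly with its endpoints, completeness of $X$ guarantees it remains defined on all of $[0,1]$, and---this is the step where simple connectedness is indispensable---comparing the propagation along two homotopic paths through the intervening homotopy (again using convexity to control the comparison) shows the outcome is path-independent. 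The local germs then glue to a genuine global convex geodesic bicombing $\sigma$ on $X$. This is exactly a Cartan--Hadamard theorem for metric spaces with local convex geodesic bicombings, due to Miesch, which I would either invoke directly or adapt to the present setting.

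It remains to deduce injectivity from the combination of a global convex geodesic bicombing $\sigma$, uniform local injectivity, and completeness. Convexity of $\sigma$, applied against the constant geodesic at $x_i$, makes $t\mapsto d(\sigma_{pq}(t),x_i)$ convex for each $i$, so $h$ is convex along every $\sigma$-geodesic; combined with the bicombing this lets one regularise a minimizing sequence for $h$ so that its consecutive steps are summable, whence completeness yields a point $y_\ast$ with $h(y_\ast)=\inf_X h$. Finally, if $h(y_\ast)>0$ one derives a contradiction by working inside the injective ball $B(y_\ast,r_0)$: the restricted ball-intersection problem there, now solvable thanks to injectivity of the ball together with the projections supplied by $\sigma$, produces a nearby point strictly decreasing $h$. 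Hence $\inf_X h\le 0$, the minimizer lies in $\bigcap_i B(x_i,r_i)$, and $X$ is injective. The main obstacle is the second paragraph---the path-independence of the propagated bicombing, equivalently the passage from ``uniformly locally injective'' to ``globally convexly bicombed''---which is where all three hypotheses are genuinely used and which carries essentially all of the content; by contrast the reformulation in the first paragraph and the convexity bookkeeping in the third are comparatively routine. This is the strategy underlying Haettel's proof, which ultimately rests on Miesch's local-to-global results.
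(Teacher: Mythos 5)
The paper does not prove this statement at all: it is quoted verbatim from Haettel, and the authors explicitly record that it ``is a consequence of a deep combinatorial local-to-global theorem'' for weakly modular/Helly graphs (Chalopin--Chepoi--Hirai--Osajda), obtained by discretizing the metric space, and that the route you propose --- Miesch's Cartan--Hadamard theorem for local convex bicombings --- is precisely the ``earlier version of this theorem with the extra assumption of local compactness.'' So your strategy is not the one behind the cited result, and the point where your sketch is vaguest is exactly the point where local compactness enters in the literature.

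Concretely, the first half of your argument (propagating the local convex bicombings to a global convex geodesic bicombing on a complete, simply connected space) is fine and does not need local compactness. The gap is in the last paragraph. First, convexity of the excess function $h$ along $\sigma$-geodesics does not by itself let you ``regularise a minimizing sequence so that its consecutive steps are summable'': in a non-locally-compact space a bounded convex function on a bicombed space need not attain its infimum, and you give no mechanism forcing a minimizing sequence to be Cauchy (this is where one would classically extract a convergent subsequence from a closed bounded set). Second, the descent step ``work inside the injective ball $B(y_\ast,r_0)$'' is not a legitimate application of injectivity of that ball: the traces $B(x_i,r_i+c)\cap B(y_\ast,r_0)$ are generally not balls of the metric subspace $B(y_\ast,r_0)$ unless that ball is gated/externally hyperconvex in $X$, which is an additional property you would have to establish (and which is what Miesch's local compactness hypothesis is used to control). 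As written, the proposal therefore proves (modulo standard references) only the locally compact case, not the theorem as stated. If you want a proof of the actual statement, you should follow Haettel's reduction to the combinatorial Helly local-to-global theorem rather than the bicombing route.
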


A metric space is \emph{uniformly locally injective} if there is an $\epsilon>0$ such that each ball of radius $\epsilon$ is injective. The assumption of $X$ being geodesic is not explicitly mentioned in the statement of the theorem in \cite{haettel2021lattices}; however, it is needed in the proof.
This theorem is a consequence of a deep combinatorial local-to-global theorem in \cite[Thm 3.5]{weaklymodular}. An earlier version of this theorem with the extra assumption of local compactness of the metric space appears in \cite{miesch2018cartan}. 

Now we recall a particular combinatorial criterion for justifying that a metric space is injective.
Let $P$ be a poset (i.e., a partially ordered set). 
Let $S\subset P$. An \emph{upper bound} (resp. \emph{lower bound}) for $S$ is an element $x\in P$ such that $s\le x$ (resp. $s\ge x$) for every $s\in S$. The \emph{join} of $S$ is an upper bound $x$ of $S$ such that $x\le y$ for any other upper bound $y$ of $S$. The \emph{meet} of $S$ is a lower bound of $x$ of $S$ such that $x\ge y$ for any other lower bound $y$ of $S$. We will write $x\vee y$ for the join of two elements $x$ and $y$, and $x\wedge y$ for the meet of two elements (if the join or the meet exists). A poset is \emph{bounded} if it has a maximal element and a minimal element. $P$ is a \emph{lattice} if $P$ is a poset and any two elements in $P$ have a join and have a meet. 

A chain in $P$ is a totally (or ``linearly'') ordered
subset, and a maximal chain is one that is not a proper subset of any other chain. A poset has rank $n$ if it is bounded, every chain is
a subset of a maximal chain, and all maximal chains have length $n$. For $a,b\in P$ with $a\le b$, the \emph{interval}  between $a$ and $b$, denoted by $[a,b]$, is the collection of all elements $x$ of $P$ such that $a\le x$ and $x\le b$. The poset $P$ is \emph{graded} if every interval in $P$ has a rank. The \emph{geometric realization of $P$}, denoted by $|P|$, is a simplicial complex whose vertex set is $P$, where a collection of vertices span a simplex if and only if they form a chain in $P$.

\begin{definition}
	Let $P$ be a poset. 
 We say that $P$ is \emph{bowtie free} if for any subset $\{x_1,x_2,y_1,y_2\}\subset P$ made of mutually distinct elements with $x_i<y_j$ for $i,j\in\{1,2\}$, there exists $z\in P$ such that $x_i\le z\le y_j$ for any $i,j\in\{1,2\}$.
\end{definition}

\begin{lem}\cite[Proposition 1.5]{brady2010braids}
	\label{lem:posets}
	If $P$ is a bowtie free graded poset, then any pair of elements in $P$ with a lower bound have a join, and any pair of elements in $P$ with a upper bound have a meet.
	
	Let $P$ be a bounded graded poset. Then $P$ is lattice if and only if it is bowtie free. 
\end{lem}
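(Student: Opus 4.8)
The plan is to prove both statements together, the first being the technical engine for the second. For the first statement, suppose $a$ and $b$ have a common lower bound $c$; I want to produce a join. Consider the set $U$ of all upper bounds of $\{a,b\}$. This set is nonempty only if $a,b$ have an upper bound, so in the ``join'' half I instead work with the set $L$ of common lower bounds, which contains $c$ and hence is nonempty. Among the elements of $L$, use gradedness to pick one of maximal rank — here I would first restrict attention to the interval $[c, a]$ (or rather to elements below both $a$ and $b$), note that ranks of elements in a graded poset are bounded along any chain inside a fixed interval, and invoke that every chain extends to a maximal one to get a rank-maximal element $m \in L$. The claim is that $m$ is in fact the greatest element of $L$, i.e.\ the meet $a \wedge b$. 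If not, there is some $m' \in L$ with $m' \not\le m$. Now I have four elements $m, m', a, b$ to feed into the bowtie-free hypothesis: since $m < a$, $m < b$, $m' < a$, $m' < b$ (after discarding the degenerate cases where some of these are equalities, which have to be handled separately and are easy), bowtie-freeness gives $z$ with $m \le z \le a$ and $m' \le z \le b$, hence $z \in L$ and $z \ge m$. Rank-maximality of $m$ forces $z = m$, but then $m' \le z = m$, a contradiction. The dual argument (with all inequalities reversed, picking a rank-minimal common upper bound) gives meets from upper bounds.

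For the second statement, if $P$ is a bounded graded poset then any two elements automatically have the minimal element of $P$ as a common lower bound and the maximal element as a common upper bound, so the first statement immediately upgrades ``bowtie free'' to ``lattice''. Conversely, if $P$ is a lattice then for any bowtie configuration $\{x_1,x_2,y_1,y_2\}$ with $x_i < y_j$ for all $i,j$, the element $z = (x_1 \vee x_2) \wedge (y_1 \wedge y_2)$ — which exists because $P$ is a lattice — is an upper bound for $\{x_1,x_2\}$ and a lower bound for $\{y_1,y_2\}$, and one checks $x_i \le x_1 \vee x_2 \le z \le y_1 \wedge y_2 \le y_j$, so $P$ is bowtie free.

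The delicate point, and the one I would spend the most care on, is the existence of a rank-maximal element of $L$. The hypothesis only says $P$ is graded (every interval has a rank) — it does not a priori say $P$ is finite or even that chains in $L$ have bounded length globally. The key observation is that every element of $L$ lies in the interval $[c', a]$ for a suitable common lower bound, or more carefully: fix $c \in L$; then I want $L \cap [c, a]$ to have a rank-maximal element. For any $m \in L$, the elements $c$ and $m$ need not be comparable, so I cannot directly confine everything to one interval. The fix is to first replace $c$ by a rank-maximal element of $L$ that is $\ge$ some fixed starting point — iterating: whenever $c \in L$ is not maximal in $L$, either there is $c'' \in L$ with $c'' > c$ (strictly higher rank, and this process terminates because ranks inside $[c_0, a]$ for the original $c_0$ are bounded once we know $c'' \in [c_0, a]$, which follows from the bowtie argument applied at each stage to pull the new element into the interval), or $c$ is already maximal. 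I would present this as: among common lower bounds comparable to and $\ge c_0$, ranks are bounded by $\operatorname{rank}[c_0,a]$, so a rank-maximal such $m$ exists, and then the bowtie argument shows it dominates every element of $L$, not just those comparable to $c_0$. Everything else is bookkeeping of the degenerate equality cases in the bowtie hypothesis.
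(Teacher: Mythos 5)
The paper offers no proof of this lemma---it is quoted from Brady--McCammond \cite[Prop.\ 1.5]{brady2010braids}---so there is nothing internal to compare against; your argument is essentially the standard one and its mathematical content is correct. The core mechanism is right: fix $c_0$ in the set $L$ of common lower bounds of $a,b$, note that every $m\in L$ with $m\ge c_0$ lies in $[c_0,a]$ so that $\operatorname{rank}[c_0,m]$ is bounded by $\operatorname{rank}[c_0,a]$, pick $m$ maximizing this rank, and use the bowtie condition on $\{m,m',a,b\}$ to produce $z\in L$ with $m\le z$, which by gradedness (maximal chains in $[c_0,z]$ split at $m$, so $m<z$ would strictly increase the rank) forces $z=m$ and hence $m'\le m$. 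This correctly shows $m$ dominates \emph{all} of $L$, not just the part above $c_0$, and the degenerate equality cases are indeed routine.

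Two points of caution. First, you announce that you are producing a \emph{join} from a \emph{lower} bound, then actually construct the greatest lower bound, i.e.\ the meet. That is the right thing to prove: the lemma as printed in the paper has ``join'' and ``meet'' swapped relative to what is true (and relative to Brady--McCammond's actual statement). The literal claim ``a pair with a lower bound has a join'' is false---the three-element poset $0<a$, $0<b$ with $a,b$ incomparable is graded and vacuously bowtie-free, and $a,b$ have a lower bound but no join. So keep the mathematics you wrote but relabel the conclusions: common lower bound $\Rightarrow$ meet, common upper bound $\Rightarrow$ join (your ``dual argument gives meets from upper bounds'' should likewise read ``joins''). Second, in the converse direction your element $(x_1\vee x_2)\wedge(y_1\wedge y_2)$ works only because one first observes $x_1\vee x_2\le y_1\wedge y_2$ (each $y_j$ is an upper bound of $\{x_1,x_2\}$), at which point the meet is redundant and $z=x_1\vee x_2$ already does the job; as written the chain $x_1\vee x_2\le z$ goes the wrong way for a general meet. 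Neither issue affects the validity of the underlying argument.
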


\begin{definition}
	\label{def:flag}
	A poset $P$ is \emph{upward flag} if any three pairwise upper bounded elements have an upper bound. A poset is \emph{downward flag} if any three pairwise lower bounded elements have a lower bound. A poset is \emph{flag} if it is both upward flag and downward flag.
\end{definition}

An $n$-dimensional \emph{unit orthoscheme} of $\mathbb R^n$ is the convex hull of $$v_0=(0,0,\ldots,0),v_1=(1,0,\ldots,0),v_2=(1,1,\ldots,0),\cdots,v_n=(1,1,\ldots,1).$$ We endow the unit orthoscheme with the $\ell^\infty$-metric. 
Following \cite[Sec 5]{brady2010braids} and \cite[Sec 1]{haettel2021lattices}, the \emph{$\ell^\infty$-orthoscheme complex} of a poset $P$, denoted by $|P|_{\infty}$, is $|P|$ endowed with the metric such that each is assigns every top dimensional simplex in $|P|$ (i.e. those corresponding to maximal chains $x_0<x_1<\cdots<x_n$) the $\ell^\infty$-metric of a
unit orthoscheme with $x_i$ corresponding to $v_i$.

Via a standard procedure (\cite[p.~65]{BridsonHaefliger1999}), the $\ell^\infty$-metric on the simplices of $|P|$ induces a \emph{pseudometric} $d$ on $|P|_\infty$, which we describe here for the convenience of the reader (a pseudometric satisfies all properties of a distance function of a metric space, except the distance of two points are allowed to be zero). Given $x,y\in |P|$, an \emph{$m$-string} from $x$ to $y$ is a sequence $(x_0=x,x_1,x_2,\ldots,x_m=y)$ such that for each $i$, $x_i$ and $x_{i+1}$ are contained in a common simplex $\sigma_i$. The \emph{length} of this string is defined to be $\sum_{i=0}^{m-1}d_{\sigma_i}(x_i,x_{i+1})$, where $d_{\sigma_i}$ denotes the metric on $\sigma_i$. Then $d(x,y)$ is defined to be infimum of the lengths of all strings from $x$ to $y$.
In the case $P$ is a graded poset with finite rank, there are only finitely many isometry types of simplices appearing in $|P|_\infty$, hence this pseudometric is a metric, and it is geodesic -- this follows from the arguments in \cite[p.~101, Corollary I.7.10]{BridsonHaefliger1999}.

\begin{theorem}(\cite[Thm 6.3]{haettel2021lattices})
\label{thm:injective criterion}
Let $P$ be a graded poset with a minimal element and finite rank such that $P$ is bowtie free and flag. Then $(|P|_\infty,d)$ is an injective metric space.
\end{theorem}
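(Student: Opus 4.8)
The proof will run through the local-to-global criterion for injectivity, Theorem~\ref{thm:local-to-global}, applied to $X=(|P|_\infty,d)$. As recorded in the discussion preceding the statement, $X$ is geodesic and $d$ is a genuine metric because a graded poset of finite rank produces only finitely many isometry types of orthoschemes; the same finiteness makes $X$ complete (cf.\ \cite{BridsonHaefliger1999}). Simple connectivity is where the minimal element $\hat{0}$ of $P$ is used: $|P|$ is the simplicial cone with apex $\hat{0}$ (every chain either contains $\hat{0}$ or extends to one that does), and the cone deformation retraction onto $\hat{0}$ is continuous for the orthoscheme metric since there are finitely many isometry types of simplices; hence $X$ is contractible, in particular simply connected. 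Everything therefore reduces to proving that $X$ is uniformly locally injective, and I would argue this by induction on the rank $n$, small $n$ being direct.

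For the inductive step, let $z\in|P|$ lie in the relative interior of the simplex $\tau$ corresponding to a chain $C=(x_0<x_1<\cdots<x_k)$. For a uniform $\epsilon>0$ (uniform because the finitely many orthoscheme types bound the edge lengths below), a radius-$\epsilon$ neighborhood of $z$ is isometric to a neighborhood of the origin in the $\ell^\infty$-product
$$
\mathbb{R}^{k}\ \times\ \operatorname{Cone}_\infty|P_{<x_0}|\ \times\ \operatorname{Cone}_\infty|(x_0,x_1)|\ \times\ \cdots\ \times\ \operatorname{Cone}_\infty|(x_{k-1},x_k)|\ \times\ \operatorname{Cone}_\infty|P_{>x_k}|,
$$
where the $\mathbb{R}^k$-factor runs along $\tau$ and each cone carries the orthoscheme cone metric; this is the $\ell^\infty$ analogue of the local decomposition of orthoscheme complexes in \cite{brady2010braids}. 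A cube with the $\ell^\infty$-metric is injective (it is a ball in $(\mathbb{R}^k,\ell^\infty)$), and an $\ell^\infty$-product of injective spaces is injective, so it suffices to show each cone factor is injective. The cone over $|P_{<x_0}|$, over $|(x_i,x_{i+1})|$, over $|P_{>x_k}|$ is, respectively, a germ of the orthoscheme complex of $P_{\le x_0}=[\hat{0},x_0]$, of $[x_i,x_{i+1}]$, of $P_{\ge x_k}$ at an extremal face.

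Here the hypotheses on $P$ do their job. Each of these sub-posets is bowtie free, since a bowtie inside it is a bowtie of $P$. The intervals $[\hat{0},x_0]$ and $[x_i,x_{i+1}]$ are bounded, hence lattices by Lemma~\ref{lem:posets}, and they are automatically flag (any two elements of a bounded poset are bounded above by the maximum and below by the minimum); so for them the bounded bowtie-free structure is what matters. For $P_{\ge x_k}$ one uses that $P$ is flag: three elements above $x_k$ that are pairwise upper bounded in $P$ admit a common bound, which automatically lies above $x_k$, and $P_{\ge x_k}$ is trivially downward flag via $x_k$; so $P_{\ge x_k}$ is again a bowtie-free flag graded poset with minimum. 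When $k\ge 1$ the ranks of $[x_i,x_{i+1}]$, $[\hat{0},x_0]$ and $P_{\ge x_k}$ are all strictly less than $n$, so the inductive hypothesis (for $P_{\ge x_k}$) and the bounded-lattice case below (for $[\hat{0},x_0]$ and $[x_i,x_{i+1}]$) finish these factors. The only pieces that are not rank-reducing are the germ of $|P|_\infty$ at the minimum ($\tau=\{\hat{0}\}$) and, when $P$ is bounded, the germs of $|P|_\infty$ at its extremal faces; in every such case the local structure is controlled by orthoscheme complexes of bounded graded lattices (intervals $[\hat{0},y]$ and their order duals), so the entire argument comes down to the bounded-lattice case.

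Accordingly, the main obstacle is this: for a bounded graded flag lattice $Q$ (bowtie free and flag, hence a lattice by Lemma~\ref{lem:posets}), every sufficiently small ball of $|Q|_\infty$ --- in particular near the top and bottom vertices and the edge joining them --- is injective. This is the step that exploits the full strength of the lattice structure from Lemma~\ref{lem:posets} (joins and meets of arbitrary finite bounded subsets exist and behave well) together with the flag property and the explicit geometry of $\ell^\infty$-orthoschemes: confronted with a finite family of pairwise-intersecting balls in $|Q|_\infty$, one must produce a point in their common intersection, and the natural candidate is built from joins and meets of the orthoscheme coordinates of the centers. I expect a clean proof of this step to rely ultimately on the combinatorial local-to-global theorem \cite[Thm~3.5]{weaklymodular} underlying Theorem~\ref{thm:local-to-global}, applied to the link of the relevant face. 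Granting it, the induction closes and Theorem~\ref{thm:local-to-global} delivers that $(|P|_\infty,d)$ is injective.
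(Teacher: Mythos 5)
This statement is an external citation (\cite[Thm 6.3]{haettel2021lattices}); the paper offers no proof of it, so the only question is whether your sketch would stand on its own. It does not: there is a genuine gap at exactly the point you flag as ``the main obstacle.'' The injectivity of $(|Q|_\infty,d)$ for a bounded graded lattice $Q$ is not a routine local verification one can defer to the combinatorial local-to-global theorem --- it is the central theorem of Haettel's paper (the $\ell^\infty$ analogue of the Brady--McCammond conjecture), proved there by a direct, global construction of a point in the common intersection of a family of pairwise-intersecting balls using the lattice operations. Writing ``I expect a clean proof of this step to rely ultimately on \cite[Thm~3.5]{weaklymodular}'' leaves the entire content of the theorem unproved.

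Worse, the reduction you propose is circular at the minimal element. Since every chain of $P$ extends to one containing $\hat{0}$, the closed star of $\hat{0}$ is all of $|P|$, and in each unit orthoscheme $1\ge x_1\ge\cdots\ge x_n\ge 0$ with $\hat{0}$ at the origin the set of points within $\ell^\infty$-distance $\epsilon$ of the origin is $\{\epsilon\ge x_1\ge\cdots\ge x_n\ge 0\}$, i.e.\ the orthoscheme rescaled by $\epsilon$. Hence the $\epsilon$-ball about $\hat{0}$ is a rescaled copy of $|P|_\infty$ itself, and ``uniform local injectivity at $\hat{0}$'' is literally equivalent to the global statement you are trying to prove; the rank does not drop there, and $P_{>\hat{0}}$ need not have a minimal element, so your inductive hypothesis does not apply to it either. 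Your assertion that this germ is ``controlled by orthoscheme complexes of bounded graded lattices $[\hat{0},y]$'' would itself require a Helly-type gluing argument across the sub-cones over the various $[\hat{0},y]$ --- which is precisely where the global flag and bowtie-free hypotheses must be used, and which you do not supply. The rank-reducing factors in your local product decomposition (the $\mathbb{R}^k$-factor, the open intervals, and $P_{\ge x_k}$ away from the minimum) are handled plausibly, but without the bounded-lattice theorem and a non-circular treatment of the germ at $\hat{0}$, the argument does not close.
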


\subsection{Coxeter complexes and Artin complexes}
\label{subsec:coxeter and artin}
 A \emph{Coxeter diagram} $\Lambda$ is a finite simplicial graph with vertex set $S=\{s_i\}_{i \in I}$ and labels $m_{ij} \in \{3,4,\ldots ,\infty\}$ for each edge $s_is_j$. If $s_is_j$ is not an edge, we define $m_{ij}=2$. The Artin group $A_\Lambda$ is the group with generating set $S$ and relations $s_is_js_i\cdots=s_js_is_j\cdots$, with both sides alternating words of length $m_{ij}$, whenever $m_{ij}<\infty$. The Coxeter group $W_\Lambda$ is obtained from $A_\Lambda$ by adding the relations $s_i^2=1$ for each $i$. The kernel $\PA_\Lambda$ of the natural homomorphism $A_\Lambda\to W_\Lambda$ is called the \emph{pure Artin group}. 

 \begin{definition}
\label{def:Artin0}
Let $A_\Lambda$ be an Artin group with Coxeter graph $\Lambda$ and generating set $S$. Its \emph{Artin complex} $\Delta_\Lambda$ \cite{CharneyDavis,godelle2012k,cumplido2020parabolic} is a simplicial complex defined as follows. For each $s\in S$, let $A_{\hat s}$ be the standard parabolic subgroup generated by $\hat s=S\setminus\{s\}$. The vertices of $\Delta_\Lambda$ correspond to the left cosets of $\{A_{\hat s}\}_{s\in S}$. A collection of vertices span a simplex if and only if the corresponding cosets have non-empty common intersection. If $v$ is a vertex of $\Delta_\Lambda$ which is a left coset of $A_{\hat s}$, we say $v$ is \emph{type $\hat s$}. 

The \emph{Coxeter complex}~$\bC_\Lambda$ for the Coxeter group $W_\Lambda$ is defined analogously, where we replace~$A_{\hat s}$ by $W_{\hat s}<W_\Lambda$
generated by~$\hat s$. If $v$ is a vertex of $\bC_\Lambda$ which is a left coset of $W_{\hat s}$, we say $v$ is \emph{type} $\hat s$. 
\end{definition} 

\begin{lem}(\cite[Lem 6]{cumplido2020parabolic})
    \label{lem:link}
Let $\delta\subset \Delta_\Lambda$ be a simplex whose types of vertices are $\{\hat s_i\}_{i=1}^k$. Then the link $\lk(\delta,\Delta_\Lambda)$ of $\delta$ in $\Delta_\Lambda$ is isomorphic $\Delta_{\Lambda'}$ where $\Lambda'$ is the induced subdiagram of $\Lambda$ spanned by vertices in $\Lambda\setminus\{s_1,s_2,\ldots,s_k\}$.
\end{lem}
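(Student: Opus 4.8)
The plan is to exhibit an explicit, type-preserving simplicial isomorphism $\phi\colon\lk(\delta,\Delta_\Lambda)\to\Delta_{\Lambda'}$. First I would normalize $\delta$: the group $A_\Lambda$ acts on $\Delta_\Lambda$ by left multiplication, type-preservingly, carrying $hA_{\hat t}$ to $ghA_{\hat t}$; since the cosets forming the vertices of $\delta$ have a common element $g$ (that being what it means for those vertices to span a simplex), I may replace $\delta$ by $g^{-1}\delta$ and assume it has vertices $A_{\hat s_1},\dots,A_{\hat s_k}$, which alters $\lk(\delta,\Delta_\Lambda)$ only by the isomorphism induced by $g^{-1}$. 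Write $S'=S\setminus\{s_1,\dots,s_k\}$ for the generating set of $\Lambda'$ and identify $A_{\Lambda'}$ with the standard parabolic $\langle S'\rangle\le A_\Lambda$ (van der Lek's injectivity theorem). The one genuinely non-formal ingredient I will use is that intersections of standard parabolic subgroups of $A_\Lambda$ are again standard parabolic, $A_{T_1}\cap A_{T_2}=A_{T_1\cap T_2}$ for $T_1,T_2\subset S$ (see e.g.\ \cite{godelle2012k,cumplido2020parabolic}); in particular $\bigcap_{i=1}^k A_{\hat s_i}=A_{S'}$, since $\bigcap_i(S\setminus\{s_i\})=S'$.

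Next I would match up the vertex sets. A vertex $v=hA_{\hat t}$ of $\Delta_\Lambda$ lies in $\lk(\delta,\Delta_\Lambda)$ exactly when $v\notin\delta$ and $hA_{\hat t}\cap\bigcap_i A_{\hat s_i}=hA_{\hat t}\cap A_{S'}\neq\emptyset$; moreover $v\notin\delta$ forces $t\in S'$, since otherwise $\hat t$ would be some $\hat s_i$ and $v$ together with the type-$\hat t$ vertex of $\delta$ would be two distinct cosets of $A_{\hat t}$ in a common simplex, which cannot happen. Choosing $a\in hA_{\hat t}\cap A_{S'}$, we have $v=aA_{\hat t}$ with $a\in A_{S'}$, and then $v\cap A_{S'}=aA_{\hat t}\cap A_{S'}=a\bigl(A_{\hat t}\cap A_{S'}\bigr)=aA_{S'\setminus\{t\}}$, a left coset of the standard parabolic of $A_{\Lambda'}$ generated by $S'\setminus\{t\}$, i.e.\ a vertex of $\Delta_{\Lambda'}$ of type $\hat t$. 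So I set $\phi(v)=v\cap A_{S'}$; it is type-preserving, onto (the link vertex $aA_{\hat t}$ with $a\in A_{S'}$ maps to $aA_{S'\setminus\{t\}}$), and injective (if $aA_{S'\setminus\{t\}}=a'A_{S'\setminus\{t\}}$ then $a^{-1}a'\in A_{S'\setminus\{t\}}\subset A_{\hat t}$, so $aA_{\hat t}=a'A_{\hat t}$).

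Finally I would check that $\phi$ is simplicial in both directions, which amounts to: a set of link vertices spans a simplex iff its $\phi$-image does. Vertices $v_1=a_1A_{\hat t_1},\dots,v_m=a_mA_{\hat t_m}$ of $\lk(\delta,\Delta_\Lambda)$ (with $a_j\in A_{S'}$) span a simplex of the link iff $v_1,\dots,v_m$ together with the vertices of $\delta$ span a simplex of $\Delta_\Lambda$, i.e.\ iff $\bigl(\bigcap_j a_jA_{\hat t_j}\bigr)\cap\bigcap_i A_{\hat s_i}=\bigl(\bigcap_j a_jA_{\hat t_j}\bigr)\cap A_{S'}\neq\emptyset$. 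Because intersecting a family of sets with one fixed set distributes over the intersection, this set equals $\bigcap_j\bigl(a_jA_{\hat t_j}\cap A_{S'}\bigr)=\bigcap_j a_jA_{S'\setminus\{t_j\}}=\bigcap_j\phi(v_j)$, whose non-emptiness is precisely the condition for $\phi(v_1),\dots,\phi(v_m)$ to span a simplex of $\Delta_{\Lambda'}$. This yields the desired isomorphism.

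The one step where I expect the real content to sit is the parabolic intersection property $A_{T_1}\cap A_{T_2}=A_{T_1\cap T_2}$ together with the embedding $A_{\Lambda'}\hookrightarrow A_\Lambda$; both the normalization of $\delta$ and the vertex identification rest on it, whereas everything else is elementary coset manipulation plus distributivity of intersection over a fixed subset. Verbatim, with $W_\Lambda$ and $W_{\hat s}$ replacing $A_\Lambda$ and $A_{\hat s}$, the same argument gives the analogous statement for the Coxeter complex $\bC_\Lambda$, using the classical intersection property for standard parabolic subgroups of Coxeter groups.
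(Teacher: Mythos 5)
Your proof is correct. The paper does not prove this lemma but simply cites it from \cite{cumplido2020parabolic}, and your argument --- normalizing $\delta$ by left translation to the standard cosets $A_{\hat s_1},\dots,A_{\hat s_k}$, invoking the parabolic intersection theorem $A_{T_1}\cap A_{T_2}=A_{T_1\cap T_2}$ to identify $\bigcap_i A_{\hat s_i}=A_{S'}$, and matching vertices via $v\mapsto v\cap A_{S'}$ with the distributivity of intersection handling simpliciality in both directions --- is essentially the standard proof appearing in that reference, with the one genuinely non-formal input (the intersection property of standard parabolics) correctly isolated and attributed.
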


Now consider the special case $W_\Lambda$ is a finite Coxeter group with its canonical representation $\rho
\colon W_\Lambda\to \mathbf{GL}(n,\mathbb R)$ \cite[Chap~6.12]{dbook}. A \emph{reflection} of $W_\Lambda$ is a conjugate of $s\in S$. Each reflection pointwise fixes a hyperplane in~$\mathbb R^n$, which we call a \emph{reflection hyperplane}. Let $\mathcal A$ be the family of all reflection hyperplanes. The hyperplane arrangement $\ca$ is the \emph{reflection arrangement} associated with $W_\Lambda$. We denote $\Si_\Lambda=\Si_\ca$ and $\od_\Lambda=\od_\ca$. Since $W_\Lambda$ permutes the elements of~$\mathcal A$, there is an induced action $W_\Lambda\act M(\ca\otimes \mathbb C)$ and an induced action $W_\Lambda\act \widehat\Si_\ca$, which are free.
The union of $\ca$ cuts the unit sphere of $\mathbb R^n$ into a simplicial complex, which is isomorphic to the Coxeter complex $\bC_\Lambda$ and dual to $\Si_\Lambda$.  The following are standard \cite[\S 3.2 and 3.3]{paris2014k}. 

\begin{theorem}
\label{thm:reflection arrangement}
Suppose $\ca$ is the reflection arrangement associated with a finite Coxeter group $W_\Lambda$. Then 
\begin{itemize}
	\item $\pi_1M(\ca\otimes \mathbb C)=\PA_\Lambda$ \cite{lek},
	\item $\pi_1(M(\ca\otimes \mathbb C)/ W_\Lambda)=\pi_1(\widehat\Si_\Lambda/W_\Lambda)=A_\Lambda$,
	\item $\widehat\Si^2_\Lambda/W_\Lambda$ is isomorphic to the presentation complex of $A_\Lambda$.
\end{itemize}
\end{theorem}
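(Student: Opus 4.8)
The plan is to derive the three statements from Salvetti's theorem and the classical computation of $\pi_1$ of a complexified reflection-arrangement complement (Brieskorn, van der Lek), together with a direct inspection of the $2$-skeleton of the quotient $\widehat\Si_\Lambda/W_\Lambda$ for the last item; the references \cite[\S\S 3.2--3.3]{paris2014k}, \cite{godelle2012k}, \cite{lek}, \cite{s87} contain all of this, and the write-up would cite them, but here is the shape of the argument.

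First I would record that $W_\Lambda$ acts freely on $M(\ca\otimes\mathbb C)$: a point there lies on no reflection hyperplane, so its stabilizer, being generated by the reflections fixing it, is trivial. Hence $M(\ca\otimes\mathbb C)\to M(\ca\otimes\mathbb C)/W_\Lambda$ is a regular cover with deck group $W_\Lambda$. By van der Lek \cite{lek} (going back to Brieskorn when $W_\Lambda$ is finite) one has $\pi_1\big(M(\ca\otimes\mathbb C)/W_\Lambda\big)\cong A_\Lambda$, and the monodromy homomorphism $\pi_1\to W_\Lambda$ of the cover is the canonical quotient $A_\Lambda\to W_\Lambda$ sending each standard generator to the corresponding reflection. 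Salvetti's theorem \cite[Thm~1]{s87} provides a homotopy equivalence $\widehat\Si_\ca\simeq M(\ca\otimes\mathbb C)$, and by naturality of the construction it is $W_\Lambda$-equivariant, so it descends to $\widehat\Si_\Lambda/W_\Lambda\simeq M(\ca\otimes\mathbb C)/W_\Lambda$; this yields the second item, $\pi_1(\widehat\Si_\Lambda/W_\Lambda)=\pi_1(M(\ca\otimes\mathbb C)/W_\Lambda)=A_\Lambda$. The first item then follows since $\pi_1 M(\ca\otimes\mathbb C)$ is the kernel of the monodromy map, which we have identified with $A_\Lambda\to W_\Lambda$, and this kernel is $\PA_\Lambda$ by definition.

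For the third item I would compute the cells of $\widehat\Si_\Lambda/W_\Lambda$ through dimension $2$, using that $W_\Lambda$ acts freely and cellularly on $\widehat\Si_\Lambda$, so that the number of cells of the quotient in each dimension equals the number upstairs divided by $|W_\Lambda|$. The vertices of $\Si_\Lambda=\widehat\Si^0_\Lambda$ are the chambers of the reflection arrangement, on which $W_\Lambda$ acts simply transitively, so the quotient has a single vertex. The edges of $\Si_\Lambda$ are dual to the codimension-$1$ intersections of hyperplanes; fixing a fundamental chamber $C$, every such intersection is $W_\Lambda$-equivalent to exactly one of the $|S|$ codimension-$1$ faces of $C$, so there are $|S|$ orbits, hence — after the doubling that produces $\widehat\Si_\Lambda$ and division by $|W_\Lambda|$ — exactly one oriented loop $e_s$ per $s\in S$ in the quotient. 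Similarly the $2$-faces of $\Si_\Lambda$ are dual to the codimension-$2$ intersections, whose orbit representatives among the faces of $C$ are indexed by unordered pairs $\{s,t\}\subseteq S$; the $2$-face dual to the pair $\{s,t\}$ is a $2m_{st}$-gon with setwise stabilizer the dihedral parabolic $W_{\{s,t\}}$ of order $2m_{st}$, which acts freely on the $2m_{st}$ Salvetti $2$-cells lying over it, and the bookkeeping then gives exactly $\binom{|S|}{2}$ two-cells in $\widehat\Si^2_\Lambda/W_\Lambda$, one per pair. Finally, reading the boundary of the surviving $2$-cell for $\{s,t\}$ through the loops $e_s$, the orientation convention defining the Salvetti complex makes one arc of the boundary spell $e_se_te_s\cdots$ and the complementary arc spell $e_te_se_t\cdots$, each of length $m_{st}$, with opposite orientations, so the attaching map realizes the Artin relation $sts\cdots=tst\cdots$. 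Hence $\widehat\Si^2_\Lambda/W_\Lambda$ has one vertex, the loops $\{e_s\}_{s\in S}$, and one $2$-cell per pair attached along the Artin relators: it is the presentation complex of $A_\Lambda$.

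The orbit counts above are routine; the point requiring genuine care — and the main obstacle — is matching the attaching maps, i.e.\ verifying that the Salvetti orientations on a dihedral $2m_{st}$-gon descend to the alternating relator word itself rather than some cyclic conjugate or inverse of it. I would handle this by chasing the equivalence relation $\sim$ of Section~\ref{sec:Salvetti} on a single dihedral $2$-face, as in \cite[\S 3.2]{paris2014k} and \cite[p.~608]{s87}.
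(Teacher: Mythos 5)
Your proposal is correct and follows exactly the route the paper takes: the paper offers no proof of its own but cites \cite[\S 3.2 and 3.3]{paris2014k} (together with \cite{lek} and \cite{s87}) as the standard source, and your argument --- the free $W_\Lambda$-action, van der Lek's identification of $\pi_1(M/W_\Lambda)$ with $A_\Lambda$, the equivariant Salvetti equivalence, and the orbit count plus attaching-map check on the dihedral $2m_{st}$-gons --- is precisely the standard argument contained in those references. Nothing further is needed.
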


Now we consider the special case when $\Lambda$ is a linear graph with consecutive vertices $\{s_i\}_{i=1}^n$. We define a relation on the vertex set of $\Delta_\Lambda$ as follows. Let $x,y\in \Delta_\Lambda$ be vertices of type $\hat s_i$ and $\hat s_j$, respectively. We say $x<y$ if they are adjacent in $\Delta_{\Lambda}$ and $i<j$. A similar relation can be defined for the vertex set of the Coxeter complex $\bC_\Lambda$. This relation is actually transitive (see, e.g., \cite[Cor 6.5]{huang2023labeled}), so we obtain a poset, which is graded and has rank $n$. 

Recall that $\Lambda$ is of type $A_n$ if each edge is labeled by $3$, in which case $A_\Lambda$ is the braid group on $n+1$ stands, and $W_\Lambda$ is the symmetric group on $n+1$ letters. The corresponding reflection arrangement is obtained by intersecting the hyperplanes $\{x_i=x_j\}_{1\le i < j\le n+1}$ of $\mathbb R^{n+1}$ with the subspace
$$
V=\{(x_1,\ldots,x_{n+1})\mid \sum_{i=1}^{n+1}x_i=0\}\cong \mathbb R^n.
$$
Up to a linear transformation, we can represent an $A_n$-type reflection arrangement as the hyperplanes $\{x_i=0\}_{i=1}^n$ and $\{x_i=x_j\}_{1\le i< j\le n}$ in $\mathbb R^n$. We will call this arrangement a \emph{skewed $A_n$ arrangement}, as it differs with the actual $A_n$ arrangement by a linear transformation.

\begin{thm}
\label{thm:bowtie free An}
Suppose $\Lambda$ is of type $A_n$. Then the posets $(\Delta^0_\Lambda,<)$ and $(\bC^0_\Lambda,<)$ are bowtie free.
\end{thm}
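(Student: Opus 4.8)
The plan is to work with the Coxeter-theoretic model of $\Delta_\Lambda$ and $\bC_\Lambda$ when $\Lambda$ is of type $A_n$, and to exploit the identification of $\bC_\Lambda$ with the Coxeter complex of the symmetric group $W=S_{n+1}$, whose simplices are the proper subsets of $\{1,\dots,n+1\}$ partitioned into blocks. Concretely, a vertex of $\bC_\Lambda$ of type $\hat s_i$ is an ordered partition of $\{1,\dots,n+1\}$ into two nonempty blocks of sizes $i$ and $n+1-i$ (equivalently, a subset $T\subset\{1,\dots,n+1\}$ of size $i$), and the relation $x<y$ from the excerpt says exactly: the size-$i$ set $T_x$ is strictly contained in the size-$j$ set $T_y$ (one has to check which of the two possible orderings of the blocks makes the transitive relation from \cite{huang2023labeled} agree with containment — this is where a small fixed-convention computation enters, using that consecutive $s_i$, $s_{i+1}$ generate an $A_2$). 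Having done this identification, $(\bC_\Lambda^0,<)$ is isomorphic to the poset of proper nonempty subsets of $\{1,\dots,n+1\}$ ordered by inclusion, and bowtie-freeness becomes the following elementary statement: given distinct proper nonempty subsets $T_1,T_2,S_1,S_2$ with $T_i\subsetneq S_j$ for all $i,j$, there is a proper nonempty subset $Z$ with $T_i\subseteq Z\subseteq S_j$ for all $i,j$. I would take $Z = T_1\cup T_2$: this contains each $T_i$, it is contained in each $S_j$ since $T_1,T_2\subseteq S_j$, it is nonempty, and it is a proper subset since $T_1\cup T_2\subseteq S_1\subsetneq\{1,\dots,n+1\}$. (One checks $Z\neq T_i$ or $Z\neq S_j$ is not required: bowtie-freeness only asks for the interpolating $z$, which may coincide with one of the $x_i$ or $y_j$.)

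For the Artin complex $\Delta_\Lambda$ one cannot use a direct combinatorial enumeration of vertices, so here the strategy is to transport the problem to the Coxeter side. The natural map $A_\Lambda\to W_\Lambda$ induces a type-preserving, poset-preserving simplicial map $\pi\colon\Delta_\Lambda\to\bC_\Lambda$ sending a left coset $gA_{\hat s}$ to $\bar g W_{\hat s}$. The key point is that if $x_1,x_2,y_1,y_2$ are four mutually distinct vertices of $\Delta_\Lambda$ forming a bowtie (so $x_i<y_j$ for all $i,j$), then their images $\pi(x_1),\pi(x_2),\pi(y_1),\pi(y_2)$ need not be mutually distinct, but the pairwise adjacencies and the strict inequalities pass through $\pi$, and one can produce the interpolating vertex $z$ in $\Delta_\Lambda$ directly once one knows enough about the local structure. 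Concretely: $x_1<y_1$ and $x_2<y_1$ both hold, so $x_1,x_2$ are both vertices of the link of $y_1$, which by \Cref{lem:link} is an Artin complex $\Delta_{\Lambda'}$ with $\Lambda'$ a proper (possibly disconnected) subdiagram of a type-$A$ diagram — hence a product of smaller type-$A$ Artin complexes. Similarly $y_1,y_2$ lie in the link of $x_1$. I would set up an induction on $n$: in the link of $y_1$ we get a sub-bowtie configuration of lower rank to which the inductive hypothesis applies, producing a candidate interpolant; then one must check that the candidate also interpolates between $x_i$ and $y_2$, which again reduces to a lower-dimensional link. The base cases $n=1,2$ are immediate ($n=1$ has only two types of cosets; $n=2$ is the link of $y$ being a single smaller Artin complex).

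The main obstacle I anticipate is the second paragraph's reduction for $\Delta_\Lambda$: unlike $\bC_\Lambda$, the vertex set of $\Delta_\Lambda$ has no transparent combinatorial labeling, so one cannot simply write down $Z = T_1\cup T_2$. The subtlety is in simultaneously interpolating between all four of $x_1,x_2$ on one side and $y_1,y_2$ on the other, because the natural inductive step only controls one pair of the two "$y$"s at a time. The cleanest route is probably to argue that the join $x_1\vee x_2$ exists in $\Delta_\Lambda$ (this is where one invokes \Cref{lem:posets}: if the subposet below $y_1$ is graded, bounded, and bowtie-free by induction, then $x_1,x_2$ have a join there), show the join is independent of which common upper bound $y_j$ we pass through, and then verify $x_1\vee x_2 \le y_j$ for $j=1,2$. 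Getting the grading and boundedness of the relevant intervals of $(\Delta_\Lambda^0,<)$ right — so that \Cref{lem:posets} genuinely applies inside each link — is the technical heart of the argument; everything else is bookkeeping with \Cref{lem:link} and the convention-fixing for the order relation.
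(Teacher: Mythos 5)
First, note that the paper does not prove this theorem at all: both halves are quoted from the literature (the Coxeter case from Tits, via \cite[Thm 2.10]{hirai2020uniform}; the Artin case from unpublished work of Crisp--McCammond as explained in \cite[\S 5]{haettel2021lattices}). Your first paragraph is a correct, self-contained, elementary proof of the Coxeter half: identifying $(\bC^0_\Lambda,<)$ for $\Lambda=A_n$ with the poset of proper nonempty subsets of $\{1,\dots,n+1\}$ under inclusion is standard, and $Z=T_1\cup T_2$ does interpolate (nonempty, proper since $Z\subset S_1$, and $T_i\subseteq Z\subseteq S_j$). That is a legitimate and arguably preferable alternative to the building-theoretic citation.

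The Artin half, however, has a genuine gap, which you partly flag yourself. A bowtie $\{x_1,x_2,y_1,y_2\}$ does not sit inside any single link: $y_1$ and $y_2$ need not be adjacent, so the configuration cannot be transported into $\lk(y_1,\Delta_\Lambda)$ and dispatched by induction via \Cref{lem:link}. Your proposed repair --- compute $x_1\vee x_2$ inside the ideal below $y_1$ (using \Cref{lem:posets} together with inductive bowtie-freeness of the link), do the same below $y_2$, and then ``show the join is independent of which common upper bound we pass through'' --- begs the question: the agreement of these two locally computed joins (equivalently, that the join computed below $y_1$ is still $\le y_2$) is exactly the content of bowtie-freeness for this configuration, and you supply no mechanism for it. A graded poset all of whose principal ideals are lattices can perfectly well fail to be bowtie free, so some global input about the braid group is unavoidable; the Crisp--McCammond argument presented in \cite[\S 5]{haettel2021lattices} is of a genuinely different nature. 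The fact that the precisely analogous statement for type $D_n$ is an open conjecture in this very paper (\Cref{conj:dn}, settled only for $n\le 4$ by a separate substantial argument) is strong evidence that no routine link induction closes. Finally, even your base case $n=2$ is not ``immediate'': it asserts that the bipartite coset graph of $\langle s_1\rangle$ and $\langle s_2\rangle$ in the $3$-strand braid group contains no embedded $4$-cycle, which already requires an argument (e.g., via the amalgam or Garside structure of $B_3$).
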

The Coxeter case follows from Tits's work \cite{tits1974buildings}, see \cite[Thm 2.10]{hirai2020uniform} for an explanation. The Artin case is due to Crisp-McCammond (unpublished), see \cite[\S 5]{haettel2021lattices} for an explanation. 

A Coxeter diagram $\Lambda$ is of type $B_n$ if $\Lambda$ is linear graph with consecutive vertices $\{s_i\}_{i=1}^n$ such that all edges are labeled by $3$ except the edge between $s_{n-1}$ and $s_n$, which is labeled by $4$. Then $W_\Lambda$ is the finite Coxeter group of type $B_n$ (sometimes called the ``signed symmetric group''), with the associated reflection arrangement made up of $\{x_i=\pm x_j\}_{1\le i< j\le n}$ and $\{x_i=0\}_{1\le i\le n}$ in $\mathbb R^n$. Instead of describing the Coxeter complex $\bC_\Lambda$ as the subdivision of the unit sphere of $\mathbb R^n$ by these hyperplanes, it is instructive to think of  $\bC_\Lambda$ as the barycentric subdivision of the boundary of the cube $[-1,1]^n$ in $\mathbb R^n$. Then vertices of $\bC_\Lambda$ are in 1-1 correspondence with points in $\mathbb R^n$ whose each coordinate belongs to $\{-1,0,1\}$ with at least one nonzero coordinate. Such a point corresponds to a vertex of type $\hat s_i$ if and only if it has $i$ nonzero coordinates. Moreover, if $P$ is obtained by adding an extra minimal element to $(\bC^0_\Lambda,<)$, then the geometric realization $|P|$ is isomorphic to the barycentric subdivision of the entire cube $[-1,1]^n$, which is naturally made of unit orthoschemes, and $|P|_\infty$ is isometric to $[-1,1]^n$ with the $\ell^\infty$ metric.

\begin{theorem}
\label{thm:Bnflag}
Suppose $\Lambda$ is of type $B_n$. Then $(\Delta^0_\Lambda,<)$ and $(\bC^0_\Lambda,<)$ are bowtie free and upward flag posets.
\end{theorem}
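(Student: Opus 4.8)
The plan is to handle the Coxeter complex and the Artin complex by quite different means: for $\bC^0_\Lambda$ one reads everything off the cube model recalled just above, while for $\Delta^0_\Lambda$ one needs a link induction (via \Cref{lem:link}), fed by the type-$A$ statement of \Cref{thm:bowtie free An} and a residual computation in the spherical Artin group $A(B_n)$.

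\textbf{The Coxeter complex.} Using the cube realization of $\bC_\Lambda$ recalled above, I would identify $(\bC^0_\Lambda,<)$ with the poset $\cN$ whose elements are the nonzero vectors $\sigma\in\{-1,0,1\}^n$, with $\sigma\le\tau$ iff $\supp\sigma\subseteq\supp\tau$ and $\sigma$ agrees with $\tau$ on $\supp\sigma$; a vertex of type $\hat s_i$ is a vector with exactly $i$ nonzero coordinates, and since $<$ runs from fewer to more nonzero coordinates — i.e.\ from a larger cube face to a smaller, incident one — this is a genuine order isomorphism. Then bowtie-freeness is immediate: given distinct $x_1,x_2<y_1,y_2$, the relations $x_1\le y_1$, $x_1\le y_2$ force $y_1$ and $y_2$ to agree with $x_1$, hence with each other, on $\supp x_1\neq\varnothing$, so the coordinatewise meet $z$ of $y_1$ and $y_2$ (set $z_k=(y_1)_k$ when $(y_1)_k=(y_2)_k$, and $z_k=0$ otherwise) is a nonzero vector, and one checks at once that $x_i\le z\le y_j$ for all $i,j$. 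For upward flag: if $w_1,w_2,w_3$ are pairwise upper bounded then each pair agrees on the intersection of its supports, so no coordinate carries two distinct nonzero entries among the $w_i$; hence their coordinatewise join (the common nonzero value at each coordinate where some $w_i$ is nonzero, and $0$ elsewhere) is a well-defined nonzero vector dominating all three.

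\textbf{The Artin complex.} Here I would induct on $n$, with base cases $n\le 2$ ($B_1=A_1$, and for $B_2$ one argues directly from the Garside normal form of $A(B_2)$ that two distinct vertices of one type never share two bounds of the other type, and that pairwise-bounded triples are bounded). For the inductive step one first reduces a putative bowtie $x_1,x_2<y_1,y_2$ to the case where the $x_i$ are pairwise incomparable and the $y_j$ are pairwise incomparable — otherwise the larger $x_i$, resp.\ smaller $y_j$, serves as the interpolant $z$. If the $x_i$ have a common lower bound $v$, all four elements lie above $v$ and hence determine a bowtie in $\lk(v,\Delta_\Lambda)$, which by \Cref{lem:link} is a join of Artin complexes of types $A_{i-1}$ and $B_{n-i}$ (or $A_{n-2}$ and $A_1$, or $B_{n-1}$); these are bowtie free by \Cref{thm:bowtie free An} and the inductive hypothesis, and (using the characterization in \Cref{lem:posets} that pairs with a lower, resp.\ upper, bound have joins, resp.\ meets, which is visibly stable under stacking one factor below the other) so is their join, so a resolving $z$ in the link lifts back since every bound of the original four elements must itself lie above $v$. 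The symmetric argument handles the case where the $y_j$ have a common upper bound, and upward flag is treated the same way (where now the mixed-factor cases of a join are automatic), reducing again to the base cases and to one residual configuration.

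\textbf{Main obstacle.} The hard part is exactly this residual configuration for the Artin complex: a bowtie $x_1,x_2<y_1,y_2$ in which the $x_i$ have no common lower bound and the $y_j$ have no common upper bound (and the parallel residual case of upward flag). This is where the specific geometry of type $B_n$ — the order-$4$ edge in particular — has to be used, and where I expect to invoke the lattice property of the interval $[1,\Delta_{B_n}]$ in the classical Garside structure of $A(B_n)$ (equivalently, of the lattice of signed permutations) to locate the required $z$. Turning that lattice structure into a statement about left cosets of maximal parabolics, and keeping track of the interaction at the order-$4$ end of the diagram, is the step that demands real work; by contrast the Coxeter complex has no residual case, since there the interpolant is simply the coordinatewise meet or join.
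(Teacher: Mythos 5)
Your treatment of the Coxeter complex is correct and complete: the identification of $(\bC^0_\Lambda,<)$ with sign vectors in $\{-1,0,1\}^n$ ordered by support-containment-plus-agreement is exactly the cube model recalled before the theorem, and the explicit coordinatewise meet/join constructions settle both properties. This matches the paper, which simply declares the Coxeter case straightforward.

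The Artin case, however, has a genuine gap, and you have located it yourself: the ``residual configuration'' in which the bottom pair of a bowtie has no common lower bound and the top pair has no common upper bound (and its analogue for upward flagness) is not an edge case --- it is the entire content of the theorem. Your link induction via Lemma~\ref{lem:link} only disposes of configurations that already sit inside the star of a vertex, and nothing in the hypotheses forces such a vertex to exist (indeed, by Lemma~\ref{lem:posets} bowtie-freeness only yields meets for pairs that \emph{already have} a lower bound, so you cannot bootstrap one into existence). Your proposed resolution --- invoking the lattice of simple elements in the Garside structure of $A(B_n)$ --- is a gesture rather than an argument: the vertices here are left cosets of \emph{maximal standard parabolics}, not Garside simples, and translating between the two is precisely the nontrivial step you defer. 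The paper does not attempt this translation either; it cites Haettel's result ([haettel2021lattices, Prop 6.6]), which is obtained by a different route entirely, namely by deducing the $B_n$ statement from the type $A$ statement of Theorem~\ref{thm:bowtie free An} (via the standard relationship between $A(B_n)$ and the braid group), rather than by a Garside computation inside $A(B_n)$. A further caution about your inductive scheme for upward flagness: the links you pass to contain type $A$ Artin-complex factors, and the paper explicitly observes (Remark~\ref{rmk:contrast}) that the type $A_n$ poset of Theorem~\ref{thm:bowtie free An} is \emph{not} upward flag; your induction survives only because of the ordinal-sum structure of the link (every element of the low factor lies below every element of the high factor), and this needs to be said rather than waved at with ``the mixed-factor cases are automatic.''
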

The Coxeter case of the above theorem is straight forward. The Artin case is proved in \cite[Prop 6.6]{haettel2021lattices}, which is deduced from Theorem~\ref{thm:bowtie free An}.

The Coxeter diagram $\Lambda$ of type $D_n$ (for $n\ge 3$) is shown in Figure~\ref{fig:dn}, where all edges are labeled by $3$. The associated reflection arrangement is $\{x_i=\pm x_j\}_{1\le i < j\le n}$ in $\mathbb R^n$.
We subdivide each edge of $\Delta_\Lambda$ connecting a vertex of type $\hat s_n$ and a vertex of type $\hat s_{n-1}$, and declare the middle point of such edge is of type $m$. Cut each top dimensional simplex in $\Delta_\Lambda$ into two simplices along the codimension 1 simplex spanned by vertices of type $m$ and $\{\hat s_i\}_{i=1}^{n-2}$. This gives a new simplicial complex, denoted by $\Delta'_\Lambda$. Define a map $t$ from $(\Delta'_\Lambda)^0$ to $\{1,2,\ldots,n\}$ by sending vertices of type $\hat s_i$ to $i$ for $1\le i\le n-2$, vertices of type $m$ to $n-1$, and vertices of type $\hat s_n$ and $\hat s_{n-1}$ to $n$. We define a relation $<$ on $(\Delta'_\Lambda)^0$ as follows. For two vertices $x,y$ of $\Delta'_\Lambda$, $x<y$ if $x$ and $y$ are adjacent in $\Delta'_\Lambda$ and $t(x)<t(y)$. The simplicial complex $\Delta'_{\Lambda}$, together with this relation, is called the \emph{$(s_n,s_{n-1})$-subdivision of $\Delta_\Lambda$}. Similarly, we can define $(s_n,s_{n-1})$-subdivision $\bC'_\Lambda$ of $\bC_\Lambda$. Note that $\bC'_\Lambda$ is isomorphic to the Coxeter complex of type $B_n$ via a type-preserving isomorphism. While $\Delta'_\Lambda$ is not isomophic to the Artin complex of type $B_n$, the following conjecture, due to Haettel, predicts that these two complexes share the following property -- his original motivation for this conjecture is that it leads an alternative proof that Artin group of type $\widetilde D_n$ satisfies the $K(\pi,1)$-conjecture. 

\begin{conj}(Haettel)
\label{conj:dn}
Suppose $\Lambda$ is of type $D_n$ for $n\geq 3$. Then $((\Delta'_\Lambda)^0,<)$ is a poset that is bowtie free and upward flag.
\end{conj}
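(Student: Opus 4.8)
The plan is to separate the Coxeter and Artin cases. The Coxeter half is immediate: the $(s_n,s_{n-1})$-subdivision $\bC'_\Lambda$ of $\bC_\Lambda$ is type-preservingly isomorphic to the Coxeter complex of type $B_n$, so $((\bC'_\Lambda)^0,<)$ is isomorphic as a graded poset to $(\bC^0_{B_n},<)$, which is bowtie free and upward flag by \Cref{thm:Bnflag}. The real content is thus the Artin complex $\Delta'_\Lambda$, and I would argue by induction on $n$, the small cases $n=3$ (where $D_3=A_3$) and $n=4$ being treated by a more hands-on argument specific to low dimensions; these are also the cases in which \Cref{thm:main1} is unconditional, and they anchor the induction.

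For the inductive step I would dichotomize a putative bowtie $\{x_1,x_2,y_1,y_2\}$ --- and likewise a pairwise-upper-bounded triple, for upward flagness. Call the configuration \emph{local} if all its members, together with a prospective common bound, lie in the closed star of a single vertex $w$ of $\Delta'_\Lambda$ of extremal $t$-value. For such configurations one passes to $\lk(w,\Delta'_\Lambda)$, which by \Cref{lem:link} is (the appropriate subdivision of) the Artin complex of a proper induced subdiagram of the $D_n$ diagram --- a diagram that is a product of type-$A$ pieces and at most one type-$D$ piece, all of strictly smaller rank. Granting the formal fact that the $(s_n,s_{n-1})$-subdivision commutes with taking links, a local configuration is then resolved by combining the inductive hypothesis with \Cref{thm:bowtie free An}, \Cref{thm:Bnflag}, and the fact that bowtie-freeness and upward flagness are preserved under products. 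One must separately check that the $m$-type vertices create no new obstruction; since each $m$-vertex lies below precisely the two $\hat s_{n-1}$/$\hat s_n$-vertices it interpolates and has link governed by \Cref{lem:link}, this is a finite local verification.

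The genuinely new case is that of \emph{global} configurations, contained in no vertex star. Here I would convert the poset conditions into statements about the Artin group $A_\Lambda=A(D_n)$ in the spirit of the Crisp--McCammond proof of \Cref{thm:bowtie free An}: a bowtie becomes a quadruple of cosets of maximal standard parabolics with prescribed nonempty pairwise intersections, and one must produce a coset of a suitable intermediate parabolic meeting all four. The plan for this is to exploit the order-two diagram automorphism $\sigma$ of $D_n$ exchanging $s_{n-1}$ and $s_n$: folding $D_n$ along $\sigma$ yields the $B_{n-1}$ diagram (with $A(D_n)^\sigma\cong A(B_{n-1})$), which also arises by folding a type-$A$ diagram, and the $m$-type vertices of $\Delta'_\Lambda$ are exactly what absorbs the mismatch at the $\sigma$-fixed part of the fold --- this is why $\Delta'_\Lambda$ is the correct object although it is \emph{not} isomorphic to the type-$B_n$ Artin complex, and why $t$ is a genuine rank function only after the subdivision. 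One then wants to transport bowtie-fillers and flag-witnesses between $\Delta'_\Lambda$ and the better-understood type-$A$ (and type-$B_{n-1}$) complexes.

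The main obstacle, I expect, is precisely this transport: making the folding comparison precise at the level of posets, and --- given a witness produced in the type-$A$ picture --- pulling it back to an \emph{honest} vertex of $\Delta'_\Lambda$ of the right $t$-value. This needs the relevant map of complexes to be \emph{full} (image an induced subposet) and compatible with the joins and meets furnished by \Cref{lem:posets}, which is the analogue of what is done for $B_n$ in \cite[Prop 6.6]{haettel2021lattices} but harder here because the $D_n$ diagram is not a subdiagram of a type-$A$ diagram and $\sigma$ is not free. A more self-contained alternative would avoid type $A$ and instead verify directly that the comparability graph of $((\Delta'_\Lambda)^0,<)$ is weakly modular, applying the local-to-global theorem of \cite{weaklymodular} already underlying \Cref{thm:local-to-global}, so that only the vertex links --- smaller Artin complexes, handled by the induction --- and a short list of local conditions remain; the difficulty then concentrates in establishing those local conditions at vertices of types $m$ and $\hat s_{n-2}$, where the fork of the $D_n$ diagram is visible.
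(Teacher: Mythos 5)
The statement you are trying to prove is stated in the paper as a \emph{conjecture} (attributed to Haettel), not a theorem: the paper does not prove it, and for $n\ge 5$ it is open (this is precisely why \Cref{thm:main1} is conditional). The paper only records that the poset part is straightforward, that the bowtie-free part is already known in full generality by \cite[Cor 8.2]{huang2023labeled}, that the upward flag part reduces to filling certain $6$-cycles in $\Delta_\Lambda$, and that the cases $n=3,4$ are established in \cite{huang2024Dn}. So there is no proof in the paper to compare yours against, and your proposal does not close the gap either: you yourself identify the decisive step --- resolving ``global'' configurations not contained in any vertex star --- and then offer two speculative strategies (folding $D_n$ along the diagram automorphism $\sigma$ with $A(D_n)^\sigma\cong A(B_{n-1})$, or verifying weak modularity of the comparability graph) without carrying out either. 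That global step is exactly where the difficulty of the conjecture is concentrated: for upward flagness, a pairwise-upper-bounded triple of $t$-value-$1$ vertices together with its three upper bounds forms a $6$-cycle that in general meets no single vertex link, and nothing in your outline produces the required common upper bound for it. A plan that names its hardest step as an expected obstacle is not a proof.

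Two further cautions on the parts you do sketch. First, the base case $n=3$ cannot be dispatched by identifying $D_3$ with $A_3$ and citing \Cref{thm:bowtie free An}: the $(s_n,s_{n-1})$-subdivision changes both the complex and the order, and the unsubdivided type-$A$ poset is \emph{not} upward flag (see \Cref{rmk:contrast}), which is precisely why the low-dimensional cases require the separate argument of \cite{huang2024Dn}. Second, in your ``local'' case the reduction of $m$-type vertex links to \Cref{lem:link} is not automatic: \Cref{lem:link} computes links of simplices of $\Delta_\Lambda$, whereas an $m$-vertex is a new vertex of the subdivision (the midpoint of an $\hat s_{n-1}$--$\hat s_n$ edge), so its link in $\Delta'_\Lambda$ must be identified with (a subdivision of) the link of that edge by a separate argument, analogous to what the paper does for the skewed $A_n$ case in Section~\ref{sec:check links}. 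Neither of these is fatal, but both need to be written out; the genuine gap remains the global $6$-cycle step.
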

The poset part of the conjecture is straightforward. The bowtie free part is a consequence of \cite[Cor 8.2]{huang2023labeled}. The upward flag part reduces to checking properties of certain 6-cycles in the Artin complex $\Delta_\Lambda$, see e.g. \cite[Lem 7.6]{huang2024Dn}.
The following is proved in \cite{huang2024Dn}.
\begin{theorem}
\label{thm:dn dim 3 and 4}
Conjecture~\ref{conj:dn} holds when $n=3,4$.
\end{theorem}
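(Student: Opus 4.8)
\emph{Reduction to the upward flag property.} The poset axioms for $((\Delta'_\Lambda)^0,<)$ are immediate from the definition of the $(s_n,s_{n-1})$-subdivision, and bowtie-freeness holds for all $n\ge 3$ by \cite[Cor 8.2]{huang2023labeled}; together with \Cref{lem:posets} this already gives that any two upper-bounded elements have a meet and any two lower-bounded elements have a join. So the entire remaining content of \Cref{conj:dn} for $n=3,4$ is the upward flag property: any three pairwise upper-bounded vertices $x_1,x_2,x_3$ admit a common upper bound. Suppose not. Using gradedness, bowtie-freeness, and \Cref{lem:link} (which identifies links of simplices of $\Delta_\Lambda$ with Artin complexes of induced subdiagrams, compatibly with the $(s_n,s_{n-1})$-subdivision on links), one passes to a link so that, after relabelling, the pairwise joins $y_{12},y_{23},y_{13}$ together with $x_1,x_2,x_3$ form an embedded $6$-cycle $C$ in $\Delta_\Lambda$ whose three ``valley'' vertices have no common upper bound. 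This is exactly the reduction recorded in \cite[Lem 7.6]{huang2024Dn}: it now suffices to show every such $C$ can be coned off, i.e.\ some vertex is adjacent to all of $x_1,x_2,x_3$ (equivalently, lies above them after adjusting types).

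\emph{The case $n=3$.} Here $D_3$ is the same Coxeter diagram as $A_3$, so $\Delta_\Lambda=\Delta_{A_3}$ and the $(s_3,s_2)$-subdivision only refines it. The required filling of $6$-cycles is then a statement about the type $A_3$ Artin complex, which is controlled by the bowtie-free/lattice structure underlying \Cref{thm:bowtie free An} (Crisp--McCammond): concretely, one checks via the coset description of adjacency in the braid group on four strands that an embedded $6$-cycle of $\Delta_{A_3}$ whose alternating vertices have types $\hat s_1,\hat s_2$ has a common neighbour. One can also route this through \Cref{thm:Bnflag}, since $\bC'_{D_3}\cong\bC_{B_3}$ on the Coxeter side and the low rank makes the lift of hexagon-fillings from the Coxeter complex to the Artin complex unobstructed.

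\emph{The case $n=4$.} The reduction above leaves two possibilities for the induced subdiagram carrying $C$: a proper subdiagram of $D_4$ --- necessarily of type $A_3$ or a product of copies of $A_1$, handled by the $n=3$ case together with a product argument --- or all of $D_4$. The genuinely new case is $C\subset\Delta'_{D_4}$. The plan is to enumerate the finitely many type-sequences a valid $6$-cycle can carry (constrained by $<$, by the $D_4$ diagram, and by the type-$m$ vertices of the subdivision), and for each to exhibit the cone vertex explicitly, translating adjacency in $\Delta_{D_4}$ into the existence of common elements of parabolic cosets and analysing these through the Garside structure (or the non-crossing-partition model) of the spherical Artin group of type $D_4$; triality of $D_4$ reduces the number of cases to inspect.

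\emph{Main obstacle.} The difficulty is concentrated in the $n=4$ core case. Unlike $\bC'_{D_4}\cong\bC_{B_4}$, the subdivided Artin complex $\Delta'_{D_4}$ is \emph{not} isomorphic to $\Delta_{B_4}$ and is not a building, so there is no abstract reason for a hexagon to cone off; the verification does not follow formally from the $A_n$ and $B_n$ inputs and must exploit special features of $D_4$. I expect this explicit hexagon-filling analysis in $\Delta'_{D_4}$ to be the technical heart of the argument, and it is precisely what \cite{huang2024Dn} carries out.
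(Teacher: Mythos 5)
The paper does not prove this statement at all: it is imported as a black box, with the single sentence ``The following is proved in \cite{huang2024Dn}.'' Your proposal correctly identifies the structure surrounding the result --- the poset and bowtie-free parts of \Cref{conj:dn} hold for all $n$ (the latter by \cite[Cor 8.2]{huang2023labeled}), so the content is the upward flag property, which reduces to filling certain $6$-cycles as in \cite[Lem 7.6]{huang2024Dn}. To that extent you have reconstructed the context accurately.

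However, as a \emph{proof} your proposal has a genuine gap, and it sits exactly where you flag it. For $n=4$ you write that ``the plan is to enumerate'' the hexagon types and that you ``expect'' the analysis to be what \cite{huang2024Dn} carries out; nothing is actually verified, so the core case is deferred to the very reference the theorem is quoting. For $n=3$ the argument is also not sound as stated: you claim the hexagon-filling in $\Delta_{A_3}$ is ``controlled by the bowtie-free/lattice structure'' of \Cref{thm:bowtie free An}, but the paper's \Cref{rmk:contrast} explicitly notes that the $A_n$ poset of \Cref{thm:bowtie free An} is bowtie free yet \emph{not} upward flag, so bowtie-freeness of the unsubdivided complex cannot by itself yield the upward flag property of the subdivided poset $((\Delta'_{D_3})^0,<)$ --- the subdivision changes the order in an essential way (compare the substantial work in Section~\ref{sec:check links} needed for the analogous skewed $A_n$ statement). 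Likewise the suggested ``lift of hexagon-fillings from the Coxeter complex to the Artin complex'' is precisely the step that has no abstract justification, since $\Delta'_{D_n}$ is not a building. If one is permitted to cite \cite{huang2024Dn}, the theorem needs no proof beyond the citation, as in the paper; if one is not, then both your $n=3$ and $n=4$ cases remain unproven.
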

\begin{figure}
    \centering
    \includegraphics[width=0.7\linewidth]{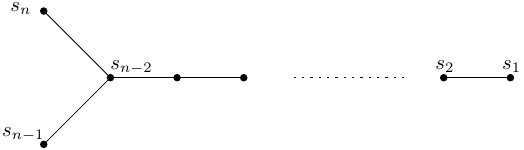}
    \caption{Coxeter diagram of type $D_n$.}
    \label{fig:dn}
\end{figure}

\section{The Falk complexes and their metric}
\label{subsec:deligne complex}

\subsection{Falk complexes}

In \cite{falk1995k}, for each affine arrangement in $\mathbb C^2$ which is the complexification of a real arrangement, Falk described a locally infinite complex which is homotopy equivalent to the associated arrangement complement in $\mathbb C^2$. Falk showed that, compared to other complexes, this complex has the advantage of being easier to arrange a ``non-positive curvature-like'' structure (under some additional assumptions), and hence lead to contractibility results.
Now we describe what we believe should be the correct analogue of Falk's complex in higher dimensions. Although it is no longer true that the complex is homotopy equivalent to the associated complement in $\mathbb C^n$, a weaker statement (see Theorem~\ref{thm:falk} below) holds, which is still useful for proving $K(\pi,1)$ results.

Let $\ca$ be an affine hyperplane arrangement of $\mathbb R^n$. Let $D_\ca$ be the union of all elements in $\fan(\ca)$ which are bounded in $\mathbb R^n$. Note that $D_\ca$ is naturally a stratified space (in the sense of \cite[Chapter II.12.1]{BridsonHaefliger1999}) by considering its fans. We also call fans in $D_\ca$ as \emph{open cells} of $D_\ca$, as $D_\ca$ also has the structure of a polyhedral complex, with each fan being an open cell in this complex.
 A \emph{face} of $D_\ca$ is defined to be the closure of an open cell of $D_\ca$. Each face of $D_\ca$ is a disjoint union of fans. 
 Let $\Sigma_\ca$ be as in Section~\ref{subsec:zonotope}.
 A face $F$ of $D_\ca$ is \emph{dual} to a face $F'$ of $\Si_\ca$ if the barycenter of $F'$ is contained in the interior of $F$. In this case, we will also say $F\subset D_\ca$ is dual to the standard subcomplex $\widehat F'$ of $\od_\ca$.

\begin{definition}
\label{def:falk}
    Let $\widetilde K$ be the universal cover of $\od_\ca$. For each face $F$ of $D_{\ca}$, let $F'$ be the face of $\Si_\ca$ dual to $F$. We index the collection of elevations of $\widehat F'$ in $\widetilde K$ using an index set $\Lambda_F$ (recall that an \emph{elevation} of $\widehat F'$ in $\widetilde K$ is a connected component of the inverse image of $\widehat F'$ with respect to the map $\widetilde K\to \od_\ca$). Let $\mathcal F$ be the collection of faces of $D_\ca$.
 Then we define the \emph{Falk complex} of $\ca$ to be
 $$\falk_\ca=\left(\bigsqcup_{F\in \mathcal F} F\times \Lambda_F\right)/\sim,$$
 where we identify $F_1\times \{\lambda_1\}$ as a face of $F_2\times\{\lambda_2\}$ ($F_1,F_2\in\mathcal F$, $\lambda_1\in \Lambda_{F_1}$ and $\lambda_2\in \Lambda_{F_2}$) if $F_1\subset F_2$ in $D_\ca$ and the elevation of $\widehat F'_2$ in $\widetilde K$ associated with $\lambda_2$ is contained in the elevation of $\widehat F'_1$ associated with $\lambda_1$. The action of $\pi_1\od_\ca$ on $\widetilde K$ by deck transformations induces an action $\pi_1\od_\ca\curvearrowright \falk_\ca$, whose quotient is naturally identified with $D_\ca$. This gives a map $q:\falk_\ca\to D_\ca$. 
\end{definition}

 \begin{lem}
 \label{lem:elevation}
\begin{enumerate}
    \item Any elevation of a standard subcomplex $\widehat E$ of $\od_\ca$ is a copy of the universal cover of $\widehat E$.
 \item Let $\widehat E_1$ and $\widehat E_2$ be two standard subcomplexes of $\od_\ca$. For $i=1,2$, let $\widetilde E_i$ be an elevation of $\widehat E_i$ in $\widetilde K$. If $\widetilde E_1\cap\widetilde E_2\neq\emptyset$, then this intersection is an elevation of $\widehat E_1\cap\widehat E_2$.
\end{enumerate}
 \end{lem}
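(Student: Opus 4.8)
The plan is to prove the two statements in order, using the fact that $\od_\ca$ is a CW (or polyhedral) complex built from copies of $\Si_\ca$ glued along standard subcomplexes, and that standard subcomplexes are themselves non-positively-curved-like in the combinatorial sense that their inclusion into $\od_\ca$ is $\pi_1$-injective. For part (1), I would recall that a standard subcomplex $\wE = \widehat F'$ is, by construction, the image $p^{-1}(F')$ of a face $F'$ of $\Si_\ca$ under the Salvetti projection, and that the retraction $\Pi_{\wE}\colon\od_\ca\to\wE$ of Definition~\ref{def:retraction} exhibits $\wE$ as a retract of $\od_\ca$. A retract induces a split injection on fundamental groups, so $\pi_1\wE\to\pi_1\od_\ca$ is injective. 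Now take an elevation $\widetilde E$ of $\wE$ in $\widetilde K$, i.e.\ a connected component of the preimage of $\wE$ under the universal covering $c\colon\widetilde K\to\od_\ca$. The restriction $c|_{\widetilde E}\colon\widetilde E\to\wE$ is a covering map (a connected component of the pullback of a covering), and $\widetilde E$ is simply connected: any loop in $\widetilde E$ maps to a loop in $\wE$ that is null-homotopic in $\od_\ca$ (since $\widetilde K$ is simply connected), hence null-homotopic in $\wE$ by $\pi_1$-injectivity, and lifting the null-homotopy back up shows the original loop bounds in $\widetilde E$. Therefore $c|_{\widetilde E}$ is the universal cover of $\wE$.

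For part (2), the key observation is that $\wE_1\cap\wE_2$ is again a standard subcomplex (intersection of standard subcomplexes is standard — this is where I would invoke the combinatorics of the Salvetti complex, specifically Lemma~\ref{lem:retraction property} and the fact that $\Pi_{\wE_1}$ restricted to $\wE_2$ equals $\Pi_{\widehat{\Pi_{F'_1}(F'_2)}}$, so the common intersection is $\widehat{F'_1\cap F'_2}$ up to identifying duals correctly). Given this, suppose $\widetilde E_1\cap\widetilde E_2\neq\emptyset$ and pick a point $p$ in the intersection. Then $p$ lies in a single component $\widetilde E_{12}$ of $c^{-1}(\wE_1\cap\wE_2)$. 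I claim $\widetilde E_{12}=\widetilde E_1\cap\widetilde E_2$: the inclusion $\widetilde E_{12}\subseteq\widetilde E_1\cap\widetilde E_2$ is immediate since $\widetilde E_{12}$ is connected, contains $p$, and maps into $\wE_i$, so it lies in the component $\widetilde E_i$; conversely, $\widetilde E_1\cap\widetilde E_2$ maps into $\wE_1\cap\wE_2$, so every point of it lies in some component of $c^{-1}(\wE_1\cap\wE_2)$, and I must show that component is $\widetilde E_{12}$. Here I would use that $\widetilde E_1\cap\widetilde E_2$ is connected: indeed $\widetilde E_1$ is the universal cover of $\wE_1$ by part (1), and $\widetilde E_1\cap\widetilde E_2$ is a component of the preimage inside $\widetilde E_1$ of the subcomplex $\wE_1\cap\wE_2\subseteq\wE_1$; but this preimage, sitting inside the universal cover $\widetilde E_1\to\wE_1$, has its components indexed by cosets of $\pi_1(\wE_1\cap\wE_2)$ in $\pi_1\wE_1$ — and since $\wE_1\cap\wE_2$ is a standard subcomplex, its inclusion into $\wE_1$ is again $\pi_1$-injective (same retraction argument applied within $\wE_1$), so each such component is connected and is the universal cover of $\wE_1\cap\wE_2$. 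Being simply connected and mapping by a covering onto $\wE_1\cap\wE_2$, it is an elevation.

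The main obstacle I anticipate is the bookkeeping in part (2) showing rigorously that $\wE_1\cap\wE_2$ is a standard subcomplex and that all the relevant inclusions ($\wE_1\cap\wE_2\hookrightarrow\wE_1\hookrightarrow\od_\ca$) are simultaneously $\pi_1$-injective via compatible retractions — this is exactly the content packaged in Lemmas~\ref{lem:compactible}, \ref{lem:retraction property} and Definition~\ref{def:retraction}, so it should go through, but the identification of which face of $\Si_\ca$ is dual to $\wE_1\cap\wE_2$ requires care with the projection maps $\Pi_{F}$. Once that combinatorial input is in hand, the covering-space arguments in both parts are formal. A secondary subtlety is making sure that ``elevation'' (connected component of full preimage) and ``universal cover'' are correctly matched: this is handled by the standard fact that for a $\pi_1$-injective subcomplex of an aspherical-in-the-relevant-sense complex, components of the preimage in the universal cover are precisely the universal covers of the subcomplex, which is what the loop-lifting argument in part (1) establishes. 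I would also remark that no asphericity of $\od_\ca$ itself is needed — only that standard subcomplexes are retracts — so the lemma holds for arbitrary $\ca$.
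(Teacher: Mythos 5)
Your part (1) is correct and matches the paper: the retraction $\Pi_{\widehat E}$ of Definition~\ref{def:retraction} gives $\pi_1$-injectivity of $\widehat E\to\od_\ca$, and the standard loop-lifting argument then identifies each elevation with the universal cover.

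Part (2), however, has a genuine gap at its central step. You assert that ``$\widetilde E_1\cap\widetilde E_2$ is a component of the preimage inside $\widetilde E_1$ of the subcomplex $\widehat E_1\cap\widehat E_2$'' and then deduce connectedness from the fact that each such component is a copy of the universal cover of $\widehat E_1\cap\widehat E_2$. But the assertion that the intersection is a \emph{single} component is precisely what has to be proved. A priori, $\widetilde E_1\cap\widetilde E_2$ is only a \emph{union} of such components: each component maps into $\widehat E_2$ and therefore sits inside some elevation of $\widehat E_2$, and nothing you have said rules out two distinct components landing in the same elevation $\widetilde E_2$. In group-theoretic terms, the components of the preimage of $\widehat E_1\cap\widehat E_2$ contained in $\widetilde E_1$ correspond to cosets of $\pi_1(\widehat E_1\cap\widehat E_2)$ in $\pi_1(\widehat E_1)$, and two of them lie in the same elevation of $\widehat E_2$ exactly when the coset representatives differ by an element of $\pi_1(\widehat E_1)\cap\pi_1(\widehat E_2)$ (intersection taken inside $\pi_1\od_\ca$). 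So the real content of (2) is the equality $\pi_1(\widehat E_1)\cap\pi_1(\widehat E_2)=\pi_1(\widehat E_1\cap\widehat E_2)$, which does \emph{not} follow from $\pi_1$-injectivity of the individual subcomplexes. The missing ingredient is the one the paper supplies: given $x,y\in\widetilde E_1\cap\widetilde E_2$ and paths $\widetilde P_i\subset\widetilde E_i$ joining them, project to $\od_\ca$ and apply the retraction $\Pi_{\widehat E_1}$ to $P_2$; by Lemma~\ref{lem:retraction property} the image $Q_2=\Pi_{\widehat E_1}(P_2)$ lies in $\widehat E_1\cap\widehat E_2$, and it is homotopic rel endpoints to $P_2$ because $\Pi_{\widehat E_1}$ fixes $P_1$ pointwise and carries the homotopy $P_1\simeq P_2$ to a homotopy $P_1\simeq Q_2$. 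Lifting $Q_2$ then joins $x$ to $y$ inside $\widetilde E_1\cap\widetilde E_2$. You do invoke Lemma~\ref{lem:retraction property} in your proposal, but only to identify $\widehat E_1\cap\widehat E_2$ as a standard subcomplex, not at the point where it is actually needed.
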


 \begin{proof}
By Definition~\ref{def:retraction}, there is retraction from $\od_\ca$ to $\widehat E$, so $\hat E\to \od_\ca$ is $\pi_1$-injective. Thus (1) follows. The proof of (2) is similar to \cite[Lemma 6.3]{huang2024cycles}, and we provide details here for the convenience of the reader. For (2), it suffices to prove $\widetilde E_1\cap\widetilde E_2$ is connected. Given vertices $x,y\in \widetilde E_1\cap\widetilde E_2$, let $\widetilde P_i$ be a path in $\widetilde E_i$ from $x$ to $y$ for $i=1,2$. Let $P_i$ be the path which is the image of $\widetilde P_i$ in $\widehat E_i$ under the covering map. Then $P_1$ and $P_2$ are homotopic rel endpoints in $\od_\ca$. Let $\Pi_{\widehat E_1}:\od_\ca\to\widehat E_1$ be the retraction map in Definition~\ref{def:falk} and $Q_i=\Pi_{\widehat E_1}(P_i)$. Then $Q_1=P_1$ and $Q_2\subset \widehat E_1\cap \widehat E_2$ by Lemma~\ref{lem:retraction property}. As $Q_1$ and $Q_2$ are homotopic rel endpoints in $\od_\ca$, we know $Q_2$ and $P_2$ are homotopic rel endpoints in $\od_\ca$. Hence $Q_2$ lifts to a path in $\widetilde E_1\cap\widetilde E_2$ connecting $x$ and $y$, as desired.
 \end{proof}

Given an affine hyperplane arrangement $\ca$ of $\mathbb R^n$, and a point $x\in \mathbb R^n$, the \emph{local arrangement} at $x$, denoted by $\ca_x$, is made of the collection of all hyperplanes of $\ca$ that contain $x$. A point $x\in \mathbb R^n$ is an \emph{$\ca$-vertex}, if it is an element of Fan$(\ca)$ (or equivalently, it is a vertex of $D_\ca$).

 \begin{theorem}
\label{thm:falk}
Let $\ca$ be an affine hyperplane arrangement of $\mathbb R^n$. Then its Falk complex $\falk_\ca$ is simply connected.

Suppose in addition that for all $\ca$-vertices of $\mathbb R^n$, the local arrangements at these vertices are $K(\pi,1)$. If the Falk complex $\falk_\ca$ of $\ca$ is contractible, then $\ca$ is $K(\pi,1)$.
\end{theorem}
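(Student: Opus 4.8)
\emph{Plan of proof.} The plan is to compare $\falk_\ca$ with the universal cover $\widetilde K$ of $\od_\ca$ through an auxiliary ``doubling'' space $\mathsf Z$ that maps to both. Write $G=\pi_1\od_\ca$. For a face $F$ of $D_\ca$ let $F'$ be the dual face of $\Si_\ca$; the index set $\Lambda_F$ of \Cref{def:falk} runs over the elevations of the standard subcomplex $\widehat{F'}\subset\od_\ca$ in $\widetilde K$, and I write $\widetilde E_{F,\lambda}\subset\widetilde K$ for the one indexed by $\lambda$. By \Cref{lem:elevation}(1) each $\widetilde E_{F,\lambda}$ is a copy of the universal cover of $\widehat{F'}$, hence connected and simply connected (and contractible exactly when $\widehat{F'}$ is aspherical); \Cref{lem:elevation}(2) records that intersections of elevations are again elevations. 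I would set
\[
\mathsf Z=\left(\ \bigsqcup_{F}\ \bigsqcup_{\lambda\in\Lambda_F} F\times\widetilde E_{F,\lambda}\ \right)/\sim,
\]
gluing $F_1\times\widetilde E_{F_1,\lambda_1}$ and $F_2\times\widetilde E_{F_2,\lambda_2}$ along $F_1\times\widetilde E_{F_2,\lambda_2}$ whenever $F_1\subset F_2$ in $D_\ca$ (so $\widehat{F_2'}\subset\widehat{F_1'}$ and $\widetilde E_{F_2,\lambda_2}$ lies in a unique $\widetilde E_{F_1,\lambda_1}$). Projection to the second factor gives a $G$-map $\alpha\colon\mathsf Z\to\widetilde K$ — onto, since the $\widehat{F_v'}$ with $v$ an $\ca$-vertex cover $\od_\ca$ — and projection to the first factor gives a $G$-map $\beta\colon\mathsf Z\to\falk_\ca$; both are trivial bundles over the interior of each cell of the target. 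The fibre of $\beta$ over a point lying over the interior of a face $F_0$ with label $\lambda$ is $\bigcup_{F_0\subset F}\widetilde E_{F,\lambda'}$, which is the whole elevation $\widetilde E_{F_0,\lambda}$ (the $\widehat{F'}$ with $F_0\subset F$ cover $\widehat{F_0'}$). The fibre of $\alpha$ over a point lying over a cell of $\od_\ca$ dual to a fan $U$ is the subcomplex of $D_\ca$ consisting of the faces $F$ whose dual fan is $\le U$; when nonempty this is the set of faces of a single face of $D_\ca$, hence a cell, hence contractible.

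The first assertion now follows. Since $\alpha$ is a surjection with connected fibres and $\widetilde K$ is connected, $\mathsf Z$ is connected, hence $\falk_\ca=\beta(\mathsf Z)$ is connected. Since $\beta$ is a surjection, trivial over open cells, with connected fibres (simply connected, in fact), a loop in $\falk_\ca$ lifts, up to homotopy and cell by cell, through these fibres, giving a surjection $\pi_1\mathsf Z\twoheadrightarrow\pi_1\falk_\ca$; as $\pi_1\mathsf Z=\pi_1\widetilde K=1$, we get $\pi_1\falk_\ca=1$. (If $\ca$ has no $\ca$-vertex then $D_\ca=\emptyset$ and there is nothing to prove; otherwise $\falk_\ca\ne\emptyset$.)

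For the second assertion, assume every local arrangement $\ca_v$ at an $\ca$-vertex is $K(\pi,1)$. I claim every standard subcomplex $\widehat{F'}$ with $F$ a face of $D_\ca$ is then aspherical. Choose an $\ca$-vertex $v\in F$ (one exists as $F$ is bounded); the $n$-cell $F_v'$ dual to $v$ contains $F'$, because $\{v\}$ lies below the fan dual to $F'$ in the poset $\fan(\ca)$, so $\widehat{F'}\subset\widehat{F_v'}$; by \Cref{def:retraction} the retraction $\Pi_{\widehat{F'}}\colon\od_\ca\to\widehat{F'}$ restricts to a retraction $\widehat{F_v'}\to\widehat{F'}$; and $\widehat{F_v'}$ is the Salvetti complex of $\ca_v$, hence aspherical by hypothesis, so its retract $\widehat{F'}$ is aspherical. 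Consequently every fibre $\widetilde E_{F_0,\lambda}$ of $\beta$ is contractible. Now both $\alpha$ and $\beta$ are CW maps that are trivial over open cells with contractible fibres, hence homotopy equivalences (by the standard skeleton-by-skeleton patching argument / gluing lemma), so $\falk_\ca\simeq\mathsf Z\simeq\widetilde K$. If $\falk_\ca$ is contractible, then so is $\widetilde K$, i.e.\ $\od_\ca$ is aspherical, and since $\od_\ca\simeq M(\ca\otimes\cc)$ this is exactly the statement that $\ca$ is a $K(\pi,1)$ arrangement.

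The main obstacle is the bookkeeping behind the first paragraph: setting up the gluings in $\mathsf Z$ consistently (using \Cref{lem:elevation}(2) so the elevations overlap correctly and the fibres of $\beta$ are genuine full elevations), checking that $\alpha$ and $\beta$ are locally trivial over open cells, and — the combinatorial heart — identifying the fibre of $\alpha$ over a point with the appropriate star-type subposet of faces of $D_\ca$ and showing it realizes to a single cell. This last point is precisely where the duality between the bounded-cell complex $D_\ca$ and $\Si_\ca$ enters, and it is the step I expect to require the most care; by contrast the asphericity input in the second part and the final patching argument are routine once the framework is in place.
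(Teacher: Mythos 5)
Your strategy --- interpolating between $\falk_\ca$ and $\widetilde K$ by a space $\mathsf Z$ that fibres over both with simply connected (resp.\ contractible) fibres --- is in essence an explicit unpacking of the nerve theorem, which is what the paper applies directly: it covers $\widetilde K$ by the elevations of the standard subcomplexes dual to the $\ca$-vertices, invokes \cite[Theorem 6]{bjorner2003nerves} together with \Cref{lem:elevation}, and identifies the resulting nerve with the nerve of the open-star cover of $\falk_\ca$. Your second half (each $\widehat{F'}$ with $F$ a bounded face of $D_\ca$ is a retract of the Salvetti complex of a local arrangement, hence aspherical) coincides with the paper's argument.

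The genuine gap is in your description of the fibres of $\alpha$. The faces of $D_\ca$ whose dual fan is $\le U$ are precisely the \emph{bounded} faces of the (typically unbounded) polyhedron $\overline U$, and these are in general not ``the set of faces of a single face of $D_\ca$''. Concretely, for $\ca=\{z=x,\ z=-x,\ x=\pm2,\ y=\pm2\}$ in $\mathbb R^3$ and the chamber $U=\{|x|<2,\ |y|<2,\ z>|x|\}$, the bounded faces of $\overline U$ are the two parallelograms $\{z=x,\ 0\le x\le 2,\ |y|\le 2\}$ and $\{z=-x,\ -2\le x\le 0,\ |y|\le 2\}$ together with their faces: two $2$-cells of $D_\ca$ glued along an edge, not a single cell. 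What your argument actually needs is that this union of bounded faces is nonempty and contractible. Nonemptiness does hold (if $\overline U$ contained a line, its direction would lie in $\bigcap_{H\in\ca}\vec H$, which is $\{0\}$ as soon as one $\ca$-vertex exists; so $\overline U$ is pointed and its vertices are $\ca$-vertices), but contractibility of the bounded subcomplex of a pointed unbounded polyhedron is a nontrivial fact that you neither prove nor cite, and it is exactly at this point --- together with the unproved assertion that a cellular surjection, trivial over open cells with contractible fibres, is a homotopy equivalence, which is the substance of the nerve theorem --- that the proposal is incomplete rather than merely tedious.
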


\begin{proof}
Let $\{x_i\}_{i\in I}$ be the collection of $\ca$-vertices of $\mathbb R^n$. Let $\widehat E_i$ be the standard subcomplex of $\od_\ca$ dual to $x_i$. Then $\widehat E_i$ is the Salvetti complex for the local arrangement $\ca_{x_i}$.
Note that $\{\widehat E_i\}_{i\in I}$ forms a covering of $\od_\ca$. Now we consider the covering $\mathcal U$ of $\widetilde K$ by all possible elevations of elements in $\{\widehat E_i\}_{i\in I}$. Let $\Delta$ be the nerve of $\mathcal U$. By Lemma~\ref{lem:elevation}, each member of $\mathcal U$ is simply connected, and the intersection of finitely many members in $\mathcal U$ is also simply connected (when non-empty). Then by \cite[Theorem 6]{bjorner2003nerves}, $\Delta$ is simply connected.

Elements of $\mathcal U$ are in 1-1 correspondence with vertices of $\falk_\ca$. Moreover, a finite collection of elements in $\mathcal U$ has non-empty common intersection if and only if the associated collection of vertices in $\falk_\ca$ have non-empty common intersection of their open stars. Let $\mathcal U'$ be the covering of $\falk_\ca$ by open stars of vertices of $\falk_\ca$. Then the nerve $\Delta'$ of $\mathcal U'$ is isomorphic to $\Delta$. Moreover, the intersection of finitely many members in $\mathcal U'$ is contractible (if non-empty). Thus by \cite[Theorem 6]{bjorner2003nerves} $\Delta'$ is homotopy equivalent to $\falk_\ca$. Therefore $\falk_\ca$ is simply connected.

If the local arrangements at vertices of $D_\ca$ are $K(\pi,1)$, then $\widehat E_i$ is aspherical. Moreover, any standard subcomplex of $\widehat E_i$ is also aspherical, as it is a retract of $\widehat E_i$ by \Cref{def:retraction}. Thus any finite intersection of elements in $\mathcal U$ is contractible (if non-empty). Hence \cite[Theorem 6]{bjorner2003nerves} implies that $\Delta$ is homotopy equivalent to $\widetilde K$. Thus if $\falk_\ca$ is contractible, then $\widetilde K$ is contractible.
\end{proof}

\subsection{Links of Falk complexes} 
\label{subsec:link}
When $\ca$ is a central arrangement, there is another complex associated with $\ca$, called the \emph{spherical Deligne complex}, defined as follows. Let $S_\ca$ be the unit sphere, endowed with the polyhedral complex structure coming from the the intersection of the unit sphere of $\mathbb R^n$ with $\ca$. Then there is a 1-1 correspondence between open cells in $S_\ca$ and elements in $\fan(\ca)$ which are not 0-dimensional. A \emph{face} of $S_\ca$ is the closure of an open cell of $S_\ca$. Each face is a disjoint union of open cells.  
A face $F$ of $S_\ca$ is dual to a face $F'$ of $\Si_\ca$ if the barycenter of $F'$ is contained in the fan associated with $F$. In this case, we also say $F$ is dual to the standard subcomplex $\widehat F'$ of $\od_\ca$. 
 The definition of the spherical Deligne complex of $\ca$, denoted by $\sd_\ca$, is identical to Definition~\ref{def:falk}, except we consider faces of $S_\ca$ instead of faces of $D_\ca$. Similarly, we have a natural action $\pi_1\od_\ca\curvearrowright \sd_\ca$ which induces a map $\sd_\ca\to S_\ca$.

\begin{lem}
\label{lem:iso}
Suppose $\ca$ is the reflection arrangement associated with a finite Coxeter group with Coxeter diagram $\Lambda$. Then $\Delta_\ca$ is isomorphic to the Artin complex $\Delta_\Lambda$.
\end{lem}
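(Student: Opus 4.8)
The plan is to unwind both definitions and exhibit an explicit type-preserving isomorphism. Recall $\Delta_\ca$ is built from $D_\ca$ (or rather from the faces of $D_\ca$) together with the universal cover $\widetilde K$ of $\od_\ca$: its vertices are elevations in $\widetilde K$ of the standard subcomplexes $\widehat F'$ dual to faces $F$ of $D_\ca$ (or $S_\ca$ in the spherical/central case), and incidence is given by containment of elevations. On the other side, $\Delta_\Lambda$ has vertices the left cosets $gA_{\hat s}$ of the maximal standard parabolic subgroups $A_{\hat s}$, with a simplex spanned by cosets with non-empty common intersection. The first step is to match up the two indexing sets. For a reflection arrangement, the faces $F'$ of $\Si_\ca$ dual to $\ca$-vertices and more generally to fans correspond to the standard parabolic subgroups of $A_\Lambda$: the $\widehat F'$ that are "spherical Deligne complex vertices" are exactly the standard subcomplexes $\widehat F'$ for which $F'$ is dual to a rank-$(n-1)$ fan, i.e. a face of $S_\ca$ of minimal dimension — these are the Salvetti complexes of the rank-1 local arrangements, whose $\pi_1$ is the parabolic $A_{\hat s}$ (the Artin group on the complement of one generator). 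Here I would invoke Theorem~\ref{thm:reflection arrangement}, which identifies $\pi_1\od_\Lambda$ with $A_\Lambda$ and identifies $\pi_1$ of the relevant standard subcomplexes with standard parabolics.

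Next, I would set up the bijection on vertices. Fix a basepoint in $\widetilde K$ over a basepoint of $\od_\ca$; this identifies deck transformations with $A_\Lambda = \pi_1\od_\ca$. For each vertex-type face $F$ of $S_\ca$, the standard subcomplex $\widehat F'$ is $\pi_1$-injective (Lemma~\ref{lem:elevation}(1) / Definition~\ref{def:retraction}), so its elevations in $\widetilde K$ are in natural bijection with the cosets $A_\Lambda / \iota_*\pi_1(\widehat F')$, where $\iota_* \pi_1(\widehat F')$ is the corresponding standard parabolic $A_{\hat s}$ (after choosing the right conjugacy representative); the bijection is: the elevation containing the basepoint corresponds to the identity coset, and deck transformations permute elevations the same way $A_\Lambda$ permutes cosets. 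Doing this over all types $s\in S$ gives a type-preserving bijection $\Delta_\ca^0 \to \Delta_\Lambda^0$.

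Then I would check this bijection is a simplicial isomorphism. A collection of vertices of $\Delta_\ca$ spans a simplex iff the corresponding elevations in $\widetilde K$ have non-empty common intersection; by Lemma~\ref{lem:elevation}(2) this intersection, when non-empty, is itself an elevation of the intersection of the standard subcomplexes, which is non-empty because the standard subcomplexes $\widehat F'_1 \cap \cdots \cap \widehat F'_k$ correspond to a non-empty fan whenever the $F'_i$ are pairwise compatible — but more to the point, two elevations $g_1 \widetilde E_1$ and $g_2 \widetilde E_2$ meet iff $g_1 A_{\hat s_1} \cap g_2 A_{\hat s_2} \neq \emptyset$ as cosets. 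This is precisely the condition defining simplices of $\Delta_\Lambda$. (For larger collections one uses that pairwise intersection of cosets of standard parabolics forces common intersection — a standard fact about parabolic subgroups, or it follows from the nerve-type arguments already in play; alternatively one reduces to the rank-$2$ link via Lemma~\ref{lem:link} and Lemma~\ref{lem:elevation}.) Finally I would record that the whole picture is equivariant: $A_\Lambda = \pi_1\od_\ca$ acts on $\Delta_\ca$ by the construction in Definition~\ref{def:falk}, and on $\Delta_\Lambda$ by left multiplication on cosets, and the bijection above intertwines the two actions by construction.

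The main obstacle I anticipate is \emph{bookkeeping of types and basepoints}: one must check that the standard parabolic attached to a vertex-type face $F$ of $S_\ca$ is genuinely $A_{\hat s}$ for the correct $s$ (not merely a conjugate), which requires relating the combinatorics of the poset $\fan(\ca)$ / the face $\Si_\ca$ to the generating set $S$ of $A_\Lambda$, and then checking that the incidence relation on elevations matches coset intersection simultaneously for \emph{all} types and \emph{all} simplex dimensions, not just vertices and edges. The cleanest route is probably to verify the edge case directly (two cosets $gA_{\hat s}$, $hA_{\hat t}$ intersect iff $h^{-1}g \in A_{\hat s} A_{\hat t}$) and then promote to higher simplices using that both $\Delta_\ca$ and $\Delta_\Lambda$ are flag-like along links — by Lemma~\ref{lem:link} the link of a simplex in $\Delta_\Lambda$ is again an Artin complex of a sub-diagram, and by Lemma~\ref{lem:elevation}(2) together with Lemma~\ref{lem:retraction property} the analogous statement holds for $\Delta_\ca$, so one finishes by induction on $|S|$.
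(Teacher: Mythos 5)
Your proposal is correct and follows essentially the same route as the paper: identify the $1$-skeleton of $\widetilde K$ with the Cayley graph of $A_\Lambda$ via \Cref{thm:reflection arrangement}, use $\pi_1$-injectivity of the standard subcomplexes (from the retraction of \Cref{def:retraction}) to put elevations of $\widehat F'$ in bijection with left cosets of the corresponding standard parabolic, and then match incidence relations. The only real divergence is at the level of higher simplices: where you propose verifying the edge case and then invoking either a Helly property for cosets of standard parabolics or an induction on links (the Helly/flagness statement is indeed available, being \cite[Prop.~4.5]{godelle2012k}, but it is a theorem rather than a triviality), the paper instead matches simplices of every dimension directly, observing that a simplex of $\Delta_\Lambda$ whose vertex types intersect in $T$ is the same datum as a coset of $A_T$, which in turn is the same datum as an elevation of the standard subcomplex labeled by $T$; this direct matching is cleaner and sidesteps the flagness input entirely.
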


\begin{proof}
By Theorem~\ref{thm:reflection arrangement}, the 1-skeleton of the universal cover of $\od_\ca$ can be identified with the Cayley graph of $A_\Lambda$. As $\od_\ca$ is dual to the Coxeter complex $\bC_\Lambda$, and the 1-skeleton of $\od_\ca$ is obtained from the 1-skeleton of $\od_\ca$ by replacing each edge by a pair of edges, we deduce that
the 1-skeleton $\od_\ca$ is naturally identified with the Cayley graph of $W_\Lambda$. Hence each edge of $\od_\ca$ is labeled by a generator of $W_\Lambda$, i.e. a vertex of $\Lambda$. Let $S$ be the collection of vertices of $\Lambda$. Given a face $F'$ of $\Si_\ca$ with associated standard subcomplex $\widehat F'$ of $\od_\ca$, let $T$ be the set of labels of edges in $\widehat F'$. By \Cref{def:retraction}, $\widehat F'\to \od_\ca$ is $\pi_1$-injective. Thus elevations of $\widehat F'$ in the universal cover of $\od_\ca$ are in 1-1 correspondence with left cosets of $A_T$ (i.e. the standard parabolic subgroup of $A_\Lambda$ generated by $T$) in $A_\Lambda$. 

On the other hand, we define the \emph{type} of a simplex in the Artin complex $\Delta_\Lambda$ to be the intersection of the types of its vertices. Thus there is a 1-1 correspondence between simplices in $\Delta_\Lambda$ of type $T$ and left cosets of $A_T$ in $\Delta_\Lambda$. Now the lemma follows from the definition of $\Delta_\Lambda$ and $\Delta_\ca$.
\end{proof}

Let $\ca$ be an arbitrary affine hyperplane arrangement. Let $x$ be a vertex in $D_\ca$, with $\ca_x$ be the local arrangement at $x$. Then $\lk(x,D_\ca)$ can be naturally identified with a subcomplex of $S_{\ca_x}$ as in Lemma~\ref{lem:link deligne}, which is a consequence of the description of $\falk_\ca$ and $\sd_{\ca_x}$.
 \begin{lem}(\cite[Lemma 4.10]{huang2024cycles})
 	\label{lem:link deligne}
 Let $x'\in \falk_\ca$ be a vertex which maps to $x\in D_\ca$ under $\falk_\ca\to D_\ca$. Let $N$ be the inverse image of $\lk(x,D_\ca)$ (viewed as a subset of $S_{\ca_x}$) under the map $\sd_{\ca_x}\to S_{\ca_x}$. Then $\lk(x',\falk_\ca)\cong N$.
 \end{lem}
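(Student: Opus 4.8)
The plan is to unwind both sides of the claimed isomorphism into combinatorial data attached to the local arrangement $\ca_x$ and to the universal cover $\widetilde K$ of $\od_\ca$, and then to match these data face by face. Write $E$ for the face of $\Si_\ca$ dual to the $\ca$-vertex $x$, so that (as recorded in the proof of \Cref{thm:falk}) the standard subcomplex $\widehat E$ of $\od_\ca$ is the Salvetti complex $\od_{\ca_x}$ of the local arrangement, and $E$ is identified with $\Si_{\ca_x}$. By \Cref{lem:elevation}(1) the elevation $\widetilde E\subset\widetilde K$ corresponding to the vertex $x'$ is a copy of the universal cover of $\od_{\ca_x}$, which we denote $\widetilde K_x$. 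On the polyhedral side, sending $F\mapsto\lk(x,F)$ gives a bijection between the faces $F$ of $D_\ca$ strictly containing $x$ and the faces of the subcomplex $\lk(x,D_\ca)\subset S_{\ca_x}$; and since $x\subset F$ forces the dual face $F'\subset\Si_\ca$ to lie in $E$, the standard subcomplex $\widehat{F'}\subset\widehat E=\od_{\ca_x}$ is exactly the standard subcomplex of $\od_{\ca_x}$ dual to $\lk(x,F)$.

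Next I would read off the cells of the two complexes. A cell of $\lk(x',\falk_\ca)$ is $\lk(x',F\times\{\lambda\})\cong\lk(x,F)$, one for each cell $F\times\{\lambda\}$ of $\falk_\ca$ strictly containing $x'$; by \Cref{def:falk} these are precisely the pairs where $x$ is a vertex of $F$ and $\lambda$ indexes an elevation of $\widehat{F'}$ in $\widetilde K$ that is contained in $\widetilde E$. A cell of $N$ is a piece $G\times\{\mu\}$ of $\sd_{\ca_x}$ with $G$ a face of $S_{\ca_x}$ lying in $\lk(x,D_\ca)$ and $\mu$ indexing an elevation, in $\widetilde K_x$, of the standard subcomplex of $\od_{\ca_x}$ dual to $G$. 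The bijection $F\leftrightarrow G=\lk(x,F)$ from the previous step identifies the polyhedral parts, so it only remains to identify the index sets. For this I apply \Cref{lem:elevation}(2) with $\widehat E_1=\widehat E$ and $\widehat E_2=\widehat{F'}$: since $\widehat{F'}\subset\widehat E$, any elevation of $\widehat{F'}$ in $\widetilde K$ meeting $\widetilde E$ intersects it in an elevation of $\widehat{F'}$, and this, being connected and contained in the (connected) ambient elevation, must equal the whole elevation; hence it lies in $\widetilde E$. Therefore the elevations of $\widehat{F'}$ in $\widetilde K$ contained in $\widetilde E$ are exactly the elevations of $\widehat{F'}$ inside $\widetilde E=\widetilde K_x$, which gives the required bijection $\lambda\leftrightarrow\mu$. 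Finally, the face relations match: $F_1\times\{\lambda_1\}$ is a face of $F_2\times\{\lambda_2\}$ iff $F_1\subset F_2$ with the $\lambda_2$-elevation inside the $\lambda_1$-elevation, and under the dictionary this is exactly the condition for $G_1\times\{\mu_1\}$ to be a face of $G_2\times\{\mu_2\}$ in $\sd_{\ca_x}$. So the cellwise correspondence is an isomorphism $\lk(x',\falk_\ca)\cong N$.

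The main obstacle I expect is the bookkeeping in the first step rather than any single deep point: correctly invoking the identification $\widehat E=\od_{\ca_x}$ and tracking the duality between faces of $D_\ca$ at $x$, faces of $S_{\ca_x}$, and standard subcomplexes of $\od_{\ca_x}$; and observing that the restriction to \emph{bounded} fans built into $D_\ca$ is reflected precisely by passing to the subcomplex $\lk(x,D_\ca)$ of $S_{\ca_x}$, not all of $S_{\ca_x}$. The only genuinely covering-space-theoretic input, namely that elevations restrict from $\widetilde K$ to $\widetilde E$ and conversely, is supplied cleanly by \Cref{lem:elevation}(2).
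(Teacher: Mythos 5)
Your argument is correct and is precisely the definition-unwinding that the paper alludes to when it calls the lemma ``a consequence of the description of $\falk_\ca$ and $\sd_{\ca_x}$'' (the paper itself only cites the external reference rather than writing out a proof). The one genuinely covering-theoretic point — that an elevation of $\widehat{F'}$ in $\widetilde K$ which meets $\widetilde E$ must be wholly contained in $\widetilde E$, deduced from \Cref{lem:elevation}(2) — is exactly the right input, and your matching of cells and face relations on both sides is complete.
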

 
\subsection{Metrizing Falk complexes}
\begin{definition}
\label{def:falk metric}
Suppose $\ca$ is an admissible affine arrangement in $\mathbb R^n$, and let $\falk_\ca$ be the associated Falk complex. We define a metric $d_\infty$ on $\falk_\ca$ as follows.
Let $D_\ca$ be defined as in the previous section. As $D_\ca$ is a subset of $\mathbb R^n$, we endow $D_\ca$ with the $\ell^\infty$ metric $d_{\infty}$ on $\mathbb R^n$. This restricts to a metric on each face of $D_\ca$, hence an $\ell^\infty$-metric on each face of $\falk_\ca$ using the map $\falk_\ca\to D_\ca$. Then we can define a pseudo-metric $d_\infty$ on $\falk_\ca$ by considering the infimum of lengths of strings between each pair of points as in Section~\ref{subsec:injective}.
\end{definition}

\begin{lem}
\label{lem:geodesic metric}
The pseudometric $d_\infty$ is a metric. Moreover, $(\falk_\ca,d_\infty)$ is a complete geodesic metric space.
\end{lem}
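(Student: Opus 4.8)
The plan is to recognize $(\falk_\ca,d_\infty)$ as a polyhedral complex assembled from compact $\ell^\infty$-cells and then feed it into the standard metric theory of such complexes, in exactly the way Section~\ref{subsec:injective} treats the orthoscheme complex $|P|_\infty$ of a finite-rank graded poset.

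First I would fix the cell structure. By Definition~\ref{def:falk metric}, each closed cell of $\falk_\ca$ is metrized by pulling back, via $q\colon\falk_\ca\to D_\ca$, the $\ell^\infty$-metric of the face of $D_\ca$ onto which it maps bijectively. That face is the closure of a bounded fan, hence a compact convex polytope inside $(\mathbb R^n,d_\infty)$, and a convex subset of $(\mathbb R^n,d_\infty)$ is a compact geodesic space because straight segments between its points are $\ell^\infty$-geodesics lying in it. So $\falk_\ca$ is a polyhedral complex built from compact geodesic cells, each isometric to a convex $\ell^\infty$-polytope, and $d_\infty$ is the intrinsic pseudometric of this complex; as in the orthoscheme case the complex may be locally infinite, which does not affect the argument.

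Next I would verify the two inputs the polyhedral-complex machinery requires. Connectedness is immediate: $\falk_\ca$ is simply connected by Theorem~\ref{thm:falk}. The second input is a uniform positive lower bound on the non-degeneracy of the cells (the quantity $\epsilon(\cdot)$ of \cite[\S I.7]{BridsonHaefliger1999}), which holds automatically as soon as $\falk_\ca$ has only finitely many isometry types of cells. In the situations where Theorems~\ref{thm:main1} and \ref{thm:main} are applied this is clear: there $\ca$ is finite or invariant under a cocompact lattice $\mathbb Z^n$ acting by isometries, so $D_\ca$ — and hence $\falk_\ca$ — has finitely many faces up to isometry. For a general admissible $\ca$ one argues instead that the cells cannot degenerate: for every $x\in D_\ca$ there is an $\ca$-vertex $v$ with $\ca_x\subseteq\ca_v$, and since admissibility confines $\ca_v$ to a finite list, the face-cones of the polytopes $\overline{U}$ ($U$ a bounded fan) fall into finitely many isometry types, while local finiteness of $\ca$ keeps these polytopes from becoming arbitrarily thin; together these yield the required lower bound even when infinitely many isometry types of cells are present.

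Granting these two inputs, the arguments around \cite[p.~101, Corollary I.7.10]{BridsonHaefliger1999}, applied verbatim as in Section~\ref{subsec:injective}, show that $d_\infty$ separates points — so it is a genuine metric — that every pair of points is joined by a geodesic, and that $(\falk_\ca,d_\infty)$ is complete. The step I expect to be the main obstacle is the non-degeneracy bound for arbitrary admissible $\ca$: extracting a uniform lower bound on the geometry of the cells of $D_\ca$ from admissibility and local finiteness alone. This subtlety vanishes once one restricts — as in every application in this paper — to finite or $\mathbb Z^n$-invariant admissible arrangements, for which finitely many isometry types of cells is automatic.
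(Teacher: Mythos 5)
Your first step coincides with the paper's: when $\falk_\ca$ has only finitely many isometry types of closed cells, the conclusion follows from the standard machinery of \cite[\S I.7]{BridsonHaefliger1999}. The gap is in your treatment of the general case. You claim that admissibility together with local finiteness of $\ca$ yields a \emph{uniform} lower bound on the non-degeneracy of the cells (``local finiteness of $\ca$ keeps these polytopes from becoming arbitrarily thin''). This is false: admissibility only constrains the local arrangements at $\ca$-vertices up to translation, and says nothing about the global positions of the hyperplanes. A locally finite admissible arrangement can easily have bounded fans of arbitrarily small width (e.g.\ in $\mathbb R^2$, take all lines $x=m$, $y=m$ for $m\in\mathbb Z$ together with $x=n+1/n$ for $n\ge 2$: every local arrangement is a product of $A_1$'s, yet the bounded cells include rectangles of width $1/n\to 0$). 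So there is no uniform $\epsilon$, and the quantity $\epsilon(\cdot)$ of \cite[\S I.7]{BridsonHaefliger1999} is not bounded below globally; your argument as written does not close.

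The paper avoids this by localizing rather than seeking a global bound: the projection $(\falk_\ca,d_\infty)\to(D_\ca,d_\infty)$ is $1$-Lipschitz, so any ball $B(x,r)$ maps into a bounded region of $D_\ca$, which by local finiteness of $\ca$ meets only finitely many faces of $D_\ca$; hence the smallest subcomplex of $\falk_\ca$ containing $B(x,r)$ has only finitely many isometry types of cells (even though it may have infinitely many cells), and the Bridson--Haefliger arguments apply to that subcomplex. Since positivity of $d_\infty$, completeness, and the existence of geodesics are all properties that can be verified ball by ball, this suffices. If you replace your ``uniform non-degeneracy'' paragraph with this localization step, the proof is complete; note also that the lemma is stated for arbitrary admissible arrangements, so you cannot simply restrict to the finite or $\mathbb Z^n$-invariant cases as your last sentence suggests.
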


\begin{proof}
Note that if $\falk_\ca$ has only finitely many isometry types of its closed cells, then the lemma follows from the same argument in \cite[p.~101, Theorem I.7.13 and p.~105, Theorem I.7.19]{BridsonHaefliger1999}. In the more general case, note that the map $(\falk_\ca,d_\infty)\to (D_\ca,d_\infty)$ is 1-Lipschitz. So for any $x\in \falk_\ca$, the ball $B(x,r)$ in $(\falk_\ca,d_\infty)$ is mapped to a bounded region in $(D_\ca,d_\infty)$. So the smallest subcomplex of $\falk_\ca$ containing $B(x,r)$ has only finitely many isometry types of its closed cells. As the lemma only concerns properties that need to be verified on each ball, this finishes the proof.
\end{proof}

\subsection{Falk complexes for admissible arrangements}
\label{subsec:admissible}
For $i=1,2$, let $\ca_i$ be an affine hyperplane arrangement in $\mathbb R^{m_i}$. Then the \emph{product} of $\ca_1$ and $\ca_2$ is an arrangement $\ca$ in $\mathbb R^{m_1+m_2}$ whose hyperplanes are of form $H\times \mathbb R^{m_2}$ for $H\in \ca_1$ or $\mathbb R^{m_1}\times H$ for $H\in \ca_2$.  
\begin{definition}
\label{def:admissible}
We say an affine hyperplane arrangement $\ca$ in $\mathbb R^n$ is \emph{admissible}, if at each $\mathcal A$-vertex $x$, the local arrangement at $x$ is a translate of one of the following four types:
\begin{enumerate}
    \item (type $B_n$) $x_i\pm x_j=0$ for $1\le i\neq j\le n$ and $x_i=0$ for $1\le i\le n$;
    \item (type $D_n$) $x_i\pm x_j=0$ for $1\le i\neq j\le n$;
    \item (skewed type $A_n$) $x_i=0$ for $1\le i\le n$ and $x_i=x_j$ for $1\le i\neq j\le n$, or any image of this this arrangement under the $(\bb{Z}/2\bb{Z})^n$ action on $\bb[n]{R}$ by reflections about the coordinate hyperplanes;
    \item or a product of the previous types.
\end{enumerate}
\end{definition}

Note that a skewed type $A_1$ arrangement is just the hyperplane $x=0$ in $\mathbb R$.

For $1\le i\le 4$, let $\ca(i)$ be the central arrangement in item $(i)$ of the above list. Let $S_{\ca(i)}$ and $\Delta_{\ca(i)}$ be defined in Section~\ref{subsec:link}. Note that for $i>1$, we can subdivide $S_{\ca(i)}$ into $S'_{\ca(i)}:=S_{\ca(1)}$ since $\ca(i) \subseteq \ca(1)$. Let $\Delta'_{\ca(i)}$ be the subdivision of $\Delta_{\ca(i)}$ obtained from pulling back the subdivision $S'_{\ca(i)}$ of $S_{\ca(i)}$ via the map $\Delta_{\ca(i)}\to S_{\ca(i)}$. Let $\Delta'_{\ca(1)}:=\Delta_{\ca(1)}$.

\begin{definition} \label{def:u type s type}
Let $1\le i\le 3$. The \emph{$u$-type} (short for ``unsubdivided type'') of a vertex in $S_{\ca(i)}$ is the usual type of the vertex viewed as a Coxeter complex as labeled in \Cref{subsec:coxeter and artin}. For $1 \leq i \leq 4$, the \emph{$s$-type} (``subdivided type'') of a vertex in $S'_{\ca(i)}$ is the type of the corresponding vertex in $S_{\ca(1)}$ viewed as the Coxeter complex $\bC_\Lambda$  of type $\Lambda = B_n$, also as labeled in \Cref{subsec:coxeter and artin}.
The $u$-type of a vertex of $\Delta_{\ca(i)}$ is its usual type from the spherical Deligne complex, and the $s$-type of a vertex in $\Delta'_{\ca(i)}$ is defined to be the pull-back of $s$-type of vertices in $S'_{\ca(i)}$. %

We define a relation $<$ on $(\Delta'_{\ca(i)})^0$. Given two distinct vertices $v,w\in (\Delta'_{\ca(i)})^0$ of type $\hat s_i,\hat s_j$ respectively, define $v<w$ if $v$ and $w$ are adjacent in $\Delta'_{\ca(i)}$ and $i<j$. As is common convention, we say $v \leq w$ if $v < w$ or $v = w$. We call this the \emph{$s$-order} and sometimes write $\leq_s$. We call the usual ordering on $\sd_{\ca(i)}^0$ the \emph{$u$-order} and sometimes write $\leq_u$. 
\end{definition}

\begin{lem} \label{lem:1 2 3 poset}
    For $1\le i\le 3$ the $s$-order $\leq_s$ on $\Delta'_{\ca(i)}$ is a partial order.
\end{lem}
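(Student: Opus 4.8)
The plan is the following. Reflexivity holds by definition, and antisymmetry is automatic: if $v\leq_s w$ and $w\leq_s v$ with $v\neq w$, then $v<_s w$ and $w<_s v$, which would force the indices of the $s$-types of $v$ and $w$ to be strictly on both sides of one another. So the only content is transitivity. Suppose $v<_s w$ and $w<_s z$, with $s$-types $\hat s_a,\hat s_b,\hat s_c$; then $a<b<c$, in particular $v,w,z$ are pairwise distinct, and once we know that $v$ and $z$ are joined by an edge it follows that $v<_s z$. So I would reduce everything to showing that \emph{$v$ and $z$ are adjacent in $\Delta'_{\ca(i)}$}, and I would phrase this via the link of the middle vertex: set $L=\lk(w,\Delta'_{\ca(i)})$; then $v,z\in L$, $v$ has $s$-type $\hat s_a$ with $a<b$ and $z$ has $s$-type $\hat s_c$ with $c>b$, and the claim is that $v$ and $z$ are adjacent in $L$.

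For $i=1$ there is no subdivision: $\Delta'_{\ca(1)}=\Delta_{\ca(1)}\cong\Delta_\Lambda$ is the Artin complex of type $B_n$ by \Cref{lem:iso}, and $\leq_s$ is precisely the linear-diagram relation of \Cref{subsec:coxeter and artin}, which is transitive by \cite[Cor 6.5]{huang2023labeled}. One can also see this directly from the link picture: by \Cref{lem:link} the link of a type-$\hat s_b$ vertex is $\Delta_{\Lambda\setminus s_b}$, and deleting $s_b$ from the linear $B_n$-diagram splits it as $A_{b-1}\sqcup B_{n-b}$, so $L\cong\Delta_{A_{b-1}}\ast\Delta_{B_{n-b}}$ with the two join factors carrying exactly the types $\hat s_1,\dots,\hat s_{b-1}$ and $\hat s_{b+1},\dots,\hat s_n$; hence $v$ and $z$ lie in distinct factors and are automatically adjacent. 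For $i=2$, after matching up the two subdivisions one sees that $\Delta'_{\ca(2)}$ equipped with $\leq_s$ is the $(s_n,s_{n-1})$-subdivision of the type-$D_n$ Artin complex with the relation of \Cref{subsec:coxeter and artin}; the transitivity we need is then the ``poset'' part of \Cref{conj:dn}, which is straightforward and holds for all $n$.

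The genuinely new case is $i=3$, and I would run the same link analysis for the $\bC_{B_n}$-subdivision $\Delta'_{\ca(3)}$ of the skewed $A_n$ Artin complex $\Delta_{\ca(3)}\cong\Delta_{A_n}$. Let $\pi\colon\Delta'_{\ca(3)}\to S'_{\ca(3)}=S_{\ca(1)}=\bC_{B_n}$ be the quotient map defining the $s$-type and $\bar w=\pi(w)$, of $B_n$-type $\hat s_b$. Downstairs $\lk(\bar w,\bC_{B_n})\cong\bC_{A_{b-1}}\ast\bC_{B_{n-b}}$, with the factors carrying the $B_n$-types $\hat s_1,\dots,\hat s_{b-1}$ and $\hat s_{b+1},\dots,\hat s_n$; this reflects a product decomposition of the local arrangement at $\bar w$ into a braid arrangement of type $A_{b-1}$ and an arrangement of type $B_{n-b}$. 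Since the local group structure along $\lk(\bar w,\bC_{B_n})$ splits along this product, $L=\lk(w,\Delta'_{\ca(3)})$ should itself split as a join $L_1\ast L_2$, with $L_1$ a (suitably subdivided) lower-rank spherical Deligne complex of type \textup{(3)} carrying the $s$-types $\hat s_1,\dots,\hat s_{b-1}$ and $L_2$ one of type \textup{(1)} carrying $\hat s_{b+1},\dots,\hat s_n$. Then $v\in L_1$ and $z\in L_2$ sit in distinct join factors, hence are adjacent, which yields transitivity.

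The step I expect to be the main obstacle is making this link computation rigorous in the subdivided setting for $i=3$: identifying $\lk(w,\Delta'_{\ca(3)})$, its induced $s$-labeling, and its join decomposition precisely when $w$ is a vertex created by the subdivision — i.e. when $\bar w$ lies on a hyperplane $x_i+x_j=0$ that does not belong to $\ca(3)$ itself — and checking that the $s$-types really do split across the two join factors as claimed. Should this clean splitting fail in some configuration, the fallback is to argue by induction on $n$, applying the inductive form of the statement to the factors $L_1$ and $L_2$ to produce the edge $vz$, the base case $n\le2$ being vacuous (then $\Delta'_{\ca(i)}$ is at most one-dimensional). Reflexivity, antisymmetry, the cases $i=1,2$, and the ``different factors imply adjacent'' conclusion are all routine; the subdivision bookkeeping for $i=3$ is where the real work lies.
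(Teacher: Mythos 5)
Your handling of $i=1,2$ matches the paper, which likewise disposes of these cases by citation (to \cite[Lem 6.5]{haettel2021lattices} and \cite[Lem 2.22]{huang2024cycles}), and your reduction of transitivity to ``$v$ and $z$ are adjacent'' is the right target. The gap is in the mechanism you propose for $i=3$, and it sits exactly where you suspect: when the middle vertex $w$ is a fake (subdivision) vertex, $\lk(w,\Delta'_{\ca(3)})$ does \emph{not} decompose as a join $L_1\ast L_2$ with $L_1$ carrying the $s$-types $\hat s_1,\dots,\hat s_{b-1}$ and $L_2$ carrying $\hat s_{b+1},\dots,\hat s_n$. A fake $w$ lies on a real edge between a non-negative $w^+$ and a non-positive $w^-$ with $u(w^+)=j<k=u(w^-)$, and its link is the $s$-subdivision of the join $\{w^+,w^-\}\ast\sd_{A_{j-1}}\ast\sd_{A_{k-j-1}}\ast\sd_{A_{n-k}}$, graded by $u$-type rather than $s$-type. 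A fake vertex $v<_s w$ satisfies $v^+\le_u w^+$ and $w^-\le_u v^-$ (\Cref{lem:chain of fake}), so it straddles the first and third $u$-factors, while a fake $z>_s w$ has both $z^{\pm}$ in the middle factor; neither sits in a single ``$s$-type $<b$'' or ``$s$-type $>b$'' join factor, so ``different factors, hence adjacent'' is unavailable. Your fallback does not repair this: applying the inductive statement (that $\le_s$ is a partial order) to the link factors produces no element between $v$ and $z$ inside $\lk(w)$, because the only witness to their comparability was $w$ itself, which has been deleted.

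The paper's route for $i=3$ (\Cref{prop:type 3 partial order}, via \Cref{prop:comparable contained in real}) avoids links of fake vertices entirely: it shows that any $\le_s$-chain is contained in a single \emph{real} simplex of the unsubdivided complex, by replacing each fake vertex $v$ with the real pair $v^+,v^-$ and assembling a $u$-chain of the form $v_1\le_u\cdots\le_u v_j\le_u v_{j+1}^+\le_u\cdots\le_u v_k^+\le_u v_k^-\le_u\cdots\le_u v_{j+1}^-$, using transitivity and flagness of the $u$-order on the type-$A_n$ Artin complex. Inside that real simplex the projection to the Coxeter complex is injective, and adjacency of $v$ and $z$ follows from $\pos(v)\subseteq\pos(w)\subseteq\pos(z)$ and $\neg(v)\subseteq\neg(w)\subseteq\neg(z)$ together with \Cref{lem:adjacency in deligne}(1). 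To salvage your approach you would essentially have to prove this chain-in-a-real-simplex statement anyway, at which point the link decomposition becomes superfluous.
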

\begin{proof}
    The case $i=1$ is verified in \cite[Lem 6.5]{haettel2021lattices}, the case $i=2$ is verified in \cite[Lem 2.22]{huang2024cycles}, and the case $i = 3$ is verified in \Cref{prop:type 3 partial order}.
\end{proof}

For the type (4) arrangements, we have a nice description of the relation $\leq$.

\begin{lem} \label{lem:decomposing 4}
    Suppose $\cb = \cb_1 \times \dots \times \cb_k$, where each $\cb_j$ is an arrangement of type (1), (2), or (3). For each $i$, let $(\c P_i,<)$ denote $((\Delta'_{\cb_i})^0,<)$ with a minimal element $0$ added.
    Then $((\Delta'_{\cb})^0,<)$ is isomorphic to $(\c P_1 \times \dots \times \c P_k \setminus (0,\dots,0), <)$ with the product relation defined by $(p_1\dots, p_n) < (p'_1,\dots,p'_n)$ if $p_i < p'_i$ for each $i$. 
    
    In particular, the $s$-order on $\Delta'_{\ca(4)}$ is a partial order.
\end{lem}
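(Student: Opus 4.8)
The plan is to prove the claim in two stages: first establish the stated poset isomorphism $((\Delta'_{\cb})^0,<)\cong (\c P_1\times\cdots\times\c P_k\setminus\{(0,\dots,0)\},<)$, and then deduce that $\leq_s$ on $\Delta'_{\ca(4)}$ is a partial order as a formal consequence, using \Cref{lem:1 2 3 poset} to know each factor $\c P_i$ is a poset.

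For the isomorphism, I would work directly from the definitions in \Cref{subsec:link} and \Cref{def:u type s type}. First I would identify the cells of $S'_{\cb}$: since $\cb=\cb_1\times\cdots\times\cb_k$ is a product arrangement and $S'_{\cb}$ is the subdivision coming from $S_{\ca(1)}$ (the type-$B$ refinement) applied factorwise, a face of $S'_{\cb}$ is a product of faces of the $S'_{\cb_i}$ (where in each coordinate we are allowed to take the ``empty'' face, i.e.\ the center, recorded by the added minimal element $0$); the requirement that the whole face be a genuine cell of the sphere, not the origin, is exactly what excludes $(0,\dots,0)$. Correspondingly, the Deligne-complex side $\Delta'_{\cb}$ is built from $\od_{\cb}$, and because $\cb$ is a product the Salvetti complex $\od_{\cb}$ is (up to the relevant combinatorics) a product of the $\od_{\cb_i}$, so its universal cover and the elevations of standard subcomplexes split as products as well; thus vertices of $\Delta'_{\cb}$ correspond to tuples $(p_1,\dots,p_k)$ with $p_i\in\c P_i$, not all equal to $0$, where $p_i=0$ records that the $i$-th coordinate face is the center. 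This is the map I would write down explicitly, and then check it is a bijection on vertices and preserves adjacency (a tuple spans a simplex iff each coordinate does, by the common-intersection criterion for $\Delta$-complexes together with the product structure). Finally I would check it respects $<$: the $s$-type of a vertex of $\Delta'_{\cb}$ is the pull-back of the type in $S_{\ca(1)}=\prod S_{\ca(1),i}$, which on cube coordinates is just ``number of nonzero coordinates'', so the $s$-type of $(p_1,\dots,p_k)$ is the sum of the $s$-types of the $p_i$ (with $0$ contributing $0$); hence for adjacent vertices, $(p_i)<(p'_i)$ in $s$-order iff they are adjacent and the total $s$-type strictly increases, and one checks adjacency forces the coordinatewise comparison to go one way, giving exactly the product relation $p_i\le p'_i$ for all $i$ with at least one strict.

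For the ``in particular'' clause: a product of posets, with the product relation $(p_i)\le(p'_i)$ iff $p_i\le p'_i$ for all $i$, is again a poset (reflexivity, antisymmetry, transitivity each hold coordinatewise), and removing one element — here $(0,\dots,0)$ — from a poset leaves a poset. Since each $\c P_i$ is obtained from the poset $(\Delta'_{\cb_i}^0,\leq_s)$ of \Cref{lem:1 2 3 poset} by adjoining a bottom element $0$ (which preserves being a poset), $\c P_1\times\cdots\times\c P_k\setminus\{(0,\dots,0)\}$ is a poset, and transporting along the isomorphism shows $\leq_s$ on $\Delta'_{\ca(4)}$ is a partial order.

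The main obstacle I anticipate is the bookkeeping in the first stage: making precise the claim that the elevations of standard subcomplexes of the product Salvetti complex $\od_{\cb}$ — hence the vertices and simplices of $\Delta'_{\cb}$ — decompose as products of the corresponding data for the factors, and that the subdivision/type assignment is compatible with this decomposition. None of this is deep, but it requires carefully tracking the dictionary between faces of $D_\cb$ (or $S_\cb$), standard subcomplexes $\widehat F'$ of $\od_\cb$, and their elevations in the universal cover, and verifying that $\pi_1(\od_{\cb_1}\times\cdots\times\od_{\cb_k})=\pi_1\od_{\cb_1}\times\cdots\times\pi_1\od_{\cb_k}$ interacts correctly with the parabolic-coset description. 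The $<$-compatibility, by contrast, reduces cleanly to the additivity of $s$-type over the product, which is immediate once the combinatorial identification of $S'_{\cb}$ with a product of refined spheres is in place.
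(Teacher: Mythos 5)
Your proposal is correct and rests on the same underlying fact as the paper's proof --- that all the relevant data decomposes as a product over the factors $\cb_i$ --- but the two arguments package this quite differently. The paper's proof is very short: it observes that $|\c P_i|$ is the cone over $\Delta'_{\cb_i}$ (since, by \Cref{lem:basic properties}, $\Delta'_{\cb_i}$ is the order complex of its vertex poset), then invokes the standard identities $|\c P_1\times\cdots\times\c P_k|\cong|\c P_1|\times\cdots\times|\c P_k|$ and ``the cone over $\Delta'_{\cb}$ is the product of the cones over the $\Delta'_{\cb_i}$,'' and reads off the poset isomorphism. The elevation bookkeeping you identify as the main obstacle is thus absorbed into that last geometric identity rather than carried out explicitly; your plan to establish the vertex bijection directly (tuples of ``vertex-plus-elevation, or center'') and to check order-compatibility via additivity of the $s$-type is sound, and is in fact more careful than the paper on the point that the simplicial isomorphism is order-preserving. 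Two minor cautions: a face of $S'_{\cb}$ is not literally a product of faces of the $S'_{\cb_i}$ (a product of simplices is not a simplex); the correct statement is that chains in the product poset are exactly tuples of chains, so the identification lives at the level of order complexes, which is exactly why the paper routes through $|\c P_1|\times\cdots\times|\c P_k|$. Also, your reading of the product relation as ``$p_i\le p'_i$ for all $i$, with at least one strict'' is the intended (and correct) one; the lemma's literal phrasing is loose. Your deduction of the ``in particular'' clause matches the paper's.
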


\begin{proof}
    For each $j$, we know $\c P_j$ is a poset by \Cref{lem:1 2 3 poset}. The geometric realization $|\mathcal P_i|$ of $\mathcal P_i$ is a cone over $\Delta'_{\cb_i}$. Let $\mathcal P=\mathcal P_i\times\dots\times \mathcal P_k$. 
    Then $\mathcal P$ is a poset (under the product order) whose geometric realization $|\mathcal P|$ is a product $|\mathcal P_1|\times \dots \times |\mathcal P_k|$ of spaces. On the other hand, as $\cb$ is a product of the $\cb_i$, we know the cone over $\Delta'_{\cb}$ is a product of the cones over the $\Delta'_{\cb_i}$. Thus $((\Delta'_{\cb})^0,<)$ is isomorphic to $\mathcal P$ with the minimal element $(0,\dots,0)$ removed, which is still a poset. 
\end{proof}

We can summarize these lemmas as

\begin{prop} \label{prop:s is partial order}
    For $1\le i\le 4$ the $s$-order $\leq$ on $\Delta'_{\ca(i)}$ is a partial order.
\end{prop}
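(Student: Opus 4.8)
The plan is to assemble the statement directly from the two lemmas that precede it. For $1\le i\le 3$ there is nothing left to prove: Lemma~\ref{lem:1 2 3 poset} already asserts that $\leq_s$ is a partial order on $(\Delta'_{\ca(i)})^0$. Reflexivity is built into the convention that $v\leq_s v$; antisymmetry is immediate because $v<_s w$ forces the $s$-type index of $v$ to be strictly smaller than that of $w$, so $v<_s w$ and $w<_s v$ cannot both hold; and transitivity is precisely the content of the cited results (\cite[Lem 6.5]{haettel2021lattices} for $i=1$, \cite[Lem 2.22]{huang2024cycles} for $i=2$, and Proposition~\ref{prop:type 3 partial order} for $i=3$). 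So for these cases I would simply invoke Lemma~\ref{lem:1 2 3 poset}.

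For $i=4$, I would use the product description of Lemma~\ref{lem:decomposing 4}. Write $\ca(4)=\cb_1\times\cdots\times\cb_k$ with each $\cb_j$ of type (1), (2), or (3), and let $\mathcal{P}_j$ be the poset $((\Delta'_{\cb_j})^0,<)$ with a minimal element adjoined. By the $i\le 3$ case each $((\Delta'_{\cb_j})^0,<)$ is a partial order, and adjoining a bottom element to a poset again yields a poset, so each $\mathcal{P}_j$ is a partial order. Hence the product relation on $\mathcal{P}_1\times\cdots\times\mathcal{P}_k$ is a partial order, and the restriction of a partial order to any subset---here the subset obtained by deleting the global minimum $(0,\dots,0)$---is again a partial order. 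Transporting this along the isomorphism $((\Delta'_{\ca(4)})^0,<)\cong(\mathcal{P}_1\times\cdots\times\mathcal{P}_k)\setminus\{(0,\dots,0)\}$ supplied by Lemma~\ref{lem:decomposing 4} gives the claim.

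There is no genuine obstacle at this level of the argument: the only substantive input is the verification that the $s$-order on the skewed $A_n$ arrangement (type (3)) is transitive, which is deferred to Proposition~\ref{prop:type 3 partial order} and is where the actual work lies; everything in the present proof is bookkeeping that organizes Lemmas~\ref{lem:1 2 3 poset} and~\ref{lem:decomposing 4} into a single statement.
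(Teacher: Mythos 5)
Your proof is correct and matches the paper exactly: the paper simply "summarizes" Lemma~\ref{lem:1 2 3 poset} (for $i\le 3$) and Lemma~\ref{lem:decomposing 4} (whose statement already includes the $i=4$ case) into this proposition, which is precisely what you do. The extra bookkeeping you supply for the product case (products of posets are posets, and deleting the minimum preserves the partial order) is the same reasoning implicit in the proof of Lemma~\ref{lem:decomposing 4}.
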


\begin{lem}
\label{lem:basic properties}
For $1\le i\le 4$, a collection of vertices of $\Delta'_{\ca(i)}$ span a simplex of $\Delta'_{\ca(i)}$ if and only if they correspond to a chain in $((\Delta'_{\ca(i)})^0,<)$.
\end{lem}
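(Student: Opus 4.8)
The plan is to reduce everything to the already established cases. For $1 \le i \le 3$ the complex $\Delta'_{\ca(i)}$ is (a subdivision of) an Artin complex of type $A_n$, $B_n$, or $D_n$, and in each of these cases the statement ``simplices $\leftrightarrow$ chains'' is part of the standard package: for type $A_n$ and $B_n$ it follows from the labeling set-up recalled in \Cref{subsec:coxeter and artin} (the relation $<$ on $\Delta^0_\Lambda$ is transitive by \cite[Cor 6.5]{huang2023labeled}, so a set of vertices is totally ordered precisely when it is pairwise adjacent with distinct types, which for a flag-type complex like the Artin complex of a linear diagram is the same as spanning a simplex), and for type $D_n$ it is the $(s_n,s_{n-1})$-subdivision construction: $\Delta'_\Lambda$ was \emph{defined} by cutting each top simplex so that the new vertex of type $m$ sits between the types $\hat s_{n-1}, \hat s_n$, and the map $t$ was chosen precisely so that chains in $((\Delta'_\Lambda)^0,<)$ are the vertex sets of simplices. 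So Step 1 is to record that $i=1,2,3$ hold, citing \Cref{prop:type 3 partial order} / \Cref{lem:1 2 3 poset} and the definitions in \Cref{subsec:coxeter and artin}, together with the elementary observation that $\Delta'_{\ca(i)}$ is a flag simplicial complex (adjacency of vertices is determined by pairwise adjacency) so that ``span a simplex'' is detected on pairs.

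Step 2 is the product case $i=4$. By \Cref{lem:decomposing 4}, if $\cb = \cb_1 \times \dots \times \cb_k$ with each $\cb_j$ of type (1), (2), or (3), then $|\Delta'_{\cb}|$ is the join-minus-the-apex of the cones $|\c P_j|$ over $\Delta'_{\cb_j}$, and $((\Delta'_\cb)^0, <)$ is the product poset $\c P_1 \times \dots \times \c P_k \setminus (0,\dots,0)$. A set of vertices $\{(p^{(1)}_\alpha, \dots, p^{(k)}_\alpha)\}_\alpha$ spans a simplex of this join exactly when, for each coordinate $j$, the set $\{p^{(j)}_\alpha : \alpha\} \setminus \{0\}$ spans a simplex of $\Delta'_{\cb_j}$ (equivalently, lies in a chain of $\c P_j$, using the $i\le 3$ case from Step 1, with the $0$ accounting for the coning). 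A standard fact about products of posets is that a subset of $\c P_1 \times \dots \times \c P_k$ is a chain iff its projection to each factor is a chain and, moreover, the elements are comparable --- but here since each factor projection is a chain and the apexes are removed the comparability in the product is automatic once each coordinate-projection is a chain (any two tuples whose every coordinate lies in a common chain of $\c P_j$ are comparable because one can order coordinatewise consistently --- this uses that each $\c P_j$-projection is \emph{totally} ordered). Matching these two descriptions --- ``spans a simplex of the join'' $\Leftrightarrow$ ``each coordinate projection spans a simplex'' $\Leftrightarrow$ ``each coordinate projection is a chain'' $\Leftrightarrow$ ``is a chain in the product poset'' --- completes the argument.

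The step I expect to need the most care is the product-poset chain characterization in Step 2: it is tempting to say ``a subset of a product poset is a chain iff each projection is,'' which is \emph{false} in general, and the fix is precisely that each projection here lands in $\c P_j$ which, restricted to the vertices of a single simplex of $\Delta'_{\cb_j}$, is totally ordered, plus the observation that the minimal elements $0$ were removed so that no two tuples collapse to the same point. Concretely I would argue: given a set $T$ of vertices of $\Delta'_\cb$ with each coordinate-projection $T_j \subset \c P_j$ a chain, pick for each $j$ the total order on $T_j$; then for two tuples $p, q \in T$, in each coordinate $p_j \le q_j$ or $q_j \le p_j$, and I claim these comparisons are globally consistent --- this follows because $T_j$ being a chain means ``$p_j \le q_j$'' is equivalent to a single numerical inequality $t_j(p_j) \le t_j(q_j)$ for the rank/height function, so summing or just reading off one coordinate where $p \ne q$ forces a definite relation; the honest version is that $T$, being a simplex, has its vertices genuinely comparable because $\Delta'_\cb$ is a simplicial complex whose simplices we already know are realized as iterated cones, so I would run the equivalences in the order ``simplex $\Rightarrow$ coordinatewise chains $\Rightarrow$ product chain $\Rightarrow$ pairwise adjacent with a consistent order $\Rightarrow$ simplex,'' using flagness of $\Delta'_\cb$ (inherited from flagness of the factors, which in turn comes from the $i \le 3$ cases) to close the loop. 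The remaining verifications --- that $\Delta'_{\ca(i)}$ for $i \le 3$ is flag, and that the apex-removal in \Cref{lem:decomposing 4} does not create extra incidences --- are routine given the explicit descriptions already in \Cref{subsec:coxeter and artin} and the proof of \Cref{lem:decomposing 4}.
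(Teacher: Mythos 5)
Your overall skeleton matches the paper's: forward direction ``simplex $\Rightarrow$ pairwise adjacent $\Rightarrow$ pairwise comparable $\Rightarrow$ chain'' via the fact that $\leq_s$ is a partial order, reverse direction ``chain $\Rightarrow$ pairwise adjacent $\Rightarrow$ simplex'' via flagness. But there are two genuine gaps.

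First, for $i=3$ you dismiss flagness of $\Delta'_{\ca(3)}$ as an ``elementary observation'' following from ``the labeling set-up.'' It does not: $\Delta'_{\ca(3)}$ is not the type-$A_n$ Artin complex with its standard structure but its subdivision by the extra $B_n$-type walls, which introduces fake vertices and fake simplices. Flagness of this subdivided complex is precisely \Cref{prop:A3 simplex chain}, whose proof runs through \Cref{prop:comparable contained in real} and the real/fake vertex machinery ($w^+$, $w^-$, and the interplay of the $u$- and $s$-orders) of Section~\ref{sec:check links}. This must be invoked as a substantive input, not waved through as routine.

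Second, the central equivalence of your Step 2 is false. A set of vertices of the product does \emph{not} span a simplex iff each coordinate projection spans a simplex (equivalently, is a chain). Take $a \in \mathcal{P}_1 \setminus \{0\}$, $b \in \mathcal{P}_2 \setminus \{0\}$ and the pair $\{(a,0),(0,b)\}$: each projection, $\{a,0\}$ and $\{0,b\}$, is a chain, yet $(a,0)$ and $(0,b)$ are incomparable in the product order and are not adjacent in the product triangulation of $\mathrm{Cone}|\Delta'_{\cb_1}| \times \mathrm{Cone}|\Delta'_{\cb_2}|$ --- this is exactly the missing diagonal of the square $\Delta^1\times\Delta^1$. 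Your attempted patch (each projection is totally ordered, and the apexes were removed) does not exclude this configuration, since only the single tuple $(0,\dots,0)$ is deleted and tuples such as $(a,0)$ survive whenever $k\ge 2$; and your ``honest version'' still routes through the broken implication ``coordinatewise chains $\Rightarrow$ product chain.'' The repair is to drop the coordinatewise detour entirely: the geometric realization of the product poset carries the standard triangulation of the product of cones, so adjacency of two vertices is by definition comparability in the \emph{product} order; since cones over flag complexes are flag, products of flag complexes are flag, and links in flag complexes are flag, a pairwise comparable set spans a simplex and conversely. That is how the paper handles $i=4$ via \Cref{lem:decomposing 4}.
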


\begin{proof}
The forward implication follows from \Cref{prop:s is partial order}; namely, if $V$ is a set of vertices which span a simplex, then they are pairwise adjacent, hence pairwise comparable, and thus form a chain since $\leq$ is a partial order. The reverse implication will follow from the fact that each subdivided complex is flag. Assuming this, if $V$ is a chain, then the elements are pairwise comparable, hence pairwise adjacent, and hence span a simplex. 
We now see why these are flag complexes. For $i = 1$, since the subdivision is trivial, this follows from the general fact for Artin complexes \cite[Prop.~4.5]{godelle2012k}. 
For $i = 2$, this follows easily from the description of the subdivided complex for $D_n$ type given in Section \ref{subsec:coxeter and artin}.
For $i = 3$, this is \Cref{prop:A3 simplex chain}.
The case $i = 4$ follows from (the proof of) \Cref{lem:decomposing 4}: 
the cone over $\Delta'_{\cb}$ is a product of the cones over the $\Delta'_{\cb_i}$, the cone over a flag complex is flag, the product of flag complexes is flag, and the link of a vertex in a flag complex is flag. Since the link of the cone point is isomorphic to $\Delta'_\cb$, the result follows.
\end{proof}

An affine hyperplane arrangement $\ca$ in $\mathbb R^n$ is \emph{complete} if $D_\ca=\mathbb R^n$. We say $\ca$ has \emph{finite shape}, if there are only finitely many isometry types of cells in $D_\ca$. Note that if $\ca$ is invariant under the action of a translation group $\mathbb Z^n$, then $\ca$ is both complete and has finite shape.

\begin{lem}
\label{lem:link to injective}
Suppose $\ca$ is a complete admissible affine arrangement in $\mathbb R^n$ with finite shape. Suppose for $1\le i\le 4$, the poset $((\Delta'_{\ca(i)})^0,<)$ is bowtie free and upward flag. Then $(\falk_\ca,d_\infty)$ is an injective metric space. In particular, $\falk_\ca$ is contractible.
\end{lem}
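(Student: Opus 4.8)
The plan is to use the local-to-global criterion for injective metric spaces, Theorem~\ref{thm:local-to-global}, applied to the metric space $(\falk_\ca, d_\infty)$ from Definition~\ref{def:falk metric}. By Lemma~\ref{lem:geodesic metric}, this is a complete geodesic metric space, and by Theorem~\ref{thm:falk} (together with the fact that admissible local arrangements are $K(\pi,1)$, being reflection arrangements of finite Coxeter groups or products thereof) it is simply connected. So the only remaining hypothesis to verify is that $(\falk_\ca, d_\infty)$ is uniformly locally injective, i.e., there is a uniform $\epsilon>0$ such that every ball of radius $\epsilon$ is injective. Once this is established, Theorem~\ref{thm:local-to-global} yields that $\falk_\ca$ is injective, and then injectivity implies contractibility as recalled in Section~\ref{subsec:injective}.

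The main work is thus in proving uniform local injectivity. Here I would argue as follows. Since $\ca$ has finite shape, there are only finitely many isometry types of vertex links in $D_\ca$, and hence (by the covering structure of $q\colon\falk_\ca\to D_\ca$ and Lemma~\ref{lem:link deligne}) only finitely many isometry types of vertex links in $\falk_\ca$. Choose $\epsilon>0$ small enough (less than half the shortest edge length, which is uniform by finite shape) so that every $\epsilon$-ball is centered in the open star of a single vertex and is isometric to an $\epsilon$-ball in the Euclidean-$\ell^\infty$ cone over the link of that vertex. Therefore it suffices to show that the $\ell^\infty$-cone over each vertex link $\lk(x',\falk_\ca)$ is injective; equivalently, by Lemma~\ref{lem:link deligne}, that the $\ell^\infty$-cone over $N$ is injective, where $N$ is the preimage in $\sd_{\ca_x}$ of $\lk(x,D_\ca)\subset S_{\ca_x}$. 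Since $\ca$ is complete we have $\lk(x,D_\ca)=S_{\ca_x}$, so $N=\sd_{\ca_x}$ itself, and we are reduced to showing that the $\ell^\infty$-metric cone over the Deligne complex $\sd_{\ca_x}$ is injective for each admissible central arrangement $\ca_x$, i.e., for $\ca_x$ of type $B_n$, $D_n$, skewed $A_n$, or a product of these.

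To handle this, I would pass from $\sd_{\ca(i)}$ to its subdivision $\Delta'_{\ca(i)}$ — which does not change the underlying space or its metric, since the subdivided metric cone is still built from unit orthoschemes once we record that the subdivision of $S_{\ca(i)}$ into $S'_{\ca(i)}=S_{\ca(1)}$ makes the cone a union of $\ell^\infty$-orthoschemes (this is exactly the $B_n$ cube picture from Section~\ref{subsec:coxeter and artin}). Concretely, let $P$ be the poset obtained by adjoining a minimal element $\hat 0$ to $((\Delta'_{\ca(i)})^0,<)$; then $|P|_\infty$ is precisely the $\ell^\infty$-cone over $\Delta'_{\ca(i)}$ (the cone point is $\hat 0$, as in the $B_n$ discussion), and $|P|_\infty$ is the relevant $\epsilon$-ball model up to rescaling. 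Now $P$ has a minimal element by construction, is graded with finite rank $n$, and — by the hypothesis of the lemma that each $((\Delta'_{\ca(i)})^0,<)$ is bowtie free and upward flag, together with Lemma~\ref{lem:basic properties} (which identifies simplices with chains, making the complex flag) — one checks that $P$ itself is bowtie free and flag: adjoining a global minimum preserves bowtie-freeness and supplies lower bounds automatically, so downward flagness is trivial and upward flagness is inherited from the link $((\Delta'_{\ca(i)})^0,<)$. Then Theorem~\ref{thm:injective criterion} applies to $P$ and gives that $(|P|_\infty, d)$ is injective. For products, Lemma~\ref{lem:decomposing 4} shows $P$ is the product of the corresponding posets for the factors (after adjoining minima), and the product of bowtie-free flag posets with minima is again bowtie-free flag with a minimum, so the same theorem applies; alternatively, the $\ell^\infty$-product of injective metric spaces is injective.

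The step I expect to be the main obstacle is the reduction from ``$\epsilon$-balls in $\falk_\ca$'' to ``$\ell^\infty$-cones over Deligne complexes'' cleanly enough to invoke Theorem~\ref{thm:injective criterion} — specifically, verifying that the metric $d_\infty$ near a vertex really is the orthoscheme-cone metric on $|P|_\infty$ (matching the $\ell^\infty$-metric on $D_\ca\subset\mathbb R^n$ with the abstract unit-orthoscheme metric via the $B_n$-cube normalization), and that $\epsilon$ can be chosen uniformly using finite shape so that balls do not ``see'' more than one vertex. The poset-theoretic input (bowtie-free, flag, graded, minimal element) is handed to us by the hypotheses and the earlier lemmas, so the geometric identification of the local model is where care is needed; once that is in place, Theorems~\ref{thm:injective criterion} and~\ref{thm:local-to-global} close the argument, and contractibility follows since injective spaces are contractible.
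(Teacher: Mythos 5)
Your proposal follows essentially the same route as the paper: complete geodesic via Lemma~\ref{lem:geodesic metric}, simple connectivity via Theorem~\ref{thm:falk}, identification of a small ball about a point over an $\ca$-vertex with a ball about the cone point of $|\mathcal P|_\infty$ for $\mathcal P$ the poset $((\Delta'_{\ca(i)})^0,<)$ with a minimum adjoined, injectivity of that local model via Theorem~\ref{thm:injective criterion}, and then Theorem~\ref{thm:local-to-global}. The poset-theoretic bookkeeping (adjoining $\hat 0$ preserves bowtie-freeness, makes downward flagness automatic, and leaves upward flagness intact; products handled by Lemma~\ref{lem:decomposing 4}) matches what the paper does.

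The one place your argument is thinner than the paper's is the step you yourself flag: uniform local injectivity at points \emph{not} lying over a vertex. Prescribing ``$\epsilon$ less than half the shortest edge length'' does not by itself guarantee that every $\epsilon$-ball is isometric to a ball in a single vertex cone: for a point $x$ in the interior of a positive-dimensional open face, the isometry type of $B(x,\epsilon)$ depends on the distance from $x$ to the various strata of the boundary of that face, and this varies continuously over the face. The paper resolves this with an induction on the dimension of the open face $F$: for $x$ with $d(x,\partial F)\le\epsilon/4$ one uses $B(x,\epsilon/4)\subset B(y,\epsilon)$ for some $y\in\partial F$ together with the fact that balls in injective spaces are injective, while for $x$ with $d(x,\partial F)\ge\epsilon/4$ all such balls are isometric to one another (here the finite shape hypothesis enters, to cut the faces into finitely many equivalence classes). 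Some argument of this kind is needed to legitimately invoke Theorem~\ref{thm:local-to-global}; with it supplied, your proof is complete and coincides with the paper's.
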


\begin{proof}
Let $\pi:\falk_\ca\to D_\ca$ be the 1-Lipschitz projection map. Given $x\in \falk_\ca$ mapping to an $\ca$-vertex $\bar x\in D_\ca$, let $\epsilon(x)$ be a positive number such that the ball $B(\bar x,\epsilon(x))$ in $D_\ca$ is contained in the open star of $\bar x$ in $D_\ca$ (here the ball is taken with respect to the $\ell^\infty$-metric). As $\pi$ is 1-Lipschitz, we know $B(x,\epsilon(x))$ is the connected component of $\pi^{-1}(B(\bar x,\epsilon(x)))$ that contains $x$. Suppose the local arrangement at $\bar x$ is $\ca(i)$ (up to a translation). Then the subdivision of $S_{\ca(i)}$ described as above induces a subdivision of $B(\bar x,\epsilon(x))$ (which is a cube with side length $2\epsilon(x)$) into orthoschemes of size $\epsilon(x)$. Hence $B(x,\epsilon(x))$ is also subdivided into a simplicial complex made up of orthoschemes of size $\epsilon(x)$. Let $\mathcal P$ be the poset obtained by adding a minimal element to $((\Delta'_{\ca(i)})^0,<)$. Then by \Cref{lem:link deligne} and \Cref{lem:basic properties}, the simplicial structure on $B(x,\epsilon(x))$ is isomorphic to the geometric realization $|\mathcal P|$ of $\mathcal P$. Such an isomorphism induces a natural bijection $f$ from $B(x,\epsilon(x))$ to the $\epsilon(x)$-ball around the cone point in $|\mathcal P|_\infty$. Note that if $\epsilon(x)$ is small enough, then in order to measure $d_\infty(y_1,y_2)$ for $y_1,y_2\in B(x,\epsilon(x))$, we only need to consider strings from $y_1,y_2$ that are contained in the open star of $x$ in $\falk_\ca$, as strings which escape the open star will automatically have length $\ge 2\epsilon (x)\ge d_\infty(y_1,y_2)$. Thus for such a choice of $\epsilon(x)$, $f$ is an isometry. By our assumptions and Theorem~\ref{thm:injective criterion}, $|\mathcal P|_\infty$ is an injective metric space, hence so are any of its balls, and in particular $B(x,\epsilon(x))$ is an injective metric space.

We claim for each open face $F\subset \falk_\ca$, there is an $\epsilon>0$, depending only on the dimension of $F$, such that for each $x\in F$, $B(x,\epsilon)$ is injective. We induct on the dimension of $F$. The case where $F$ is a vertex is treated in the previous paragraph. Now suppose $\dim(F)=n > 0$. As $\partial F$ is a disjoint union of finitely many open faces of lower dimension, by induction there is an $\epsilon>0$ such that $B(x,\epsilon)$ is injective for each $x\in \partial F$. Thus for any $x\in F$ such that $d(x,\partial F)\le \epsilon/4$, $B(x,\epsilon/4)$ is injective as $B(x,\epsilon/4)\subset B(y,\epsilon)$ for some $y\in \partial F$, and any ball in an injective metric space is injective. 
If $x\in F$ is such that $d(x,\partial F)\ge \epsilon/4$, by a similar consideration of the position of strings as in the previous paragraph, there is an $\epsilon'>0$ such that the $B(x,\epsilon')$ are isometric to each other for all such $x$. Thus we can assume $d(x,\partial F)=\epsilon/4$ and finish as before. Given two different open faces $F_1$ and $F_2$ of $D_\ca$, we write $F_1\sim F_2$ if $F_1$ and $F_2$ are isometric, and each hyperplane of $\ca$ containing $F_1$ is parallel to a hyperplane of $\ca$ containing $F_2$ and vice versa. The relation $\sim$ is transitive and divides the collection of $n$-dimensional open faces of $D_\ca$ into finitely many equivalent classes by the finite shape assumption. Moreover, if $F_1\sim F_2$, $x_1\in F_1$, and $x_2\in F_2$, then a small ball around $x_1$ and a small ball around $x_2$ are isometric. Thus we can use the same $\epsilon$ for $n$-dimensional open faces in the same equivalence class. This proves the claim.

It follows that $\falk_\ca$ is locally uniformly injective. As $\falk_\ca$ is simply connected by Theorem~\ref{thm:falk}, we deduce from Theorem~\ref{thm:local-to-global} that $\falk_\ca$ is injective, hence contractible.
\end{proof}

We postpone the proof of the following key proposition to Section \ref{sec:check links}.
\begin{prop}
\label{prop:key}
The poset $((\Delta'_{\ca(3)})^0,<)$ is bowtie free and upward flag.
\end{prop}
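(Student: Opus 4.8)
The plan is to reduce the claim about the subdivided Deligne complex $\Delta'_{\ca(3)}$ to a statement about a better-understood combinatorial object, namely the (subdivided) Artin or Coxeter complex of type $B_n$, and then to import the bowtie-free and upward-flag properties established in Theorem~\ref{thm:Bnflag}. The starting observation is that $\ca(3)$ is the skewed $A_n$ arrangement, whose spherical Deligne complex is isomorphic to the type $A_n$ Artin complex $\Delta_\Lambda$ by Lemma~\ref{lem:iso}. Since $\ca(3)\subset\ca(1)$ (the coordinate hyperplanes together with $x_i=x_j$ sit inside the $B_n$ arrangement, which also has $x_i+x_j=0$), the subdivision $S'_{\ca(3)}=S_{\ca(1)}$ refines $S_{\ca(3)}$, and pulling this back along $\Delta_{\ca(3)}\to S_{\ca(3)}$ produces $\Delta'_{\ca(3)}$ with its $s$-type labeling by $\{\hat s_1,\dots,\hat s_n\}$ of type $B_n$. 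So the goal becomes: show that this $s$-labeled subdivision of the $A_n$ Artin complex, which is already known to be a poset by Lemma~\ref{lem:1 2 3 poset} (i.e. Lemma~\ref{prop:type 3 partial order}), is bowtie free and upward flag.

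First I would set up the local picture via links. By Lemma~\ref{lem:link}, links of simplices in $\Delta_\Lambda$ are again Artin complexes of induced subdiagrams, and the $s$-subdivision is compatible with taking links; combined with Lemma~\ref{lem:link deligne} this lets me reduce both the bowtie-free and the upward-flag verifications to low-rank base cases plus an inductive step. Concretely, upward flag is a statement about triples of pairwise-upper-bounded vertices: passing to the link of a common lower element (or of the minimal $s$-type vertex involved), one reduces to checking that certain short cycles — in the $B_n$ $s$-labeling these will be $4$-cycles and $6$-cycles, exactly as in the $D_n$ discussion preceding Conjecture~\ref{conj:dn} — bound in the complex. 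For bowtie freeness, the chain-to-simplex correspondence of Lemma~\ref{lem:basic properties} (whose proof for $i=3$ is Proposition~\ref{prop:A3 simplex chain}, assumed here) means a bowtie $\{x_1,x_2,y_1,y_2\}$ with $x_i<y_j$ forces each $x_iy_j$ to be an edge, so one must produce a common "interval" vertex $z$; this is where the combinatorics of the skewed $A_n$ arrangement, rewritten in $B_n$ coordinates as points of $\{-1,0,1\}^n$ with a sign pattern constrained by which chamber of the $(\mathbb Z/2\mathbb Z)^n$-orbit we are in, gets used explicitly.

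The main obstacle I anticipate is precisely the mismatch flagged in Remark~\ref{rmk:contrast}: the $s$-labeling on $\Delta'_{\ca(3)}$ is the $B_n$ labeling, but $\Delta'_{\ca(3)}$ is \emph{not} the $B_n$ Artin complex — it is a subdivided $A_n$ Artin complex — so one cannot simply quote Theorem~\ref{thm:Bnflag}. The $A_n$ results (Theorem~\ref{thm:bowtie free An}) give bowtie freeness only for the \emph{unsubdivided} $u$-order, and the content of the proposition is that subdividing and relabeling preserves bowtie freeness and, crucially, \emph{gains} the upward-flag property that the unsubdivided $A_n$ complex does not have. So the heart of the argument must be a direct comparison between the $u$-order and the $s$-order on the same underlying complex: I would show that $s$-comparability is detected by $u$-comparability together with the position of the subdivision vertices (those of $s$-type $\hat s_{n-1}$ that are midpoints of edges split in the subdivision), translate a hypothetical $s$-bowtie or a failure of $s$-upward-flag into a configuration in the $u$-ordered $A_n$ complex, and then invoke the known $A_n$ bowtie freeness (and an explicit small-rank check, using Theorem~\ref{thm:dn dim 3 and 4}-style case analysis if needed) to derive the required common bound $z$. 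Getting the bookkeeping of signs and subdivision midpoints exactly right in this translation is the delicate part; everything else is either an induction via links or a citation.
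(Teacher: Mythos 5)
Your outline matches the paper's proof in broad strokes: both reduce the problem, via induction on links, to filling embedded $4$-cycles (for bowtie freeness) and $6$-cycles (for upward flag) of extreme $s$-types, and both hinge on a careful comparison of the $u$-order on the unsubdivided $A_n$ Deligne complex with the $s$-order on its subdivision. But there is a genuine gap in your proposed mechanism for the upward-flag property. You plan to translate a failure of $s$-upward-flag into a configuration in the $u$-ordered $A_n$ complex and then invoke $A_n$ bowtie freeness (Theorem~\ref{thm:bowtie free An}). This cannot work in the case where all the relevant vertices lie in a single octant: there the subdivision is trivial, no subdivision vertices appear, the $s$-order and $u$-order coincide, and $A_n$ bowtie freeness alone does not produce an upper bound for three pairwise upper-bounded vertices --- this is exactly the failure of upward flag for the unsubdivided complex noted in Remark~\ref{rmk:contrast}. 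The paper's proof handles this case by a separate input, Proposition~\ref{prop:positive part}: the restriction of the $s$-order to the part of $\Delta'_{\ca(3)}$ lying over a single orthant is bowtie free \emph{and upward flag}, which is deduced from the type $B_n$ Artin complex result (Theorem~\ref{thm:Bnflag}) via \cite[Lem 5.5]{huang2024cycles}, not from the $A_n$ result. Without identifying this one-octant-to-$B_n$ comparison as the source of upward flag, the argument does not close.

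A second, smaller issue: you describe the subdivision vertices as ``those of $s$-type $\hat s_{n-1}$ that are midpoints of edges split in the subdivision.'' That is the $D_n$ picture from Section~\ref{subsec:coxeter and artin}, not the skewed $A_n$ one. Here a fake vertex $v$ has $s$-type $\#\pos(v)+\#\neg(v)$, which ranges over $\{2,\dots,n\}$, and the correct bookkeeping is via the two real projections $v^+$ (non-negative) and $v^-$ (non-positive) with $v$ lying on the real edge joining them; the key comparison lemmas (e.g.\ that $v\le_s w$ for fake $v,w$ iff $v^+\le_u w^+\le_u w^-\le_u v^-$) are stated in terms of these projections. Getting this description right is not cosmetic --- it is what lets the mixed-sign cases of the $4$- and $6$-cycle arguments go through.
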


\begin{remark}
\label{rmk:contrast}
    There is an interesting contrast between this proposition and Theorem~\ref{thm:bowtie free An}, as the poset in Theorem~\ref{thm:bowtie free An} is not upward flag.
\end{remark}

\begin{thm}
\label{thm:main}
Suppose $\ca$ is a complete admissible affine arrangement in $\mathbb R^n$ with finite shape. Suppose Conjecture~\ref{conj:dn} holds. Then $(\falk_\ca,d_\infty)$ is an injective metric space and $\ca$ is a $K(\pi,1)$ arrangement.
\end{thm}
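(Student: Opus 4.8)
The plan is to assemble \Cref{thm:main} from the machinery already in place, with \Cref{lem:link to injective} as the load-bearing reduction. That lemma says: if $\ca$ is complete, admissible, of finite shape, and if for each $1\le i\le 4$ the poset $((\Delta'_{\ca(i)})^0,<)$ is bowtie free and upward flag, then $(\falk_\ca,d_\infty)$ is injective (hence contractible), and then \Cref{thm:falk} upgrades contractibility of $\falk_\ca$ to the $K(\pi,1)$ property for $\ca$ (once we know the local arrangements are themselves $K(\pi,1)$). So the proof of \Cref{thm:main} is really just the verification of the four poset hypotheses, which I would handle type by type.

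\textbf{Step 1: the four link posets are bowtie free and upward flag.} For $i=1$ (type $B_n$), this is exactly \Cref{thm:Bnflag}. For $i=2$ (type $D_n$), the statement that $((\Delta'_{\ca(2)})^0,<)$ is bowtie free and upward flag is precisely \Cref{conj:dn} (via \Cref{lem:iso}, which identifies $\Delta_{\ca(2)}$ with the Artin complex $\Delta_\Lambda$ of type $D_n$, and the $(s_n,s_{n-1})$-subdivision matches $\Delta'_\Lambda$); this is the one place the conjecture is invoked, and it is granted by hypothesis. For $i=3$ (skewed type $A_n$), this is \Cref{prop:key}. For $i=4$ (products), I would argue as in the proof of \Cref{lem:decomposing 4}: write the local arrangement as $\cb_1\times\dots\times\cb_k$ with each $\cb_j$ of type (1), (2), or (3), so that $((\Delta'_{\cb})^0,<)\cong (\c P_1\times\dots\times\c P_k)\setminus(0,\dots,0)$ with the product order, where each $\c P_j$ is the corresponding link poset with a minimal element adjoined. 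A product of graded bowtie-free upward-flag posets is again bowtie free and upward flag: bowtie freeness of a product is checked coordinatewise (given a bowtie in the product, project to each coordinate to get either a bowtie or a degenerate configuration, produce the middle element in each coordinate, and recombine); upward flag likewise passes to products; and removing the global minimum from a bowtie-free upward-flag poset of positive rank preserves both properties since the relevant elements and witnesses are never the minimum. (Alternatively, one can phrase this via the cone: $|\c P|_\infty$ is a product of injective orthoscheme complexes, hence injective, and $\Delta'_\cb$ is the link of the cone point.)

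\textbf{Step 2: the local arrangements are $K(\pi,1)$.} To invoke the second half of \Cref{thm:falk}, I need every local arrangement $\ca_x$ to be a $K(\pi,1)$ arrangement. Each $\ca(i)$ is a reflection arrangement of a finite Coxeter group of type $A_n$, $B_n$, $D_n$, or a product thereof (the skewed $A_n$ arrangement differs from the genuine one only by a linear change of coordinates, which does not affect $M(\ca\otimes\mathbb C)$ up to homeomorphism), and finite Coxeter arrangements are central and simplicial, hence $K(\pi,1)$ by Deligne \cite{deligne}. A product of $K(\pi,1)$ arrangements is $K(\pi,1)$ since $M(\ca_1\times\ca_2\otimes\mathbb C)\cong M(\ca_1\otimes\mathbb C)\times M(\ca_2\otimes\mathbb C)$. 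Thus all local arrangements are $K(\pi,1)$.

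\textbf{Step 3: conclude.} Combining Steps 1 and 2, \Cref{lem:link to injective} gives that $(\falk_\ca,d_\infty)$ is injective and in particular contractible; then \Cref{thm:falk} (whose hypothesis on local arrangements is Step 2) gives that $\ca$ is a $K(\pi,1)$ arrangement. I expect the only genuinely nonroutine point — apart from \Cref{prop:key}, which is deferred to \Cref{sec:check links} — to be the product step (type (4)): one must be a little careful that bowtie freeness and the upward flag property are genuinely preserved under products and under deleting the minimum, since bowties and pairwise-bounded triples in a product can involve coordinates where the projected configuration degenerates (elements collapse or the projected "middle" witness is the adjoined minimum). But this is exactly the bookkeeping already implicit in \Cref{lem:decomposing 4} and \Cref{lem:basic properties}, so no new ideas are needed. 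The main conceptual obstacle of the whole circle of results is \Cref{prop:key}, and for $n\ge 5$ in type $D_n$ the missing input is \Cref{conj:dn}, which is precisely why the theorem is stated conditionally; for $n\le 4$ it is unconditional by \Cref{thm:dn dim 3 and 4}.
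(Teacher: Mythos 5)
Your proposal is correct and follows essentially the same route as the paper: verify that $((\Delta'_{\ca(i)})^0,<)$ is bowtie free and upward flag for $i=1,2,3$ via \Cref{thm:Bnflag}, \Cref{conj:dn}, and \Cref{prop:key} respectively, handle $i=4$ by the product/minimal-element argument of \Cref{lem:decomposing 4}, and then conclude via \Cref{lem:link to injective} and \Cref{thm:falk}. Your Step 2 (checking that the local arrangements are $K(\pi,1)$ via Deligne, so that the second half of \Cref{thm:falk} applies) is left implicit in the paper's proof, so making it explicit is a small improvement in care rather than a different argument.
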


\begin{proof}
Proposition~\ref{prop:key}, Conjecture~\ref{conj:dn} and Theorem~\ref{thm:Bnflag} imply that $((\Delta'_{\ca(i)})^0,<)$ is bowtie free and upward flag for $i=1$, $2$, and $3$, respectively. The bowtie free and upward flag properties are not changed by adding or deleting a minimal element, or by taking products. So by \Cref{lem:decomposing 4}, $((\Delta'_{\ca(i)})^0,<)$ is bowtie free and upward flag when $i = 4$.
Then the theorem follows from Lemma~\ref{lem:link to injective} and Theorem~\ref{def:falk}. 
\end{proof}

\begin{cor}
Suppose $\ca$ is a complete finite shape admissible affine arrangement in $\mathbb R^n$ with $n=2,3,4$, then $\ca$ is a $K(\pi,1)$ arrangement.
\end{cor}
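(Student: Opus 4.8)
The plan is to deduce the corollary directly from Theorem~\ref{thm:main} by verifying its hypothesis, namely Conjecture~\ref{conj:dn}, in the relevant low-dimensional regime. Since $\ca$ is an admissible affine arrangement in $\mathbb R^n$ with $n\le 4$, every local arrangement at an $\ca$-vertex is (a translate of) one of the four types in Definition~\ref{def:admissible}, and each factor of a type~(4) product lives in $\mathbb R^m$ for some $m\le n\le 4$. In particular, any type $D_m$ factor that appears has $m\le 4$, so only Conjecture~\ref{conj:dn} for $n=3,4$ is needed.

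The first step is to invoke Theorem~\ref{thm:dn dim 3 and 4}, which asserts precisely that Conjecture~\ref{conj:dn} holds for $n=3,4$. Combined with Theorem~\ref{thm:bowtie free An}, Proposition~\ref{prop:key}, and Theorem~\ref{thm:Bnflag}, this gives that $((\Delta'_{\ca(i)})^0,<)$ is bowtie free and upward flag for $i=1,2,3$ in all dimensions $\le 4$. Since (as noted in the proof of Theorem~\ref{thm:main}) the bowtie free and upward flag properties are preserved under adding/deleting a minimal element and under products, Lemma~\ref{lem:decomposing 4} then yields the same conclusion for $i=4$. The second step is to observe that, although Theorem~\ref{thm:main} is stated as requiring the full Conjecture~\ref{conj:dn}, its proof only uses the conjecture through the bowtie free and upward flag properties of the posets $((\Delta'_{\ca(i)})^0,<)$ associated to the local arrangements of $\ca$; for $n\le 4$ these have been established unconditionally. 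Finally one applies Lemma~\ref{lem:link to injective} (using that $\ca$ is complete with finite shape) to conclude that $(\falk_\ca,d_\infty)$ is injective, hence contractible, and then Theorem~\ref{thm:falk} — noting that the local arrangements, being reflection arrangements of finite Coxeter groups or products thereof, are $K(\pi,1)$ by Deligne's theorem — to conclude that $\ca$ is a $K(\pi,1)$ arrangement.

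There is essentially no genuine obstacle here: the corollary is a packaging of Theorem~\ref{thm:main} with the unconditional input Theorem~\ref{thm:dn dim 3 and 4}. The only mild subtlety to be careful about is the reduction to low-dimensional $D_m$ factors — one must check that in a type~(4) local arrangement sitting inside $\mathbb R^n$ with $n\le 4$, no irreducible factor of type $D_m$ with $m\ge 5$ can occur, which is immediate since each factor occupies a coordinate subspace of dimension at most $n$. With that in hand, the proof is a two-line citation of Theorem~\ref{thm:main} and Theorem~\ref{thm:dn dim 3 and 4}.
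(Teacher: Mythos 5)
Your proof is correct and is essentially the paper's own argument: the paper's proof is the one-line citation ``This follows from Theorem~\ref{thm:main} and Theorem~\ref{thm:dn dim 3 and 4}.'' Your additional remarks --- that only $D_m$ factors with $m\le 4$ can occur in a local arrangement inside $\mathbb R^n$ with $n\le 4$, and that Theorem~\ref{thm:main} only uses Conjecture~\ref{conj:dn} through the bowtie free and upward flag properties in those dimensions --- simply make explicit what the paper leaves implicit.
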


\begin{proof}
This follows from Theorem~\ref{thm:main} and Theorem~\ref{thm:dn dim 3 and 4}.
\end{proof}

The next result can be proved in the same way as Theorem~\ref{thm:main}, using Proposition~\ref{prop:key} and Theorem~\ref{thm:Bnflag}.
\begin{cor}
\label{cor:AB}
Suppose $\ca$ is a complete finite shape affine arrangement in $\mathbb R^n$ such that for each $\mathcal A$-vertex $x$, the local arrangement at $x$ is a translate of one of the following three types:
\begin{enumerate}
    \item (type $B_n$) $x_i\pm x_j=0$ for $1\le i\neq j\le n$ and $x_i=0$ for $1\le i\le n$;
    \item (skewed type $A_n$) $x_i=0$ for $1\le i\le n$ and $x_i=x_j$ for $1\le i\neq j\le n$, or any image of this this arrangement under the $(\bb{Z}/2\bb{Z})^n$ action on $\bb[n]{R}$ by reflections about the coordinate hyperplanes;
    \item or a product of the previous types.
\end{enumerate}
Then $(\falk_\ca,d_\infty)$ is an injective metric space and $\ca$ is a $K(\pi,1)$ arrangement.
\end{cor}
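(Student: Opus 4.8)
The plan is to show that Corollary~\ref{cor:AB} follows from the same machinery used for Theorem~\ref{thm:main}, simply by noting that the $D_n$-type local arrangement never occurs, so Conjecture~\ref{conj:dn} is not needed. Concretely, the arrangements $\ca$ in the hypothesis are exactly the admissible arrangements (in the sense of Definition~\ref{def:admissible}) for which the local arrangement at every $\ca$-vertex is of type (1) ($B_n$), type (3) (skewed $A_n$), or a product of these. In particular $\ca$ is a complete, finite shape admissible arrangement, so Lemma~\ref{lem:link to injective} applies once we verify its hypothesis: that for each of the relevant central arrangements $\ca(i)$, the poset $((\Delta'_{\ca(i)})^0,<)$ is bowtie free and upward flag.

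First I would record that for the local types that actually appear, the required poset property holds unconditionally. For type (1), i.e.\ $i=1$ (where the subdivision is trivial, so $\Delta'_{\ca(1)}=\Delta_{\ca(1)}$), Theorem~\ref{thm:Bnflag} gives that $((\Delta'_{\ca(1)})^0,<)$ is bowtie free and upward flag. For type (3), i.e.\ $i=3$, this is exactly Proposition~\ref{prop:key}. For products (type (4)), one invokes Lemma~\ref{lem:decomposing 4}: the poset $((\Delta'_{\cb})^0,<)$ for $\cb=\cb_1\times\dots\times\cb_k$ is, after adding and deleting a minimal element, a product of the posets $\mathcal P_i$ attached to the factors $\cb_i$, and here each $\cb_i$ is of type (1) or (3). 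Since being bowtie free and being upward flag are both preserved under adding or deleting a minimal element and under taking products of bounded graded posets, the product poset is again bowtie free and upward flag. (The relevant facts — that bowtie freeness passes to products of bounded graded posets, and that ``upward flag'' does too — are the same ones tacitly used in the proof of Theorem~\ref{thm:main}; I would either cite that proof or spell out the one-line argument that a pairwise-bounded triple in a product is pairwise-bounded coordinatewise, hence bounded coordinatewise, hence bounded.)

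With the poset hypothesis verified for every central arrangement $\ca(i)$ that arises as a local arrangement of $\ca$, Lemma~\ref{lem:link to injective} immediately yields that $(\falk_\ca,d_\infty)$ is an injective metric space, and in particular contractible. To conclude that $\ca$ is a $K(\pi,1)$ arrangement, I would apply the second half of Theorem~\ref{thm:falk}: it suffices to know that the local arrangement at each $\ca$-vertex is itself $K(\pi,1)$. But the $B_n$ and skewed $A_n$ arrangements are reflection arrangements of finite Coxeter groups (of types $B_n$ and $A_n$ respectively), which are central and simplicial, hence $K(\pi,1)$ by Deligne~\cite{deligne}; and a product of $K(\pi,1)$ arrangements is $K(\pi,1)$ since $M(\ca_1\times\ca_2\otimes\mathbb C)\simeq M(\ca_1\otimes\mathbb C)\times M(\ca_2\otimes\mathbb C)$. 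Therefore $\falk_\ca$ is contractible and all local arrangements are $K(\pi,1)$, so Theorem~\ref{thm:falk} gives that $\ca$ is $K(\pi,1)$.

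The only real content beyond bookkeeping is Proposition~\ref{prop:key}, whose proof is deferred to Section~\ref{sec:check links}; everything else here is an assembly of results already in hand. Thus the ``main obstacle'' in this corollary is entirely upstream — it is Proposition~\ref{prop:key} — and the proof of the corollary itself is a short deduction: check the poset hypothesis of Lemma~\ref{lem:link to injective} using Theorem~\ref{thm:Bnflag}, Proposition~\ref{prop:key}, and the product closure from Lemma~\ref{lem:decomposing 4}; apply Lemma~\ref{lem:link to injective} to get injectivity of $(\falk_\ca,d_\infty)$; and apply Theorem~\ref{thm:falk} together with Deligne's theorem on simplicial arrangements to get the $K(\pi,1)$ conclusion.
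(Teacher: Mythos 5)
Your proposal is correct and follows exactly the route the paper intends: the paper's own proof of this corollary is a one-line remark that it "can be proved in the same way as Theorem~\ref{thm:main}, using Proposition~\ref{prop:key} and Theorem~\ref{thm:Bnflag}," and your write-up is precisely that argument with the details (the product case via Lemma~\ref{lem:decomposing 4}, the application of Lemma~\ref{lem:link to injective}, and the verification via Deligne's theorem that the local arrangements are $K(\pi,1)$ so that Theorem~\ref{thm:falk} applies) spelled out. No gaps.
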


\section{Some examples of admissible arrangements}
\label{sec:example}
In each dimension, we consider an infinite family of finite affine arrangements, and an infinite family of infinite complete affine arrangements, as follows.

\begin{definition}
For $k\ge 1$, let $\ch_{k,n}$ be the affine hyperplane arrangement in $\mathbb R^n$ given by $x_i \in \{-2k-1, -2k+1, \dots,-3, -1, 1, 3, \dots, 2k - 1, 2k + 1\}$ for $1 \le i \le n$ and $x_i \pm x_j = 0$ for $1\le i\neq j\le n$.

For $k\ge 1$, let $\ck_{k,n}$ be the affine hyperplane arrangement in $\mathbb R^n$ given by $x_i\in \mathbb Z$ for $1\le i\le n$, and $x_i+x_j \in 2k\mathbb Z+1$, $x_i-x_j \in 2k\mathbb Z$ for $1\le i\neq j\le n$.
\end{definition}

\begin{prop}
\label{prop:admissible}
Both $\ch_{k,n}$ and $\ck_{k,n}$ are admissible in the sense of Definition~\ref{def:admissible}.
\end{prop}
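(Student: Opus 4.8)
The plan is to verify the local condition in Definition~\ref{def:admissible} at every $\ca$-vertex of each arrangement by a direct case analysis, organizing the computation around which coordinates of the vertex lie on the various hyperplanes. First I would fix an $\ca$-vertex $x=(x_1,\dots,x_n)$ and partition the coordinate indices $\{1,\dots,n\}$ according to the equivalence relation generated by ``$x_i=\pm x_j$ at $x$'' together with the distinguished value(s) cut out by the first family of hyperplanes ($x_i\in\{\pm1,\pm3,\dots\}$ for $\ch_{k,n}$, or $x_i\in\mathbb Z$ for $\ck_{k,n}$). The key point is that two distinct families of parallel hyperplanes (the ``coordinate-type'' ones $x_i=c$ and the ``diagonal-type'' ones $x_i\pm x_j=0$) cannot be satisfied simultaneously at $x$ in a way that forces extra hyperplanes through $x$ beyond those dictated by these index-classes; so the local arrangement $\ca_x$ decomposes as a product over the classes, and on each class it is the restriction of the standard $B$, $D$, or skewed-$A$ pattern.

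The main steps, in order: (i) Show that for $\ch_{k,n}$, a diagonal hyperplane $x_i\pm x_j=0$ passes through $x$ iff $x_i=\mp x_j$, and that a coordinate hyperplane $x_i=2m\pm1$ passes through $x$ iff $x_i$ equals that value; then observe that if $x_i$ is an odd integer in the range $[-2k-1,2k+1]$, \emph{exactly one} coordinate hyperplane passes through $x$ in the $i$-th direction. (ii) Group the indices: within a group where the coordinates are all $\pm$ one another and all equal in absolute value to one of the values $1,3,\dots,2k+1$, the hyperplanes through $x$ restricted to that group are $x_i=\pm x_j$ \emph{and} one coordinate hyperplane per index — after translating $x$ to the origin and possibly reflecting coordinates (the $(\mathbb Z/2\mathbb Z)^n$ action), this is precisely the type $B_m$ pattern if the common absolute value is nonzero, and type $D_m$ if the coordinates are all zero at $x$ (which for $\ch_{k,n}$ cannot happen since $0$ is not among the allowed coordinate values — so in fact only $B_m$ blocks and singleton ``$x_i=c$'' blocks occur, the latter being skewed $A_1$). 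Conclude $\ca_x$ is a product of $B_m$'s and $A_1$'s. (iii) For $\ck_{k,n}$, repeat the analysis: now $x_i\pm x_j\in 2k\mathbb Z$ resp.\ $2k\mathbb Z+1$ and $x_i\in\mathbb Z$. Here at a vertex one checks which of $x_i+x_j$ and $x_i-x_j$ lie in the relevant residue classes and which coordinates are integers; again the local arrangement splits into blocks, and one identifies each block as a translate of $B_m$, $D_m$, or skewed $A_m$ (the skewed $A_m$ case genuinely arises here, e.g.\ when $k=1$ near a point where all $x_i$ are integers and all $x_i-x_j\in2\mathbb Z$ but the $x_i+x_j$ conditions are inactive, cf.\ Falk's Example 3.13 / the $\widetilde A$ description in the footnote). (iv) Invoke Definition~\ref{def:admissible}(4) to assemble the product.

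The main obstacle I anticipate is bookkeeping step (iii) for $\ck_{k,n}$: the two diagonal conditions $x_i+x_j\in 2k\mathbb Z+1$ and $x_i-x_j\in 2k\mathbb Z$ interact with the coordinate condition $x_i\in\mathbb Z$ in a way that is sensitive to the residues of the $x_i$ modulo $2k$, and one must check that the hyperplanes through a vertex never form a pattern outside the allowed list — in particular that one never gets, say, a $C_m$-type or an affine-looking local arrangement, and that the skewed-$A_m$ blocks really do appear in one of the allowed $(\mathbb Z/2\mathbb Z)^n$-reflected forms rather than some other linear image. I would handle this by passing to coordinates $y_i=x_i \bmod 2k$ (or rescaling so the relevant lattice is $\mathbb Z^n$) to make the residue pattern transparent, and then matching the resulting finite central arrangement against the three model arrangements $\ca(1),\ca(2),\ca(3)$ and their products. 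The finite arrangement $\ch_{k,n}$ should be genuinely easier, since there $0$ is excluded as a coordinate value, killing the possibility of a $D_m$ block and forcing only $B_m$ and $A_1$ factors.
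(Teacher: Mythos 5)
Your overall strategy---fix an $\ca$-vertex, partition the coordinate indices into blocks so that the local arrangement is a product over blocks, and identify each block with one of the model types---is exactly the paper's approach. However, step (ii) contains a genuine error in the identification of the blocks for $\ch_{k,n}$. If a block of coordinates has common nonzero absolute value $c$, then for each pair $i\neq j$ in the block \emph{only one} of the hyperplanes $x_i-x_j=0$, $x_i+x_j=0$ passes through the vertex (having both would force $\theta_i=\theta_j=0$, contradicting $c\neq 0$). Together with the single coordinate hyperplane $x_i=\theta_i$ per index, such a block is a translated/reflected \emph{skewed $A_m$} arrangement, not $B_m$ as you claim; in fact neither $\ch_{k,n}$ nor $\ck_{k,n}$ ever produces a $B$-type block, since a $B$-block requires $\theta_i=\theta_j=0$ together with $x_i=0$, $x_i\pm x_j=0$ all being hyperplanes through the vertex. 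Worse, your assertion that $D_m$ blocks cannot occur for $\ch_{k,n}$ is false: the origin is an $\ch_{k,n}$-vertex (the hyperplanes $x_i\pm x_j=0$ alone intersect in $\{0\}$ for $n\ge 2$), and its local arrangement is precisely type $D_n$; more generally any vertex with at least two vanishing coordinates contributes a $D$-block. Since your case analysis only covers blocks whose common absolute value lies in $\{1,3,\dots,2k+1\}$, it simply does not apply to such vertices, so the proof as written has a hole exactly where the $D$-type local arrangements live. (The conclusion would still be "admissible" either way, but the argument must actually account for these vertices.)

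For $\ck_{k,n}$, your step (iii) is a plan rather than a proof, and you correctly identify it as the hard part; the paper executes it by splitting into three cases according to whether all, none, or some of the coordinates of the vertex are integers. At an all-integer vertex, $x_i+x_j\in 2k\mathbb Z+1$ and $x_i-x_j\in 2k\mathbb Z$ cannot hold simultaneously (they would force $2x_i$ odd), and the relation ``some $(x_i,x_j)$-hyperplane passes through $\theta$'' is checked to be transitive, yielding a product of skewed $A$ blocks. At a vertex with no integer coordinates, each coordinate is forced into $\tfrac12+k\mathbb Z$, and a parity argument on the coordinates shows that pairs of equal parity carry \emph{both} diagonal hyperplanes (giving two $D$-blocks, one per parity class) while pairs of opposite parity carry none. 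The mixed case is the product of the two. You should carry out these verifications explicitly---in particular the transitivity of your block relation and the residue/parity computation---since they are the substance of the proposition.
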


\begin{proof}
We only treat the family $\ck_{k,n}$, as the family $\ch_{k,n}$ is similar and much simpler. An $(x_i,x_j)^+$ hyperplane of $\ch_{k,n}$ is a hyperplane defined by $x_i+x_j\in 2k\mathbb Z+1$, and an $(x_i,x_j)^-$ hyperplane of $\ch_{k,n}$ is a hyperplane defined by $x_i-x_j\in 2k\mathbb Z+1$. An $(x_i,x_j)$ hyperplane is either an $(x_i,x_j)^+$ hyperplane or an $(x_i,x_j)^-$ hyperplane.
Let $\theta$ be a $\ck_{k,n}$-vertex.

\medskip
\noindent
\underline{Case 1: all the coordinates of $\theta$ are integers.} Then the local arrangement at $\theta$ contains all the coordinate hyperplanes. We define $x_i\sim x_j$ if there is an $(x_i,x_j)$ hyperplane containing $\theta$. We claim $\sim$ is an equivalence relation. Suppose $x_i-x_j=2kn_1$ and $x_j-x_k=2kn_2$ contain $\theta$. Then $x_i-x_k=2k(n_1+n_2)$ contains $\theta$. Suppose $x_i-x_j=2kn_1$ and $x_j+x_k=2kn_2+1$ contain $\theta$. Then $x_i+x_k=2k(n_1+n_2)+1$ contains $\theta$. Suppose $x_i+x_j=2kn_1+1$ and $x_j+x_k=2kn_2+1$ contain $\theta$. Then $x_i-x_k=2k(n_2-n_1)$ contains $\theta$. Thus the claim is proved. As $\theta$ has integer coordinates, for each $i\neq j$, $\theta$ is not contained simultaneously in an $(x_i,x_j)^-$ hyperplane and an $(x_i,x_j)^+$ hyperplane. Thus the local arrangement at $\theta$ is a product of skewed type $A$ arrangements, one for each equivalent class.

\medskip
\noindent
\underline{Case 2: all the coordinates of $\theta$ are not integers}. Then the coordinates are of form $1/2+k\mathbb Z$. We say the $x_i$-coordinate of $\theta$ is \emph{even}, if it is $1/2+k\cdot even$. Otherwise the $x_i$-coordinate of $\theta$ is \emph{odd}. For $i\neq j$, if the $x_i$-coordinate and $x_j$-coordinate of $\theta$ have the same parity (i.e. they are both odd or both even), then we have both an $(x_i,x_j)^-$ hyperplane and an $(x_i,x_j)^+$ hyperplane containing $\theta$. If the $x_i$-coordinate and $x_j$-coordinate of $\theta$ have different parity, then there are no $(x_i,x_j)^-$ hyperplanes and no $(x_i,x_j)^+$ hyperplanes containing $\theta$. Thus the local arrangement at $\theta$ is a product of two arrangements of type $D$, one involving the even coordinates, and another one involving the odd coordinates.

\medskip
\noindent
\underline{Case 3: the coordinates of $\theta$ has both integers and non-integers.} Note that if $x_i$ is an integer coordinate, and $x_j$ is a non-integer coordinate, then there are no $(x_i,x_j)^-$ hyperplanes and no $(x_i,x_j)^+$ hyperplanes containing $\theta$. Thus 
the local arrangement at $\theta$ is a product of the arrangements in Case 1 (between the integer coordinates) and the arrangements in Case 2 (between the non-integer coordinates).
\end{proof}

\begin{remark}
Both $\ch_{k,n}$ and $\ck_{k,n}$ are not of fiber type in an obvious way. The family $\ck_{k,n}$ generalizes \cite[Example 3.13]{falk1995k} which is neither supersolvable nor simplicial. For $\ch_{k,n}$, note that for each parallel family of hyperplanes in $\ch_{k,n}$, one can find $\ch_{k,n}$-vertices that are not contained in any member of this parallel family. So one cannot produce an iterated fibration structure on $\ch_{k,n}$ in a similar as dealing with supersolvable arrangement complements. 
\end{remark}

To show $\ch_{k,n}$ and $\ck_{k,n}$ are $K(\pi,1)$ arrangements, we need the following which is a consequence of \cite[Lem 5.5]{huang2024cycles} and Theorem~\ref{thm:Bnflag}.

\begin{prop}
\label{prop:positive part}
Let $\ca(3)$, $S'_{\ca(3)}$ and $\Delta'_{\ca(3)}$ be as in Section~\ref{subsec:admissible}. Let $(S'_{\ca(3)})^+$ be the part of the sphere $S'_{\ca(3)}$ in the first octant (i.e. all coordinates are $\ge 0$). Let $(\Delta'_{\ca(3)})^+$ be the inverse image of $(S'_{\ca(3)})^+$ under $\Delta'_{\ca(3)}\to S'_{\ca(3)}$. Then the restriction of the partial order  $(\Delta'_{\ca(3)})^0$ to the vertex set of $(\Delta'_{\ca(3)})^+$ is bowtie free and upward flag. 
\end{prop}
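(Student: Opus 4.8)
The plan is to reduce the statement about the subdivided type-$A_n$ complex $(\Delta'_{\ca(3)})^+$ to the corresponding statement about the type-$B_n$ Artin complex, where the bowtie free and upward flag properties are already known by Theorem~\ref{thm:Bnflag}. The key geometric input is the cited result \cite[Lem 5.5]{huang2024cycles}, which (as in the statement) identifies the positive part $(\Delta'_{\ca(3)})^+$ — the preimage of the first-octant piece of the sphere $S'_{\ca(3)} = S_{\ca(1)}$ — with a naturally defined subcomplex of $\Delta'_{\ca(2)}$ or $\Delta_\Lambda$ for $\Lambda$ of type $B_n$, in a way that is compatible with the $s$-type labeling. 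Concretely, restricting to the first octant throws away all the ``$\pm$'' ambiguity in the skewed $A_n$ arrangement: a hyperplane $x_i = x_j$ and the coordinate hyperplanes $x_i = 0$ behave exactly like the walls of a type-$B_n$ chamber system once we only look at the region where all coordinates are nonnegative. So first I would make this identification precise: exhibit a type-preserving (for $s$-type) simplicial isomorphism $\iota$ from $(\Delta'_{\ca(3)})^+$ onto a full subcomplex $Y \subset \Delta_\Lambda$ ($\Lambda$ of type $B_n$), namely the subcomplex consisting of simplices whose vertices, via the $s$-type bijection with $\{-1,0,1\}^n$-type data on $\bC_\Lambda$, lie over the first octant.

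Second, I would check that $\iota$ is order-preserving and order-reflecting with respect to the $s$-order $<$ on both sides. Since $<$ is defined purely in terms of adjacency in the complex together with the $s$-type function $t$, and $\iota$ preserves adjacency (being a simplicial isomorphism onto a full subcomplex) and preserves $s$-type by construction, this is essentially immediate: $x <_s y$ in $(\Delta'_{\ca(3)})^+$ iff $\iota(x), \iota(y)$ are adjacent in $Y$ with $t(\iota(x)) < t(\iota(y))$, iff $\iota(x) <_s \iota(y)$ in $\Delta_\Lambda$ (using that $Y$ is full, so adjacency in $Y$ agrees with adjacency in $\Delta_\Lambda$). Hence the poset $((\Delta'_{\ca(3)})^+)^0, <)$ is isomorphic as a poset to $(Y^0, <)$, where the latter carries the restriction of the type-$B_n$ $s$-order.

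Third, I would deduce the bowtie free and upward flag properties for $(Y^0, <)$ from those for $((\Delta_\Lambda)^0, <)$ in type $B_n$ (Theorem~\ref{thm:Bnflag}), provided the witnesses can be taken inside $Y$. Bowtie freeness: given distinct $x_1, x_2, y_1, y_2 \in Y^0$ with $x_i < y_j$ for all $i,j$, Theorem~\ref{thm:Bnflag} produces $z \in (\Delta_\Lambda)^0$ with $x_i \le z \le y_j$; I must argue $z \in Y$. This is where the ``positive octant'' is used essentially: the interval $[x_i, y_j]$ in the type-$B_n$ complex, described via the $\{-1,0,1\}^n$ combinatorics, stays in the first octant once its endpoints do — increasing the number of nonzero coordinates while remaining $\le y_j$ cannot introduce a negative coordinate if $y_j$ has none, and sitting above $x_i$ forces agreement with the nonzero (hence nonnegative) coordinates of $x_i$. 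The analogous monotonicity argument handles the upward flag property: three pairwise upper-bounded elements of $Y^0$ have an upper bound $w$ in $\Delta_\Lambda$, and $w$ can be chosen minimal among common upper bounds, which forces its nonzero coordinates to be among those of the three given (nonnegative) vertices, so $w \in Y$. I expect this monotonicity/closure argument — showing the $B_n$ joins, meets, and flag-witnesses land in the first-octant subcomplex $Y$ — to be the main obstacle; it is essentially a bookkeeping argument with the $\{-1,0,1\}^n$ coordinate description of $\bC_\Lambda$ and its lift to $\Delta_\Lambda$, but it must be carried out carefully to be sure no witness strays outside $Y$. Once that is done, transporting back along $\iota$ gives the claim for $(\Delta'_{\ca(3)})^+$.
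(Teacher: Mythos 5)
The paper gives no in-house proof here: the proposition is stated as a direct consequence of the external result \cite[Lem 5.5]{huang2024cycles} together with Theorem~\ref{thm:Bnflag}, and the content of that lemma is exactly the identification your argument needs. The identification that makes the deduction immediate is that $(((\Delta'_{\ca(3)})^+)^0,<_s)$ --- which, since no fake vertex is non-negative, is just the full sub-poset of the type-$A_n$ Artin complex $\Delta_{\ca(3)}\cong\Delta_{A_n}$ on the non-negative vertices with the $u$-order (cf.\ \Cref{prop:basic artin}(5) and \Cref{lem:adjacency in deligne}(2)) --- is isomorphic as an ordered poset to the \emph{whole} of $(\Delta_{B_n}^0,<)$; Theorem~\ref{thm:Bnflag} then applies verbatim. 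Your step 1 instead posits an isomorphism onto a \emph{proper} subcomplex $Y$ of $\Delta_{B_n}$, its first-octant part. That is not the right target. Both complexes lie over the same first-octant piece of the sphere, but their fibers are cosets in different groups: the top-dimensional simplices of $(\Delta'_{\ca(3)})^+$ form a torsor over the full group $A_{B_n}$ (realized as the preimage in $A_{A_n}$ of the stabilizer of a letter in $W_{A_n}=S_{n+1}$), whereas those of $Y$ form a torsor over the index-$2^n$ subgroup of $A_{B_n}$ lying over $S_n<W_{B_n}$. So the isomorphism you assert is not the one that exists, and in any case you offer no construction of it, even though it is the entire mathematical content of the step.

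Even granting your $Y$, the compensating ``witnesses stay in $Y$'' argument of step 3 is incomplete precisely where it matters. The bowtie-free half is fine: intervals in $\bC_{B_n}$ between non-negative elements are non-negative, and the type-preserving projection $\Delta_{B_n}\to\bC_{B_n}$ is order-preserving, so the central vertex supplied by Theorem~\ref{thm:Bnflag} lies over the first octant. But the upward-flag half does not go through as sketched: a common upper bound in $\Delta_{B_n}$ of three non-negative vertices need not be non-negative (already in $\bC_{B_3}$, the vertex $(1,1,-1)$ upper-bounds $(1,0,0)$, $(0,1,0)$ and $(1,1,0)$), and your proposed fix --- take a minimal common upper bound and argue its nonzero coordinates are among those of the three inputs --- presupposes that joins computed in the Artin complex project to joins in the Coxeter complex, which you do not establish and which is not obvious. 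In short, the proposal substitutes an incorrect (and unproved) identification for the one the paper imports, and the extra localization argument it then requires is left open at its hardest point; the intended route is simply to quote the full poset isomorphism with $\Delta_{B_n}$ and then Theorem~\ref{thm:Bnflag}.
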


\begin{thm}
\label{thm:examples}
Suppose Conjecture~\ref{conj:dn} holds in dimension $n$. Then $(\falk_{\ch_{k,n}},d_\infty)$ and $(\falk_{\ck_{k,n}},d_\infty)$ are injective metric spaces, and 
$\ch_{k,n}$ and $\ck_{k,n}$ are $K(\pi,1)$ arrangements for any $k\ge 1$. 

Thus by Theorem~\ref{thm:dn dim 3 and 4}, for $n=2,3,4$ and any $k\ge 1$, the arrangements $\ch_{k,n}$ and $\ck_{k,n}$ are $K(\pi,1)$ arrangements. 
\end{thm}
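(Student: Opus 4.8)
The plan is to prove \Cref{thm:examples} by combining the structural results already assembled in Sections~\ref{subsec:deligne complex}--\ref{sec:example}. First I would invoke \Cref{prop:admissible}, which establishes that both $\ch_{k,n}$ and $\ck_{k,n}$ are admissible in the sense of \Cref{def:admissible}. The family $\ck_{k,n}$ is $\mathbb Z^n$-invariant (the translation lattice generated by $2k e_i$ preserves it, and in fact a smaller lattice does too), hence it is complete with $D_{\ck_{k,n}}=\mathbb R^n$ and has finite shape. So \Cref{thm:main} applies directly to $\ck_{k,n}$ once we know Conjecture~\ref{conj:dn} in dimension $n$: we obtain that $(\falk_{\ck_{k,n}},d_\infty)$ is injective and $\ck_{k,n}$ is $K(\pi,1)$.

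The arrangement $\ch_{k,n}$ is finite, so $D_{\ch_{k,n}}$ is a bounded region of $\mathbb R^n$, not all of $\mathbb R^n$; hence $\ch_{k,n}$ is \emph{not} complete and \Cref{thm:main} does not apply verbatim. Here the point is that $\ch_{k,n}$ lives ``inside the first octant picture'': up to translation its $\ca$-vertices have coordinates that are odd integers, and the local arrangements are products of skewed $A$-pieces and $B$-pieces all of whose relevant chambers sit in a fixed octant. Thus the relevant links are governed not by the full posets $((\Delta'_{\ca(i)})^0,<)$ but by their ``positive parts.'' This is exactly what \Cref{prop:positive part} is for: it gives bowtie-freeness and upward flagness of the restriction of the $s$-order to $(\Delta'_{\ca(3)})^+$, and \Cref{thm:Bnflag} gives the analogous statement for the $B_n$ positive part (which is a subcomplex of a cube $[0,1]^n$ with the $\ell^\infty$ metric, hence already injective). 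So I would re-run the proof of \Cref{lem:link to injective}, replacing ``$B(x,\epsilon(x))$ is isometric to a ball in $|\mathcal P|_\infty$ for the full poset $\mathcal P$'' with ``$B(x,\epsilon(x))$ is isometric to a ball in the orthoscheme complex of the positive-part poset,'' which is still injective by \Cref{thm:injective criterion}. The local-to-global step (\Cref{thm:local-to-global}) then yields injectivity of $(\falk_{\ch_{k,n}},d_\infty)$, and \Cref{thm:falk} yields the $K(\pi,1)$ conclusion, using that the local arrangements of $\ch_{k,n}$ are themselves $K(\pi,1)$ (they are reflection arrangements of type $B$, $D$, or products, together with skewed $A$ arrangements, all known to be $K(\pi,1)$ by Deligne and the reflection-arrangement theory recalled in \Cref{subsec:coxeter and artin}, or more simply because the type (1)--(3) pieces are fiber-type).

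Having proved both injectivity statements conditionally on Conjecture~\ref{conj:dn} in dimension $n$, the final sentence of the theorem is immediate: \Cref{thm:dn dim 3 and 4} says the conjecture holds for $n=3,4$, and for $n=2$ everything is classical (or the conjecture is vacuous/trivial in low rank), so for $n=2,3,4$ and all $k\ge 1$ the arrangements $\ch_{k,n}$ and $\ck_{k,n}$ are $K(\pi,1)$.

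The main obstacle I anticipate is the bookkeeping in the $\ch_{k,n}$ case: verifying carefully that every link of a vertex of $\falk_{\ch_{k,n}}$ really is a ``positive part'' of the relevant spherical Deligne complex (so that \Cref{prop:positive part} applies rather than \Cref{prop:key}), and that the boundary faces of $D_{\ch_{k,n}}$ do not introduce links outside this controlled family. Concretely one must check that at each $\ch_{k,n}$-vertex the hyperplanes present are exactly the coordinate-type and $x_i\pm x_j=0$ hyperplanes through that vertex with the chamber structure confined to an octant, using that the ``walls'' $x_i=\pm(2k+1)$ only appear at extreme vertices and are parallel to coordinate hyperplanes. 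Once this geometric identification of links is in hand, the metric/injectivity argument is a routine adaptation of \Cref{lem:link to injective}.
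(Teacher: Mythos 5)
Your proposal follows the paper's proof essentially verbatim: $\ck_{k,n}$ is complete and of finite shape, so Theorem~\ref{thm:main} applies directly, while $\ch_{k,n}$ is handled by re-running the argument of Lemma~\ref{lem:link to injective}, using Proposition~\ref{prop:positive part} together with the join decomposition of the link to treat the octant-restricted links at $\ca$-vertices on the boundary of $D_{\ch_{k,n}}$, and the unconditional statement for $n\le 4$ comes from Theorem~\ref{thm:dn dim 3 and 4}. One slip is worth correcting. The arrangement $\ch_{k,n}$ contains no hyperplanes $x_i=0$, so its local arrangements are products of skewed type $A$ factors (on blocks of coordinates sharing a common nonzero absolute value) and a single type $D$ factor (on the block of zero coordinates); there are no type $B$ factors, so Theorem~\ref{thm:Bnflag} is not the relevant input here. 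Moreover, the type $D$ factor only involves coordinates equal to $0$, which are interior to $[-(2k+1),2k+1]$, so it contributes its \emph{full} spherical Deligne complex to the link and is covered by the standing hypothesis that Conjecture~\ref{conj:dn} holds (this is also why the statement for $\ch_{k,n}$ is conditional at all); only the skewed type $A$ factors on coordinates equal to $\pm(2k+1)$ get truncated to an octant, and those are exactly what Proposition~\ref{prop:positive part} handles. With that correction, your treatment of the boundary links coincides with the paper's.
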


\begin{proof}
The $\ck_{k,n}$ case is a consequence of \Cref{thm:main} and \Cref{prop:admissible}, as it is complete. The arrangement $\ch_{k,n}$ is not complete, but we will explain how it is still a $K(\pi,1)$ arrangement using techniques similar to the proof for complete arrangements. Let $D$ be the union of all elements in $\fan(\ch_{k,n})$ that are bounded. Then $D$ is the cube bounded by $\{x_i=\pm (2k+1)\}_{1\le i\le n}$. Let $\falk$ be the associated Falk complex. We can prove $(\falk,d_\infty)$ is injective in the same as Lemma~\ref{lem:link to injective}, except we need to verify that for each $\ca$-vertex $\bar x$ on the boundary of $D$ and $x\in \falk$ that is mapped to $\bar x$, $B(x,r)$ with the induced metric from $(\falk,d_\infty)$ is injective. Note that the local arrangement $\ca_{\bar x}$ at $\bar x$ is a product of one skewed type $A_m$ arrangement $\mathcal L_1$ and a few skewed type $A_1$ arrangements $\{\mathcal L_i\}_{i=2}^m$. Thus $S_{\ca_{\bar x}}$ admits a join decomposition 
$$
S_{\ca_{\bar x}}=S_{\mathcal L_1}\circ S_{\mathcal L_2}\circ\cdots\circ S_{\mathcal L_m}.
$$
Note that $S_{\mathcal L_i}$ is a copy of $\mathbb S^0$ for $i\ge 2$. 
Let $\bar N=\lk(\bar x,D)$. Then $\bar N=\bar N_1\circ\cdots\circ \bar N_k$, where $\bar N_1$ is the subset of $S_{\mathcal L_1}$ in the first octant (up to reflections along the coordinate hyperplanes). For $i\ge 2$, either $\bar N_i=S_{\mathcal L_i}$ or $\bar N_i$ is a single point. Now by Theorem~\ref{thm:injective criterion}, Lemma~\ref{lem:link deligne}, Proposition~\ref{prop:positive part} and the argument in the proof Theorem~\ref{thm:main} for multiple factors, we know $B(x,r)$ is injective for $r$ small enough.
\end{proof}

\section{The skewed \texorpdfstring{$A_n$}{A\_n} arrangement}
\label{sec:check links}

Our goal in this section is to prove \Cref{prop:contractible} (= Proposition~\ref{prop:key}). We will begin with a more in-depth discussion of the different subdivisions $S_{\ca(3)}$ and $S'_{\ca(3)}$ of the sphere (introduced in \Cref{subsec:admissible}) in the specific case of type (3), how these subdivisions of the sphere relate, and how their properties extend to the complexes $\Delta_{\ca(3)}$ and $\Delta_{\ca(3)}'$. 
For ease of notation, \textbf{in the rest of this section, let} $\cb = \ca(3)$. 

\subsection{The Coxeter complex}

It becomes convenient to replace $S_\cb$ and $S'_\cb$ with their projections to the boundary of the $n$-cube $[-1,1]^n$, inheriting the cell structures of  $S_\cb$ and $S'_\cb$, resp. (not the cell structure of the cube).
Equivalently, we can redefine $S_\cb$ and $S'_\cb$ replacing the unit sphere by $\partial [-1,1]^n$.
For $S'_\cb$, this gives the barycentric subdivision of the usual cell structure on $\partial [-1,1]^n$. 

We briefly recall the labeling of the vertices of $S_\cb$ and $S_\cb'$. The general definition is given in \Cref{def:u type s type}.
The $u$-type of a vertex $v$ of $S_\cb$ is of the form $\hat u_i$; we define $u(v) = i$. Similarly, the $s$-type of a vertex $w$ of $S_\cb'$ has the form $\hat s_j$; we define $s(w) = j$.

There are two orderings we consider: the $u$-order and the $s$-order. The $u$-order is defined on vertices of $S_\cb$, where we say $v <_u w$ if $v \sim_u w$ (meaning $v$ and $w$ are adjacent in the \emph{u}nsubdivided complex $S_\cb$) and $u(v) < u(w)$. The $s$-order is defined on vertices of $S_\cb'$, where we say $v <_s w$ if $v \sim_s w$ (meaning $v$ and $w$ are adjacent in the \emph{s}ubdivided complex $S_\cb'$) and $s(v) < s(w)$. As is common, we say $v \leq_u w$ if either $v = w$ or $v <_u w$, and $v \leq_s w$ if either $v = w$ or $v <_s w$. Sometimes we may write $v \simeq_u w$ to say $v = w$ or $v \sim_u w$, and may write $v \simeq_s w$ to say $v = w$ or $v \sim_s w$. 

Notice that if $v$ is a vertex of $S_\cb$ then it is also a vertex of $S_\cb'$, so it has a $u$-type and and $s$-type, and these types do not necessarily agree. In particular there may be vertices $v$ and $w$ with $v \leq_u w$ but $w \leq_s v$ (or even $v \not\sim_s w$, so the vertices are not comparable in the $s$-order). We clarify the connection in the following.

For a vertex $p = (p_1,\dots,p_n)$ in $S'_\cb$, let $\pos(p) = \{\, i : p_i > 0 \,\} = \{\, i : p_i = 1 \,\}$ and $\neg(p) = \{\, i : p_i < 0 \,\} = \{\, i : p_i = -1 \,\}$ (recall we are projecting to the cube). The following facts follow quickly from the definitions and are left as an exercise.

\begin{prop} 
\label{prop:basic coxeter}
Let $v$ be a vertex of $S'_\cb$.
    \begin{itemize}
        \item $s(v) = \#\pos(v) + \#\neg(v)$.
        \item If $w$ is another vertex of $S'_\cb$, then $v$ and $w$ are adjacent if and only if either $\pos(v) \subset \pos (w)$ and $\neg(v)\subset \neg(w)$, or $\pos(w)\subset \pos(v)$ and $\neg(w)\subset \neg(v)$.
        \item $v$ is vertex of $S_\cb$ if and only if either $\neg(v) = \varnothing$ or $\pos(v) = \varnothing$. \textup{(In the first case, we will call $v$ ``non-negative'', and in the second, we will call $v$ ``non-positive''.)}
        \item If $v$ is non-negative, then $u(v) = s(v) = \#\pos(v)$.
        \item If $v$ is non-positive, then $u(v) + s(v) = n+1$, or in other terms, $u(v) = n+1 - s(v) = n+1 - \#\neg(v)$.
    \end{itemize}
\end{prop}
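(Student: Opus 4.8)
The plan is to read off each of the five bullets directly from the definitions of $S_\cb$, $S_\cb'$, the $s$-type, and the $u$-type; once these are unwound, every assertion reduces to bookkeeping. Throughout I use the identification of $S_\cb'=S_{\ca(1)}$ with the Coxeter complex $\bC_\Lambda$ of type $B_n$ from Section~\ref{subsec:coxeter and artin}, realized as the barycentric subdivision of $\partial[-1,1]^n$: a vertex $v$ is the barycenter of the face $F_v\subseteq\partial[-1,1]^n$ obtained by fixing $x_i=v_i$ for $i\in\pos(v)\cup\neg(v)$, and its $s$-type is $\hat s_j$ with $j$ the number of fixed (nonzero) coordinates. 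The first bullet, $s(v)=\#\pos(v)+\#\neg(v)$, is then immediate. For the second, two vertices of a barycentric subdivision span an edge exactly when the corresponding faces are nested, and $F_v\subseteq F_w$ holds iff $\pos(w)\subseteq\pos(v)$ and $\neg(w)\subseteq\neg(v)$, which is precisely the stated dichotomy.

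For the third bullet, recall that $\cb$ is a linear image of the reflection arrangement of type $A_n$, so (by the discussion of reflection arrangements in Sections~\ref{subsec:link} and~\ref{subsec:coxeter and artin}) $S_\cb$ is the associated Coxeter complex, and its vertices, viewed inside the finer subdivision $S_\cb'$, are exactly the $1$-dimensional fans of $\cb$, i.e.\ the extreme rays of its chambers. A chamber of $\cb$ is cut out by a linear order interleaving $0$ with $x_1,\dots,x_n$, i.e.\ a cone $\{x_{i_1}\le\dots\le x_{i_a}\le 0\le x_{i_{a+1}}\le\dots\le x_{i_n}\}$, and this cone factors as a product of two ``staircase'' cones in complementary coordinates; its extreme rays, radially projected onto $\partial[-1,1]^n$, are exactly the sign vectors whose nonzero entries are all $+1$ or all $-1$. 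Conversely, any such nonzero sign vector lies on $n-1$ independent hyperplanes of $\cb$ and hence spans a ray. This yields ``$v$ is a vertex of $S_\cb$ iff $\neg(v)=\varnothing$ or $\pos(v)=\varnothing$''.

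For the last two bullets I would pass to the standard model of the type-$A_n$ arrangement, the hyperplanes $\{x_i=x_j\}$ inside $V=\{\sum_{i=1}^{n+1}x_i=0\}$, carried to the skewed model by $y_i=x_i-x_{n+1}$ for $1\le i\le n$. In $V$, a vertex of $u$-type $\hat u_k$ corresponds to a $k$-element subset $I\subseteq\{1,\dots,n+1\}$, realized by the ray through $e_I-\tfrac{k}{n+1}\mathbf 1$; since $y_i=(e_I)_i-(e_I)_{n+1}$, this vertex is non-negative with $\pos(v)=I$ when $n+1\notin I$ --- giving $u(v)=k=\#\pos(v)=s(v)$ --- and is non-positive with $\neg(v)=\{1,\dots,n\}\setminus I'$ when $I=I'\cup\{n+1\}$ --- giving $u(v)=k$ and $s(v)=\#\neg(v)=n-(k-1)$, hence $u(v)+s(v)=n+1$. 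One checks that this assignment lands bijectively on the vertex set described in the third bullet, which doubles as a consistency check.

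I do not expect a genuine obstacle; as the authors indicate, this is an exercise. The only points needing attention are matching index conventions so that ``number of nonzero coordinates'' corresponds to the correct $\hat u_k$ and $\hat s_j$, and noting that radial projection onto $\partial[-1,1]^n$ does not alter the combinatorial type of the rays and faces involved, because their nonzero coordinates are already $\pm1$. One should also bear in mind that the statement is phrased for the standard skewed $A_n$ representative: for a $(\mathbb Z/2\mathbb Z)^n$-image of it, the condition ``$\neg(v)=\varnothing$ or $\pos(v)=\varnothing$'' is replaced by the corresponding sign-pattern-compatibility condition, with no change to the argument.
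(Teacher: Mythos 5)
Your proposal is correct, and since the paper explicitly leaves this proposition as an exercise, your definition-unwinding is exactly the intended argument: bullets one and two are immediate from the barycentric-subdivision model of $S'_\cb$ on $\partial[-1,1]^n$, and your treatment of the only substantive points --- identifying the rays of the skewed $A_n$ arrangement as the extreme rays of the product "staircase" cones (bullet three), and transporting the $u$-type labels from the standard $A_n$ model via $y_i=x_i-x_{n+1}$ (bullets four and five) --- is sound. The index convention you fix is the one the paper itself uses later (e.g.\ in \Cref{lem:adjacency in coxeter}), so no adjustment is needed.
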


Note that there are always vertices (and generally, cells) in $S'_\cb$ which are not vertices (resp., cells) of $S_\cb$.
We call the cells of $S_\cb$ \emph{real}, and the cells of $S'_\cb$ which are not cells of $S_\cb$ \emph{fake}. %

Adjacency in $S_\cb$ is slightly more subtle than adjacency in $S'_\cb$.

\begin{lem} \label{lem:adjacency in coxeter}
Let $v$ and $w$ be vertices of $S_\cb$. If both $v$ and $w$ are non-negative or both non-positive, then they are adjacent in $S_\cb$ if and only if they are adjacent in $S'_\cb$. In this case, there is no fake vertex on the edge between them. If $v$ is non-negative  and $w$ is non-positive, then they are adjacent in $S_\cb$ if and only if their non-zero coordinates do not overlap. In this case, $u(v) < u(w)$, and there is a fake vertex $b$ laying on the real edge between $v$ and $w$ satisfying $\pos(b) = \pos(v)$ and $\neg(b) = \neg(w)$.
\end{lem}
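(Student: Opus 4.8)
The plan is to unwind the definitions of the two cell structures $S_\cb$ and $S'_\cb$ on $\partial[-1,1]^n$ and reduce the lemma to elementary combinatorics of the functions $\pos$ and $\neg$. Recall $S_\cb'$ is the barycentric subdivision of the standard cube structure, so every cell of $S_\cb'$ is a chain of faces of $[-1,1]^n$, and the vertices of $S_\cb'$ are exactly the barycenters of faces of the cube, which are precisely the points of $\{-1,0,1\}^n \setminus \{0\}$ when projected. A cell of $S_\cb$ is a cell of the arrangement $\cb$ cutting the sphere (equivalently the cube boundary), and the key structural fact, stated in \Cref{subsec:admissible} and reflected in \Cref{prop:basic coxeter}, is that the real vertices are exactly the points with $\neg(v)=\varnothing$ (non-negative) or $\pos(v)=\varnothing$ (non-positive): these come from the type-$A$ hyperplanes $x_i=x_j$ together with $x_i=0$, whereas a point with both a $+1$ and a $-1$ coordinate only sits on type-$B$ walls and hence is ``fake''.

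First I would handle the two homogeneous cases: both $v,w$ non-negative, or both non-positive. Up to the $(\mathbb Z/2\mathbb Z)^n$ reflection symmetry we may assume both are non-negative, so $\neg(v)=\neg(w)=\varnothing$, and by \Cref{prop:basic coxeter} adjacency in $S_\cb'$ just says $\pos(v)\subset\pos(w)$ or $\pos(w)\subset\pos(v)$. I would argue that in this regime the arrangement $\cb$ restricted to the non-negative orthant face already realizes the full barycentric subdivision --- concretely, a nested pair $\pos(v)\subsetneq\pos(w)$ corresponds to a flag of faces of the simplex cut out by $x_i=x_j$ and $x_i=0$, so $v$ and $w$ lie on a common real cell and the open edge between them contains no point with a negative coordinate, hence no fake vertex. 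Conversely if they are adjacent in $S_\cb$ they are a fortiori adjacent in $S_\cb'$. The mixed case is the substantive one: assume $v$ non-negative, $w$ non-positive. Here I would show that the edge $vw$ is a cell of $S_\cb$ iff $\pos(v)\cap\neg(w)=\varnothing$. For the ``only if'' direction: if some coordinate $i$ is $+1$ in $v$ and $-1$ in $w$, then every real $1$-cell through $v$ either stays non-negative or, moving toward $w$, must first pass through the coordinate hyperplane $x_i=0$ before $x_i$ can become negative, so there is a real vertex strictly between them and $vw$ is not a single edge of $S_\cb$. For the ``if'' direction, when the supports are disjoint one checks directly that the segment from $v$ to $w$ meets no hyperplane of $\cb$ in its interior except possibly... --- actually it does cross the coordinate hyperplanes $x_i=0$ for $i\in\neg(w)$, producing the fake vertex $b$ with $\pos(b)=\pos(v)$, $\neg(b)=\neg(w)$; the point is that this $b$ is fake (it has both signs, so is not a vertex of $S_\cb$), hence $vw$ really is a single real edge subdivided by one fake vertex in $S_\cb'$. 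Finally, $u(v)<u(w)$ follows from \Cref{prop:basic coxeter}: $u(v)=\#\pos(v)$ and $u(w)=n+1-\#\neg(w)$, and disjointness of $\pos(v)$ and $\neg(w)$ inside $\{1,\dots,n\}$ gives $\#\pos(v)+\#\neg(w)\le n < n+1$, so $\#\pos(v) < n+1-\#\neg(w)$.

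The main obstacle I anticipate is the precise identification of which segments are genuine $1$-cells of the \emph{unsubdivided} complex $S_\cb$ in the mixed case --- i.e. making rigorous the claim that a point with overlapping positive and negative support forces an intervening real vertex, and dually that disjoint supports force exactly one (fake) subdivision point. This is really a statement about how the type-$A$ chamber structure (hyperplanes $x_i=0$ and $x_i=x_j$) sits inside the finer type-$B$ barycentric subdivision, and I would make it precise by describing the carrier in the cube: the minimal face of $[-1,1]^n$ containing $v$ is the one with coordinates frozen at $\pm1$ exactly on $\pos(v)\cup\neg(v)$, and two real vertices span a real edge iff their carriers form a flag \emph{and} the sign pattern is compatible with a single type-$A$ wall crossing. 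Once this carrier description is in place the rest is bookkeeping with $\pos$ and $\neg$, and the $u$-type inequality is the short computation above.
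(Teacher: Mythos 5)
Your homogeneous case and the $u$-type computation are essentially right, though note that the individual coordinate reflections do \emph{not} preserve $\cb$ (reflecting $x_1\mapsto -x_1$ sends $x_1=x_2$ to $x_1=-x_2\notin\cb$); only the antipodal map does, so ``up to the $(\mathbb Z/2\mathbb Z)^n$ symmetry'' should be ``up to $x\mapsto -x$'' or simply a symmetric argument. The real gap is in the mixed case, where you never articulate the criterion the paper's proof rests on: because $\cb$ is a \emph{simplicial} arrangement, two vertices of $S_\cb$ are adjacent if and only if they are vertices of a common chamber, i.e.\ if and only if no hyperplane of $\cb$ strictly separates them. With that criterion, ``only if'' is exactly your separation observation (if $v_i=1$ and $w_i=-1$ then $\{x_i=0\}$ separates them) --- except your stated conclusion ``there is a real vertex strictly between them'' neither follows nor is needed --- and ``if'' is a direct check that disjoint supports place $v$ and $w$ in the same closed halfspace of every $\{x_i=0\}$ and every $\{x_i=x_j\}$.

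Your account of the ``if'' direction and of the fake vertex $b$ is wrong as written. With disjoint supports, the straight segment from $v$ to $w$ does \emph{not} cross $\{x_i=0\}$ for $i\in\neg(w)$ in its interior: for such $i$ one has $v_i=0$ and $w_i=-1$, so $x_i$ decreases monotonically from $0$ and meets the hyperplane only at the endpoint $v$. (Indeed the interior of a $1$-cell of $S_\cb$ cannot cross any hyperplane of $\cb$; if it did, $v$ and $w$ would not be adjacent.) The vertex $b$ does not arise from hyperplanes of $\cb$ at all: it is cut out by the extra $B_n$ hyperplanes $x_i+x_j=0$ (for $i\in\pos(v)$, $j\in\neg(w)$) present in $S'_\cb$ but absent from $S_\cb$, and it lies not on the chord from $v$ to $w$ but in the relative interior of the $1$-cell of $S_\cb$, i.e.\ the arc of $\partial[-1,1]^n$ (resp.\ of the sphere) carried by the closed two-dimensional cone spanned by $v$ and $w$. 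For example, with $v=(1,0)$ and $w=(0,-1)$ one gets $b=(1,-1)$, the corner of the square between them, which is not a point of the segment $vw$. Your proposed fallback --- the ``carrier flag plus sign-pattern compatibility'' description --- is too vague to substitute for the chamber criterion, so as it stands the mixed case, and in particular the existence and location of $b$, is not established.
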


\begin{proof}
    If both $v$ and $w$ are non-negative or both non-positive, then they are both in the same octant, and there is no subdivision within this octant, so adjacency in the unsubdivided complex is equivalent to adjacency in the subdivided complex.

    Now let $v = (v_1,\dots,v_n)$ be a non-negative vertex and $w = (w_1,\dots,w_n)$ a non-positive vertex. Notice that as a basic consequence of the definition of $S_\cb$, $v$ and $w$ are adjacent in $S_\cb$ if and only if every hyperplane $H$ containing neither $v$ nor $w$ has $v$ and $w$ contained in the same connected component of its complement $\bb[n]{R} \setminus H$. This statement is equivalent to saying that $v$ and $w$ are vertices of the same chamber of the arrangement, which in turn is equivalent to adjacency because the arrangement is simplicial, and every edge is contained in a chamber.

If $v$ and $w$ have non-overlaping non-zero coordinates, then $v_i>0$ implies $w_i=0$ and $w_i<0$ implies $v_i=0$, hence $v$ and $w$ are contained in the same closed halfspace bounded by $\{x_i=0\}$. Moreover, if $v_i>v_j$ for $i\neq j$ then $v_i>0$ and $v_j=0$. Consequently, $w_i=0$, $w_j\le 0$, and $w_i\ge w_j$. Similarly, $w_i>w_j$ implies $v_i\ge v_j$. So $v$ and $w$ are contained in the same closed halfspace bounded by $\{x_i=x_j\}$ for $i\neq j$. Hence $v$ and $w$ are contained in the same chamber, hence adjacent.

    Conversely, suppose $v$ and $w$ are adjacent. 
    Then for every hyperplane $H$, we either have that $H$ contains $v$, $H$ contains $w$, both, or $v$ and $w$ are on the same side of the complement of $H$. Suppose $H = \{x_i = 0\}$ is a coordinate hyperplane for some $i$. Then $v_i \geq 0$ and $w_i \leq 0$, so they cannot lie in the same component of the complement of $H$ (since these are the halfspaces $x_i > 0$ and $x_i < 0$). This means $H$ must contain at least one of $v_i$ or $w_i$, or in other words, we must have at least one of $v_i = 0$ or $w_i = 0$. Since this holds for each $i$, it follows that the non-zero coordinates do not overlap, as claimed.

    To see that $u(v) < u(w)$, since the non-zero coordinates do not overlap, i.e., $\pos(v) \cap \pos(w) = \varnothing$, we know that $\#\pos(v) + \#\neg(w) \leq n$. Then $u(v) = \#\pos(v) \leq n - \#\neg(w) < n + 1 - \#\neg(w) = u(w)$.

    Define $b = (b_1,\dots,b_n)$ by 
    \[
        b_i = \begin{cases}
            b_i = v_i & v_i \not= 0 \\
            b_i = w_i & w_i \not= 0 \\
            b_i = 0 & \text{otherwise} \\
        \end{cases}.
    \]
    Since $\pos(v) \cap \neg(w) = \varnothing$, this is well-defined.
    It is a straightforward linear algebra exercise to see that $b$  lies on the edge of $S_\cb$ between $v$ and $w$. 
\end{proof}

This gives information on adjacent real vertices, but we can also say something about a real vertex which is adjacent to a fake vertex.

\begin{lem} \label{lem:fake adjacent real s type coxeter}
    If $b$ is a fake vertex adjacent to a real vertex $a$ in $S'_\cb$, then $s(b) > s(a)$.
\end{lem}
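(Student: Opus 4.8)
The plan is to argue entirely combinatorially from the dictionary assembled in \Cref{prop:basic coxeter}: a vertex $v$ of $S'_\cb$ is encoded by the disjoint pair of subsets $(\pos(v),\neg(v))$ of $\{1,\dots,n\}$, one has $s(v)=\#\pos(v)+\#\neg(v)$, the vertex $v$ is real exactly when $\pos(v)=\varnothing$ or $\neg(v)=\varnothing$, and two vertices $v,w$ are adjacent in $S'_\cb$ precisely when one of the pairs is coordinatewise contained in the other, i.e.\ either $\pos(v)\subset\pos(w)$ and $\neg(v)\subset\neg(w)$, or $\pos(w)\subset\pos(v)$ and $\neg(w)\subset\neg(v)$.

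First I would set up a symmetry reduction. The linear involution $x\mapsto-x$ of $\bb[n]{R}$ preserves the cube $[-1,1]^n$ and the reflection arrangement defining the subdivision, so it induces a cellular automorphism of $S'_\cb$; on vertices it swaps $\pos$ and $\neg$, hence it preserves the $s$-type, preserves the class of real (resp.\ fake) vertices, and interchanges non-negative with non-positive vertices. Applying this involution to $(a,b)$ if necessary, I may therefore assume the real vertex $a$ is non-negative, so $\neg(a)=\varnothing$ and, by \Cref{prop:basic coxeter}, $s(a)=\#\pos(a)$.

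Then I would exploit that $b$ is fake: by definition this means $b$ is not real, so both $\pos(b)\neq\varnothing$ and $\neg(b)\neq\varnothing$; in particular $\#\neg(b)\ge 1$. Feeding the pair $\{a,b\}$ into the adjacency criterion, the option ``$\pos(b)\subset\pos(a)$ and $\neg(b)\subset\neg(a)$'' cannot occur, since $\neg(a)=\varnothing$ would force $\neg(b)=\varnothing$. Hence the other option holds, and in particular $\pos(a)\subset\pos(b)$. Consequently
\[
s(b)=\#\pos(b)+\#\neg(b)\ \ge\ \#\pos(a)+1\ =\ s(a)+1\ >\ s(a),
\]
as claimed.

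The whole argument is essentially mechanical once \Cref{prop:basic coxeter} is in hand; the only point demanding a moment's care is the opening reduction, namely checking that negation of coordinates really does respect the subdivided cell structure $S'_\cb$ and the $s$-labelling — and this is immediate because $S'_\cb$ is the barycentric subdivision of $\partial[-1,1]^n$, which is symmetric under $x\mapsto-x$, and the $s$-type of a vertex only records how many of its coordinates are nonzero. (Alternatively one can skip the reduction and treat the non-positive case of $a$ by the mirror-image computation; there is no real obstacle either way.)
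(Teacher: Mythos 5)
Your proof is correct and follows essentially the same route as the paper's: both arguments apply the adjacency criterion of \Cref{prop:basic coxeter}, rule out the containment of $b$'s sets in $a$'s using that $a$ is real (one set empty) while $b$ is fake (both sets nonempty), and conclude $s(a)<s(b)$ from the resulting strict inclusion. The only difference is your optional $x\mapsto -x$ symmetry reduction, which the paper replaces by writing out the two (symmetric) cases directly.
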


\begin{proof}
    Since $b$ is a fake vertex, it must have both positive and negative coordinates, but since $a$ is real, it can only have one type. If $\pos(a) = \varnothing$, then we cannot have $\pos(b) \subseteq \pos(a)$, so by the adjacency rules of $S'_\cb$, we must have $\pos(a) \subsetneq \pos(b)$ and $\neg(a) \subseteq \neg(b)$ (where the first inclusion is proper). Similarly, if $\neg(a) = \varnothing$, then we cannot have $\neg(b) \subseteq \neg(a)$, so we have $\pos(a) \subseteq \pos(b)$ and $\neg(a) \subsetneq \neg(b)$ (where the second inclusion is proper).
    In either case, 
    \[
        s(a) = \#\pos(a) + \#\neg(a) < \#\pos(b) + \#\neg(b) = s(b). \qedhere
    \]
\end{proof}

For any vertex $b = (b_1,\dots,b_n)$ of $S'_\cb$, let $b^+ = (b_1^+,\dots, b_n^+)$ be given by $b_i^+ = b_i$ when $b_i \geq 0$ and $b_i^+ = 0$ when $b_i < 0$, and similarly, let $b^- = (b_1^-,\dots, b_n^-)$ be given by $b_i^- = b_i$ when $b_i \leq 0$ and $b_i^- = 0$ when $b_i > 0$. 
The next Lemma follows immediately from \Cref{lem:adjacency in coxeter}. 

\begin{lem}
    If $b$ is a fake vertex, then $b^+$ and $b^-$ are real vertices which are adjacent in $S_\cb$, and $b$ lies on the real edge joining $b^+$ and $b^-$. 
\end{lem}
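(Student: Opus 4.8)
The plan is to read everything off directly from the explicit formulas for $b^{+}$ and $b^{-}$ together with \Cref{lem:adjacency in coxeter}. First I would record that, since $b$ is fake, it has at least one positive and at least one negative coordinate, i.e.\ $\pos(b)\neq\varnothing$ and $\neg(b)\neq\varnothing$. From the definition of $b^{+}$ one has $\pos(b^{+})=\pos(b)$ and $\neg(b^{+})=\varnothing$, so $b^{+}$ is a nonzero point of $\partial[-1,1]^n$ all of whose nonzero coordinates are positive; by the third bullet of \Cref{prop:basic coxeter} this exactly says $b^{+}$ is a (non-negative) vertex of $S_\cb$. Symmetrically, $b^{-}$ is a (non-positive) vertex of $S_\cb$. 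The hypothesis that $b$ has coordinates of both signs is what guarantees $b^{+}$ and $b^{-}$ are genuinely vertices (nonzero points).

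Next I would verify the disjointness hypothesis needed to invoke \Cref{lem:adjacency in coxeter}: the nonzero coordinates of $b^{+}$ form the set $\pos(b)$, those of $b^{-}$ form $\neg(b)$, and these are disjoint because no coordinate of $b$ is simultaneously positive and negative. Hence \Cref{lem:adjacency in coxeter}, applied to the non-negative vertex $b^{+}$ and the non-positive vertex $b^{-}$, yields that $b^{+}$ and $b^{-}$ are adjacent in $S_\cb$, and that there is a fake vertex $b'$ lying on the real edge joining them with $\pos(b')=\pos(b^{+})=\pos(b)$ and $\neg(b')=\neg(b^{-})=\neg(b)$.

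Finally I would close the argument by observing that a vertex of $S'_\cb$, viewed as a point of $\partial[-1,1]^n$, is completely determined by the pair of sets $(\pos,\neg)$ of indices of its positive and negative coordinates; since $\pos(b')=\pos(b)$ and $\neg(b')=\neg(b)$, we get $b'=b$, so $b$ itself lies on the real edge joining $b^{+}$ and $b^{-}$. I do not anticipate any real obstacle: the only step requiring a little care is this last identification $b'=b$, which is precisely why one wants the uniqueness of the fake vertex on the edge built into \Cref{lem:adjacency in coxeter}; everything else is bookkeeping with $\pos(\cdot)$ and $\neg(\cdot)$.
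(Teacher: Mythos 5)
Your proposal is correct and matches the paper's intent: the paper gives no written proof, stating only that the lemma "follows immediately from Lemma~\ref{lem:adjacency in coxeter}," and your argument is exactly the fleshed-out version of that — checking $b^{+},b^{-}$ are real via the sign criterion, checking their nonzero coordinates are disjoint, and identifying the resulting fake vertex with $b$ via its $(\pos,\neg)$ data. (Minor quibble: the final identification $b'=b$ needs only that a vertex of $S'_\cb$ is determined by its sign pattern, which you already note, not any uniqueness clause of Lemma~\ref{lem:adjacency in coxeter}.)
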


The following two propositions follows from Theorem~\ref{thm:bowtie free An} and Theorem~\ref{thm:Bnflag}. %
\begin{prop} \label{prop:coxeter poset s}
    For vertices $a,b \in S'_\cb$, say $a \leq_s b$ if $a$ and $b$ are adjacent or equal, and $s(a) \leq s(b)$. Then $((S'_\cb)^0, \leq_s)$ is a poset.
\end{prop}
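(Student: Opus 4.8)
The plan is to show that $\leq_s$ is reflexive (immediate from the definition), antisymmetric, and transitive. Antisymmetry is easy: if $a \leq_s b$ and $b \leq_s a$ with $a \neq b$, then $a \sim_s b$ and $s(a) \leq s(b) \leq s(a)$, so $s(a) = s(b)$; but two distinct adjacent vertices of $S'_\cb$ cannot have the same $s$-type, since by \Cref{prop:basic coxeter} adjacency forces a strict containment $\pos(a) \subsetneq \pos(b)$ or $\neg(a)\subsetneq\neg(b)$ (or the reverse), and $s(v) = \#\pos(v) + \#\neg(v)$ is then strictly monotone. So the real content is transitivity: given $a <_s b$ and $b <_s c$, I must produce that $a$ and $c$ are adjacent in $S'_\cb$ and $s(a) < s(c)$.

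The inequality $s(a) < s(b) < s(c)$ is automatic, so the heart of the matter is adjacency of $a$ and $c$. Here I would exploit the combinatorial description of adjacency in $S'_\cb$ from \Cref{prop:basic coxeter}: $v \sim_s w$ iff the pair $(\pos(v),\neg(v))$ and $(\pos(w),\neg(w))$ are nested, meaning one is coordinatewise contained in the other. From $a <_s b$ we get $s(a) < s(b)$, which combined with nestedness forces $\pos(a) \subseteq \pos(b)$ and $\neg(a) \subseteq \neg(b)$ (the containment must go from the smaller $s$-type to the larger, since the reverse nesting would give $s(a) \geq s(b)$). Likewise $b <_s c$ gives $\pos(b) \subseteq \pos(c)$ and $\neg(b)\subseteq\neg(c)$. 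Chaining the inclusions yields $\pos(a)\subseteq\pos(c)$ and $\neg(a)\subseteq\neg(c)$, which by \Cref{prop:basic coxeter} is exactly the condition for $a \sim_s c$; since $s(a) < s(c)$ they are in particular distinct, so $a <_s c$.

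I expect the main (minor) obstacle to be just being careful that the cells referred to are those of $S'_\cb$ projected to $\partial[-1,1]^n$, so that every vertex genuinely has coordinates in $\{-1,0,1\}$ and the $\pos/\neg$ bookkeeping of \Cref{prop:basic coxeter} applies uniformly to real and fake vertices alike — the point being that in $S'_\cb$ (unlike $S_\cb$) adjacency is the clean nestedness criterion, with no simpliciality subtleties. One could alternatively cite that $S'_\cb$ is the barycentric subdivision of $\partial[-1,1]^n$ and that $\leq_s$ is then the face relation of that (oriented) poset together with a minimal element deleted — i.e. invoke that the vertex set of a barycentric subdivision of a regular cell complex, ordered by dimension, is a poset — but the direct argument via \Cref{prop:basic coxeter} is shorter and self-contained, so that is the route I would write up.
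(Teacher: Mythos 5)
Your proof is correct, but it is a genuinely different (and more self-contained) route than the paper's. The paper disposes of \Cref{prop:coxeter poset s} with a one-line appeal to known results: $S'_\cb$ is by construction the type-$B_n$ Coxeter complex (the barycentric subdivision of $\partial[-1,1]^n$), and the transitivity of the order on such a complex is quoted from the literature in \Cref{subsec:coxeter and artin} (together with \Cref{thm:bowtie free An} and \Cref{thm:Bnflag}); no direct argument is written out. You instead verify the poset axioms by hand from the nestedness characterization of adjacency in \Cref{prop:basic coxeter}: the key observation that $a <_s b$ forces the containments $\pos(a) \subseteq \pos(b)$ and $\neg(a) \subseteq \neg(b)$ (since the reverse nesting would contradict $s(a) < s(b)$), after which transitivity is just chaining inclusions, and antisymmetry follows because a vertex of $\{-1,0,1\}^n$ is determined by its $\pos$/$\neg$ sets. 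This is exactly the argument the paper later runs at the level of the Artin complex (\Cref{lem:leq s containment} and \Cref{prop:type 3 partial order}), so your write-up is in the spirit of the paper even though it does not match its stated justification; what it buys is independence from the external citation, at the cost of a few lines. Your parenthetical concern about working on $\partial[-1,1]^n$ rather than the sphere is already handled by the paper's convention at the start of the section, so there is no gap there.
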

\begin{prop}\label{prop:coxeter poset u}
    For vertices $a,b \in S_\cb$, say $a \leq_u b$ if $a$ and $b$ are adjacent or equal, and $u(a) \leq u(b)$. Then $((S_\cb)^0, \leq_u)$ is a poset.
\end{prop}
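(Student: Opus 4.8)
The plan is to check the three axioms for $\leq_u$ to be a partial order on $(S_\cb)^0$. Reflexivity holds by definition. For antisymmetry, if $a \leq_u b$ and $b \leq_u a$ then $u(a) = u(b)$; but by \Cref{lem:adjacency in coxeter} two \emph{distinct} adjacent vertices of $S_\cb$ always have different $u$-types: when both have the same sign, adjacency forces their $\pos$- or $\neg$-sets to be nested, hence equal once the sizes coincide, so the vertices coincide; when their signs differ, the non-negative vertex has strictly smaller $u$-type. Hence $a = b$, and the only work is transitivity.

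One short route to transitivity: $S_\cb$, with its cell structure, is type-preservingly isomorphic to the Coxeter complex $\bC_{A_n}$ (\Cref{subsec:coxeter and artin}), and under this isomorphism $\leq_u$ is exactly the relation on $\bC_\Lambda^0$ introduced there for a linear diagram $\Lambda$; that relation is transitive by \cite[Cor 6.5]{huang2023labeled} (see the remarks following Theorem~\ref{thm:bowtie free An}). This gives the proposition at once, and it is also how \Cref{prop:coxeter poset s} would be obtained, replacing $A_n$ by $B_n$ via the identification of $S'_\cb$ with the barycentric subdivision of $\partial[-1,1]^n$ and invoking Theorem~\ref{thm:Bnflag}.

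Alternatively, more in the spirit of the cube model used in this section, I would argue transitivity by hand. Suppose $a \leq_u b \leq_u c$ with $a,b,c$ pairwise distinct, so that $u(a) < u(b) < u(c)$ and consecutive pairs are adjacent in $S_\cb$. The crucial input from \Cref{lem:adjacency in coxeter} is that when an adjacent pair of $S_\cb$-vertices has opposite signs the non-negative one carries the smaller $u$-type; in particular a non-positive vertex can never $<_u$-precede a non-negative vertex. Hence along the chain the sign switches from non-negative to non-positive at most once, so the sign pattern of $(a,b,c)$ is one of $(+,+,+)$, $(+,+,-)$, $(+,-,-)$, $(-,-,-)$. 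If all three are non-negative, adjacency means the $\pos$-sets are nested (\Cref{prop:basic coxeter}), which together with $u = \#\pos$ strictly increasing forces $\pos(a) \subsetneq \pos(b) \subsetneq \pos(c)$, so $\pos(a) \subsetneq \pos(c)$ and thus $a <_u c$; the all--non-positive case is dual, with the $\neg$-sets strictly decreasing. In the mixed cases one uses that an adjacent pair $(v,w)$ with $v$ non-negative and $w$ non-positive satisfies $\pos(v) \cap \neg(w) = \varnothing$: in the $(+,+,-)$ case the inclusion $\pos(a) \subseteq \pos(b)$ keeps $\pos(a)$ disjoint from $\neg(c)$, and in the $(+,-,-)$ case the inclusion $\neg(c) \subseteq \neg(b)$ keeps $\neg(c)$ disjoint from $\pos(a)$; either way $a$ and $c$ are adjacent in $S_\cb$, and $u(a) < u(b) < u(c)$ finishes it.

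I expect transitivity to be the only real obstacle, and even it is mild: once \Cref{lem:adjacency in coxeter} tells us that an oppositely-signed adjacent pair has its $u$-types ordered in a fixed way, a $\leq_u$-chain cannot oscillate between the two halves of the cube, and within each half the order is just the Boolean-lattice order on supports. The write-up would therefore consist mostly of the routine bookkeeping of these four sign patterns (or, if one prefers, of no bookkeeping at all and a single citation via the identification with $\bC_{A_n}$).
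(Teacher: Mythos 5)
Your first route --- identifying $S_\cb$ with the Coxeter complex of type $A_n$ and citing the transitivity of the standard order on $\bC_\Lambda^0$ (via \cite[Cor 6.5]{huang2023labeled}, as recalled before Theorem~\ref{thm:bowtie free An}) --- is exactly the paper's proof, which disposes of Propositions~\ref{prop:coxeter poset s} and~\ref{prop:coxeter poset u} in one line by this identification. Your hands-on alternative in the cube model is also correct: the key observation that Lemma~\ref{lem:adjacency in coxeter} forbids a non-positive vertex from $<_u$-preceding a non-negative one rules out oscillation between the two halves, and the four remaining sign patterns reduce to nesting of $\pos$/$\neg$-sets plus the disjointness condition for mixed pairs, all of which you verify accurately. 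The elementary argument buys self-containedness (no appeal to \cite{huang2023labeled}) and makes visible the combinatorial reason the order is transitive, at the cost of the case bookkeeping; the paper opts for the citation.
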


\subsection{The Artin complex}
We can transfer the information of the previous section back to $\sd_\cb'$ in order to discuss injectivity. As with the Coxeter complex, vertices inherit an $s$-type and $u$-type (\Cref{def:u type s type}). If $v$ is a vertex of $\sd_\cb$, it has a $u$-type $\hat u_i$ and we define $u(v) = i$. If $w$ is a vertex of $\sd_\cb'$, it has an $s$-type $\hat s_j$ and we define $s(w) = j$.
We will say a cell of  $\sd_\cb$ is \emph{real}, and say a cell of $\sd'_\cb$ which is not a cell of $\sd_\cb$ is \emph{fake}.
Let $\overline{\,\cdot\,} : \sd_\cb' \to S'_\cb$ be the projection map defined previously. 
For a vertex $v \in \sd'_\cb$, we will let $\pos(v) = \pos(\overline v)$ and $\neg(v) = \neg(\overline v)$.
The following are straightforward exercises.

\begin{prop} \label{prop:basic artin}
Let $v$ be a vertex of $\sd_\cb'$.
    \begin{enumerate}
        \item \label{item:projection injective} The projection $\sd_\cb' \to S'_\cb$ is injective on real simplices.
        \item A cell $\delta$ is real (resp., fake) if and only if $\overline \delta$ is real (resp., fake).
        \item $s(v) = s(\overline v)$, and if $v$ is real, $u(v) = u(\overline v)$ (in other words, the projection is type-preserving)
        \item $v$ is a real vertex if and only if either  $\neg(v) = \varnothing$ or $\pos(v) = \varnothing$. \textup{(In the first case, we will call $v$ ``non-negative'', and in the second, we will call $v$ ``non-positive''.)}
        \item If $v$ is non-negative, then $u(v) = s(v) = \# \pos(v)$, and if $v$ is non-positive, $u(v) = n+1 - s(v)$.
    \end{enumerate}
\end{prop}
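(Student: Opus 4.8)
The plan is to obtain all five items by transporting the corresponding facts for the Coxeter complex --- \Cref{prop:basic coxeter}, \Cref{lem:adjacency in coxeter}, and \Cref{lem:fake adjacent real s type coxeter} --- along the projection $\overline{\,\cdot\,}\colon\sd'_\cb\to S'_\cb$. So the first step is to pin down the two features of this projection that make the transport work. On the unsubdivided level, $\overline{\,\cdot\,}\colon\sd_\cb\to S_\cb$ is, via \Cref{lem:iso} together with the identification of $S_\cb$ with the type-$A_n$ Coxeter complex $\bC_\Lambda$, the canonical map from the Artin complex $\Delta_\Lambda$ to $\bC_\Lambda$ induced by the surjection $A_\Lambda\to W_\Lambda$. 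I would record that this map is type-preserving for the $u$-types and that it restricts to a simplicial isomorphism on each simplex of $\sd_\cb$ --- indeed on the closed star of each simplex --- because $A_T\to W_T$ is onto for every standard parabolic $A_T$. Secondly, by the way $\sd'_\cb$ and its labels are defined in \Cref{def:u type s type}, the complex $\sd'_\cb$ is the pullback of the subdivision $S'_\cb$ of $S_\cb$ along $\overline{\,\cdot\,}$, and $s(\cdot)$, $\pos(\cdot)$, $\neg(\cdot)$ on vertices of $\sd'_\cb$ are by definition the pullbacks of the same quantities on $S'_\cb$, while $u(\cdot)$ on real vertices is the pullback of $u(\cdot)$ on $S_\cb$ using type-preservation.

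Granting this, items (3)--(5) come out at once. For (3): the equalities $s(v)=s(\overline v)$ and (for real $v$) $u(v)=u(\overline v)$ are the pullback definitions restated, which is exactly the statement that $\overline{\,\cdot\,}$ is type-preserving. For (4): since $\pos(v)=\pos(\overline v)$ and $\neg(v)=\neg(\overline v)$, the assertion is the third bullet of \Cref{prop:basic coxeter} applied to $\overline v$, combined with item (2) to see that realness can be read off after projecting. For (5): note that $v$ is non-negative (resp.\ non-positive) iff $\overline v$ is, again because $\pos$ and $\neg$ are preserved, and then apply the last two bullets of \Cref{prop:basic coxeter} to $\overline v$ and pull back via (3).

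Items (1) and (2) I would prove by localizing to a single simplex. Every cell $\delta$ of $\sd'_\cb$ lies in a unique simplex $\tau$ of $\sd_\cb$ and is one of the pieces into which the pullback subdivision cuts $\tau$. By the structural fact above, $\overline{\,\cdot\,}$ carries $\tau$ isomorphically onto a simplex $\overline\tau$ of $S_\cb$ and matches the induced subdivision of $\tau$ with the subdivision of $\overline\tau$ inside $S'_\cb$. Hence $\overline{\,\cdot\,}$ is injective on $\delta$ --- it maps $\delta$ isomorphically onto $\overline\delta$ --- which gives (1); and $\delta$ is a cell of $\sd_\cb$ precisely when $\overline\delta$ is a cell of $\overline\tau$, equivalently a cell of $S_\cb$, which gives (2), where I would use that a cell of a polyhedral complex contained in a given cell is a face of it. A quicker route to (1) alone: $\overline{\,\cdot\,}$ preserves the $s$-type and, by \Cref{lem:basic properties}, distinct vertices of a simplex of $\sd'_\cb$ have distinct $s$-types, so $\overline{\,\cdot\,}$ is automatically injective on every simplex.

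The only step carrying real content is the structural claim in the first paragraph: that $\overline{\,\cdot\,}\colon\sd_\cb\to S_\cb$ is the Deligne-to-Coxeter map coming from $A_\Lambda\to W_\Lambda$, is type-preserving, and is an isomorphism on stars --- together with the check that this is indeed the map used to construct $\sd'_\cb$ and its labeling. This is standard (cf.\ \Cref{thm:reflection arrangement} and \Cref{lem:iso}), but since everything in (1)--(5) rests on it I would state and justify it carefully; after that the proposition is bookkeeping with the pullback definitions, as the word ``exercise'' suggests.
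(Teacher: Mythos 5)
Your argument is correct, and in fact the paper offers no proof of this proposition at all (it is dismissed as a ``straightforward exercise''), so there is nothing to compare against; the route you take --- pulling everything back through the projection $\overline{\,\cdot\,}$ using \Cref{prop:basic coxeter}, \Cref{lem:adjacency in coxeter}, and the pullback definitions in \Cref{def:u type s type} --- is clearly the intended one. One caveat: your parenthetical claim that $\overline{\,\cdot\,}\colon \sd_\cb \to S_\cb$ restricts to an isomorphism on \emph{closed stars} is false. Already in type $A_2$ the star of a vertex of the Artin complex is infinite (edges at a vertex of type $\hat s$ correspond to cosets of the trivial subgroup in $A_{\hat s}\cong\mathbb Z$), whereas the Coxeter complex is a hexagon; more generally the link of a vertex is a full Artin complex by \Cref{lem:link}, which the projection does not map injectively. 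What is true, and all your argument actually uses, is that the projection is $u$-type-preserving and hence injective on each individual real simplex, since the vertices of a simplex of $\sd_\cb$ are cosets of pairwise distinct maximal parabolics (equivalently, one can note that each face $F\times\{\lambda\}$ in the construction of \Cref{def:falk} maps bijectively onto $F$). With that overstatement removed, the localization-to-a-single-simplex argument for items (1) and (2), and the bookkeeping for (3)--(5), are complete.
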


We now need to establish some facts about adjacency in $\sd_\cb$ and $\sd_\cb'$, with the goal of showing that the relation $\leq$ on $(\sd_\cb')^0$ is indeed a partial order.

 If $v$ and $w$ are vertices which are adjacent (resp., adjacent or equal) in the unsubdivided complex, we will write $v \sim_u w$ (resp., $v \simeq_u w$). 
If $v$ and $w$ are adjacent (resp., adjacent or equal) in the subdivided complex, we will write $v \sim_s w$ (resp., $v \simeq_s w$). %

\begin{lem} \label{lem:adjacency in deligne}
    Suppose $v$ and $w$ are vertices of $\sd_\cb'$.
    \begin{enumerate}
        \item If $v$ and $w$ are contained in a common real simplex, then $v \simeq_s w$ if and only if either $\pos(v) \subseteq \pos(w)$ and $\neg(v) \subseteq \neg(w)$, or $\pos(w) \subseteq \pos(v)$ and $\neg(w) \subseteq \neg(v)$.
        \item If both are non-negative (or both non-positive), then $v$ and $w$ are adjacent in $\sd_\cb$ if and only if they are adjacent in $\sd_\cb'$.
        \item If $v$ is non-negative, $w$ is non-positive, and $v \sim_u w$, then their non-zero coordinates do not overlap. In this case, $v <_u w$, and there is a fake vertex $y$ on the edge between $v$ and $w$. 
    \end{enumerate}
\end{lem}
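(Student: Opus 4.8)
The plan is to deduce all three statements from their already-established analogues for the Coxeter complex --- Lemma~\ref{lem:adjacency in coxeter} and Proposition~\ref{prop:basic coxeter} --- by transporting information along the projection $\overline{\,\cdot\,}\colon \sd_\cb'\to S'_\cb$. Two features of this projection will do the work. First, by Proposition~\ref{prop:basic artin} the map is type-preserving and restricts to an injection on each real simplex $\sigma$ of $\sd_\cb$; since $\sd_\cb'$ is, by definition, the subdivision of $\sd_\cb$ pulled back from the subdivision $S'_\cb$ of $S_\cb$ along $\sd_\cb\to S_\cb$, this injection upgrades to a simplicial isomorphism from the subdivision of $\sigma$ onto the subdivision of $\overline\sigma$ preserving $s$-types (and $u$-types on real vertices). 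Second, I would check by a carrier argument that whenever two vertices $v,w$ lie in a common real simplex $\sigma$, the edge they span in $\sd_\cb'$ --- if it exists --- already lies in the subdivision of $\sigma$: the carrier of such an edge is the smallest real simplex containing both carriers of $v$ and of $w$, and that simplex is a face of $\sigma$. The identical statement holds in $S'_\cb$.

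Granting these, part (1) becomes a direct translation. For $v,w$ in a common real simplex $\sigma$, the carrier remark reduces $v\simeq_s w$ to adjacency-or-equality within the subdivision of $\sigma$, which by the isomorphism above is equivalent to $\overline v\simeq_s\overline w$ within the subdivision of $\overline\sigma$, hence --- by the carrier remark on the Coxeter side --- to $\overline v\simeq_s\overline w$ in $S'_\cb$; Proposition~\ref{prop:basic coxeter} rewrites this as $\pos(\overline v)\subseteq\pos(\overline w)$, $\neg(\overline v)\subseteq\neg(\overline w)$, or the reverse pair of inclusions, and $\pos(v)=\pos(\overline v)$, $\neg(v)=\neg(\overline v)$ finishes it.

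For part (2) I would first recall from Proposition~\ref{prop:basic artin} that non-negative and non-positive vertices are real. If $v\simeq_s w$, the edge they span lies in the subdivision of a real simplex $\sigma$ (since $\sd_\cb'$ refines $\sd_\cb$), and as $v,w$ are real they are vertices of $\sigma$, so $v\simeq_u w$ --- this direction uses nothing about signs. Conversely, if $v\sim_u w$ with $v,w$ of the same sign, they lie in a common real simplex $\sigma$, so $\overline v,\overline w$ are adjacent in $S_\cb$; having the same sign, Lemma~\ref{lem:adjacency in coxeter} tells us they are adjacent in $S'_\cb$ \emph{with no fake vertex on the edge between them}, so that edge is an unsubdivided edge of $\overline\sigma$, and pulling it back through the isomorphism above gives $v\sim_s w$. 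Part (3) runs along the same lines: $v\sim_u w$ forces $\overline v,\overline w$ adjacent in $S_\cb$, and since $\overline v$ is non-negative while $\overline w$ is non-positive, Lemma~\ref{lem:adjacency in coxeter} yields that their nonzero coordinates are disjoint, that $u(\overline v)<u(\overline w)$, and that the real edge joining them carries a fake vertex $\overline b$ with $\pos(\overline b)=\pos(\overline v)$ and $\neg(\overline b)=\neg(\overline w)$; translating via $\pos(v)=\pos(\overline v)$, $\neg(w)=\neg(\overline w)$, $u(v)=u(\overline v)$, $u(w)=u(\overline w)$ gives the disjointness of nonzero coordinates and $v<_u w$, and pulling $\overline b$ back produces the required fake vertex $y$ on the real edge of $\sd_\cb$ between $v$ and $w$.

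The main obstacle I anticipate is the second feature above --- pinning down that, for two vertices of a common real simplex, adjacency in $\sd_\cb'$ is already detected inside the subdivision of that simplex --- together with the careful $u$- versus $s$-type bookkeeping recorded in Proposition~\ref{prop:basic artin} (for instance, that $u(v)=u(\overline v)$ is available only when $v$ is real). Once those points are settled, each of the three parts is a mechanical pushforward of Lemma~\ref{lem:adjacency in coxeter} and Proposition~\ref{prop:basic coxeter}.
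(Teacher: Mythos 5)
Your proposal is correct and follows essentially the same route as the paper: all three parts are pushed forward to the Coxeter complex via the projection, using its type-preservation and injectivity on real simplices (Proposition~\ref{prop:basic artin}) together with Proposition~\ref{prop:basic coxeter} and Lemma~\ref{lem:adjacency in coxeter}. The ``carrier'' point you flag as the main obstacle is exactly what the paper handles (implicitly) by noting that any edge of $\sd_\cb'$ lies in some real simplex, on which the projection is a simplicial isomorphism onto its subdivided image.
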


\begin{proof}
    Points (1) and (3) follow immediately from \Cref{prop:basic artin}(\ref{item:projection injective}), \Cref{prop:basic coxeter}, and \Cref{lem:adjacency in coxeter}. 
    For (2), 
    first suppose $v$ and $w$ are adjacent in $\sd_\cb$. Then the real edge $e$ between them injectively maps to the real edge $\overline e$ of $S_\cb$ between the non-negative (or non-positive) vertices $\overline v$ and $\overline w$ (\Cref{prop:basic artin}(\ref{item:projection injective}), (2), and (4)). Then \Cref{lem:adjacency in coxeter} says there is no fake vertex on $\overline e$, so there is no fake vertex on $e$, and hence $v \sim_s w$.

    Conversely, suppose  $v$ and $w$ are adjacent in $\sd_\cb'$. 
    Let $e$ be the edge of $\sd'_\cb$ connecting $v$ and $w$. Then there is a real simplex $\delta \subseteq \sd_\cb$ containing $e$: if $e$ is a real edge, choose $\delta = e$, and if $e$ is a fake edge, then since $\sd'_\cb$ is a (strictly) finer cell structure than $\sd_\cb$, it must be properly contained in some real simplex.
    Then $\delta$ maps isomorphically to $\overline \delta$ (\Cref{prop:basic artin}(\ref{item:projection injective})). Then $\overline v$ and $\overline w$ are both non-negative (or both non-positive) and are adjacent in $S'_\cb$, so they are adjacent in $S_\cb$ by \Cref{lem:adjacency in coxeter}. In particular, $\overline e$ must have been a real edge to start with, since there are no double edges, and thus so is $e$, implying $v \sim_u w$.
\end{proof}

\begin{lem} \label{lem:leq s containment}
    If $v \leq_s w$, then $\pos(v) \subseteq \pos(w)$ and $\neg(v) \subseteq \neg(w)$.
\end{lem}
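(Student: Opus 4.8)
The plan is to deduce this quickly from the adjacency dichotomy in \Cref{lem:adjacency in deligne}. First I would dispose of the trivial case $v=w$, which leaves the case $v<_s w$; by definition this means $v\sim_s w$ in $\sd_\cb'$ together with $s(v)<s(w)$.

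Next I would observe that since $\sd_\cb'$ is a genuine subdivision of $\sd_\cb$, the edge of $\sd_\cb'$ joining $v$ and $w$ is contained in a simplex of $\sd_\cb$, so $v$ and $w$ lie in a common real simplex. This puts us in position to apply \Cref{lem:adjacency in deligne}(1), which offers exactly two possibilities: either $\pos(v)\subseteq\pos(w)$ and $\neg(v)\subseteq\neg(w)$, which is the desired conclusion, or $\pos(w)\subseteq\pos(v)$ and $\neg(w)\subseteq\neg(v)$.

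Finally I would eliminate the second possibility by a cardinality count: by \Cref{prop:basic artin} we have $s(v)=\#\pos(v)+\#\neg(v)$ and $s(w)=\#\pos(w)+\#\neg(w)$, so the inclusions of the second case force $s(w)\le s(v)$, contradicting $s(v)<s(w)$. Hence the first case holds and the lemma follows.

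There is no genuinely hard step here; the content is entirely packaged in \Cref{lem:adjacency in deligne} and \Cref{prop:basic artin}. The only point needing a bit of care is the bookkeeping around strictness: one must use that $v<_s w$ carries the \emph{strict} inequality $s(v)<s(w)$, since otherwise the degenerate situation $\pos(v)=\pos(w)$, $\neg(v)=\neg(w)$ would require separate attention (although, by injectivity of the projection on the real simplex containing the edge, \Cref{prop:basic artin}(\ref{item:projection injective}), that situation in fact forces $v=w$).
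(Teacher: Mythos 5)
Your proof is correct and follows essentially the same route as the paper's: both reduce to the adjacency dichotomy of \Cref{lem:adjacency in deligne}(1) and rule out the reverse-inclusion case by the cardinality formula $s(\cdot)=\#\pos(\cdot)+\#\neg(\cdot)$, the only difference being that you argue directly by cases while the paper frames it as a contradiction. (Your closing worry about strictness is harmless but unnecessary: if both inclusions of the dichotomy hold simultaneously, the desired containments already follow.)
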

\begin{proof}
    Suppose to the contrary that either $\pos(v) \not\subseteq \pos(w)$ or $\neg(v) \not\subseteq \neg(w)$. If it is also the case that $\pos(w) \not\subseteq \pos(v)$ or $\neg(w) \not\subseteq \neg(v)$, then $v \not\sim_s w$ (\Cref{lem:adjacency in deligne}), which is a contradiction. So $\pos(w) \subseteq \pos(v)$ and $\neg(w) \subseteq \neg(v)$. Our original contradiction assumption tells us that (at least) one of these inclusions must be proper, so we may assume that $\pos(w) \subsetneq \pos(v)$ (the other inclusion results in an identical conclusion). Then $\#\pos(w) < \# \pos(v)$ and $\#\neg(w) \leq \#\neg(v)$, so 
    \[
        s(w) = \#\pos(w) + \#\neg(w) < \#\pos(v) + \#\neg(v) = s(v),
    \]
    which contradicts the fact that $s(v) \leq s(w)$ (from the definition of $\leq_s$).
\end{proof}

\begin{lem} \label{lem:fake not less than real}
    Suppose $v$ is a real vertex and $w$ is a fake vertex. If $v \sim_s w$, then $v <_s w$.
\end{lem}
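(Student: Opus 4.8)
The plan is to reduce the statement to its Coxeter-complex analogue, \Cref{lem:fake adjacent real s type coxeter}, via the type-preserving projection $\overline{\,\cdot\,}\colon \sd_\cb' \to S'_\cb$. Let $e$ be the edge of $\sd_\cb'$ joining $v$ and $w$. First I would observe that $e$ cannot be a real edge: if $e$ were an edge of $\sd_\cb$, then its endpoint $w$ would be a vertex of $\sd_\cb$, hence a real vertex, contradicting the hypothesis that $w$ is fake. So $e$ is a fake edge.

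Next, since $\sd_\cb'$ is a strictly finer cell structure than $\sd_\cb$, the fake edge $e$ is contained in some real simplex $\delta$ of $\sd_\cb$ — this is exactly the argument used in the proof of \Cref{lem:adjacency in deligne}(2). By \Cref{prop:basic artin}(\ref{item:projection injective}) the projection restricts to an isomorphism $\delta \cong \overline\delta$, so $\overline v$ and $\overline w$ are distinct vertices of the cell $\overline\delta \subseteq S'_\cb$ lying on the common edge $\overline e$; in particular $\overline v \sim_s \overline w$ in $S'_\cb$. By \Cref{prop:basic artin}(2), $\overline v$ is a real vertex of $S'_\cb$ and $\overline w$ is fake.

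Now I would apply \Cref{lem:fake adjacent real s type coxeter} to the pair $\overline v, \overline w$: since $\overline w$ is a fake vertex adjacent to the real vertex $\overline v$ in $S'_\cb$, it gives $s(\overline w) > s(\overline v)$. Finally, \Cref{prop:basic artin}(3) yields $s(v) = s(\overline v)$ and $s(w) = s(\overline w)$, whence $s(v) < s(w)$; together with $v \sim_s w$ this is precisely $v <_s w$ by the definition of $\leq_s$.

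The argument is short, and the only point requiring a little care — the main obstacle, mild as it is — is the first step: confirming that an edge between a real and a fake vertex of $\sd_\cb'$ must itself be fake, so that it lies inside a real simplex on which the otherwise-possibly-non-injective projection is an isomorphism. Everything afterward is transported verbatim from the corresponding facts about the Coxeter complex.
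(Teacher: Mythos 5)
Your proposal is correct and follows essentially the same route as the paper: place the edge between $v$ and $w$ inside a real simplex, use that the projection to $S'_{\mathcal B}$ is injective (and type-preserving) there, and then invoke Lemma~\ref{lem:fake adjacent real s type coxeter} to compare $s$-types. The extra observation that the edge must itself be fake is a harmless refinement; the paper's proof simply notes the edge lies in a real simplex without distinguishing the cases.
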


\begin{proof}
    The edge between $v$ and $w$ is contained in a real simplex $\delta$. The projection is injective on $\delta$, so $\overline v$ and $\overline w$ are adjacent in $S'_\cb$. Moreover, $\overline v$ is a real vertex and $\overline w$ is a fake vertex. So by \Cref{lem:fake adjacent real s type coxeter} and \Cref{prop:basic artin}(3), $s(v) = s(\overline v) < s(\overline w) = s(w)$, and thus $v <_s w$.
\end{proof}

Similar to the Coxeter complex, we can define ``projections'' from fake vertices to distinguished adjacent real vertices.

\begin{lem} \label{lem:fake projects to real}
    Suppose $v$ is a fake vertex. Then there exists a non-negative vertex $v^+$ and a non-positive vertex $v^-$ such that $v^+ \sim_u v^-$ and $v$ lies on the real edge between them. Moreover, $s(v) = s(v^+) + s(v^-) = u(v^+) - u(v^-) + n+1$.
\end{lem}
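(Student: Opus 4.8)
The plan is to deduce this from the corresponding statement in the Coxeter complex by lifting through the type-preserving projection $\overline{\,\cdot\,}\colon\sd_\cb'\to S_\cb'$, which is injective on real simplices (\Cref{prop:basic artin}(1),(3)). First I would pass to the image $\overline v\in S_\cb'$, which is a fake vertex by \Cref{prop:basic artin}(2). The Coxeter-complex analogue of the present lemma (the unlabelled lemma following \Cref{lem:adjacency in coxeter}, together with the description of the fake midpoint in \Cref{lem:adjacency in coxeter}) yields a non-negative vertex $\overline v^{+}=(\overline v)^{+}$ and a non-positive vertex $\overline v^{-}=(\overline v)^{-}$ of $S_\cb$ that are adjacent in $S_\cb$, such that $\overline v$ lies in the relative interior of the real edge $e$ of $S_\cb$ joining them; moreover, directly from the definition of $b^{\pm}$ one has $\pos(\overline v^{+})=\pos(\overline v)$, $\neg(\overline v^{-})=\neg(\overline v)$, and $\neg(\overline v^{+})=\pos(\overline v^{-})=\varnothing$.

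Next I would lift the edge $e$ to $\sd_\cb$. Since $\sd_\cb'$ subdivides $\sd_\cb$ on the same underlying space, $v$ lies in the relative interior of a unique simplex $\tau$ of $\sd_\cb$; because $v$ is fake it is not a vertex of $\sd_\cb$, so $\dim\tau\ge 1$. By \Cref{prop:basic artin}(1),(2) the projection restricts to a simplicial isomorphism of $\tau$ onto a real simplex of $S_\cb$, namely the minimal simplex of $S_\cb$ whose relative interior contains $\overline v$; but that minimal simplex is exactly $e$ (as $\overline v$ is in the interior of $e$ and $e$ is an edge), so $\overline\tau=e$ and $\tau$ is an edge mapping isomorphically onto $e$. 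Let $v^{+}$ and $v^{-}$ be the endpoints of $\tau$ projecting to $\overline v^{+}$ and $\overline v^{-}$ respectively. By \Cref{prop:basic artin}(4), $\pos$ and $\neg$ are unchanged under projection on real vertices, so $v^{+}$ is non-negative and $v^{-}$ is non-positive; they satisfy $v^{+}\sim_u v^{-}$ because $\tau$ is a real edge of $\sd_\cb$ joining them, and $v$ lies on $\tau$ by construction. This establishes the first assertion.

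For the numerical identities I would combine \Cref{prop:basic artin}(3),(5) with the set equalities recorded above. Using that $\overline v^{+}$ is non-negative and $\overline v^{-}$ is non-positive,
\[
 s(v)=s(\overline v)=\#\pos(\overline v)+\#\neg(\overline v)=\#\pos(\overline v^{+})+\#\neg(\overline v^{-})=s(\overline v^{+})+s(\overline v^{-})=s(v^{+})+s(v^{-}),
\]
and since $u(v^{+})=s(v^{+})$ while $u(v^{-})=n+1-s(v^{-})$ by \Cref{prop:basic artin}(5), we get $s(v^{+})+s(v^{-})=u(v^{+})-u(v^{-})+n+1$. The only step requiring genuine care is the carrier argument in the second paragraph — showing that the real edge $e$ of $S_\cb$ lifts, through $v$, to a real edge of $\sd_\cb$; this is immediate once one uses that a fake vertex has a well-defined minimal real carrier and that the (cellular, real-simplex-injective) projection sends it to the minimal real carrier of $\overline v$, but it is the place where one must be careful not to conflate the two cell structures $\sd_\cb$ and $\sd_\cb'$.
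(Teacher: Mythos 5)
Your proposal is correct and follows essentially the same route as the paper: both pass to $\overline v$ in $S'_\cb$, use the explicit fake midpoint of \Cref{lem:adjacency in coxeter} to produce $(\overline v)^{\pm}$ and the real edge $e$, and pull back through the real-simplex-injective projection of \Cref{prop:basic artin}. The only cosmetic difference is that the paper starts from an arbitrary real simplex $\delta$ containing $v$ and observes $\overline\delta\supseteq e$, whereas you identify the minimal real carrier of $v$ directly; both arguments are valid.
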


\begin{proof}
    Let $\delta$ be a real simplex containing $v$. Then $\delta$ is combinatorially isomorphic to $\overline \delta$ (\Cref{prop:basic artin}(\ref{item:projection injective})) and $\overline v$ is a fake vertex of $S'_\cb$. Then $\overline v$ lies on the real edge $\overline e$ between $(\overline v)^+$ and $(\overline v)^-$. Since $\overline\delta$ is a real simplex and contains an interior point of the real edge $\overline e$, it must contain all of $e$, and in particular, contains $(\overline v)^+$ and $(\overline v)^-$. Letting $e$, $v^+$, and $v^-$ be the preimages of $\overline e$, $(\overline v)^+$, and $(\overline v)^-$, resp., under the isomorphism $\delta \to \overline \delta$ and applying \Cref{prop:basic artin}(3) gives the result.
\end{proof}

\begin{lem} \label{lem:fake projections are adjacent}
    Suppose $v$ is a real vertex, $w$ is a fake vertex, and $v \sim_s w$. If $v$ is non-negative, then $v \leq_u w^+$ and $v \leq_u w^-$. If $v$ is non-positive, then $w^+ \leq_u v$ and $w^- \leq_u v$.
\end{lem}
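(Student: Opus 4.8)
\medskip
\noindent\emph{Proof proposal.}
The plan is to shrink the picture down to a single real simplex of $\sd_\cb$ and then to read off the $u$-type inequalities directly from the sets $\pos(\cdot)$ and $\neg(\cdot)$, using \Cref{prop:basic artin} and \Cref{lem:adjacency in deligne}.

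First I would invoke \Cref{lem:fake projects to real}, which places $w$ in the interior of the real edge $[w^+,w^-]$ of $\sd_\cb$ and identifies $\pos(w^+)=\pos(w)$, $\neg(w^-)=\neg(w)$, $\neg(w^+)=\pos(w^-)=\varnothing$ (with $\pos(w),\neg(w)$ disjoint, as $w$ is a point). Next I would choose a real simplex $\delta$ of $\sd_\cb$ containing the edge $vw$: if $vw$ is real, take $\delta=vw$; if $vw$ is fake, then since $\sd'_\cb$ is a strictly finer cell structure than $\sd_\cb$, the fake edge $vw$ is properly contained in some real simplex. Because $\delta$ is a simplex of $\sd_\cb$ and contains an interior point of the real edge $[w^+,w^-]$, it must contain that whole edge, so $v,w^+,w^-$ are all vertices of the single simplex $\delta$. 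Hence any two of $v,w^+,w^-$ are equal or adjacent in $\sd_\cb$; in particular $v\simeq_u w^+$ and $v\simeq_u w^-$, and it remains only to compare $u$-types.

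Then, from $v\sim_s w$, I would apply \Cref{lem:adjacency in deligne}(1) to conclude that either $\pos(v)\subseteq\pos(w)$ and $\neg(v)\subseteq\neg(w)$, or the reverse containments hold. If $v$ is non-negative, then $\neg(v)=\varnothing$, so the reverse alternative would force $\neg(w)=\varnothing$, impossible since $w$ is fake; hence $\pos(v)\subseteq\pos(w)=\pos(w^+)$. By \Cref{prop:basic artin} this gives $u(v)=\#\pos(v)\le\#\pos(w^+)=u(w^+)$, so $v\le_u w^+$; and $v\ne w^-$ (otherwise $v$ is the zero vector), so $v\sim_u w^-$, and \Cref{lem:adjacency in deligne}(3) applied to the non-negative vertex $v$ and non-positive vertex $w^-$ yields $v<_u w^-$, hence $v\le_u w^-$. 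The case $v$ non-positive is symmetric: here $\pos(v)=\varnothing$, the first alternative must hold (else $\pos(w)=\varnothing$), so $\neg(v)\subseteq\neg(w)=\neg(w^-)$, giving $u(w^-)=n+1-\#\neg(w^-)\le n+1-\#\neg(v)=u(v)$, i.e. $w^-\le_u v$; and since $w^+\ne v$ (because $\pos(w^+)=\pos(w)\ne\varnothing$) one gets $w^+<_u v$ from \Cref{lem:adjacency in deligne}(3), hence $w^+\le_u v$.

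There is no deep obstacle here; the content is entirely bookkeeping. The step needing the most care is the reduction, namely checking that one real simplex can be made to contain $v$, $w^+$, and $w^-$ at once — this rests on the carrier of the edge $vw$ being a real simplex and on $w$ lying in the \emph{interior} of the real edge $[w^+,w^-]$. After that, the only things to watch are the opposite conventions for $u$-type on non-negative vertices ($u(v)=\#\pos(v)$) versus non-positive vertices ($u(v)=n+1-\#\neg(v)$), and a few harmless degenerate equalities such as $v=w^+$, all of which are absorbed by the weak order $\le_u$.
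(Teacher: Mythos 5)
Your proposal is correct and follows essentially the same route as the paper: both arguments place $v$, $w^+$, $w^-$ in a single real simplex via the carrier of the fake edge $vw$ and the fact that $w$ is interior to the real edge $[w^+,w^-]$, then compare $u$-types by comparing $\pos(\cdot)$ and $\neg(\cdot)$. The only (immaterial) difference is that you extract the containments $\pos(v)\subseteq\pos(w)$, $\neg(v)\subseteq\neg(w)$ directly from \Cref{lem:adjacency in deligne}(1) by ruling out the reverse alternative, whereas the paper routes this through \Cref{lem:fake not less than real} and \Cref{lem:leq s containment}.
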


\begin{proof}
We first show that $v \simeq_u w^+$ and $v \simeq_u w^-$%
. Let $e$ be the real edge between $w^+$ and $w^-$, so that $w$ lies in the interior of $e$. Let $e'$ be the fake edge between $v$ and $w$. This fake edge must be contained in a real simplex $\delta$. Then this simplex contains the midpoint of the real edge $e$, so it must contain the whole edge, and in particular, must contain its vertices $w^+$ and $w^-$. Since this real simplex also contains $v$, it follows that $v \simeq_u w^+$ and $v \simeq_u w^-$. 

Assume that $v$ is non-negative. 
Since $w^-$ is non-positive and $v \sim_u w^-$, we know $v \leq_u w^-$ (\Cref{lem:adjacency in deligne}(3)).
Since $v \leq_s w$ (\Cref{lem:fake not less than real}), then $\pos(v) \subseteq \pos(w) = \pos(w^+)$ (\Cref{lem:leq s containment}). Since $v$ and $w^+$ are non-negative, $u(v) = \#\pos(v) \leq \#\pos(w^+) = u(w^+)$, so $v \leq_u w^+$.

Now assume that $v$ is non-positive. 
Since $w^+$ is non-negative and $v \sim_u w^+$, we know $w^+ \leq_u v$ (\Cref{lem:adjacency in deligne}(3)).
Since $v \leq_s w$ (\Cref{lem:fake not less than real}), then $\neg(v) \subseteq \neg(w) = \neg(w^-)$ (\Cref{lem:leq s containment}), and in particular, $\#\neg(v) \leq \#\neg(w^-)$. Since $v$ and $w^+$ are non-positive, $u(v) = n+1-\#\neg(v) \geq n+1-\#\neg(w^+) = u(w^+)$, so $w^+ \leq_u v$.
\end{proof}

\begin{lem} \label{lem:chain of fake}
    Suppose $v$ and $w$ are fake vertices. Then $v \leq_s w$ if and only if $v^+ \leq_u w^+ \leq_u w^-  \leq_u v^-$.
\end{lem}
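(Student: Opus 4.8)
The plan is to push everything into a single real simplex $\delta$ of $\sd_\cb$ containing all of $v$, $w$, $v^+$, $v^-$, $w^+$, $w^-$; on such a $\delta$ the projection $\delta\to\overline\delta$ is a cell-structure-preserving isomorphism (\Cref{prop:basic artin}(1)), so comparisons in the $u$-order and the $s$-order turn into inclusions between the sets $\pos(\cdot)$ and $\neg(\cdot)$ via the dictionary $s(x)=\#\pos(x)+\#\neg(x)$, $u(x^+)=\#\pos(x)$, $u(x^-)=n+1-\#\neg(x)$ of \Cref{prop:basic artin}, together with the adjacency description of \Cref{lem:adjacency in deligne} and \Cref{lem:leq s containment}. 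If $v=w$ the claim is trivial: both sides reduce (after reflexivity) to $v^+\leq_u v^-$, which holds by \Cref{lem:fake projects to real} and \Cref{lem:adjacency in deligne}(3). So assume $v\neq w$.

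For the forward implication, suppose $v\leq_s w$. Then $v\sim_s w$, and \Cref{lem:leq s containment} gives $\pos(v)\subseteq\pos(w)$ and $\neg(v)\subseteq\neg(w)$. Since $\sd'_\cb$ refines $\sd_\cb$, the edge of $\sd'_\cb$ joining $v$ and $w$ lies in some real simplex $\delta$. The carrier of $v$ in $\sd_\cb$ is the real edge joining $v^+$ and $v^-$ (\Cref{lem:fake projects to real}), which is therefore a face of $\delta$; hence $v^+,v^-\in\delta$, and likewise $w^+,w^-\in\delta$. Thus $v^+,v^-,w^+,w^-$ are vertices of one real simplex, so they are pairwise $\simeq_u$ and the projection is injective on $\delta$. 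Now $u(v^+)=\#\pos(v)\le\#\pos(w)=u(w^+)$, with equality forcing $\pos(v)=\pos(w)$ and hence $v^+=w^+$; this yields $v^+\leq_u w^+$. Symmetrically $u(w^-)=n+1-\#\neg(w)\le n+1-\#\neg(v)=u(v^-)$ yields $w^-\leq_u v^-$, and $w^+<_u w^-$ by \Cref{lem:fake projects to real} and \Cref{lem:adjacency in deligne}(3). This is the desired chain.

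For the converse, suppose $v^+\leq_u w^+\leq_u w^-\leq_u v^-$. Since $\leq_u$ is a partial order on $\sd_\cb^0$, transitivity makes $v^+,v^-,w^+,w^-$ pairwise $\simeq_u$; as $\sd_\cb$ is flag (being an Artin complex, by \Cref{lem:iso}), the distinct ones among them span a real simplex $\delta$. Since $v^+,v^-\in\delta$, the edge joining them --- which carries $v$ by \Cref{lem:fake projects to real} --- is a face of $\delta$, so $v\in\delta$; likewise $w\in\delta$. Inside the real simplex $\delta$ the non-negative vertices $v^+,w^+$ have nested $\pos$-sets (\Cref{lem:adjacency in deligne}(1)), so $u(v^+)\le u(w^+)$ forces $\pos(v)\subseteq\pos(w)$, and symmetrically $\neg(v)\subseteq\neg(w)$. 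Then \Cref{prop:basic coxeter} gives $\overline v\simeq_s\overline w$ in $S'_\cb$, hence $v\simeq_s w$ through the isomorphism $\delta\to\overline\delta$, while $s(v)=\#\pos(v)+\#\neg(v)\le\#\pos(w)+\#\neg(w)=s(w)$. Therefore $v\leq_s w$.

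The only genuinely asymmetric point, and the step I expect to be the main obstacle, is building the common simplex $\delta$ in the converse direction: there we have only four real vertices together with order relations among them --- no edge between $v$ and $w$ is given --- so we must invoke flagness of the Artin complex $\sd_\cb$ (equivalently, of the type-$A_n$ Artin complex $\Delta_\Lambda$) to gather them into one simplex before transporting the combinatorics through the projection. Everything after that is bookkeeping with $\pos$ and $\neg$.
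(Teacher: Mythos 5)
Your proof is correct and follows essentially the same route as the paper's: in the forward direction you place the fake edge $vw$ in a real simplex and observe it must contain the carriers of $v$ and $w$ (hence $v^{\pm}$, $w^{\pm}$), and in the converse you use transitivity of $\leq_u$ plus flagness of the real complex $\sd_\cb$ to assemble $v^{\pm}$, $w^{\pm}$ into a real simplex containing $v$ and $w$, with the order comparisons translated into inclusions of $\pos$ and $\neg$ via the injectivity of the projection on real simplices. The only cosmetic difference is that you pass through \Cref{prop:basic coxeter} and the $\pos$/$\neg$ nesting directly where the paper routes through $v^+\leq_s w^+$ and \Cref{lem:leq s containment}, which amounts to the same bookkeeping.
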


\begin{proof}
    Suppose $v \leq_s w$. Let $e_v$ be the real edge between $v^+$ and $v^-$, let $e_w$ be the real edge between $w^+$ and $w^-$, and let $e_{vw}$ be the fake edge between $v$ and $w$. Then $e_{vw}$ must be contained in a real simplex $\delta$. Since $\delta$ contains interior points of $e_v$ and $e_w$, it must contain all of $e_v$ and all of $e_w$, and in particular, contains $v^+$, $v^-$, $w^+$, and $w^-$. This means that $v^+$, $v^-$, $w^+$, and $w^-$ are pairwise $u$-adjacent. 
    Since $w^+$ is non-negative and $v^-$ is non-positive,  $w^+ \leq v^-$. 
    By \Cref{lem:leq s containment}, since $v \leq_s w$,
    \begin{align*}
        \pos(v^+) &= \pos(v) \subseteq \pos(w) = \pos(w^+) = u(w^+), \quad \text{ and} \\
        \neg(v^-) &= \neg(v) \subseteq \neg(w) = \neg(w^-) = u(w^-).
    \end{align*}
    Then 
    \begin{align*}
        u(v^+) &= \#\pos(v^+) \leq \#\pos(w^+) = u(w^+), \quad \text{ and} \\
        u(v^-) &= n + 1 - \#\neg(v) \geq n + 1 - \neg(w^+) = u(w^-). \qedhere
    \end{align*}

    Conversely, suppose $v^+ \leq_u w^+ \leq_u w^-  \leq_u v^-$. 
    Since the $u$-order is transitive, the vertices $v^+$, $v^-$, $w^+$, and $w^-$ are pairwise $u$-adjacent. Since $\sd_\cb$ is a flag complex \cite[Prop.~4.5]{godelle2012k}, these vertices span a real simplex. This real simplex contains the real edges which contain $v$ and $w$, so the simplex contains $v$ and $w$. Now, since $v^+ \leq_u w^+$ and $v^+$ and $w^+$ are non-negative, we have $v^+ \leq_s w^+$ (\Cref{lem:adjacency in deligne}(1) and (2), and \Cref{prop:basic artin}(5)). Then $\pos(v^+) \subseteq \pos(w^+)$ (\Cref{lem:leq s containment}). 
    Similarly, $w^- \leq_u v^-$, so $w^- \sim_s v^-$ since they are both non-positive (\Cref{lem:adjacency in deligne}(2)), and $u(w^-) \leq u(v^-)$, so $s(v^-) = n + 1 - u(v^-) \leq n + 1 - u(w^-) = s(w^-)$, implying $v^- \leq_s w^-$, so $\neg(v^-) \subseteq \neg(w^-)$. 
    Then 
    \[
        \pos(v) = \pos(v^+) \subseteq \pos(w^+) = \pos(w) \qquad \text{ and } \qquad \neg(v) = \neg(v^-) \subseteq \neg(w^-) = \neg(w),
    \]
    so since $v$ and $w$ are contained in a real simplex, $v \leq_s w$ by \Cref{lem:adjacency in deligne}(1).
\end{proof}

\begin{prop} \label{prop:comparable contained in real}
    Let $V = \{v_1,\dots,v_k\}$ be a collection of vertices of $\sd_\cb'$ such that $v_i \leq v_{i+1}$ for each $i$. Then there exists a real simplex $\delta$ containing $V$.
\end{prop}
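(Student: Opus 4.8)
The plan is to build, from the given chain $v_1\le v_2\le\dots\le v_k$, a single finite set $R$ of \emph{real} vertices of $\sd_\cb$ that is pairwise $u$-adjacent-or-equal and which ``captures'' every $v_i$; then flagness of $\sd_\cb$ yields a real simplex spanned by $R$, and this simplex automatically contains all of $V$.

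First I would record the relevant real vertices. For each $i$ with $v_i$ real, set $R_i=\{v_i\}$. For each $i$ with $v_i$ fake, apply \Cref{lem:fake projects to real} to obtain the non-negative real vertex $v_i^+$ and the non-positive real vertex $v_i^-$ with $v_i^+\sim_u v_i^-$ and $v_i$ lying on the real edge they span; set $R_i=\{v_i^+,v_i^-\}$. Put $R=\bigcup_{i=1}^{k}R_i$, a finite set of real vertices. Since $\le$ is a partial order on $(\sd_\cb')^0$ (\Cref{prop:s is partial order}), transitivity gives $v_i\le v_j$ for all indices $i\le j$, so to show $R$ is pairwise $u$-adjacent-or-equal it suffices to compare, for each pair $i\le j$, the real vertices ``belonging to'' $v_i$ against those belonging to $v_j$ (within a single $R_i$ the two vertices are $u$-adjacent by construction).

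The core is then a short case analysis on whether $v_i$ and $v_j$ are real or fake. If both are real, \Cref{lem:leq s containment} forces $v_i$ and $v_j$ to have the same sign type: a real vertex has exactly one of $\pos,\neg$ empty, and $v_i\le_s v_j$ gives $\pos(v_i)\subseteq\pos(v_j)$ and $\neg(v_i)\subseteq\neg(v_j)$, which rules out a non-negative vertex lying below a non-positive one; then $v_i\simeq_s v_j$ upgrades to $v_i\simeq_u v_j$ by \Cref{lem:adjacency in deligne}(2). If exactly one of the two is fake (so in particular $v_i\neq v_j$, hence $v_i\sim_s v_j$), the real one is $\sim_s$-adjacent to the fake one, and \Cref{lem:fake projections are adjacent} shows the real vertex is $\le_u$-comparable to \emph{both} projections of the fake vertex; this settles $R_i$ against $R_j$ in either order. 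If both are fake, \Cref{lem:chain of fake} gives $v_i^+\le_u v_j^+\le_u v_j^-\le_u v_i^-$, and transitivity of $\le_u$ makes these four vertices pairwise $u$-comparable. Hence $R$ is pairwise $u$-adjacent-or-equal.

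To finish: since $\sd_\cb$ is a flag complex \cite[Prop.~4.5]{godelle2012k}, the distinct elements of $R$ span a real simplex $\delta$ of $\sd_\cb$. For real $v_i$ we have $v_i\in R\subseteq\delta$; for fake $v_i$, the vertices $v_i^+,v_i^-$ of $\delta$ are $u$-adjacent, so the edge between them is a face of $\delta$, and since $v_i$ lies on that edge we get $v_i\in\delta$. Therefore $V\subseteq\delta$. I expect the only genuine obstacle to be the bookkeeping in the case analysis, and the one point requiring care is the all-real case, where one must exclude a non-negative vertex being $\le_s$ a non-positive one — exactly the place where \Cref{lem:leq s containment} is invoked.
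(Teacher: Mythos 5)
Your proposal is correct and follows essentially the same route as the paper: replace each fake $v_i$ by its real projections $v_i^\pm$, verify pairwise $u$-adjacency of the resulting real vertex set via Lemmas \ref{lem:leq s containment}, \ref{lem:fake projections are adjacent} and \ref{lem:chain of fake}, and invoke flagness of $\sd_\cb$ to span a real simplex whose edges recover the fake vertices. The only cosmetic difference is that the paper assembles the projections into a single linear $u$-chain (using Lemma \ref{lem:fake not less than real} to put the real vertices first), whereas you check pairwise comparability directly, which is all that flagness requires.
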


\begin{proof}
    By \Cref{lem:fake not less than real}, there must exist some $0 \leq j \leq k$ so that $v_i$ is real when $i \leq j$ and is fake when $i > j$.
    Let $V_r = \{v_1,\dots,v_{j}\}$ denote the (possibly empty) set of real vertices contained in $V$, and let $V_f = \{v_{j+1}\dots,v_k\}$ denote the (possibly empty) set of fake vertices.
    Let $V_{f\pm} = \{\,v_i^+, v_i^- : i > j \,\}$ be the real projections of the fake vertices. 
    By \Cref{lem:chain of fake}, for each $j < i < n$, $v_i^+ \leq_u v_{i+1}^+ \leq_u v_{i+1}^-  \leq_u v_i^-$. By transitivity of the $u$-order, this means 
    \[
    v_{j+1}^+ \leq_u v_{j+2}^+ \leq_u \ldots \leq_u v_k^+ \leq v_k^- \leq_u v_{k-1}^- \leq_u \ldots  \leq_u v_{j+1}^-.
    \]
    
    The adjacency within the set of real vertices $V_r$ occurs in the subdivided complex $\sd_{\cb}'$, so every vertex of $V_r$ is either non-negative or non-positive.
    Suppose first that they are non-negative. Then $u(v_i) = s(v_i) < s(v_{i+1}) = u(v_{i+1})$ for every $i < j$, so $v_i \leq_u v_{i+1}$. 
    Moreover, since $v_{j}$ is non-negative and adjacent to the fake vertex $v_{j+1}$, \Cref{lem:fake projections are adjacent} tells us that $v_j \leq_u v_{j+1}^+$. Since the $u$-order is a partial order, and in particular transitive, this means
    \[
    v_1 \leq_u \ldots \leq_u v_j \leq v_{j+1}^+  \leq_u \ldots \leq_u v_k^+ \leq v_k^- \leq_u v_{k-1}^- \leq_u \ldots  \leq_u v_{j+1}^-.
    \]
    In other words, the elements of 
    $V_r \cup V_{f\pm}$ are real vertices which are pairwise connected by real edges. Since the unsubdivided complex is flag, this implies that they span a real simplex. The real edges between the elements of $V_{f\pm}$ contain the fake vertices $V_f$, so this real simplex contains all of $V$.
    The non-positive case is similar. The only difference is that 
    $u(v_i) = n + 1 - s(v_i) > n + 1 - s(v_{i+1}) = u(v_{i+1})$ if $i < j$, so $v_{i+1} \leq_u v_i$, but since $v_j$ is non-positive, \Cref{lem:fake projections are adjacent} now tells us that $v_{j+1}^- \leq_u v_j$, so we still have a linearly ordered set 
    \[
    v_{j+1}^+  \leq_u \ldots \leq_u v_k^+ \leq v_k^- \leq_u v_{k-1}^- \leq_u \ldots  \leq_u v_{j+1}^- \leq_u v_j \leq_u v_{j-1} \leq_u \ldots \leq_u v_1.
    \]
    Then for the same reasons in the non-negative case, $V_r \cup V_{f\pm}$ spans a real simplex containing $V$.
\end{proof}

\begin{prop} \label{prop:type 3 partial order}
    The relation $\leq$ is a partial order on $(\sd_{\ca(3)}')^0 = (\sd_\cb')^0$. 
\end{prop}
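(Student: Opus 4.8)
The plan is to verify directly that the relation $\leq$ satisfies the three axioms of a partial order on $(\sd'_\cb)^0$. Reflexivity is built into the convention $v \leq v$, so only antisymmetry and transitivity require argument, and most of the genuine difficulty has already been isolated into \Cref{prop:comparable contained in real}.

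For antisymmetry I would argue by contradiction: suppose $v \leq w$, $w \leq v$, and $v \neq w$. Then $v \sim_s w$ and $s(v) \leq s(w) \leq s(v)$, so $s(v) = s(w)$. The edge $vw$ is contained in a real simplex $\delta$ (every edge of $\sd'_\cb$ lies in a real simplex, since $\sd'_\cb$ is a proper refinement of $\sd_\cb$), and the projection $\overline{\,\cdot\,}$ is injective on $\delta$ by \Cref{prop:basic artin}(\ref{item:projection injective}), so $(\pos(v),\neg(v)) \neq (\pos(w),\neg(w))$. Now \Cref{lem:adjacency in deligne}(1) forces one of the two containment alternatives between these pairs of sets, and since the pairs differ, at least one of the relevant inclusions is strict; recalling $s(v) = \#\pos(v) + \#\neg(v)$ (and likewise for $w$) via \Cref{prop:basic coxeter} and \Cref{prop:basic artin}, this makes $s(v)$ and $s(w)$ unequal, a contradiction. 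Hence $v = w$.

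For transitivity I would take $u \leq v$ and $v \leq w$. If either relation is an equality we are done, so assume $u <_s v$ and $v <_s w$; then $s(u) < s(v) < s(w)$, so in particular $u \neq w$. Applying \Cref{prop:comparable contained in real} to the chain $\{u,v,w\}$ produces a real simplex $\delta$ containing all three of them, so $u$ and $w$ lie in a common real simplex. By \Cref{lem:leq s containment}, $\pos(u) \subseteq \pos(v) \subseteq \pos(w)$ and $\neg(u) \subseteq \neg(v) \subseteq \neg(w)$, so \Cref{lem:adjacency in deligne}(1) gives $u \sim_s w$; combined with $s(u) < s(w)$ this yields $u <_s w$, i.e. $u \leq w$.

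The main obstacle is supplying the common real simplex used in the transitivity step, but this is precisely the content of \Cref{prop:comparable contained in real}, whose proof already carries out the delicate bookkeeping between a fake vertex and its real projections $v^\pm$ (via \Cref{lem:chain of fake} and \Cref{lem:fake projections are adjacent}) and reduces matters to flagness of the unsubdivided complex together with transitivity of the $u$-order. With that proposition available, checking the order axioms is essentially formal, so in the write-up I would keep this proof short and defer all the technical work to \Cref{prop:comparable contained in real}.
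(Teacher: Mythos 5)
Your proposal is correct and follows essentially the same route as the paper: reflexivity is immediate, antisymmetry comes from projecting an edge into a real simplex and comparing $s$-types (you do this via the $\pos/\neg$ containments of \Cref{lem:adjacency in deligne}(1) rather than quoting that adjacent Coxeter vertices have distinct types, but that is the same fact), and transitivity is exactly the paper's argument via \Cref{prop:comparable contained in real}, \Cref{lem:leq s containment}, and \Cref{lem:adjacency in deligne}(1).
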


\begin{proof}
Clearly $\leq$ is reflexive. 
Showing that $\leq$ is antisymmetric amounts to showing that there are no distinct adjacent vertices of the same $s$-type. Indeed, if $v \sim_s w$ with $v \not= w$, then the edge between $v$ and $w$ 
is contained in a real simplex $\delta$, on which the projection to $S'_\cb$ is injective, so $\overline v$ and $\overline w$ are distinct and adjacent in $S'_\cb$. Adjacent vertices in the Coxeter complex cannot have the same type, so $s(v) = s(\overline v) \not= s(\overline w) = s(w)$. This means if $v \leq w$ and $w \leq v$, then $v \sim_s w$ and $s(v) = s(w)$, thus we must have $v = w$.
It remains to show that $\leq$ is transitive. 

Suppose $v,w,z$ are vertices such that $v \leq w$ and $w \leq z$. 
By \Cref{prop:comparable contained in real}, there is a real simplex $\delta$ containing $\{v,w,z\}$. Then $s(v) \leq s(w) \leq s(z)$, or more specifically, 
\Cref{lem:leq s containment} says that $\pos(v) \subseteq \pos(w) \subseteq \pos(z)$ and $\neg(v) \subseteq \neg(w) \subseteq \neg(z)$. Then 
\Cref{lem:adjacency in deligne}(1) implies $v \sim_s z$, and thus $v \leq_s z$, since $s(v) \leq s(z)$. 
\end{proof}

\begin{prop} \label{prop:A3 simplex chain}
    $\sd_{\ca(3)}'$ ($= \sd_{\cb}'$) is a flag complex.
\end{prop}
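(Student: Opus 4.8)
The plan is to show that a pairwise-adjacent collection of vertices in $\sd_\cb'$ spans a simplex by first reducing to the partial order. Given pairwise-adjacent vertices $v_1, \dots, v_k$, transitivity of $\leq_s$ (\Cref{prop:type 3 partial order}) is not yet available as a tool for arranging them into a chain — that requires first knowing they sit in a common real simplex. So the first step is to observe that pairwise $\sim_s$-adjacency already forces enough structure: each $v_i$ is either non-negative, non-positive, or fake, and I will use the adjacency criteria of \Cref{lem:adjacency in deligne}, together with the projections $v^{\pm}$ of fake vertices (\Cref{lem:fake projects to real}), to replace the fake vertices by their real projections and reduce to a pairwise-adjacent collection of \emph{real} vertices.

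The key steps, in order, are as follows. First I would show that any two adjacent vertices are comparable in $\leq_s$: if $v \sim_s w$ then the edge lies in a real simplex $\delta$, the projection $\overline{\,\cdot\,}$ is injective on $\delta$, and adjacent vertices of the Coxeter complex $S'_\cb$ have distinct $s$-types, so exactly one of $s(v) < s(w)$ or $s(w) < s(v)$ holds, giving $v <_s w$ or $w <_s v$. Hence a pairwise-adjacent set is totally ordered by $\leq_s$, so after relabeling $v_1 <_s v_2 <_s \dots <_s v_k$. Second, by \Cref{lem:fake not less than real}, all real vertices among the $v_i$ precede all fake ones, so write $V_r = \{v_1, \dots, v_j\}$ (real) and $V_f = \{v_{j+1}, \dots, v_k\}$ (fake). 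Third — and this is exactly the content already assembled in \Cref{prop:comparable contained in real} — I would invoke that proposition directly: it produces a real simplex $\delta$ containing all of $V = V_r \cup V_f$. Finally, $\delta$ is a real simplex, on which the projection to $S_\cb$ is injective and type-preserving, so $\overline{V}$ is a pairwise-adjacent collection of vertices of $S'_\cb$; since $S'_\cb$ is a flag complex (it is the barycentric subdivision of $\partial[-1,1]^n$, or alternatively a Coxeter complex of type $B_n$), $\overline V$ spans a simplex of $S'_\cb$, hence $V$ spans the preimage simplex inside $\delta \subseteq \sd_\cb'$. This shows every pairwise-adjacent set spans a simplex, i.e. $\sd_\cb'$ is flag.

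Concretely the write-up is short: ``By \Cref{prop:type 3 partial order}, $\leq_s$ is a partial order, so a pairwise-adjacent set $V$ is a chain; by \Cref{prop:comparable contained in real} it lies in a real simplex $\delta$; projecting to $S'_\cb$, which is flag, $\overline V$ spans a simplex, and lifting back through the isomorphism $\delta \cong \overline\delta$ shows $V$ spans a simplex of $\sd_\cb'$.'' One should double-check the base/degenerate cases (a single vertex, or $V$ entirely real, or $V$ entirely fake) but these are all covered by \Cref{prop:comparable contained in real} as stated.

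The main obstacle — already resolved upstream in \Cref{prop:comparable contained in real} — is that fake vertices do not directly live in the unsubdivided flag complex $\sd_\cb$, so one cannot apply flagness of $\sd_\cb$ to them; the trick of replacing each fake $v_i$ by the real pair $v_i^+, v_i^-$ and checking (via \Cref{lem:chain of fake} and \Cref{lem:fake projections are adjacent}) that the enlarged set $V_r \cup V_{f\pm}$ is still pairwise $u$-adjacent is what makes flagness of $\sd_\cb$ applicable. Given that this is packaged in \Cref{prop:comparable contained in real}, the only remaining work here is the clean reduction to flagness of the Coxeter complex $S'_\cb$, which is elementary.
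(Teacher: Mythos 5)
Your proposal is correct and follows essentially the same route as the paper: reduce a pairwise-adjacent set to a chain (adjacent vertices are comparable and $\leq_s$ is a partial order by \Cref{prop:type 3 partial order}), place it in a real simplex via \Cref{prop:comparable contained in real}, and then project to the flag complex $S'_\cb$ and pull back. Your explicit justification that a pairwise-adjacent set forms a chain is a step the paper leaves implicit, and your initial worry that transitivity is "not yet available" is unfounded since \Cref{prop:type 3 partial order} is established beforehand — but the argument you actually run is the paper's.
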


\begin{proof}
Suppose $V$ is a collection of vertices which are pairwise adjacent. By \Cref{prop:comparable contained in real}, there is a real simplex $\delta$ containing $V$.
Then $\overline \delta$ contains the vertices of $\overline V$ and in particular the edges between them. Since the vertices of $\overline V$ are pairwise adjacent and $S'_\cb$ is flag, they span a simplex $\overline{\delta_0}$ of $S'_\cb$. We know $\delta$ is isomorphic to $\overline \delta$, so we can pull back $\overline{\delta_0}$ to see that $V$ spans a simplex $\delta_0$ of $\sd_\cb'$.
\end{proof}

\subsection{Verifying the link condition}

Now we want to reduce checking the bowtie free and upward flag conditions for the subdivided skewed $A_n$ type arrangements to only needing to verify certain cycles in the 1-skeleton of the subdivided Artin complex can be ``filled'' in a certain way. We start with Propositions \ref{prop:subdiv 4 cycles} and \ref{prop:subdiv 6 cycles}, which show how to fill these certain cycles, then conclude with \Cref{prop:contractible} to show how filling these cycles implies the full strength of bowtie free and upward flag. First, we will need a technical lemma to make some of the future arguments faster.

Define $(S_\cb')^+$ to be the full subcomplex of $S_\cb'$ on the non-negative vertices, and $S_\cb^-$ to be the full subcomplex on the non-positive vertices. Then let $(\Delta_\cb')^+$ and $(\Delta_\cb')^-$ be the pullback of $(S_\cb')^+$ and $(S_\cb')^-$ under the projection $\Delta_\cb' \to S_\cb'$, respectively. Recall that \Cref{prop:positive part} says that the restriction of the $s$-order to $(\Delta_\cb')^+$ is a partial order which is bowtie free and upward flag. The following proposition deals with $(\Delta_\cb')^-$.

\begin{lem} \label{lem:pos neg isom}
    There is a combinatorial isomorphism $\iota : \sd_\cb' \to \sd_\cb'$ which is $s$-order preserving, $u$-order reversing, and maps $(\sd_\cb')^+$ isomorphically to $(\sd_\cb')^-$ (and vice versa). In particular, the restriction of the $s$-order to $(\Delta_\cb')^-$ is a partial order which is bowtie free and upward flag.
\end{lem}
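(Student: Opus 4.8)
The plan is to construct the isomorphism $\iota$ explicitly at the level of the ambient cube and then lift it to the Artin complex. On the cube $[-1,1]^n$, consider the map $\sigma$ which sends $(x_1,\dots,x_n)$ to $(-x_1,\dots,-x_n)$, i.e. the antipodal map. This is a symmetry of the arrangement $\cb = \ca(3)$: the hyperplanes $x_i=0$ are preserved, and the hyperplanes $x_i=x_j$ are preserved as well, since $x_i=x_j$ iff $-x_i=-x_j$. (Here it is essential that we use the \emph{skewed} $A_n$ arrangement containing all coordinate hyperplanes together with the diagonal hyperplanes, rather than a plain $A_n$ arrangement, so that the antipodal map is a symmetry.) Hence $\sigma$ induces a simplicial automorphism of $S_\cb$ and of its barycentric-type subdivision $S_\cb'$; on vertices it swaps $\pos$ and $\neg$, i.e. $\pos(\sigma(v)) = \neg(v)$ and $\neg(\sigma(v)) = \pos(v)$. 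In particular $\sigma$ swaps non-negative and non-positive vertices, hence carries $(S_\cb')^+$ onto $(S_\cb')^-$, and it preserves $s$-type since $s(\sigma(v)) = \#\pos(\sigma(v)) + \#\neg(\sigma(v)) = \#\neg(v) + \#\pos(v) = s(v)$ by \Cref{prop:basic coxeter}. On real vertices it reverses the $u$-order: a non-negative $v$ with $u(v) = \#\pos(v)$ is sent to a non-positive $\sigma(v)$ with $u(\sigma(v)) = n+1-\#\neg(\sigma(v)) = n+1-\#\pos(v) = n+1-u(v)$, and $i \mapsto n+1-i$ is order-reversing on $\{1,\dots,n\}$, and likewise starting from a non-positive vertex; more robustly, the linear graph $\Lambda$ of type $A_n$ has a diagram automorphism $s_i \mapsto s_{n+1-i}$, and $\sigma$ realizes it.

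Next I would promote $\sigma$ to the Artin complex. The automorphism of $\Lambda = A_n$ given by $s_i \mapsto s_{n+1-i}$ induces an automorphism of the Artin group $A_\Lambda$, hence of the spherical Deligne complex $\sd_\cb \cong \Delta_\Lambda$ (\Cref{lem:iso}), which is type-equivariant over the diagram automorphism and which covers $\sigma$ on $S_\cb$ under the projection $\sd_\cb \to S_\cb$. Call this map $\iota$. Because $\sd_\cb'$ is obtained from $\sd_\cb$ by pulling back the subdivision $S_\cb'$ of $S_\cb$ along the projection, and $\iota$ covers $\sigma$ which preserves that subdivision, $\iota$ extends to a simplicial automorphism of $\sd_\cb'$. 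The properties now transfer from the cube: $\iota$ preserves $s$-type (since $\sigma$ does and $s$-types are pulled back by \Cref{prop:basic artin}(3)), reverses $u$-type on real vertices (since $\sigma$ does and $u$-types agree with the base on real cells by \Cref{prop:basic artin}(3)), and swaps non-negative with non-positive vertices (characterized by $\neg=\varnothing$ vs $\pos=\varnothing$, which $\sigma$ interchanges, by \Cref{prop:basic artin}(4)). Hence $\iota$ carries $(\sd_\cb')^+$ isomorphically onto $(\sd_\cb')^-$. Since adjacency is combinatorial and $\iota$ preserves $s$-type, $\iota$ is $s$-order preserving; and since it reverses $u$-type on the real vertices where the $u$-order is defined, it is $u$-order reversing.

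Finally, transporting structure: the restriction of the $s$-order to $(\sd_\cb')^+$ is a partial order that is bowtie free and upward flag by \Cref{prop:positive part}, and $\iota$ restricts to an $s$-order isomorphism $(\sd_\cb')^+ \to (\sd_\cb')^-$; partial-order-ness, bowtie freeness, and upward flagness are all preserved under poset isomorphism, so the $s$-order on $(\sd_\cb')^-$ is a bowtie free, upward flag partial order as well.

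The main point requiring care — though it is not really an obstacle — is verifying that the diagram automorphism $s_i \mapsto s_{n+1-i}$ of $A_\Lambda$ genuinely lifts $\sigma$ in a way compatible with the subdivisions, i.e. checking that the combinatorial automorphism $\iota$ of $\sd_\cb$ one writes down (permuting standard parabolic cosets by the diagram symmetry) projects to exactly $\sigma$ on $S_\cb$ under the map of \Cref{lem:iso}, and hence respects the pulled-back cell structure $\sd_\cb'$. Once the labeling conventions of \Cref{subsec:coxeter and artin} and \Cref{def:u type s type} are unwound, this is a bookkeeping check; an alternative, slightly more hands-on route that avoids group theory entirely is to note that $\sigma$ on $S_\cb$ lifts to \emph{any} connected cover, in particular to the universal cover realizing $\sd_\cb$, because $\sigma$ normalizes the relevant deck group (it is induced by a symmetry of the arrangement, hence commutes up to conjugation with the $\PA_\Lambda$-action), and this lift is unique once we specify the image of one vertex.
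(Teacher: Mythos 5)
Your overall strategy is the paper's: the antipodal map on the cube, lifted to the Deligne complex, with the order properties then transferred from $(\sd_\cb')^+$ to $(\sd_\cb')^-$. The computations on the base ($\sigma$ preserves $s$-type, sends $u$ to $n+1-u$, swaps $\pos$ and $\neg$) and the final transfer of partial-order-ness, bowtie-freeness and upward flagness are fine. The gap is in the lifting step, which is precisely the point you set aside as ``a bookkeeping check'': it is not. The automorphism of $\Delta_\Lambda$ induced by the diagram automorphism $s_i\mapsto s_{n+1-i}$ alone sends $gA_{\hat s}$ to $\tilde\tau(g)A_{\widehat{\tau(s)}}$, hence fixes the simplex of identity cosets; it therefore covers the map $wW_{\hat s}\mapsto \tau(w)W_{\widehat{\tau(s)}}$ on $\bC_\Lambda$, which fixes the fundamental chamber setwise. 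The antipodal map fixes no chamber (in the standard realization $-1=\rho(w_0)\circ\tau^*$, not $\tau^*$), so the map you wrote down does not cover $\sigma$, and in particular cannot carry $(\sd_\cb')^+$ to $(\sd_\cb')^-$. To repair this along your lines you would have to compose with left multiplication by a lift of $w_0$ (e.g.\ the Garside element) and then re-verify compatibility with the subdivision and the $\pm$ decomposition.

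Your fallback route is also not available as stated: $\sd_\cb\to S_\cb$ is not a covering map ($S_\cb$ is a sphere, while the fibers over vertices are the infinite coset spaces $A_\Lambda/A_{\hat s}$), so one cannot ``lift $\sigma$ to the cover realizing $\sd_\cb$.'' The correct mechanism --- and the one the paper uses --- is to extend the antipodal map to the Salvetti complex $\od_\cb$ via $[F,v]\mapsto[-F,-v]$, lift that to the genuine universal cover $\widetilde K\to\od_\cb$, observe that this lift permutes the elevations of standard subcomplexes compatibly with nesting, and only then read off the induced automorphism $\iota$ of $\sd_\cb$ (and of $\sd_\cb'$, since the induced map on the base is $-(\cdot)$, which preserves the subdivision). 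That intermediate passage through $\od_\cb$ and $\widetilde K$ is the step your proposal is missing.
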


\begin{proof}
    Consider $-(\cdot) : \bb[n]{R} \to \bb[n]{R}$, the usual inversion map $p \mapsto -p$ on \bb[n]{R}. This restricts to a combinatorial automorphism of $S_\cb$ which preserves the subdivision $S_\cb'$, and a combinatorial isomorpism of $\Sigma_\cb$.
    We can extend this to an automorphism of the Salvetti complex $\widehat \Sigma_\cb$ as follows. The map on the vertices is still given simply by the inversion map (as the vertices of $\widehat \Sigma_\cb$ are the vertices of $\Sigma_\cb$). Suppose $F$ is a face of $\Sigma_\cb$ and $v$ is a vertex of $F$, so $[F,v]$ is a face of $\widehat\Sigma_\cb$. Then we define $-[F,v] = [-F, -v]$. It is easy to see that this map gives a well-defined automorphism of $\widehat\Sigma_\cb$. 
    Let $\rho : \widetilde K \to \widehat\Sigma_\cb$ be the universal cover of $\widehat\Sigma_\cb$, and let $\kappa : \widetilde K \to \widetilde K$ be the lift of the inversion map $-(\cdot) : \widehat\Sigma_\cb \to \widehat\Sigma_\cb$ to $\widetilde K$. In particular, $\rho \circ \kappa = -\rho$.

    We now define $\iota : \sd_\cb \to \sd_\cb$ on the unsubdivided complex. Let $\c F$ be the collection of faces of $S_\cb$, and for $F \in \c F$, let $\Lambda_F$ be an index set of the elevations of the standard subcomplex $\widehat F$ (cf.~\Cref{def:falk} and \Cref{subsec:link}). 
    First we define $\iota_0$ as an automorphism of $\bigsqcup_{F \in \c F} F \times \Lambda_F$. 
    For every $F \in \c F$, there is a bijection $\Lambda_F \to \Lambda_{-F}$, which we will denote $\lambda \mapsto -\lambda$, induced by $\kappa$. More specifically, since $\kappa$ commutes with the projection, it bijectively maps the elevations of $F$ to the elevations of $-F$ in such a way that preserves nesting of elevations, and this induces the map on the index sets. 
    Then we define $\iota_0(F \times \{\lambda\}) = (-F) \times \{-\lambda\}$. By our definition of the bijection $\Lambda_F \to \Lambda_{-F}$, it is clear that this respects the gluing of the Falk complex, and thus descends to an automorphism $\iota$ on $\left(\bigsqcup_{F \in \c F} F \times \Lambda_F\right)/\sim = \Delta_\cb$. Letting $\overline{\,\cdot\,} : \sd_\cb \to S_\cb$ be the usual projection, it is immediate from the definitions that $\overline {\iota(x)} = - \overline x$ for all $x \in \sd_\cb$.  In particular, $\iota$ preserves the subdivision $\sd_\cb'$. 

    Thus we see for a vertex $v \in \Delta_\cb'$ that $s(v) = s(\overline v) = s(-\overline v) = s(\overline{\iota(v)}) = s(\iota(v))$, but for a vertex $v \in \Delta_\cb$ that $u(v) = u(\overline v) = n+1 - u(-\overline v) = n+1 - u(\overline{\iota(v)}) = n+1 - u(\iota(v))$.
    So if $v,w \in (\Delta_\cb')^0$ and $v \leq_s w$, then $\iota(v) \sim_s \iota(w)$ since $\iota$ is an automorphism, and $s(\iota(v)) = s(v) \leq s(w) = s(\iota(w))$, so $\iota(v) \leq_s \iota(w)$. So $\iota$ is $s$-order preserving. But if $v,w \in (\Delta_\cb)^0$ and $v \leq_u w$, then $\iota(v) \sim_u \iota(w)$ since $\iota$ is an automorphism, while $u(\iota(v)) = n+1 - u(v) \geq n+1 - u(w) = u(\iota(w))$, so $w \leq_u v$. So $\iota$ is $u$-order reversing.
    In particular, it is an order-preserving isomorphism of $( ((\Delta_\cb')^+)^0, <_s)$ and $( ((\Delta_\cb')^-)^0, <_s)$, so $(\Delta_\cb')^-$ inherits all the $s$-order properties enjoyed by $(\Delta_\cb')^+$. 
\end{proof}

\begin{prop} \label{prop:subdiv 4 cycles}
    Any embedded $4$-cycle in $\sd'_\cb$ of $s$-type $1n1n$ has a central vertex, i.e., a vertex which is $s$-adjacent or equal to each vertex of the cycle. 
\end{prop}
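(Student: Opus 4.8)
The plan is to read off the $\pos$/$\neg$ coordinate data of the four vertices and split into two cases according to ``signs''. Write the cycle as $a_1\sim_s b_1\sim_s a_2\sim_s b_2\sim_s a_1$ with $s(a_i)=1$ and $s(b_j)=n$; note that then each $a_i$ is adjacent to each $b_j$, so $a_i<_s b_j$ for all $i,j$ (here $n\ge 2$, since if $n=1$ there is nothing to prove, as adjacent vertices have distinct $s$-type). By \Cref{prop:basic artin} every vertex of $s$-type $1$ is real, hence non-negative or non-positive. Applying the $s$-order preserving automorphism $\iota$ of \Cref{lem:pos neg isom} if necessary, we may assume $a_1$ is non-negative, and we split into \emph{Case 1} ($a_2$ non-negative) and \emph{Case 2} ($a_2$ non-positive).

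\emph{Case 1.} Here $a_1,a_2\in(\Delta'_\cb)^+$. No $b_j$ can be non-positive, as that would force $\pos(a_i)\subseteq\pos(b_j)=\varnothing$; so each $b_j$ is non-negative or fake. Put $c_j=b_j$ in the first case and $c_j=b_j^+$ in the second (\Cref{lem:fake projects to real}), so that $c_j$ is non-negative. Using \Cref{lem:fake projections are adjacent}, \Cref{prop:basic artin}, and the coincidence of $\le_u$ and $\le_s$ on non-negative vertices (\Cref{lem:adjacency in deligne}(2)), one checks $a_1,a_2\le_s c_j$; and $c_j\le_s b_j$ either trivially, or because the fake vertex $b_j$ sits on the subdivided real edge with endpoint $b_j^+$, giving $b_j^+<_s b_j$ by \Cref{lem:fake not less than real}. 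If $c_1=c_2$, take $z=c_1$. Otherwise $\{a_1,a_2,c_1,c_2\}$ is a bowtie in $(\Delta'_\cb)^+$, and bowtie-freeness of $(\Delta'_\cb)^+$ (\Cref{prop:positive part}) produces $z$ with $a_i\le_s z\le_s c_j$; transitivity of $\le_s$ then yields $z\le_s b_j$. In every subcase $z$ is $s$-adjacent or equal to all four vertices.

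\emph{Case 2.} Write $\pos(a_1)=\{k_1\}$ and $\neg(a_2)=\{k_2\}$. Each $b_j$ must be fake: a real vertex of $s$-type $n$ is $(1,\dots,1)$ or $(-1,\dots,-1)$, and the former contradicts $\neg(a_2)\subseteq\neg(b_j)$, the latter $\pos(a_1)\subseteq\pos(b_j)$. By \Cref{lem:fake projections are adjacent}, $a_1\le_u b_j^+\le_u a_2$; transitivity of the $u$-order (the Coxeter diagram is linear) then gives $a_1\le_u a_2$, hence $a_1<_u a_2$ since $a_1\ne a_2$. By \Cref{lem:adjacency in deligne}(3) the real edge between $a_1$ and $a_2$ carries a fake vertex $z$, and tracing the construction back through \Cref{lem:adjacency in coxeter} one sees $\pos(z)=\{k_1\}$ and $\neg(z)=\{k_2\}$, so $z^+=a_1$ and $z^-=a_2$. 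Since $z$ lies in a common real simplex with each $a_i$, \Cref{lem:adjacency in deligne}(1) gives $a_1,a_2<_s z$; and for each $j$ the chain $a_1=z^+\le_u b_j^+<_u b_j^-\le_u a_2=z^-$ (from \Cref{lem:fake projections are adjacent}, \Cref{lem:fake projects to real}, and \Cref{lem:adjacency in deligne}(3)) together with \Cref{lem:chain of fake} gives $z\le_s b_j$. Thus $z$ is $s$-adjacent or equal to all four vertices; pushing back by $\iota^{-1}$ if we reduced to this configuration finishes the proof.

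I expect the main difficulty to be organizational rather than conceptual: the argument repeatedly passes between the $u$-order on $\Delta_\cb$ and the $s$-order on $\Delta'_\cb$, so the care lies in invoking the correct statement among \Cref{lem:adjacency in deligne}, \Cref{lem:fake not less than real}, \Cref{lem:fake projects to real}, \Cref{lem:fake projections are adjacent}, and \Cref{lem:chain of fake} at each step, and in disposing of the degenerate possibilities (some $b_j$ real, or $c_1=c_2$). The only genuinely geometric input is the construction in Case 2 of the mediating fake vertex $z$ between $a_1$ and $a_2$, and this is precisely where it matters that $a_1$ and $a_2$ have opposite signs: a fake vertex of $s$-type $2$ with one positive and one negative coordinate is exactly what can lie above a non-negative and a non-positive vertex of $s$-type $1$ at the same time.
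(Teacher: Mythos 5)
Your proof is correct and follows essentially the same route as the paper's: the same case split on the signs of the two $s$-type~$1$ vertices, the reduction of the all-non-negative case to \Cref{prop:positive part} via the replacements $b_j\mapsto b_j^+$, and the construction in the mixed case of the fake vertex on the real edge between $a_1$ and $a_2$ certified by \Cref{lem:chain of fake}. The only (harmless) differences are organizational: you use $\iota$ up front to cut four sign configurations to two, and you explicitly dispose of the degenerate possibility $c_1=c_2$, which the paper leaves implicit.
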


\begin{proof}
    Let $(v_1,w_1,v_2,w_2)$ be an embedded $4$-cycle in $\sd'_\cb$ with $s(v_i) = 1$ and $s(w_i) = n$ for $i = 1,2$. 
    For $i = 1,2$, since $\#\pos(v_i) + \#\neg(v_i) = s(v_i) = 1$, this means either $\pos(v_i) = \varnothing$ or $\neg(v_i) = \varnothing$. 
    In particular, the $v_i$ must be real vertices.

    First, assume $\neg(v_1) = \neg(v_2) = \varnothing$. %
    Choose some $i = 1,2$. If $w_i$ is real, then since it's $s$-adjacent to the non-negative vertices $v_1$ and $v_2$, it is also non-negative (\Cref{lem:adjacency in deligne}(3)), and in particular, is $u$-adjacent to these vertices (\Cref{lem:adjacency in deligne}(2)). If $w_i$ is not real, then $w_i^+$ is non-negative (by definition) and $v_j \leq_u w_i^+$ by \Cref{lem:fake projections are adjacent}. We can summarize this as follows: for $i = 1,2$, if $w_i$ is not fake, define $w_i' = w_i$, and if $w_i$ is fake, define $w_i' = w_i^+$.
    Then $w_i'$ is non-negative and $v_i \leq_u w_j'$ for $i,j = 1,2$. Then by \Cref{prop:positive part}, there is a vertex $v \in (\sd_\cb')^+$ with $v_i \leq_u v$ and $v \leq_u w_i'$. In particular, $v_i \leq_s v$ and $v \leq_s w_i'$ by \Cref{prop:basic artin}(5). If $w_i$ is real then $v \leq_s w_i' = w_i$. If it is fake, then $w_i' = w_i^+ \leq_s w_i$ by \Cref{lem:fake not less than real}. Since the $s$-order is transitive, $v \leq_s w_i$.

    Next, assume $\pos(v_1) = \pos(v_2) = \varnothing$. This case follows immediately from \Cref{lem:pos neg isom}, but we provide the details for completeness.
    Choose some $i = 1,2$. If $w_i$ is real, then since it's $s$-adjacent to the non-positive vertices $v_1$ and $v_2$, it is also non-positive (\Cref{lem:adjacency in deligne}(3)). Note we have already assumed that $v_j \leq_s w_i$, so $\iota(v_j) \leq_s \iota(w_i)$. If $w_i$ is not real, then $w_i^-$ is non-positive (by definition) and $w_i^- \leq_u v_j$ by \Cref{lem:fake projections are adjacent}. But then since these vertices live in $(\Delta_\cb')^-$, we know from \Cref{lem:pos neg isom} that $\iota(v_j) \leq_u \iota(w_i^-)$ since $\iota$ is $u$-order reversing, and consequently $\iota(v_j) \leq_s \iota(w_i^-)$ since their images under $\iota$ are non-negative, and the $u$- and $s$-orders agree on the non-negative part.
    We can summarize this as follows: for $i = 1,2$, if $w_i$ is not fake, define $w_i' = w_i$, and if $w_i$ is fake, define $w_i' = w_i^+$.
    Then $w_i'$ is non-negative and $\iota(v_i) \leq_s \iota(w_j')$ for $i,j = 1,2$. Then by \Cref{prop:positive part}, there is a vertex $v \in (\sd_\cb')^+$ with $\iota(v_i) \leq_s v$ and $v \leq_s \iota(w_i')$. In particular $v_i \leq_s \iota^{-1}(v)$ and $\iota^{-1}(v) \leq_s w_i'$ since $\iota$ is $s$-order preserving. If $w_i$ is real then $\iota^{-1}(v) \leq_s w_i' = w_i$. If it is fake, then $w_i' = w_i^- \leq_s w_i$ by \Cref{lem:fake not less than real}. Since the $s$-order is transitive, $\iota^{-1}(v) \leq_s w_i$.

    Now assume $\neg(v_1) = \pos(v_2) = \varnothing$ (the case $\pos(v_1) = \neg(v_2) = \varnothing$ is identical). Note that for $i = 1,2$, $w_i$ cannot be a real vertex: if it was non-negative, then it could not be $s$-adjacent to the non-positive vertex $v_2$, and if it were non-positive, it could not be $s$-adjacent to the non-negative vertex $v_1$ (\Cref{lem:adjacency in deligne}(3)). So both $w_1$ and $w_2$ are fake vertices. %

    We claim that $v_1 \leq_u v_2$. Indeed, since $v_1$ and $v_2$ are $s$-adjacent to the fake vertex $w_1$, then $v_1 \leq_u w_1^+$ since $v_1$ is non-negative, and $w_1^+ \leq_u v_2$ since $v_2$ is non-positive (\Cref{lem:fake projections are adjacent}). By transitivity of the $u$-order, $v_1 \leq_u v_2$, as claimed.
    Then by \Cref{lem:adjacency in deligne}(3), there is a (fake) vertex $v$ (possibly equal to either $w_1$ or $w_2$) which lies on the real edge between $v_1$ and $v_2$. Then 
    \Cref{lem:fake projects to real} implies that $v^+ = v_1$ and $v^- = v_2$ (the real edge that $v$ lies on is clearly unique); in particular, $v \sim_s v_i$ for $i = 1,2$.
    We have already seen that $v^+ = v_1 \leq_u w_1^+$. For identical reasons (namely, \Cref{lem:fake projections are adjacent}), we also have $v^+ = v_1 \leq_u w_2^+$, but also $w_1^- \leq_u v_2 = v^-$ and $w_2^- \leq_u v_2 = v^-$. Note that it is always the case that $w_1^+ \leq_u w_1^-$ and $w_2^+ \leq_u w_2^-$ (\Cref{lem:adjacency in deligne}(3)). So in summary, for $i = 1,2$, $v^+ \leq_u w_i^+ \leq_u w_i^- \leq_u v^-$. Thus by \Cref{lem:chain of fake}, $v\sim_s w_i$ for $i = 1,2$.
\end{proof}

\begin{prop}\label{prop:subdiv 6 cycles}
    Any embedded $6$-cycle in $\sd'_\cb$ of $s$-type $1n1n1n$ has a vertex in $\sd'_\cb$ $s$-adjacent to all the $s$-type $1$ vertices. 
\end{prop}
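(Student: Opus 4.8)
The plan is to split into cases according to the signs of the three $s$-type-$1$ vertices. Write the cycle as $(v_1,w_1,v_2,w_2,v_3,w_3)$ with $s(v_i)=1$ and $s(w_i)=n$, so that $v_i$ is $s$-adjacent to both $w_{i-1}$ and $w_i$ (indices mod $3$). By \Cref{prop:basic artin}, each $v_i$ is a real vertex (non-negative or non-positive), while the $w_i$ may be fake. Since the number of sign changes around the cyclic word $v_1,v_2,v_3$ is even, either all three $v_i$ have the same sign or exactly one differs; using the $s$-order-preserving automorphism $\iota$ of \Cref{lem:pos neg isom}, which interchanges the non-negative and non-positive parts, I may assume at least two of the $v_i$ are non-negative, and after cyclically relabelling I am reduced to two cases: \textbf{(A)} $v_1,v_2,v_3$ all non-negative, and \textbf{(B)} $v_2,v_3$ non-negative and $v_1$ non-positive.

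For case (A), I would first replace each $w_i$ by a non-negative vertex $w_i'$: if $w_i$ is real it is already non-negative (being $s$-adjacent to the non-negative vertices $v_i$ and $v_{i+1}$, it cannot be non-positive, by \Cref{lem:adjacency in deligne}(3)), and if $w_i$ is fake I take $w_i'=w_i^+$. Using \Cref{lem:fake not less than real}, \Cref{lem:fake projections are adjacent}, and \Cref{prop:basic artin}(5), one checks in either case that $v_i\le_s w_i'$ and $v_{i+1}\le_s w_i'$. Hence $v_1,v_2,v_3$ are pairwise upper bounded inside $(\sd'_\cb)^+$, so the upward flag part of \Cref{prop:positive part} yields a common $s$-upper bound, which is $s$-adjacent to all three $v_i$ and is the vertex we want.

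The real content is case (B). First I would note that $w_1$ (between $v_1$ and $v_2$) and $w_3$ (between $v_3$ and $v_1$) must be fake, since projecting a real simplex containing such an edge injectively into $S'_\cb$ and invoking the adjacency rule of \Cref{prop:basic coxeter} shows that no real vertex can be $s$-adjacent to both a non-negative and a non-positive vertex. Applying \Cref{lem:fake projections are adjacent} to $w_1$ gives $v_2\le_u w_1^+\le_u v_1$, and to $w_3$ gives $v_3\le_u w_3^+\le_u v_1$; transitivity of the $u$-order on $\sd_\cb$, which is the type-$A_n$ Artin complex (\Cref{lem:iso}), then gives $v_2\le_u v_1$ and $v_3\le_u v_1$. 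Treating $w_2$ as in case (A) produces a non-negative $w_2'$ with $v_2,v_3\le_u w_2'$, so $\{v_2,v_3\}$ is bounded above in $(\sd^0_\cb,\le_u)$; since that poset is graded and bowtie free (\Cref{thm:bowtie free An}), $v_2$ and $v_3$ admit a join $q$ in it (\Cref{lem:posets}), which necessarily satisfies $q\le_u v_1$ and $q\le_u w_2'$. A short adjacency argument via \Cref{lem:adjacency in deligne}(3) (a non-negative vertex that is $u$-adjacent to a non-positive one is strictly smaller) forces $q$ to be non-negative. Now, since $q$ is non-negative, $v_1$ is non-positive, and $q\le_u v_1$, \Cref{lem:adjacency in deligne}(3) provides a fake vertex $y$ on the real edge between $q$ and $v_1$; by \Cref{lem:fake projects to real} and the uniqueness of that edge, $y^+=q$ and $y^-=v_1$, so $\pos(y)=\pos(q)$ and $\neg(y)=\neg(v_1)$. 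I would then claim $y$ is the required vertex: $y\sim_s v_1$ because $y$ subdivides the real edge $[q,v_1]$; and for $i=2,3$ the chain $v_i\le_u q\le_u v_1$ in $\sd_\cb$, together with the flagness of $\sd_\cb$ (\cite[Prop.~4.5]{godelle2012k}), places $v_i$, $q$, $v_1$, hence also $y$, in a common real simplex, while $\pos(v_i)\subseteq\pos(q)=\pos(y)$ and $\neg(v_i)=\varnothing\subseteq\neg(y)$, so $v_i\sim_s y$ by \Cref{lem:adjacency in deligne}(1).

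The main obstacle I anticipate is precisely case (B): the output vertex is forced to be fake, so it cannot be produced by appealing directly to the positive-part results, and it has to be manufactured by hand. The device that makes this work is to pass to the unsubdivided complex $\sd_\cb$ (where the $A_n$ lattice-type results of \Cref{thm:bowtie free An} apply), form the join $q=v_2\vee v_3$ there, and only afterwards reinstate the single negative coordinate of $v_1$ by taking the fake vertex lying on the real edge $[q,v_1]$. The delicate point is to confirm that this $y$ is genuinely $s$-adjacent to $v_2$ and $v_3$ (not merely $u$-comparable after projecting to $S'_\cb$), and this is exactly where \Cref{lem:adjacency in deligne}(1) and the flagness of $\sd_\cb$ are used.
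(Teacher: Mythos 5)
Your proof is correct and follows essentially the same route as the paper's: reduce via the inversion automorphism $\iota$ of \Cref{lem:pos neg isom} to at most one non-positive $v_i$, handle the all-non-negative case with \Cref{prop:positive part}, and in the mixed case pass to the unsubdivided complex, use type-$A_n$ bowtie-freeness to find a real vertex $q$ sitting above the non-negative $v_i$'s and below the non-positive one, and take the fake subdivision point of the real edge joining them. The only differences are cosmetic streamlinings --- you package the bowtie center as the join $v_2\vee_u v_3$ (note the paper's \Cref{lem:posets} as literally stated gives joins to pairs with a \emph{lower} bound, so you are really using the bowtie-free property directly, as the paper does) and you observe that $q\le_u w_2'$ forces $q$ to be non-negative, which eliminates the second subcase the paper treats.
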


\begin{proof}
    Let $(v_1,w_1,v_2,w_2,v_3,w_3)$ be an embedded $6$-cycle in $\sd'_\cb$ with $s(v_i) = 1$ and $s(w_i) = n$ for $i = 1,2,3$. For each $i$, 
    Since $s(v_i) = 1$, then either $\pos(v_i) = \varnothing$ or $\neg(v_i) = \varnothing$, so the $v_i$ must be real vertices. We will take cases on which $v_i$ are non-positive and which are non-negative.

    Suppose first that all $v_i$ are non-negative. This means $1 = s(v_i) = u(v_i) = \#\pos(v_i)$ for each $i$. 
	For $i = 1,2,3$, if $w_i$ is real, define $w_i' = w_i$, and if $w_i$ is fake, define $w_i' = w_i^+$.
    By \Cref{lem:fake projections are adjacent}, if $w_i$ is fake and $v_j \leq_s w_i$ then $v_j \leq_u w_i^+$, and since $v_j$ and $w_i^+$ are non-negative, $v_j \leq_s w_i^+$. So regardless if $w_i$ is fake or not, we always have $v_j \leq_s w_i'$ whenever $v_j \leq_s w_i$. This means the non-negative $v_j$ are pairwise $s$-upper bounded by the non-negative $w_i'$, and \Cref{prop:positive part} tells us they have a common upper bound in the subcomplex $(\sd_\cb')^+$, hence in $\sd_\cb'$.

    Suppose exactly one $v_i$ is non-positive and the rest are non-negative. We may assume that $v_3$ is non-positive. 
    By a similar argument to the one given in the proof of \Cref{prop:subdiv 4 cycles}, $w_2$ and $w_3$ are fake vertices. We define $w_1'$ similarly to before: if $w_1$ is real, define $w_1' = w_1$, and if it's fake, define $w_1' = w_1^+$. 
    Notice that $1 = s(v_i) = u(v_i) = \#\pos(v_i)$ for $i = 1,2$, and $u(v_3) = n+1 - s(v_3) = n$. 
    Then $v_3$ is $u$-adjacent to both $v_1$ and $v_2$ in $\sd_\cb$:
    since $v_2$ is non-negative, we have $v_2 \leq_u w_2^+$, and since $v_3$ is non-positive, we have $w_2^+ \leq_u v_3$ (\Cref{lem:fake projections are adjacent}). By transitivity of the $u$-order, $v_2 \leq_u v_3$.  The argument for $v_1$ is identical, replacing $w_2^+$ with $w_3^+$.
    Now the $4$-cycle $(v_1,w_1',v_2,v_3)$ in $\sd_\cb$ is a bowtie, so there is a central (real) vertex $v$ which is $u$-adjacent to each vertex, or more specfically, $v_1 \leq_u v$, $v_2 \leq_u v$, $v \leq_u w_1'$, and $v \leq_u v_3$. In particular, $1 < u(v) < n$. 
    There are two possibilities.
    
    Suppose $v$ is non-negative. Then $v$ is adjacent to $v_1$ and $v_2$ in $\sd_\cb'$ and since these vertices are non-negative, $v_1 \leq_s v$ and $v_2 \leq_s v$. But there is a fake vertex $v'$ on the edge between $v$ and $v_3$. (In particular, $v' \sim_s v_3$.) Then $v \leq_s v'$ by \Cref{lem:fake not less than real}. By transitivity of the $s$-order, $v_1 \leq_s v'$ and $v_2 \leq_s v'$, so $v'$ is adjacent to each of $v_1$, $v_2$, and $v_3$.

    Suppose $v$ is non-positive. Since it is $u$-adjacent to the non-negative vertex $w_1'$, there is a fake vertex $v'$ on the real edge between $v$ and $w_1'$ (\Cref{lem:adjacency in deligne}(3)). In particular, $w_1' \leq_s v'$ and $v \leq_s v'$. For $i = 1,2$, since $v_i \leq_s w_1'$, we have $v_i \leq v'$. Last, $v \leq_u v_3$, but since these vertices are non-positive, $v_3 \leq_s v$. Then by transitivity, $v_3 \leq_s v'$.

    The cases with exactly two $v_i$ non-positive and with all $v_i$ non-positive are identical to these cases after applying \Cref{lem:pos neg isom}.
\end{proof}

\begin{thm}
\label{prop:contractible}
$((\sd'_{\ca(3)})^0, <)$ is bowtie-free and upward flag.
\end{thm}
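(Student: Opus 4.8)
The plan is to establish both properties --- \emph{bowtie freeness} and the \emph{upward flag} condition --- by reducing them to the ``extreme-type'' configurations already handled by \Cref{prop:subdiv 4 cycles} and \Cref{prop:subdiv 6 cycles}, together with the ``one-sided'' configurations handled by \Cref{prop:positive part} and \Cref{lem:pos neg isom}; the reduction is by induction on $n$, using links. We use freely that any vertex of $s$-type $1$ is real (as $\#\pos+\#\neg=1$ forces $\pos=\varnothing$ or $\neg=\varnothing$) and that $((\sd'_\cb)^0,<)$ is a graded poset of rank $n$, the latter since $\sd'_\cb$ is an $(n-1)$-dimensional complex whose simplices are exactly the chains (\Cref{prop:type 3 partial order} and \Cref{lem:basic properties}).

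For bowtie freeness, let $\{x_1,x_2,y_1,y_2\}$ be a bowtie: mutually distinct vertices with $x_i<_s y_j$ for all $i,j$. The first step is to split on the ``signs'' of $x_1,x_2$, each being non-negative, non-positive, or fake. If $x_1,x_2$ are both non-negative, then each $y_j$ satisfies $\pos(y_j)\supseteq\pos(x_1)\ne\varnothing$ by \Cref{lem:leq s containment}, so no $y_j$ is non-positive; replacing each fake $y_j$ by $y_j^+$ (which still satisfies $x_i\le_s y_j^+\le_s y_j$ by \Cref{lem:fake projections are adjacent}, \Cref{prop:basic artin}(5) and \Cref{lem:fake not less than real}) puts the bowtie inside $(\sd'_\cb)^+$, where \Cref{prop:positive part} yields a filler --- which also fills the original. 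The case $x_1,x_2$ both non-positive is identical after applying the isomorphism $\iota$ of \Cref{lem:pos neg isom}. If $x_1$ is non-negative and $x_2$ non-positive, each $y_j$ is forced to be fake, \Cref{lem:fake projections are adjacent} gives $x_1\le_u y_j^+\le_u x_2$ hence $x_1<_u x_2$, and by \Cref{lem:adjacency in deligne}(3) there is a fake vertex $z$ on the real edge from $x_1$ to $x_2$; then \Cref{lem:fake projects to real}, \Cref{lem:chain of fake} (via $z^+=x_1\le_u y_j^+\le_u y_j^-\le_u x_2=z^-$) and \Cref{lem:fake not less than real} give $x_1,x_2<_s z\le_s y_1,y_2$. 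When some $x_i$ is fake, every $y_j$ is again fake; here one fills the bowtie obtained by projecting all relevant vertices into $(\sd'_\cb)^+$ and the one obtained by projecting into $(\sd'_\cb)^-$, and reassembles the two witnesses into a single fake vertex on a common real edge via \Cref{lem:fake projects to real}, verifying $x_i\le_s z\le_s y_j$ with \Cref{lem:chain of fake}. When the $s$-types occurring are $\{1,n\}$, this last reassembly is precisely \Cref{prop:subdiv 4 cycles}; otherwise one reduces to that (and to the one-sided cases) by passing to the link of a common lower bound, which by \Cref{lem:link deligne} is a join of an ordinary $A$-type Artin complex --- bowtie free by \Cref{thm:bowtie free An} --- with a subdivided skewed $A_m$ complex, $m<n$, to which the inductive hypothesis applies.

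For the upward flag condition, let $x_1,x_2,x_3$ be pairwise upper bounded, say by $y_{12},y_{13},y_{23}$; we seek a common upper bound. The same sign analysis applies: if all $x_i$ lie in $(\sd'_\cb)^+$ (resp.\ $(\sd'_\cb)^-$), then after projecting the $y_{ij}$ as above, \Cref{prop:positive part} (resp.\ \Cref{lem:pos neg isom}) produces one. When the $x_i$ and their bounds carry $s$-types $1$ and $n$, the data form an embedded $6$-cycle of $s$-type $1n1n1n$, and \Cref{prop:subdiv 6 cycles} gives a vertex $s$-adjacent to all three $x_i$; since distinct vertices of equal $s$-type are never adjacent, this vertex strictly dominates each $x_i$, as required. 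The remaining mixed cases reduce to these by sign-projection into the two halves with reassembly of fake witnesses via \Cref{lem:chain of fake}, and by passage to links of common lower bounds (using the inductive hypothesis and, for the $A$-type factors, the upward flag part of \Cref{thm:Bnflag}).

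The step I expect to be the main obstacle is the case analysis when $x_1,x_2$ (or $x_1,x_2,x_3$) straddle the two halves or are themselves fake: one must track the $s$-order and the $u$-order simultaneously --- they genuinely differ, and run in opposite directions on the non-positive part by \Cref{prop:basic artin}(5) --- keep straight which of \Cref{lem:adjacency in deligne}, \Cref{lem:fake projections are adjacent}, \Cref{lem:fake projects to real}, \Cref{lem:chain of fake} and \Cref{lem:fake not less than real} governs each adjacency, and check that the separately built positive and negative witnesses meet along a single real edge to give a fake witness. A secondary subtlety is the reduction from arbitrary $s$-types to $\{1,n\}$: the links involved are only ``half nice'', since the $A$-type factor is bowtie free but not upward flag (\Cref{rmk:contrast}), and one must use that it always sits on the ``outward'' side of a vertex and so never obstructs a filling.
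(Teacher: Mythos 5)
Your high-level strategy coincides with the paper's: reduce to the extreme-type cycles handled by \Cref{prop:subdiv 4 cycles} and \Cref{prop:subdiv 6 cycles}, use \Cref{prop:positive part} and \Cref{lem:pos neg isom} for configurations confined to one half, and induct on dimension via links. However, the step you yourself identify as the "secondary subtlety" --- the reduction from arbitrary $s$-types to $s$-type $\{1,n\}$ --- is in fact the technical heart of the argument, and your treatment of it contains a genuine gap. First, for a bowtie $\{x_1,x_2,y_1,y_2\}$ there is no common lower bound of all four vertices a priori, so "passing to the link of a common lower bound" is not available as stated. The paper instead replaces each $x_i$ by a vertex $x_i'$ of $s$-type $1$ below it and each $y_j$ by a vertex $y_j'$ of $s$-type $n$ above it, fills the resulting $1n1n$-cycle by \Cref{prop:subdiv 4 cycles}, and then transfers the filler back to the original quadruple one vertex at a time, each step taking place in $\lk(x_i')$ or $\lk(y_j')$ and invoking the inductive hypothesis there. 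An analogous three-stage transfer is needed for the upward flag condition after applying \Cref{prop:subdiv 6 cycles}.

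Second, and more seriously, your description of those links is wrong in a way that would break the induction if it were right. You assert the relevant link is a join of an \emph{unsubdivided} $A$-type Artin complex with a subdivided skewed complex, note that the $A$-type factor is not upward flag (\Cref{rmk:contrast}), and then claim without justification that it "sits on the outward side and never obstructs a filling." Since upward flagness of a product/join of posets requires upward flagness of every factor (\Cref{lem:decomposing 4}), such a factor genuinely would obstruct the filling, and no appeal to \Cref{thm:Bnflag} (which concerns type $B_n$, not type $A_n$) repairs this. The resolution is that the premise is false: by \Cref{lem:link} together with the way the $B_n$-subdivision restricts to links, $\lk(v,\sd'_{\cb_k})$ for $v$ of $s$-type $1$ is $\sd'_{\cb_{k-1}}$, and for $v$ of $s$-type $k$ it is the $s$-subdivision of a join $\sd_{\cb_i}\circ\sd_{\cb_j}\circ\sd_{\cb_\ell}$ of lower-dimensional \emph{skewed} complexes, each carrying its own subdivision and $s$-order --- so every join factor is covered by the inductive hypothesis, and no unsubdivided $A$-type factor with the $u$-order ever appears. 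Until the link structure is identified correctly and the vertex-by-vertex transfer is carried out, the proof is incomplete precisely at its hardest point; your direct sign-analysis of the extreme cases essentially re-derives parts of \Cref{prop:subdiv 4 cycles} but does not substitute for the reduction.
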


\begin{proof}
In order to make the dimension clear, we let $\cb_k$ denote the arrangement $\ca(3)$ in \bb[k]{R}.
We start by showing $((\sd'_{\cb_k})^0, <)$ is bowtie-free by induction on $k$. 
When $k = 1$, there is nothing to show. %
So assume $k \geq 2$ and $((\sd'_{\cb_j})^0, <)$ is bowtie-free for all $j < k$. 

Let $\{v_1,v_2,v_3,v_4\}$ be a set of vertices in $\sd'_{\cb_k}$ with 
$v_1 <_s v_2$, $v_2 >_s v_3$, $v_3 <_s v_4$, and $v_4 >_s v_1$. Let $\gamma = (v_1,v_2,v_3,v_4)$. If $\gamma$ is not embedded, then the vertices are not pairwise distinct, so $\gamma$ has a central vertex (i.e., a vertex which is $s$-adjacent or equal to each vertex of the cycle).
So suppose $\gamma$ is embedded.
For $i = 1,3$, choose a vertex $v_i' \simeq_s v_i$ with $s(v_i') = 1$. Similarly, for $i = 1,3$, choose a vertex $v_i' \simeq_s v_i$ with $s(v_i') = k$. Since $\gamma$ is embedded, we can choose such vertices so that $v_1' \not= v_3'$ and $v_2' \not= v_4'$. 
By transitivity of the $s$-order, 
$v_1' <_s v_2'$, $v_2' >_s v_3'$, $v_3' <_s v_4'$, and $v_4' >_s v_1'$. 
In particular, $\gamma' = (v_1', v_2', v_3', v_4')$ is an embedded 4-cycle of $s$-type $1k1k$. \Cref{prop:subdiv 4 cycles} guarantees the existence of a central vertex $w'$ for $\gamma'$. We will now show how $w'$ gives rise to a central vertex of $\gamma$, showing that $\{v_1,v_2,v_3,v_4\}$ is not a bowtie.

If $v_1' = v_1$, define $w_1 = w'$. If not, then note that $v_1 <_s v_i \leq_s v_i'$ for $i = 2,4$. It follows from transitivity that $v_1 <_s v_i'$ for $i = 2,4$. So, in $\lk(v_1', \sd'_{\cb_k})$, there is a $4$-cycle $\gamma_1 = (v_1, v_2', w', v_4')$. Since $s(v_1') = 1$, $v_1'$ is a real vertex, and $\lk(v_1', \sd'_{\cb_k}) \cong \sd'_{\cb_{k-1}}$. By induction, this complex is bowtie free, so $\gamma_1$ has a central vertex in $\lk(v_1', \sd'_{\cb_k})$, which we will call $w_1$.

If $v_2' = v_2$, define $w_2 = w_1$. If not, then, for similar reasons as before, $v_3' <_s v_2$. So, in $\lk(v_2', \sd'_{\cb_k})$, there is a $4$-cycle $\gamma_2 = (v_1, v_2, v_3', w_1)$. Since $s(v_2') = k$, it is either a real vertex or lies on the midpoint of the real line between $(v_2')^+$ and $(v_2')^-$. By \Cref{lem:link}, we can summarize this by saying $\lk(v_2', \sd'_{\cb_k})$ is isomorphic to the $s$-subdivision of the join $\sd_{\cb_{i}} \circ \sd_{\cb_{j}} \circ \sd_{\cb_{\ell}}$ where each $0 \leq i,j,\ell < k$ with at least one of $i$, $j$, and $k$ positive, and if any are 0, we exclude the corresponding complex from the join. By induction, each of these join factors are bowtie free, so it follows from \Cref{lem:decomposing 4} and its proof that their $s$-subdivided join $\lk(v_2', \sd'_{\cb_k})$ is bowtie free.
So, $\gamma_2$ has a central vertex in $\lk(v_2', \sd'_{\cb_k})$, which we will call $w_2$.

If $v_3' = v_3$, define $w_3 = w_2$. If not, then in $\lk(v_3', \sd'_{\cb_k})$, there is a $4$-cycle $(w_2, v_2, v_3, v_4')$. By an argument similar to the one given for $\gamma_1$, this cycle has a central vertex in $\lk(v_3', \sd'_{\cb_k})$, which we will call $w_3$. Note that $w_2 <_s v_2$, so we must have $w_2 <_s w_3$. In particular, since $v_1 <_s w_2$, we also have $v_1 <_s w_3$. 

If $v_4' = v_4$, define $w = w_3$. If not, then in $\lk(v_4', \sd'_{\cb_k})$, there is a $4$-cycle $(v_1, w_3, v_3, v_4)$. By an argument similar to the one given for $\gamma_2$, this cycle has a central vertex in $\lk(v_4', \sd'_{\cb_k})$, which we will call $w$. Note that $v_1 <_s w$, so we must have $w <_s w_3$. In particular, since $w_3 <_s v_2$, we also have $w <_s v_2$. Thus $w$ is a central vertex for $\gamma$, so $\sd'_{\cb_k}$ is bowtie-free.

Next we show $((\sd'_{\cb_k})^0, <)$ is upward flag by induction on $k$. If $k = 1$ then there is nothing to show.
Assume $k \geq 2$ and $((\sd'_{\cb_j})^0, <_s)$ is upward flag for all $j < k$. 
Suppose $\{w_1,w_2,w_3\}$ is a collection of pairwise $s$-upper bounded vertices of $\sd'_{\cb_k}$, with $z_i$ an upper bound for $\{w_i, w_{i+1}\}$ for $i = 1,2,3$ (with indices taken cyclically). For each $i = 1,2,3$, we may assume that $s(z_i) = k$ (for example, by replacing $z_i$ with any vertex $v$ of $s$-type $k$ with $z_i \leq_s v$). We will repeat a procedure similar to the one used for $4$-cycles to determine an upper bound of the $w_i$.

For each $i = 1,2,3$, let $w_i'$ be a vertex with $w_i' \leq_s w_i$ and $s(w_i') = 1$.  
(Note that if $s(w_i) = 1$, this implies $w_i' = w_i$.) 
Then $\{w_1',w_2',w_3'\}$ is a collection of $s$-type $1$ vertices pairwise upper bounded by $s$-type $k$ vertices, giving a 6-cycle $\beta = (w_1', z_1, w_2', z_2, w_3', z_3)$ of $s$-type $1k1k1k$. If $\beta$ is not embedded, then either the $w_i'$ are not pairwise distinct or the $z_i$ are not pairwise distinct.
If the $w_i'$ cannot be chosen to be pairwise distinct, then the $w_i$ could not have been pairwise distinct to start with, so one of the $z_i$ will be an upper bound of each vertex and we are done. If the $z_i$ are not pairwise distinct, then one of them is an upper bound for each of the $w_i$ and we are done. So we may assume that the $6$-cycle $\beta$ is embedded. By \Cref{prop:subdiv 6 cycles}, there exists a vertex $v_0$ which is an upper bound of the $w_i'$. We now find a vertex $v$ which is an upper bound of the $w_i$.

If $s(w_1) = 1$ (so $w_1' = w_1$), then let $v_1 = v_0$. Otherwise, define $v_1$ as follows.
For each $i$, consider the 4-cycle $\gamma_i^{(1)} = (w_i', v_0,w_{i+1}', z_i)$. Since $((\sd'_{\cb_k})^0, <_s)$ is bowtie-free by our above work, there must be some vertex $v_i'$ such that $w_i' \leq_s v_i'$, $w_{i+1}' \leq_s v_i'$, $v_i' \leq_s v_0$, and $v_i' \leq_s z_i$. Now inside $\lk(w_1',\sd'_{\cb_k})$, the vertices $\{ w_1, v_1', v_3' \}$ are pairwise $s$-upper bounded: $z_{i-1}$ is an upper bound for $\{v_{3}',w_1\}$, $z_1$ is an upper bound for $\{w_1,v_1\}$, and $v_0$ is an upper bound for $\{v_1',v_3'\}$. Similar to the bowtie free case previously, this link is the $s$-subdivision of the join of posets, each of which are upward flag by induction, and thus is itself upward flag. So, there is an $s$-upper bound $v_1$ for  $\{ w_1, v_1', v_3' \}$ in the link of $w_1'$. In particular, $w_2' \leq_s v_1' \leq_s v_1$ and $w_3' \leq_s v_3' \leq_s v_1$, so $v_1$ is an upper bound of $\{w_1,w_2',w_3'\}$. 

We now repeat a similar procedure on the vertices $\{w_1,w_2',w_3'\}$. If $s(w_2) = 1$, define $v_2 = v_1$; otherwise, define $v_2$ as follows. 
Let $\gamma_1^{(2)} = (w_1, v_0,w_{2}', z_1)$
and $\gamma_i^{(2)} = (w_i', v_0,w_{i+1}', z_i)$ for $i = 2,3$.
To simplify notation, 
let $v_1'$ now denote a vertex such that $w_1 \leq_s v_1'$, $w_{2}' \leq_s v_1'$, $v_1' \leq_s v_0$, and $v_1' \leq_s z_1$, 
and for $i = 2,3$, let $v_i'$ now denote a vertex such that $w_i' \leq v_i'$, $w_{i+1}' \leq v_i'$, $v_i' \leq v_0$, and $v_i' \leq z_i$  (these vertices exist because $\sd'_{\cb_k}$ is bowtie-free under the $s$-order).
Now in $\lk(w_2'\sd'_{\cb_k})$, $\{w_2,v_1',v_2'\}$ is pairwise $s$-upper bounded: $u_1$ is an upper bound of $\{w_2,v_1'\}$, $v_1$ is an upper bound of $\{v_1', v_2'\}$, and $u_2$ is an upper bound of $\{v_2', w_2\}$. So $\{w_2,v_1',v_2'\}$ has an $s$-upper bound $v_2$. Note that $w_1 \leq_s v_1' \leq_s v_2$ and $w_3' \leq_s v_2' \leq_s v_2$, so $v_2$ is an upper bound of $\{w_1,w_2,w_3'\}$. 

We conclude by examining the vertices $\{w_1,w_2,w_3'\}$. If $s(w_3) = 1$, define $v = v_2$. In this case, $v = v_2$ is already an upper bound of $\{w_1,w_2,w_3\}$. Otherwise, define $v$ as follows. 
Let 
$\gamma_1^{(3)} = (w_1, v_0, w_2, z_1)$,
$\gamma_2^{(3)} = (w_2, v_0, w_3',z_2)$, and
$\gamma_3^{(3)} = (w_3',v_0, w_1, z_3)$.
To simplify notation again, 
let $v_1'$ now denote a vertex such that $w_1 \leq_s v_1'$, $w_2 \leq_s v_1'$, $v_1' \leq_s v_0$, and $v_1' \leq_s u_1$, 
let $v_2'$ now denote a vertex such that $w_2 \leq_s v_2'$, $w_3' \leq_s v_2'$, $v_2' \leq_s v_2$, and $v_2' \leq_s u_2$, 
and
let $v_3'$ now denote a vertex such that $w_3'\leq_s v_3'$, $w_1 \leq_s v_3'$, $v_3' \leq_s v_0$, and $v_3' \leq_s u_3$. (These all exist since $\sd_\cb'$ is bowtie free.)
Now in $\lk(w_3'\sd'_{\cb_k})$, $u_2$ is an ($s$-)upper bound of $\{w_3,v_2'\}$, $v_2$ is an upper bound of $\{v_2',v_3'\}$, and $u_3$ is an upper bound of $\{v_3',w_3\}$. So $\{w_3,v_2',v_3'\}$ has an $s$-upper bound $v$. Note that $w_1 \leq_s v_3' \leq_s v_3$ and $w_2 \leq_s v_2' \leq_s v_3$, so $v$ is an $s$-upper bound of $\{w_1,w_2,w_3\}$. 
\end{proof}

\let\c\ced
\bibliographystyle{alpha}
\bibliography{mybib}

\end{document}